\tikzset{
>=stealth',
  punktchain/.style={
    rectangle,
    rounded corners,
    % fill=black!10,
    draw=black, thick,
    minimum height=3em,
    text centered,
    on chain},
  line/.style={draw, thick, <-},
  element/.style={
    tape,
    top color=white,
    bottom color=blue!50!black!60!,
    minimum width=8em,
    draw=blue!40!black!90, very thick,
    text width=10em,
    minimum height=3.5em,
    text centered,
    on chain},
  every join/.style={->, thick,shorten >=1pt},
  decoration={brace},
  tuborg/.style={decorate},
  tubnode/.style={midway, right=2pt},
}
\renewcommand\_{^{}_}
\renewcommand\;{\hspace{.6pt}}
\newcommand\To{\longrightarrow}
\newcommand\into{\hookrightarrow}
\newcommand{\Into}{\ensuremath{\lhook\joinrel\relbar\joinrel\rightarrow}}
\newcommand\Onto{\longrightarrow\hspace{-5.5mm}\longrightarrow}
\newcommand\Mapsto{\ensuremath{\shortmid\joinrel\relbar\joinrel\rightarrow}}
\newcommand{\rt}[1]{\xrightarrow{\ #1\ }}
\newcommand\PP{\mathbb P}
\newcommand\C{\mathbb C}
\newcommand\Q{\mathbb Q}
\newcommand\R{\mathbb R}
\newcommand\N{\mathbb N}
\newcommand\Z{\mathbb Z}
\newcommand\J{\mathsf J\;}
\newcommand\cA{\mathcal A}
\newcommand\cD{\mathcal D}
\newcommand\cH{\mathcal H}
\newcommand\cO{\mathcal O}
\newcommand\cF{\mathcal F}
\newcommand\cM{\mathcal M}
\newcommand\cT{\mathcal T}
\newcommand\ch{\operatorname{ch}}
\newcommand\Aut{\operatorname{Aut}}
\newcommand\Hom{\operatorname{Hom}}
\newcommand\Pic{\operatorname{Pic}}
\newcommand\Ext{\operatorname{Ext}}
\newcommand\rk{\operatorname{rank}}
\newcommand\cok{\operatorname{coker}}
\newcommand\js{\operatorname{JS}}
\newcommand\vi{v}
\newcommand\im{\operatorname{im}}
\newcommand\onto{\to\hspace{-3mm}\to}
\newcommand\so{\operatorname{\Longrightarrow}}
\renewcommand\({\big(}
\renewcommand\){\big)}
\renewcommand\={\ =\ }
\newcommand\arXiv[1]{\href{http://arxiv.org/abs/#1}{arXiv:#1}}
\newcommand\mathAG[1]{\href{http://arxiv.org/abs/math/#1}{math.AG/#1}}
\newcommand\beq[1]{\begin{equation}\label{#1}}
\newcommand\eeq{\end{equation}}
\newcommand\beqa{\begin{eqnarray*}}
\newcommand\eeqa{\end{eqnarray*}}
\newtheorem*{rep@theorem}{\rep@title}
\newcommand{\newreptheorem}[2]{%
\newenvironment{rep#1}[1]{%
 \def\rep@title{#2 \ref{##1}}%
 \begin{rep@theorem}}%
 {\end{rep@theorem}}}
\newtheorem{Thm}{Theorem}[section]
\newtheorem{Thm*}{Theorem}
\newtheorem{Prop}[Thm]{Proposition}
\newtheorem{Lem}[Thm]{Lemma}
\newtheorem{Cor}[Thm]{Corollary}
\newtheorem{Con}[Thm]{Conjecture}
\newtheorem{thm-int}{Theorem}
\theoremstyle{definition}
\newtheorem{Def-s}[Thm]{Definition}
\newtheorem{Def}[Thm]{Definition}
\newtheorem{Rem}[Thm]{Remark}
\newcommand{\ignore}[1]{}
\begin{document}

\title{\ \vspace{-2cm} \\ Rank $r$ DT theory from rank $0$\vspace{-3mm}}
\author{S. Feyzbakhsh and R. P. Thomas\vspace{-8mm}}
\maketitle
\begin{abstract}
Fix a Calabi-Yau 3-fold $X$ satisfying the Bogomolov-Gieseker conjecture of Bayer-Macr\`i-Toda, such as the quintic 3-fold. We express Joyce’s generalised DT invariants counting Gieseker semistable sheaves of any rank $r\ge1$ on $X$ in terms of those counting sheaves of rank 0 and pure dimension 2.

The basic technique is to reduce the ranks of sheaves by replacing them by the cokernels of their Mochizuki/Joyce-Song pairs and then use wall crossing to handle their stability.
\end{abstract}
\bigskip

%Let $X$ be a Calabi-Yau 3-fold satisfying \cite[Conjecture 4.7]{BM} --- a weakening of the Bogomolov-Gieseker conjecture of \cite{BMT}. Let $\J(v)\in\Q$ be Joyce-Song's generalised DT invariant \cite{JS} counting Gieseker semistable sheaves of numerical K-theory class $v$ on $X$.

Let $X$ be a Calabi-Yau 3-fold satisfying the Bogomolov-Gieseker conjecture of \cite{BMT}.\footnote{In fact the significant weakening of the conjecture described in Section \ref{BGsec} is sufficient for our purposes.} Let $\J(v)\in\Q$ be Joyce-Song's generalised DT invariant \cite{JS} counting Gieseker semistable sheaves of numerical K-theory class $v$ on $X$.

We show these DT invariants are determined entirely by \emph{rank zero} invariants counting Gieseker semistable sheaves of pure dimension 2. These latter invariants are predicted by physicists' S-duality conjectures to be governed by vector valued mock modular forms. See \cite[Section 5]{FT2} for a discussion, and a possible route to an explanation by expressing these rank 0 dimension 2 invariants in terms of
\begin{itemize}
\item intersection numbers with Noether-Lefschetz loci and
\item the Dedekind $\eta$-function governing Hilbert schemes of points on surfaces.
\end{itemize}

\begin{Thm*}\label{1} Let $(X,\cO_X(1))$ be a Calabi-Yau 3-fold satisfying the weak form \emph{\ref{wBG}} of the Bogomolov-Gieseker inequality of
%\cite[Conjecture 4.7]{BM}.
\cite{BMT}. Then for fixed $v$ of rank $\ge1$,
\beq{formu}
\J(v)\=F\(\J(\alpha_1),\J(\alpha_2),\dots\)
\eeq
is a universal polynomial in invariants $\J(\alpha_i)$, with all $\alpha_i$ of rank 0 and dimension 2.
\end{Thm*}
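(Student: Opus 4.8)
The plan is to induct on $r=\rk(v)$, trading a Gieseker semistable sheaf $E$ of class $v$ for the cokernel $F$ of a section $\cO(-n)\to E$, which has rank $r-1$, and then recovering Gieseker stability by a wall crossing. The case $r=0$ is vacuous, so fix $r\ge1$ and assume the theorem for all classes of rank $<r$. First I would pass from $\J(v)$ to the Joyce--Song pair invariant $\mathrm{PI}^{n}(v)$, $n\gg0$, which counts pairs $(E,s)$ with $E$ Gieseker semistable of class $v$ and $0\ne s\in H^{0}(E(n))$. The Joyce--Song wall-crossing formula writes $\mathrm{PI}^{n}(v)$ as a universal polynomial, with coefficients depending only on the classes and the Euler form, of the shape $c\,\J(v)+\sum_{\ell\ge2}(\cdots)\prod_{i}\J(v_i)$ with $c\ne0$ and each decomposition $v=v_1+\cdots+v_\ell$ having all $v_i$ of the same reduced Hilbert polynomial as $v$; since $v$ has positive rank so does every $v_i$, hence for $\ell\ge2$ each $v_i$ has rank $\le r-1$. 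Inverting this identity and applying the inductive hypothesis to the $\J(v_i)$ reduces the theorem for $v$ to showing that $\mathrm{PI}^{n}(v)$ is a universal polynomial in rank $0$, dimension $2$ invariants.

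Next, since $r\ge1$ such an $E$ is torsion free, so $s\colon\cO(-n)\hookrightarrow E$ is injective and sits in $0\to\cO(-n)\to E\to F\to0$ with $F$ of class $w:=v-[\cO(-n)]$ and rank $r-1$. The pair $(E,s)$ is the same datum as $F$ together with the extension class in $\Ext^{1}(F,\cO(-n))$ reconstructing $E$ --- a Mochizuki/Joyce--Song pair object; being simple, it is parametrised by a genuine (rigidified) moduli space carrying an honest invariant. For $n\gg0$, boundedness forces $\dim\Ext^{1}(F,\cO(-n))$ and the higher $\Ext$'s to behave uniformly over the moduli of the $F$'s, so that $\mathrm{PI}^{n}(v)$ becomes, up to a universal constant and lower-order corrections, a Donaldson--Thomas-type count of the cokernels $F$ of the rank $r-1$ class $w$.

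Now the cokernels $F$ are generally not Gieseker semistable, but because $E$ is and $\cO(-n)$ is ``small'', each $F$ is semistable for a weak stability condition $\sigma$ in a chamber adjacent to the Gieseker chamber --- such $\sigma$ exist by \ref{wBG}. I would then vary $\sigma$ to the Gieseker chamber; by Joyce's wall-crossing formula the count of cokernels differs from $\J(w)$ by a finite sum of universal polynomials in the invariants $\J(\beta_i)$ attached to the Harder--Narasimhan factors at the walls crossed, and the slope inequalities coming from \ref{wBG} force every $\beta_i$ to be assembled from shifts of the fixed class $[\cO(-n)]$ and from ``pieces of $w$'', so that $\rk\beta_i\le r-1$ in absolute value. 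Combining this with the inductive hypothesis for ranks $<r$ turns $\mathrm{PI}^{n}(v)$, hence $\J(v)$, into a universal polynomial in rank $0$ invariants; when $r=1$ the class $w$ and the $\beta_i$ that arise are rank $0$ of dimension $\le2$ (indeed $F$ has two-dimensional support), and the finitely many rank $0$ contributions of dimension $\le1$ --- resp. dimension $0$ --- produced along the way are themselves reduced to dimension $2$, resp. are universal constants, by the same pairs-and-wall-crossing device, closing the induction.

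The main work, and the main obstacle, lies in the middle steps: realising $\mathrm{PI}^{n}(v)$ on the nose as a weak-stability pair/cokernel invariant in the correct chamber, proving that only finitely many walls are crossed (a boundedness statement), and --- crucially --- showing that the wall-crossing classes $\beta_i$ strictly drop in rank, so that no wall reintroduces a rank-$r$ object other than $v$ itself. This is precisely where the Calabi--Yau geometry is used: the weak Bogomolov--Gieseker inequality \ref{wBG} both supplies the family of weak stability conditions on $D^{b}(X)$ and, through its slope bounds, pins down the constituents that can appear at a wall, which is what makes the rank induction run.
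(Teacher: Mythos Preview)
Your outline captures the correct high-level strategy --- induct on rank, use Joyce--Song pairs to pass to cokernels of rank $r-1$, then wall cross --- and you correctly identify the hard part as bounding the ranks of wall-crossing factors. But the specific mechanism you describe differs from the paper's in a way that matters.

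You start from Gieseker Joyce--Song pairs $\mathrm{PI}^n(v)$ and then seek a weak stability condition $\sigma$ ``adjacent to the Gieseker chamber'' for which the cokernels $F$ are semistable. The paper does the opposite: it works in weak stability $\nu_{b,w}$ from the outset. The cokernels of $\nu_{b,w}$-Joyce--Song pairs are shown to be $\nu_{b,w}$-\emph{stable} just above a specific wall $\ell_{\js}$ --- the line through $\Pi(\cO(-n))$ and $\Pi(v_n)$ --- which is \emph{far} from the large volume/Gieseker region. Below $\ell_{\js}$ (indeed below a lower wall $\ell_f$ coming from \ref{wBG}) there are no semistable objects of class $v_n$ at all, so $\J_{b,w}(v_n)=0$ there. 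The argument then wall-crosses $v_n$ from this empty region up through $\ell_{\js}$ to the large volume chamber; crossing $\ell_{\js}$ is the unique place where the class $\mathsf v$ appears, with nonzero coefficient $(-1)^{\chi(\mathsf v(n))-1}\chi(\mathsf v(n))$, and one then solves for $\J_{b,\infty}(\mathsf v)$. Only at the very end does one pass from large volume (= tilt) to Gieseker by a separate wall crossing in $\mathrm{Coh}(X)$.

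The substantive gap in your sketch is the assertion that $\mathrm{PI}^n(v)$ ``becomes, up to a universal constant and lower-order corrections, a DT-type count of the cokernels $F$''. Making this precise is exactly what the $\ell_{\js}$ wall crossing accomplishes, but it requires first proving (Sections~\ref{shvs}--\ref{subshvs}) that all $\nu_{b,w}$-semistable objects of class $v_n$ are sheaves, and that on every wall other than $\ell_{\js}$ the destabilising factors are again sheaves of rank $\le r-1$ (with controlled Chern characters, so induction applies). Your claim that the $\beta_i$ ``are assembled from shifts of $[\cO(-n)]$ and pieces of $w$'' is the content of Theorem~\ref{thm:all walls} and Corollary~\ref{ssf}, and this is where \ref{wBG} is actually used --- not to produce the stability conditions (those exist unconditionally) but to locate $\ell_f$ and thereby constrain the destabilisers. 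Finally, your remark about reducing dimension-$\le1$ rank-$0$ invariants is unnecessary: the paper's bookkeeping ensures every rank-$0$ class $\beta$ appearing satisfies $\ch_1(\beta).H^2>0$, hence is already dimension $2$.
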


The coefficients of $F$ depend only on $H^*(X,\Q)$ as a graded ring with pairing, $\ch(v)$, the Chern classes of $X$, and the class $H:=c_1(\cO_X(1))$ used to define Gieseker stability. There are countably many terms in the formula \eqref{formu} but only finitely many are nonzero.

In a sequel \cite{FT3} we take the technique described below one step further to express all rank DT invariants in terms of invariants counting rank $-1$ complexes. Dualising and shifting by $[1]$, these become rank 1 stable pairs. Combined with the DT/PT wall crossing formula this finally leads to a variant of Theorem \ref{1} expressing all DT invariants in terms of rank 1 invariants --- a 6 dimensional analogue of the conjectured equivalence between Donaldson theory in any rank and Seiberg-Witten theory in rank 1.

\subsection*{Joyce-Song pairs} 
Our route to Theorem \ref{1} goes via \emph{Joyce-Song stable pairs} $(F,s)$ on a fixed smooth complex projective threefold $(X,\cO_X(1))$. These consist of a fixed $n\gg0$,
\begin{itemize}
\item a rank $r\ge1$ semistable sheaf $F$ of fixed class $v$, and
\item $s\in H^0(F(n))$ which factors through no (semi-)destabilising subsheaf of $F$.
\end{itemize}
For us ``semistable" will refer to a specific weak stability condition of \cite{BMT, BMS} (in contrast to Joyce-Song, who use Gieseker stability). This is enough to ensure that $s$ is an \emph{injective} map of sheaves, so it makes sense to consider its cokernel $E$,
\beq{JSs}
0\To\cO(-n)\rt{s}F\To E\To0,
\eeq
of class $v_n:=v-[\cO(-n)]$. Crucially this has smaller rank $r-1$. We show $E$ is \emph{stable} just above what we call the \emph{Joyce-Song wall} $\ell_{\js}$ \eqref{ljsdef} in the space of weak stability conditions. This is the wall along which the slopes of $\cO(-n)[1],\,F$ and $E$ coincide, so the exact triangle $F\to E\to\cO(-n)[1]$ induced by \eqref{JSs} destabilises $E$ on and below $\ell_{\js}$. The same argument applies to $E\otimes T$ for $T$ any line bundle with \emph{torsion first Chern class},
%$c_1(T)=0\in H^2(X,\Q)$,
i.e. for $T$ in
$$
\Pic\_0(X)\ :=\ \big\{T\in\Pic(X)\ \colon\ c_1(T)\,=\,0\,\in\,H^2(X,\Q)\big\}.
$$
Moreover this describes \emph{all} sheaves in an open and closed subset of the moduli space of semistable sheaves. (When $r\le2$ it is the whole moduli space.) Thus the moduli space of rank $r$ Joyce-Song pairs is actually a moduli space of (semi)stable sheaves of rank $r-1$.

%\begin{itemize}
%\item cokernels of Joyce-Song pairs $(F,s)$ in class $v$ are stable sheaves $E$ of class $v_n$, as are $E\otimes T$ for line bundles $T$ with torsion $c_1(T)\in H^2(X,\Z)$, and
%\item given a semistable sheaf $E$ of class $v_n$, there is a unique line bundle $T$ with torsion $c_1(T)$ and Joyce-Song pair $(F,s)$ such that $E\otimes T\cong\cok(s)$.
%\end{itemize}

\subsection*{Wall crossing} When $X$ is Calabi-Yau this allows us to employ the Joyce-Song wall crossing formula to relate invariants counting the rank $r$ sheaves $F$ to those counting the rank $r-1$ sheaves $E$. 
This gives a universal formula like \eqref{formu}, but for invariants defined by a weak stability condition.

Next we move through the space of weak stability conditions to the large volume limit. The sheaves $F$ in class $v$ remain semistable but the sheaves $E$ of class $v_n$ undergo wall crossing. We prove this only ever involves $E$ being destabilised by other \emph{sheaves} --- rather than complexes of sheaves --- which therefore also have rank $\le r-1$. We can also control their Chern characters and thus set up an induction on rank. The base case $r=1$ was proved in \cite{FT2}, so eventually we get a formula \eqref{formu} for invariants counting \emph{tilt semistable} objects. A final wall crossing to Gieseker stability then proves Theorem \ref{1}.\medskip

Our main technique for finding walls of instability is the Bogomolov-Gieseker conjecture of \cite{BMT, BMS}. In fact we only need the much weaker condition \ref{wBG} of Section \ref{BGsec}. This has now been proved for some Calabi-Yau 3-folds \cite{BMS, Ko20, Li, Liu, MP}. 
\smallskip

\subsection*{Acknowledgements}
We thank Arend Bayer, Tom Bridgeland, Daniel Huybrechts, Adrian Langer, Davesh Maulik, Rahul Pandharipande, J\o rgen Rennemo and Yukinobu Toda for help, encouragement and useful conversations. We thank Dominic Joyce for help, discouragement and useful conversations. In \cite{FT2} we explained our intellectual debt to Toda's work \cite{TodaBG}; here we would also like to record our admiration for the extraordinary papers \cite{BMT, BMS, JS}, which made everything possible. We are grateful to two fantastic anonymous referees for really useful reports.

We acknowledge the support of an EPSRC postdoctoral fellowship EP/T018658/1, an EPSRC grant EP/R013349/1 and a Royal Society research professorship.

\subsection*{Plan} We work on a smooth complex projective threefold $X$. In Section \ref{weak} we review the weak stability conditions and Bogomolov-Gieseker inequality of \cite{BMT, BMS}. Section \ref{shvs} proves that for such stability conditions, semistable objects $E\in\cD(X)$ of class $v_n$ are always \emph{sheaves}. Moreover in Section \ref{subshvs} we show that their destabilising objects are also sheaves, with one exception --- on the Joyce-Song wall, $E$ can be destabilised by an exact triangle ${F\to E\to T(-n)[1]}$ expressing it in terms of a Joyce-Song pair \eqref{JSs} up to tensoring by some $T\in\Pic\_0(X)$.

From Section \ref{DT3} we assume $X$ is Calabi-Yau. We review the invariants counting its (weak) semistable sheaves and their wall crossing formulae in Section \ref{DT3}. Section \ref{r} gives the proof of Theorem \ref{1}. It hides the role of Joyce-Song stable pairs so we explain their relevance in Appendix \ref{JSapp}, and stronger results in rank 2 in Appendix \ref{2}. Neither is necessary for the rest of the paper. Appendix \ref{TomB} uses work of Piyaratne-Toda to check that  moduli stacks of weak semistable objects are algebraic of finite type.

\subsection*{Outlook}
Up until Section \ref{DT3} all of our results apply to any 3-fold satisfying the Bogomolov-Gieseker inequality. From Section \ref{DT3} we restrict attention to Calabi-Yau 3-folds only so that we are able to apply the wall crossing formula of \cite{JS}. 
Using Joyce's new wall crossing formula for Fano 3-folds \cite[Theorem 7.69]{Jo4} instead we would expect our results to prove a version of Theorem \ref{1} with insertions for Fano 3-folds.

On Calabi-Yau 3-folds with $h^1(\cO_X)>0$ the locally free action of Jac$\;(X)$ on moduli spaces of rank $r>0$ sheaves forces $\J(v)=0$. So from Section \ref{DT3} we assume $h^1(\cO_X)=0$ without loss of generality. But it would be interesting to extend our results to counts of sheaves of \emph{fixed determinant} when $h^1(\cO_X)>0$; this should not be too hard because in this case the wall crossing formula simplifies --- see \cite[Proposition 4.4]{TT2} or \cite[Theorem 4.9]{OPT} for instance.

Kontsevich-Soibelman's refinements of $\J(v)$ \cite{KS1, KS2} are probably rigorously defined  now that their integrality conjecture has been proved \cite{BenSven} and orientation data has been shown to exist on Calabi-Yau 3-folds
\cite{JU}. Replacing the wall crossing formula of \cite{JS} with that of \cite{KS1} should prove an analogue of Theorem \ref{1} for these invariants too. As Jan Manschot pointed out, the refined invariants should determine the fixed determinant invariants when $h^1(\cO_X)>0$.

Finally we note that while the universal formulae of Theorem \ref{1} are not explicit, the wall crossing formulae in this paper give explicit formulae when restricted to specific classes and specific Calabi-Yau threefolds; see \cite{F22} for examples.

\setcounter{tocdepth}{1}
\tableofcontents
\vspace{-1cm}

\section{Weak stability conditions}\label{weak}
Let $(X, \cO(1))$ be a smooth polarised complex projective threefold with bounded derived category of coherent sheaves $\cD(X)$ and Grothendieck group $K(\mathrm{Coh}(X))$. Dividing by the kernel of the Mukai pairing gives the numerical Grothendieck group
\beq{Kdef}
K(X)\ :=\ \frac{K(\mathrm{Coh}(X))}{\ker\chi(\ \ ,\ \ )}\,.
\eeq
Notice $K(X)$ is torsion-free and isomorphic to its image in $H^*(X,\Q)$ under the Chern character. We will mainly work with $K(X)$ and its quotient
\beq{K_Hdef}
K_H(X)\ :=\ \frac{K(X)}{\ker\ch\_H}\ \cong\ \im\;(\ch\_H)\ \subseteq\ \Q^3,
\eeq
where $H:=c_1(\cO(1))$ and $\ch_H\colon K(X)\to\Q^3$ is the map
\beq{chH}
            \ch\_H(E)\=\(\!\ch_0(E)H^3,\,\ch_1(E).\;H^2,\,\ch_2(E).\;H\).
\eeq
Thus $K_H(X)$ sees $\ch_i$ only through its $H$-degree $\ch_i\!.\;H^{3-i}$ for $i\le2$, and is blind to $\ch_3$.

Gieseker and slope (semi)stability of sheaves will always be defined by $H$.
Scaling the usual definition by $H^3$, we define the $\mu\_H$-slope of a coherent sheaf $E$ to be
$$
\mu\_H(E)\ :=\ \left\{\!\!\!\begin{array}{cc} \frac{\ch_1(E).H^2}{\ch_0(E)H^3} & \text{if }\ch_0(E)\ne0, \\
+\infty & \text{if }\ch_0(E)=0. \end{array}\right.
$$
Associated to this slope every sheaf $E$ has a Harder-Narasimhan filtration. Its graded pieces have slopes whose maximum we denote by $\mu_H^+(E)$ and minimum by $\mu_H^-(E)$.

For any $b \in \mathbb{R}$, let $\cA_{\;b}\subset\cD(X)$ denote the abelian category of complexes
	\begin{equation}\label{Abdef}
	\mathcal{A}_{\;b}\ =\ \big\{E^{-1} \xrightarrow{\,d\,} E^0 \ \colon\ \mu_H^{+}(\ker d) \leq b \,,\  \mu_H^{-}(\cok d) > b \big\}. 
	\end{equation}
In particular each $E\in\cA_{\;b}$ satisfies
\beq{65}
\ch_1^{bH}(E).H^2\=\ch_1(E).H^2-bH^3\ch_0(E)\ \ge\ 0,
\eeq
where $\ch^{bH}(E):=\ch(E)e^{-bH}$.
By \cite[Lemma 6.1]{Br} $\cA_{\;b}$ is the heart of a t-structure on $\cD(X)$. For any $w>\frac12b^2$, we have on $\cA_{\;b}$ the slope function
\begin{equation}\label{noo}
\nu\_{b,w}(E)\ =\ \left\{\!\!\!\begin{array}{cc} \frac{\ch_2(E).H - w\ch_0(E)H^3}{\ch_1^{bH}(E).H^2}
 & \text{if }\ch_1^{bH}(E).H^2\ne0, \\
+\infty & \text{if }\ch_1^{bH}(E).H^2=0. \end{array}\right.
\end{equation}
Note that if $\ch_H(E) = 0$ the slope $\nu\_{b,w}(E)$ is defined by \eqref{noo} to be $+\infty$. 
By \cite{BMT}\footnote{We use notation from \cite{FT2}; in particular the rescaling \cite[Equation 6]{FT2} of \cite{BMT}'s slope function.} there exists a Harder-Narasimhan filtration on $\cA_{\;b}$ for $\nu\_{b,w}$, thus defining a \emph{weak stability condition} on $\cD(X)$.

\begin{Def}
Fix $w>\frac12b^2$. Given an injection $0 \neq F\into E$ in $\cA_{\;b}$ we call $F$ a \emph{destabilising subobject of} $E$ if and only if
\beq{seesaw}
\nu\_{b,w}(F)\ \ge\ \nu\_{b,w}(E/F),
\eeq
and \emph{strictly destabilising} if $>$ holds. 
We say $E\in\cD(X)$ is $\nu\_{b,w}$-(semi)stable if and only if
\begin{itemize}
\item $E[k]\in\cA_{\;b}$ for some $k\in\Z$, and
\item %$\nu\_{b,w}(F)\ (\le)\ \nu\_{b,w}(E/F)$ for all non-trivial subobjects $F \into E[k]$ in $\cA_b$. 
$E[k]$ contains no (strictly) destabilising subobjects.
\end{itemize}
%Here $(\leq)$ denotes $<$ for stability and $\leq$ for semistability.
\end{Def}
It is important to note we cannot replace \eqref{seesaw} by $\nu\_{b,w}(F)\ge\nu\_{b,w}(E)$; this is implied by \eqref{seesaw} but does \emph{not} imply it. So for instance the sequence $I_p\into\cO_X\onto\cO_p$, for $p$ a point of $X$, does \emph{not} destabilise $\cO_X$ even though $\nu\_{b,w}(I_p)=\nu\_{b,w}(\cO_X)$.

\begin{Rem}\label{heart} 
Given $(b,w) \in \mathbb{R}^2$ with $w> \frac12b^2$, the argument in \cite[Proposition 5.3]{Br.stbaility} describes $\cA_{\;b}$. It is generated by the $\nu\_{b,w}$-stable two-term complexes $E = \{E^{-1} \to E^0\}$ in $\cD(X)$ satisfying the following conditions on the denominator and numerator of $\nu\_{b,w}$ \eqref{noo}:
\begin{enumerate}
    \item $\ch_1^{bH}(E).H^2 \geq 0$, and
    \item $\ch_2(E).H - w\ch_0(E)H^3 \geq 0$ if $\ch_1^{bH}(E).H^2 = 0$. 
\end{enumerate}
That is, $\cA_{\;b}$ is the extension-closure of the set of these complexes. 
\end{Rem}

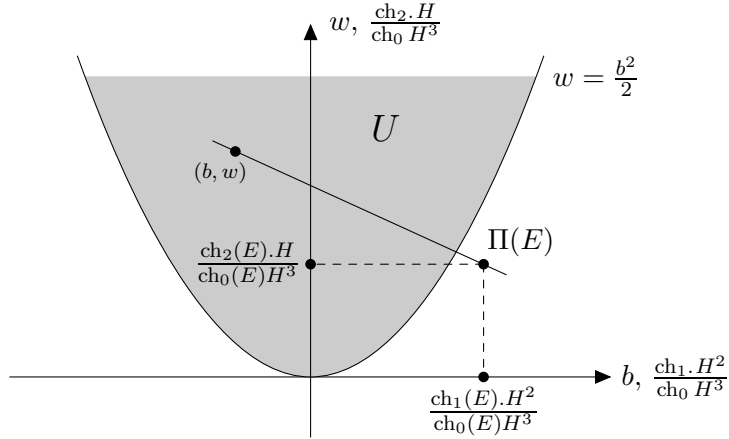
\begin{figure}[h]
	\begin{centering}
		\definecolor{zzttqq}{rgb}{0.27,0.27,0.27}
		\definecolor{qqqqff}{rgb}{0.33,0.33,0.33}
		\definecolor{uququq}{rgb}{0.25,0.25,0.25}
		\definecolor{xdxdff}{rgb}{0.66,0.66,0.66}
		
		\begin{tikzpicture}[line cap=round,line join=round,>=triangle 45,x=1.0cm,y=1.0cm]
		
		\draw[->,color=black] (-4,0) -- (4,0);
		\draw  (4, 0) node [right ] {$b,\,\frac{\ch_1\!.\;H^2}{\ch_0H^3}$};

		%\fill[gray!40!white, draw=black] (-3,4) parabola (3,4);

		\fill [fill=gray!40!white] (0,0) parabola (3,4) parabola [bend at end] (-3,4) parabola [bend at end] (0,0);
		
		\draw  (0,0) parabola (3.1,4.27); 
		\draw  (0,0) parabola (-3.1,4.27); 
		\draw  (3.8 , 3.6) node [above] {$w= \frac{b^2}{2}$};
		
		%\draw [thick, color= white] (0.75,0.75)--(1,1);
		%\draw [thick, color=white] (0.7,1.05) -- (1.5,2.25);
		%\draw [thick, color=white] (0.6,1.7) -- (0.9148,2.5921);
		
		%\draw [thick, color= white] (-0.75,0.75)--(-1,1);
		%\draw [thick, color=white] (-0.7,1.05) -- (-1.5,2.25);
		%\draw [thick, color=white] (-0.6,1.7) -- (-0.9148,2.5921);

		\draw[->,color=black] (0,-.8) -- (0,4.7);
		\draw  (1, 4.3) node [above ] {$w,\,\frac{\ch_2\!.\;H}{\ch_0H^3}$};

		%\draw [color=black, dashed] (2,.7) -- (-3.8,3);
		%\draw [color=black, dashed] (2,.7) -- (-3,1);
		
		%\draw[color=black] (1.44,.93)-- (-2.35,2.43);
		%\draw[color=black] (-1.43,.9) -- (1.3,.73);
		\draw [dashed, color=black] (2.3,1.5) -- (2.3,0);
		\draw [dashed, color=black] (2.3, 1.5) -- (0, 1.5);
		\draw [color=black] (2.6, 1.36) -- (-1.3, 3.14);
		
		\draw  (2.8, 1.8) node {$\Pi(E)$};
		\draw  (1, 3) node [above] {\Large{$U$}};
		\draw  (0, 1.5) node [left] {$\frac{\ch_2(E).H}{\ch_0(E)H^3}$};
		\draw  (2.3 , 0) node [below] {$\frac{\ch_1(E).H^2}{\ch_0(E)H^3}$};
		\begin{scriptsize}
		\fill (0, 1.5) circle (2pt);
		\fill (2.3,0) circle (2pt);
		\fill (2.3,1.5) circle (2pt);
		\fill (-1,3) circle (2pt);
		\draw  (-1.2, 2.96) node [below] {$(b,w)$};
		
		\end{scriptsize}
		
		\end{tikzpicture}
		
		\caption{$(b,w)$-plane and the projection $\Pi(E)$ when $\ch_0(E)>0$}
		
		\label{projetcion}
		
	\end{centering}
\end{figure}

By \cite[Theorem 3.5]{BMS} any $\nu\_{b,w}$-semistable object $E \in \mathcal{D}(X)$ satisfies
\beq{BOG}
    \Delta_H(E)\ :=\ \left(\ch_1(E).H^2\right)^2 -2(\ch_2(E).H)\ch_0(E)H^3\ \geq\ 0.
\eeq
Therefore, if we plot the $(b,w)$-plane simultaneously with the image of the projection map
\begin{eqnarray*}
	\Pi\colon\ K(X) \smallsetminus \big\{E \colon \ch_0(E) = 0\big\}\! &\longrightarrow& \R^2, \\
	E &\ensuremath{\shortmid\joinrel\relbar\joinrel\rightarrow}& \!\!\bigg(\frac{\ch_1(E).H^2}{\ch_0(E)H^3}\,,\, \frac{\ch_2(E).H}{\ch_0(E)H^3}\bigg),
\end{eqnarray*}
as in Figure \ref{projetcion}, then $\nu\_{b,w}$-semistable objects $E$ lie outside the
 open set
\begin{equation}\label{Udef}
U\ :=\ \Big\{(b,w) \in \mathbb{R}^2 \colon w > \tfrac12b^2  \Big\}
\end{equation}
while $(b,w)$ lies inside $U$. By \eqref{65} they lie to the right of (or on) the vertical line through $(b,w)$ if $\ch_0(E)>0$, to the left if $\ch_0(E)<0$, and at infinity if $\ch_0(E)=0$. \emph{The slope $\nu\_{b,w}(E)$ of $E$ is the gradient of the line connecting $(b,w)$ to $\Pi(E)$.}

Objects $E\in\cD(X)$ gives the space of weak stability conditions a wall and chamber structure by \cite[Proposition 12.5]{BMS}, as rephrased in \cite[Proposition 4.1]{FT1} for instance.

%{\color{red}
%For a $\nu\_{b,w}$-semistable object $E \in \cA_b$, a subobject $F\into E$ in $\cA_{\;b}$ is called a \emph{destabilising subobject of} $E$ if and only if $\nu\_{b,w}(F)\  = \ \nu\_{b,w}(E/F)$. We call $F$ \emph{strictly destabilising} iff for either $w '>w$ or $w' <w$, we have $\nu\_{b,w'}(F)\ >  \ \nu\_{b,w'}(E/F)$.
%}

\begin{Prop}[\textbf{Wall and chamber structure}]\label{prop. locally finite set of walls}
	Fix $v\in K(X)$ such that $\ch_H(v)\ne0$ and $\Delta_H(v)\ge0$. There exists a set of lines $\{\ell_i\}_{i \in I}$ in $\mathbb{R}^2$ such that the segments $\ell_i\cap U$ (called ``\emph{walls}") are locally finite and satisfy 
	\begin{itemize*}
	    \item[\emph{(}a\emph{)}] If $\ch_0(v)\ne0$ then all lines $\ell_i$ pass through $\Pi(v)$.
	    \item[\emph{(}b\emph{)}] If $\ch_0(v)=0$ then all lines $\ell_i$ are parallel of slope $\frac{\ch_2(v).H}{\ch_1(v).H^2}$.
	   		\item[\emph{(}c\emph{)}] The $\nu\_{b,w}$-(semi)stability of any $E\in\cD(X)$ of class $v$ is unchanged as $(b,w)$ varies within any connected component (called a ``\emph{chamber}") of $U \smallsetminus \bigcup_{i \in I}\ell_i$.
		\item[\emph{(}d\emph{)}] For any wall $\ell_i\cap U$ there is a map $f\colon F\to E$ in $\cD(X)$ such that
\begin{itemize}
\item for any $(b,w) \in \ell_i \cap U$, the objects $E,\,F$ lie in the heart $\cA_{\;b}$,
\item $E$ is $\nu\_{b,w}$-semistable of class $v$ with $\nu\_{b,w}(E)=\nu\_{b,w}(F)=\,\mathrm{slope}\,(\ell_i)$ constant on the wall $\ell_i \cap U$, and
\item $f$ is an injection $F\into E $ in $\cA_{\;b}$ which strictly destabilises $E$ for $(b,w)$ in one of the two chambers adjacent to $\ell_i$.
\hfill$\square$
% one side of $\ell_i \cap U$. \\ **{\footnotesize{\emph{Change back to ``in one of the chambers adjacent to the wall $\ell_i$"?}}}**

\end{itemize} 
	\end{itemize*} 
\end{Prop}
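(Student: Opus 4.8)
The plan is to establish this as in \cite[Proposition 12.5]{BMS}: it is a linear-algebra statement about the slope functions $\nu\_{b,w}$ reinforced by a geometry-of-numbers finiteness argument. Write $(r,c,d)=\ch_H(v)$, and recall from \eqref{noo} that $\nu\_{b,w}$ is the ratio of the two quantities $\ch_1^{bH}(-).H^2=\ch_1(-).H^2-bH^3\ch_0(-)$ and $\ch_2(-).H-w\ch_0(-)H^3$, each of which is affine in $(b,w)$ with coefficients depending only on $\ch_H$. First I would note that, for a class $u\in K_H(X)$, clearing denominators shows the coincidence locus $\{(b,w)\in U:\nu\_{b,w}(u)=\nu\_{b,w}(v)\}$ is cut out by the vanishing of the ``cross-product'' of the numerators and denominators of $\nu\_{b,w}(u)$ and $\nu\_{b,w}(v)$; the $bw$-terms cancel, so this is again an affine function of $(b,w)$, and the locus is empty, a line, or all of $\R^2$. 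Geometrically $\nu\_{b,w}(v)$ is the gradient of the segment joining $(b,w)$ to $\Pi(v)$ when $r\ne0$, so any such line passes through $\Pi(v)$, giving (a); and when $r=0$ the slope $\nu\_{b,w}(v)$ is the constant $d/c$ (or $+\infty$, giving no walls, if $c=0$), so the locus is a line of slope $d/c$, giving (b).

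The crux is local finiteness. Fix a compact $K\subset U$; I claim only finitely many classes $u\in K_H(X)$ arise as $\ch_H(F)$ for an inclusion $F\into E$ in some $\cA_{\;b}$ with $E$ of class $v$ being $\nu\_{b,w}$-semistable at some $(b,w)\in K$ and $\nu\_{b,w}(F)=\nu\_{b,w}(E)$. Replacing $F$ by its maximal $\nu\_{b,w}$-destabilising subobject (using the Harder--Narasimhan property), the seesaw property lets us assume $F$ is $\nu\_{b,w}$-semistable of slope $\nu\_{b,w}(F)=\nu\_{b,w}(E)$, so $\Delta_H(F)\ge0$ by \eqref{BOG}. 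From $F,\,E/F\in\cA_{\;b}$ and \eqref{65} we get $0\le\ch_1^{bH}(F).H^2\le\ch_1^{bH}(E).H^2$, and since the slopes agree, $|\ch_2(F).H-w\ch_0(F)H^3|=|\nu\_{b,w}(E)|\,\ch_1^{bH}(F).H^2\le|\ch_2(v).H-w\ch_0(v)H^3|$; both bounds are uniform over $K$. Feeding these two boundedness statements into $\Delta_H(F)\ge0$ together with $2w-b^2>0$ on $U$ yields a quadratic inequality in $\ch_0(F)H^3$ with positive, $K$-uniformly-bounded-below leading coefficient, hence a bound on $\ch_0(F)H^3$, and then on all of $\ch_H(F)$. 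As $\ch_H$ takes values in the discrete lattice $K_H(X)\subset\Q^3$, only finitely many such $u$ occur; each contributes one line (its coincidence locus with $v$), so the resulting walls $\ell_i\cap U$ form a locally finite family.

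It remains to match this family with the actual changes of stability. For (d): at any point of $\ell_i\cap U$ there is, by construction, a $\nu\_{b,w}$-semistable $E$ of class $v$ and a destabilising $F\into E$ with equal slope; $E$ and $F$ stay in the same heart $\cA_{\;b}$ all along $\ell_i\cap U$ because the sign conditions of Remark \ref{heart} persist there (the slope is constant on the wall and $\ch_1^{bH}(v).H^2$ does not change sign, its only zero $\Pi(v)$ lying outside $U$), and in whichever adjacent chamber has $\nu\_{b,w}(F)>\nu\_{b,w}(E/F)$ the inclusion $F\into E$ strictly destabilises $E$. For (c): were the $\nu\_{b,w}$-(semi)stability of some object of class $v$ to change along a path lying in a single chamber, continuity of the slope functions and the Harder--Narasimhan property would produce a point of the path at which this object is $\nu\_{b,w}$-semistable but carries a subobject of equal slope that strictly destabilises it on one side of that point --- so the point lies on some $\ell_i$, contradicting that the path avoids all walls.

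The step I expect to be the real obstacle is the finiteness claim in the middle paragraph: extracting a genuine, $K$-uniform bound on $\ch_0(F)H^3$ from the two boundedness statements and the quadratic Bogomolov--Gieseker inequality \eqref{BOG}, and carefully handling the degenerate cases ($\ch_1^{bH}(F).H^2=0$, classes with $\ch_H=0$, and the fact that the heart $\cA_{\;b}$ itself varies with $b$). This is precisely the content one imports from \cite{BMS} and its reformulation in \cite{FT1}; the remaining steps are formal.
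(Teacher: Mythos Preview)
The paper does not give its own proof of this proposition: it is stated with a closing $\square$ and attributed just before to \cite[Proposition 12.5]{BMS}, ``as rephrased in \cite[Proposition 4.1]{FT1}''. Your sketch is precisely the argument of \cite{BMS} --- the cross-product computation showing the coincidence loci are lines through $\Pi(v)$ (or of fixed slope when $\ch_0=0$), the bound on $\ch_H(F)$ from $0\le\ch_1^{bH}(F).H^2\le\ch_1^{bH}(v).H^2$, the equal-slope identity $\ch_2(F).H-w\ch_0(F)H^3=\nu\_{b,w}(v)\cdot\ch_1^{bH}(F).H^2$, and then $\Delta_H(F)\ge0$ giving a $(b^2-2w)$-leading quadratic in $\ch_0(F)H^3$ --- so there is nothing to compare: you have reproduced the cited proof, and your identification of the finiteness step as the only substantive point is accurate.
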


In this paper, we always assume $X$ satisfies the conjectural Bogomolov-Gieseker inequality of Bayer-Macr\`i-Toda \cite{BMT}.  In the form of \cite[Conjecture 4.1]{BMS}, rephrased in terms of the rescaling \cite[Equation 6]{FT2}, it is the following.

%\begin{Con}  \label{conjecture}
%	For $\nu\_{b,w}$-semistable $E\in\cA_{\;b}$ with $\ch_2^{bH}(E).H =\big(w -\frac{b^2}2\big)\ch_0(E)H^3$, 
%\begin{equation}\label{BGineq}
%		\ch_3^{bH}(E)\ \leq\ \bigg(\frac{w}{3} - \frac{b^2}{6}\bigg) \ch_1^{bH}(E).H^2.
%\end{equation}
%	\end{Con}
\begin{Con}\label{conjecture}
For any $(b,w)\in U$ and $\nu\_{b,w}$-semistable $E\in\cD(X)$, we have the inequality
\begin{equation}\label{quadratic form}
    B_{b, w}(E)\ :=\ (2w-b^2)\Delta_H(E) + 4\big(\ch_2^{bH}(E).H\big)^2 -6\(\ch_1^{bH}(E).H^2\)\ch_3^{bH}(E)\ \geq\ 0.
\end{equation}
\end{Con}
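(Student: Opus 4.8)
Since Conjecture \ref{conjecture} is taken as a hypothesis for the rest of the paper --- indeed only its much weaker form \ref{wBG} is actually needed --- I will not prove it, but sketch how one establishes it in the cases that are known \cite{BMS, Ko20, Li, Liu, MP}. The first move is to reduce the statement to a bound on $\ch_3$. After shifting $E$ so that $E\in\cA_{\;b}$, the only term of $B_{b,w}(E)$ that can be negative is the last one: $(2w-b^2)\Delta_H(E)\ge0$ because $2w-b^2>0$ on $U$ and $\Delta_H(E)\ge0$ by \eqref{BOG}; $4\bigl(\ch_2^{bH}(E).H\bigr)^2\ge0$ trivially; and $\ch_1^{bH}(E).H^2\ge0$ by \eqref{65}. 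Hence \eqref{quadratic form} is equivalent to
\[
6\,\ch_3^{bH}(E)\ \le\ \frac{(2w-b^2)\Delta_H(E)+4\bigl(\ch_2^{bH}(E).H\bigr)^2}{\ch_1^{bH}(E).H^2}\qquad\text{when }\ \ch_1^{bH}(E).H^2>0,
\]
the case $\ch_1^{bH}(E).H^2=0$ being automatic. So everything comes down to controlling the top Chern number $\ch_3$ of a $\nu_{b,w}$-semistable object --- for which, unlike the Bogomolov inequality $\Delta_H\ge0$, there is no classical counterpart.

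Next I would use the wall-and-chamber structure of Proposition \ref{prop. locally finite set of walls}, together with the way $B_{b,w}(E)$ transforms as $(b,w)$ varies in $U$ and under tensoring $E$ by line bundles, to reduce to an extremal case --- for instance $E$ with $\nu_{b,w}(E)=0$, where the numerator of $\nu_{b,w}$ vanishes and the inequality simplifies. For such $E$ one bounds $\ch_3^{bH}(E)$ from above by combining Hirzebruch-Riemann-Roch with vanishing theorems: semistability forces $\Hom$'s, and (via Serre duality on the Calabi-Yau $X$) $\Ext^3$'s, between $E$ and suitable complexes of line bundles to vanish, so the relevant derived $\Hom$ has non-positive Euler characteristic, giving the required inequality. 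The remaining, essential step is to supply the geometric input that makes those vanishings possible. For abelian threefolds a Fourier-Mukai symmetry essentially reduces the critical-slope case to semihomogeneous sheaves, for which the bound is an explicit computation \cite{BMS}. For the quintic, and for Fano and weighted-hypersurface threefolds, one restricts $E$ to a smooth surface $S\in|mH|$, applies the classical Bogomolov-Gieseker inequality and the Hodge index theorem on $S$, and translates stability of the restriction --- via $0\to\cO_X(-mH)\to\cO_X\to\cO_S\to0$ --- into a bound on $\ch_3(E)$ \cite{Li, Ko20, MP}.

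The hard part is precisely this last step in the generality of an arbitrary Calabi-Yau (or arbitrary smooth projective) threefold: restricting to a surface discards exactly the $\ch_3$ information one is trying to bound, so some genuinely three-dimensional ingredient --- a Fourier-Mukai equivalence, a favourable projective model, or a Noether-Lefschetz-type control of how sub- and quotient objects of $E$ meet a surface in $|mH|$ --- seems to be unavoidable, and none of these is available in full generality. That is why we assume only the weak form \ref{wBG}, which is enough for every wall-crossing in the proof of Theorem \ref{1} and has now been checked for the quintic 3-fold and several other Calabi-Yau 3-folds \cite{BMS, Ko20, Li, Liu, MP}.
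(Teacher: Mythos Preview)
Your assessment is correct: the statement is labelled a \emph{Conjecture} in the paper and is never proved there --- it is assumed as a standing hypothesis (indeed only the weaker form \ref{wBG} is ultimately required). So there is no ``paper's own proof'' to compare against, and your decision not to attempt a proof matches the paper exactly.

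Your sketch of the known cases is broadly accurate and goes beyond what the paper says. One small caveat: the reduction you give in the first paragraph is fine, but note that the cited proofs do not quite proceed by first isolating the $\ch_3^{bH}$ term as you describe; rather (as in \cite{BMS, Li}) they typically reduce via the tilt and a further tilt to a limiting slope, or work directly with the quadratic form $B_{b,w}$, so your presentation is a slight repackaging rather than a faithful summary of any one reference. That said, this is commentary rather than a required proof, and nothing you wrote is wrong.
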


\begin{Rem}\label{remark}
Multiplying out and cancelling we find that $B_{b, w}$ is actually linear in $(b,w)$:
\beq{boglinear}
\tfrac12B_{b, w}(E)\=\(C_1^2-2C_0C_2\)w+\(3C_0C_3-C_1C_2\)b+(2C_2^2-3C_1C_3),
\eeq
where $C_i:=\ch_i(E).H^{3-i}$.
Thus the Bogomolov-Gieseker inequality \eqref{quadratic form} says that $E$ can be $\nu\_{b,w}$-semistable only on one side of the line $\ell_f(E)$ defined by the equation $B_{b, w}(E)=0$. When $\ch_0(E)\neq0\neq\ch_1(E).H^2$ we can rearrange to see $\ell_f(E)$ is the line through the points $\Pi(E)$ and 
\begin{equation}\label{pi'}
\Pi'(E)\ :=\, \left(\frac{2\ch_2(E).H}{\ch_1(E).H^2}\,, \ \frac{3\ch_3(E)}{\ch_1(E).H^2}\right).    
\end{equation}
\end{Rem}

\section{Semistable objects are sheaves}\label{shvs}
Fix a rank $r\geq1$, arbitrary $p_1,p_2, q\in\N$, and a class $\mathsf v\in K(X)$ whose Chern character
\beq{vdef}
\ch(\mathsf v)\=\(r,\,D,\,-\beta,\,-m\)\ \in\ H^{2*}(X,\Q)
\eeq
satisfies the bounds\footnote{In Section \ref{r} we will show how to remove the $D.H^2=0$ condition, to work with arbitrary $D$.}
\begin{equation}\label{general bound}
D.H^2\,=\,0, \quad -p_1\,\leq\,\beta.H\,\leq\,p_2 \qquad \text{and} \qquad m\,\leq\,q.
\end{equation}
Then we pick $n\gg0$, a function of $r,p_1,p_2,q$, as follows. There are finitely many explicit inequalities of the form $O\(n^i\)<O(n^{i+1})$ in this Section \ref{shvs} and Section \ref{subshvs}, starting at \eqref{equation of final wall} and ending with \eqref{marker}. We fix
\beq{nzero}
n\=n(r,p_1,p_2,q)\ \gg\ 0
\eeq
sufficiently large that all of these inequalities hold for
\begin{itemize}
\item all rank $r$ classes $\mathsf v$ satisfying the bounds \eqref{general bound}, and
\item all rank $1\le r'\le r-1$ classes $\mathsf v'= (r', D', \beta', m')$ satisfying the bounds \eqref{general bound} with $(r, D,\beta,m)$ and $p_1,p_2,q$ replaced by $(r', D',\beta',m')$ and $p’_1,p’_2,q’$. Here $p’_1,p’_2,q’$ are the (maximum of the) functions of $r,p_1,p_2,q$ that arise in \eqref{bounds'} and Theorem \ref{thm:all walls}.
\end{itemize}\smallskip

Set $v_n:=\mathsf v-[\cO_X(-n)]$ with
\begin{equation}\label{class vn}
    \ch(\vi_n)\=\Big( r-1, \ nH+D ,\ -\beta -\tfrac12n^2H^2 , \ -m +\tfrac16n^3H^3\Big)\,\in\ H^{2*}(X,\Q).
\end{equation}
Throughout the paper we study $\nu\_{b,w}$-semistable objects $E$ of class $v_n$ for stability conditions $\nu\_{b,w}$ with \emph{$b$ strictly to the left of $\Pi(E)$}. That is, we take $b<\mu\_H(E)=\frac n{r-1}$ so that the denominator of $\nu\_{b,w}(E)$ is strictly positive. A special role will be played by stability conditions on what we call the \emph{Joyce-Song wall} $\ell_{\js}\subset\R^2$ --- the line joining
\beq{ljsdef}
\Pi(v_n) \quad\text{and}\quad \Pi(\cO(-n)[1]) \quad\text{(and therefore also }\,\Pi(\mathsf v)).
\eeq
The goal of this section is to prove that semistable objects $E$ of class $v_n$ are \emph{sheaves}, with one exception: for $(b,w)\in\ell_{\js}$ objects of the form
\beq{fake}
F\oplus T(-n)[1], \qquad F\text{ a sheaf of class }\mathsf v,\ T\in\Pic\_0(X),
\eeq
are $\nu\_{b,w}$-semistable if $F$ is. (We will see later in Lemma \ref{lem: bounding the first wall} that such $F$ are semistable in the large volume chamber $w\gg0$ and so are torsion-free sheaves.) These objects \eqref{fake} are unstable on both sides of $\ell_{\js}$ so will not participate in the wall crossing formula.

\begin{Thm}\label{Thm. large n-arbitrary rank}
Given $(b,w)\in U$ with $b<\frac n{r-1}$, any $\nu\_{b,w}$-semistable $E \in \cA_{\;b}$ of class $\vi_n$ \eqref{class vn} is either a sheaf or of the form \eqref{fake}.
\end{Thm}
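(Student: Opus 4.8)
The plan is to argue by contradiction: suppose $E$ is $\nu_{b,w}$-semistable of class $v_n$ but not a sheaf. Since $E\in\cA_{\;b}$ is a two-term complex $\{E^{-1}\to E^0\}$, failing to be a sheaf means $\cH^{-1}(E)\ne0$. The first step is to produce a destabilising subobject from this. The natural candidate is $\cH^{-1}(E)[1]$, which sits in $\cA_{\;b}$ (as $\cH^{-1}(E)$ is a subsheaf of $E^{-1}$, hence has $\mu_H^+\le b$) and injects into $E$ in $\cA_{\;b}$ with quotient $\cH^0(E)$. So semistability of $E$ forces $\nu_{b,w}(\cH^{-1}(E)[1])\le\nu_{b,w}(\cH^0(E))$, i.e.\ the slope inequality \eqref{seesaw} is an equality or goes the wrong way — but one must be careful, since a priori this is not immediately a contradiction. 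The real leverage comes from comparing numerical invariants: $\cH^{-1}(E)[1]$ has rank $\le 0$ after the shift (i.e.\ $\ch_0(\cH^{-1}(E)[1])\le 0$), and its class is constrained only by the bounds \eqref{general bound} on $v_n$ inherited from $\mathsf v$, \emph{except} that $n$ is huge.

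The key quantitative input is that $n\gg0$. The point of choosing $b<\tfrac n{r-1}=\mu_H(E)$ is that $\Pi(E)$ lies far to the right, and the wall-and-chamber structure (Proposition \ref{prop. locally finite set of walls}) together with the Bogomolov-Gieseker bound \eqref{BOG} on semistable objects constrains where a destabiliser $F$ can project to. Concretely, I would: (1) use \eqref{65} and the positivity of denominators to bound $\ch_1^{bH}(\cH^{-1}(E)[1]).H^2$ and $\ch_1^{bH}(\cH^0(E)).H^2$ from below; (2) use $\Delta_H\ge0$ (the classical Bogomolov inequality, which holds for the HN pieces of the sheaves $\cH^{-1}(E)$ and $\cH^0(E)$ via their slope filtrations) to bound $\ch_2.H$ of each in terms of $\ch_0$ and $\ch_1.H^2$; and (3) combine these with the explicit Chern character \eqref{class vn} — in which the $nH$, $-\tfrac12n^2H^2$ terms dominate — to show that for $n$ large enough the slope inequality $\nu_{b,w}(\cH^{-1}(E)[1])\le\nu_{b,w}(\cH^0(E))$ cannot hold unless $\cH^{-1}(E)=0$. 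Essentially, the $\cO_X(-n)$ we subtracted off is so negative that any genuine complex structure would violate the $(b,w)$-region in which semistable objects of class $v_n$ must live.

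I expect the main obstacle to be step (3): carefully tracking the $n$-dependence through the two competing slopes while the classes of $\cH^{-1}(E)$ and $\cH^{0}(E)$ themselves range over an a priori unbounded family. One needs the bounds \eqref{general bound} on $\beta.H$ and $m$ to be genuinely used to pin down that the "error terms" are $O(1)$ or at worst $O(n)$ with a controllable coefficient, so that the $O(n^2)$ term in $\ch_2(v_n).H$ wins. A clean way to organise this is via the projection picture of Figure \ref{projetcion}: plot $\Pi(\cH^{-1}(E)[1])$, $\Pi(\cH^0(E))$ and $\Pi(E)$; the first has $\ch_0\le0$ so lies to the \emph{left} of the vertical line through $(b,w)$, while $E$ lies to the right, and the seesaw \eqref{seesaw} combined with $\Delta_H\ge0$ (which confines these points to a parabolic region) forces a contradiction once $\Pi(E)$ is far enough to the right — which is exactly the regime $n\gg0$ guarantees. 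I would also need to handle the degenerate sub-cases where a denominator $\ch_1^{bH}.H^2$ vanishes separately, using Remark \ref{heart} to control which stable complexes can appear, but these should follow from the same estimates.
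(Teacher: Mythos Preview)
Your outline has the right instinct (exploit $n\gg0$ via the projection picture) but it has a genuine gap: working at the \emph{single} point $(b,w)$ gives you only $\mu_H^+(\cH^{-1}(E))\le b$ and $\mu_H^-(\cH^0(E))>b$, which is far too weak to bound $\rk\cH^{-1}(E)$ or to force the slope inequality $\nu_{b,w}(\cH^{-1}(E)[1])\le\nu_{b,w}(\cH^0(E))$ to fail. The classes of $\cH^{-1}(E)$ and $\cH^0(E)$ are not an ``a priori unbounded family with $O(1)$ errors''; their $\ch_1.H^2$ can each be of order $n$ in absolute value with no further constraint from a single $(b,w)$, and then your $\Delta_H\ge0$ bound on their $\ch_2.H$ is of order $n^2$, the same order as the dominant term of $v_n$. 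So step (3) does not go through.

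The paper gets the needed leverage from two ingredients you are missing. First, it uses Conjecture \ref{conjecture} (not just $\Delta_H\ge0$) to produce a line $\ell_f$ below which no semistable objects of class $v_n$ exist; then, by the wall structure, $E$ is semistable along the entire segment $\ell\cap U$ through $(b,w)$ and $\Pi(v_n)$, and this segment lies above $\ell_f$. Letting $b$ run to both endpoints of $\ell\cap\partial U$ (which are close to $-n$ and $0$) yields $\mu_H(\cH^{-1}(E))\lesssim -n$ and $\mu_H^-(\cH^0(E))\gtrsim 0$, forcing $\rk\cH^{-1}(E)=1$ and $\ch_1(\cH^{-1}(E)).H^2=-nH^3$ exactly. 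Second, this alone is still not a contradiction: the paper then moves $(b,w)$ down to the lowest wall $\ell_E$ for $E$, analyses the destabilising sequence there, and shows one factor is a sheaf with controlled invariants while the other has class of type $v_n$ for strictly smaller rank. An induction on $r$ finishes it. Your proposal contains neither the $\ell_f$/move-along-the-wall mechanism nor any induction, and the bare cohomology triangle $\cH^{-1}(E)[1]\hookrightarrow E\twoheadrightarrow\cH^0(E)$ is not the sequence that yields the contradiction.
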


%Fix an integer number $r \in \mathbb{Z}^{>0}$ and rational numbers $\tilde{\beta}, \tilde{m} \in \mathbb{Q}$, then choose $n \gg 0$. Consider the class 
%\begin{equation*}
%    \vi_n := \left( r, \ nH+c ,\ -\beta -\frac{n^2H^2}{2} , \ -m +\frac{n^3H^3}{6}\right)
%\end{equation*}
%such that the class $(c, \beta , m) \in H^2(X, \mathbb{Q}) \oplus H^4(X, \mathbb{Q}) \oplus H^4(X, \mathbb{Q})$ satisfies $c.H^2 =0$, 
%\begin{equation*}
 %   \frac{\beta.H}{H^3} = \tilde{\beta} \qquad \text{and} \qquad \frac{m}{H^3} \leq \tilde{m}
%\end{equation*}
%$\tilde{\beta}.H = p$ and $\tilde{m} \leq q$. 
%there is no $\nu\_{b,w}$-semistable object of class $\vi_n$ for $(b,w)$ below the line passing through $\Pi(\vi_n)$ and $\Pi(\cO(-)[1])$.  
%\begin{enumerate}
%    \item $E$ is the shift of a coherent sheaf on $X$
%    \item the point $(b,w)$ where $E$ is semistable lies above or on the line passing through $\Pi(\vi_n)$ and $\Pi(\cO_X(-n)[1])$.
%\end{enumerate}

We start by noting the equation of the line $\ell_f:=\ell_f(E)$ passing through the points $\Pi(\vi_n)$ and $\Pi'(\vi_n)$ \eqref{pi'},
\begin{equation}\label{equation of final wall}
w \= \left(-\frac{n}{2} + \frac{n(r-2)\beta.H + 3(r-1)m}{n^2rH^3 +2(r-1)\beta.H}  \right) b %-\frac{2\beta.H}{H^3(r+1)}
- \frac{3nm +2n^2\beta.H +2(\beta.H)^2/H^3}{n^2rH^3 +2r\beta.H}\,.
\end{equation}
%\begin{equation}
%    \ell_f\ :=\ \left\{w \= \left(-\frac{n}{2} + \frac{\beta.H(r-1)+r}{H^3n(r+1)} \right) b -\frac{2\beta.H +1}{H^3(r+1)}\right\}.
%\end{equation}
The Bogomolov-Gieseker Conjecture \ref{conjecture} then gives the following. 
\begin{Lem}\label{final wall}
There are no $\nu\_{b,w}$-semistable objects of class $\vi_n$ \eqref{class vn} if $(b,w) \in U$ lies below the line $\ell_f\cap U.\hfill\square$
\end{Lem}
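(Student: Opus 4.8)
The statement is an essentially immediate consequence of the Bogomolov-Gieseker Conjecture~\ref{conjecture} once one identifies the line $B_{b,w}(\vi_n)=0$. The plan is as follows. First, recall from Remark~\ref{remark} that for any object $E$ the locus $B_{b,w}(E)=0$ is a line $\ell_f(E)$ in the $(b,w)$-plane, linear in $(b,w)$ by \eqref{boglinear}, and that (when $\ch_0(E)\neq0\neq\ch_1(E).H^2$) it is the line through $\Pi(E)$ and $\Pi'(E)$. Since $\nu\_{b,w}$-semistability of $E$ depends only on its class $\vi_n$, and $\vi_n$ has $\ch_0(\vi_n)=r-1\neq0$ (here we use $r\ge1$; the rank-$1$ case $r-1=0$ can be handled separately, but in fact $\ch_1(\vi_n).H^2=nH^3\neq0$ makes the denominator nonzero so the rearrangement of Remark~\ref{remark} still applies) and $\ch_1(\vi_n).H^2=nH^3\neq 0$, the line $\ell_f:=\ell_f(\vi_n)$ through $\Pi(\vi_n)$ and $\Pi'(\vi_n)$ is exactly the one whose equation is worked out in \eqref{equation of final wall}. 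The direction of the inequality \eqref{quadratic form}, namely $B_{b,w}(\vi_n)\ge0$, then forces any $\nu\_{b,w}$-semistable $E$ of class $\vi_n$ to lie on the side of $\ell_f$ where $B_{b,w}\ge0$.

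Second, I would check which side that is. Using the linear form \eqref{boglinear} with $C_i=\ch_i(\vi_n).H^{3-i}$ read off from \eqref{class vn}, the coefficient of $w$ is $C_1^2-2C_0C_2=\Delta_H(\vi_n)\ge0$; by the Bogomolov inequality \eqref{BOG} applied to $\vi_n$ (or just by direct computation, since $C_1^2=n^2(H^3)^2$ dominates for $n\gg0$) this coefficient is strictly positive. Hence $B_{b,w}(\vi_n)$ is an increasing function of $w$ for fixed $b$, so $B_{b,w}(\vi_n)\ge0$ holds precisely on and above the line $\ell_f$. Equivalently, there are no $\nu\_{b,w}$-semistable objects of class $\vi_n$ with $(b,w)$ strictly below $\ell_f\cap U$, which is the assertion.

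The only real content is bookkeeping: one must confirm that the expression \eqref{equation of final wall} is genuinely the equation $B_{b,w}(\vi_n)=0$ solved for $w$, i.e.\ that plugging $\ch(\vi_n)$ from \eqref{class vn} into \eqref{boglinear} and dividing through by the (positive) coefficient $C_1^2-2C_0C_2$ reproduces the stated slope and intercept. This is a routine but slightly lengthy polynomial manipulation in $n$, $r$, $\beta.H$, $m$ and $H^3$; I would carry it out by substituting $C_0=r-1$, $C_1=nH^3$, $C_2=-\beta.H-\tfrac12 n^2H^3$ (note $D.H^2=0$ by \eqref{general bound}), $C_3=-m+\tfrac16 n^3H^3$, expanding, and collecting terms. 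I do not expect any conceptual obstacle; the ``hard part'' is merely making sure no sign or factor of $2$, $3$, or $6$ is dropped, and checking that the denominators appearing in \eqref{equation of final wall} (namely $n^2rH^3+2(r-1)\beta.H$ and $n^2rH^3+2r\beta.H$) are the correct normalizations of $C_1^2-2C_0C_2$ after clearing the overall $H^3$. Once that identity is verified, Lemma~\ref{final wall} follows at once from Conjecture~\ref{conjecture}.
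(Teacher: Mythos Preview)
Your proposal is correct and follows exactly the approach the paper intends: the paper states the lemma with a $\square$ and no further argument, having just identified $\ell_f$ as the line $B_{b,w}(\vi_n)=0$ via Remark~\ref{remark}, so the result is immediate from Conjecture~\ref{conjecture}. Your extra care in checking that the $w$-coefficient $C_1^2-2C_0C_2=\Delta_H(\vi_n)$ is positive (by direct computation for $n\gg0$, since \eqref{BOG} applies only to semistable objects, not to bare classes) correctly pins down which side of $\ell_f$ is excluded.
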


%\begin{Lem}
%For $n\gg0$ there are no $\nu\_{b,w}$-semistable objects of class $\vi_n$ \eqref{class vn} if $(b,w) \in U$ lies on or below the line $\ell_f\cap U$.
%\end{Lem}
%\begin{proof}
%Suppose $E \in \cA(b_0)$ is a $\nu\_{b_0,w_0}$-semistable object of class $\vi_n$ with $(b_0,w_0)$ on or below the line $\ell_f$. The Bogomolov-Gieseker inequality \eqref{quadratic form} implies that $B_{b_0,w_0}(E) \geq 0$. Keeping $b_0$ fixed, move $w_0$ up to $\ell_f\cap U$. Since $\Delta_H(E) \geq 0$, the definition of $B_{b,w}$ \eqref{quadratic form} shows it remains $\ge0$.

%This implies that $B_{b,w}(E) \geq 0$ for any $(b,w) \in \ell_f \cap U$ because $\ell_f$ passes through the point $\Pi(E)$. **?** Taking $(b,w)=(-1,w_1)\in\ell_f\cap U$ by \eqref{equation of final wall}, we compute
%\begin{multline*}
%{\scriptstyle \frac{B_{-1,w_1}(E)}{(H^3)^2}\=(2w-1)\left(n^2+2r\left(\frac{n^2}{2} + \frac{\beta.H}{H^3}\right)\right) + 4\left(- \frac{n^2}{2}+n + \frac{r}{2} -\frac{\beta.H}{H^3}\right)^{\!2}  - 6(n+r)\left(\frac{n^3}{6} - \frac{n^2}{2} +\frac{n}{2} -\frac{\beta.H}{H^3} + \frac{r}{6} -\frac{m}{H^3}\right)\=}\\
%{\scriptstyle \left(n- \frac{4\beta.H +2}{H^3(r+1)} -1 - \frac{2(r-1)\beta.H +2r}{H^3n(r+1)}\right)\left(n^2(1+r) + 2r\frac{\beta.H}{H^3}\right) + 4\left(- \frac{n^2}{2}+n + \frac{r}{2} -\frac{\beta.H}{H^3}\right)^{\!2}    - 6(n+r)\left(\frac{n^3}{6} - \frac{n^2}{2} +\frac{n}{2} -\frac{\beta.H}{H^3} + \frac{r}{6} - \frac{m}{H^3}\right).}
%\end{multline*} 
%This is $-\frac{2}{H^3} n^2 + O(n) <0$ for $n\gg0$, a contradiction.
%\end{proof}
Note $\ell_f\cap U$ is nonempty for $n\gg0$. Indeed let $a_f<b_f$ be the values of $b$ at the two intersection points $\ell_f\cap\partial U$ as in Figure \ref{llf}.
As $n\to\infty$ the coefficient of $b$ in \eqref{equation of final wall} has leading term $-\frac{n}{2}$, while the constant term of \eqref{equation of final wall} tends to $-\frac{2\beta.H}{rH^3}$. Thus we can make $a_f,\,b_f$ as close as we like to $-n$ and $0$ when $n\gg0$. In fact for any fixed $\epsilon>0$ we may take $n$ sufficiently large that the points
$$
\(\!-n+\epsilon,\tfrac12(-n+\epsilon)^2\)\quad\mathrm{and}\quad
\(\!-\epsilon,\tfrac12\epsilon^2\;\)
$$
%$$
%\left( -n +\tfrac{1}{4rH^3} ,\, \tfrac{1}{2}\big[\!-n+\tfrac{1}{4rH^3}\big]^2\right)
%\quad\mathrm{and}\quad \left( \tfrac{-1}{4r^2H^3} ,\, \tfrac{1}{2}\big[\tfrac{1}{4r^2H^3}\big]^2\right)
%$$
of $\partial U$ lie below $\ell_f$, so 
\begin{equation}\label{bounds on bf}
a_f\  <\ -n+ \epsilon \qquad \text{and} \qquad  -\epsilon\ <\ b_f.    
\end{equation}
We fix $\epsilon=1/4r^2H^3$ and choose $n\gg0$ accordingly.

We will use \eqref{bounds on bf} repeatedly in the following argument from \cite{FT1,FT2}. Consider an object $F\in\cA_{\;b}$ which is $\nu\_{b,w}$-semistable for $(b,w)$ on a wall $\ell\cap U$ passing through $\Pi(F)$. If $\ell\cap U$ is very long --- for instance if it is close to $\ell_f\cap U$ --- then the fact that $F\in\cA_{\;b}$ along the whole length of the wall constrains it (and its destabilising objects), in particular forcing its $\cH^{-1}$ to have low rank.

So suppose $\ell$ lies \emph{above or on} another line $\tilde{\ell}$ through $\Pi(F)$ which intersects the boundary $\partial U$ at two points with $b$-values $\tilde{a} < \tilde{b}$.
\begin{figure}[h]
	\begin{centering}
		\definecolor{zzttqq}{rgb}{0.27,0.27,0.27}
		\definecolor{qqqqff}{rgb}{0.33,0.33,0.33}
		\definecolor{uququq}{rgb}{0.25,0.25,0.25}
		\definecolor{xdxdff}{rgb}{0.66,0.66,0.66}
		
		\begin{tikzpicture}[line cap=round,line join=round,>=triangle 45,x=1.0cm,y=1.0cm]
		
		\draw[->,color=black] (-4,0) -- (4,0);
		\draw  (4, 0) node [right ] {$b,\,\frac{\ch_1\!.\;H^2}{\ch_0H^3}$};
		
		\fill [fill=gray!40!white] (0,0) parabola (3,4) parabola [bend at end] (-3,4) parabola [bend at end] (0,0);
		
		\draw[thick]  (0,0) parabola  (3.1,4.27); 
		\draw[thick]  (0,0) parabola (-3.1,4.27); 
		\draw  (3.8 , 3.6) node [above] {$w= \frac{b^2}{2}$};
		
		\draw[->,color=black] (0,-1.2) -- (0,4.6);
		\draw  (0.9, 4.2) node [above ] {$w,\,\frac{\ch_2\!.\;H}{\ch_0H^3}$};
		
		\draw[color=black, dashed] (.58, 2)-- (.58, 0);
		\draw [dashed, color=black] (-1.87,3) -- (-1.87, 0);
		\draw [color=black] (2.5, -1) -- (-3.5, 2.5);
		\draw [color=black] (2.5, -1) -- (-3.5, 3.5);
		
		\draw  (-1.87,0) node [below] {$\tilde{a}$};
		\draw  (.58, 0) node [below] {$\tilde{b}$};
		\draw  (2.8,-.3) node [below] {$\Pi(F)$};
	    \draw  (-3.5, 2.5) node [left] {$\tilde{\ell}$};
	    \draw  (-3.5, 3.5) node [left] {$\ell$};
		
		\begin{scriptsize}
		\fill (.58, .15) circle (2pt);
		\fill (-1.87,1.55) circle (2pt);
		\fill (2.5,-1) circle (2pt);

		\end{scriptsize}
	
		\end{tikzpicture}
		\caption{The lines $\tilde\ell$ and $\ell$}\label{llf}		
	
	\end{centering}
\end{figure}
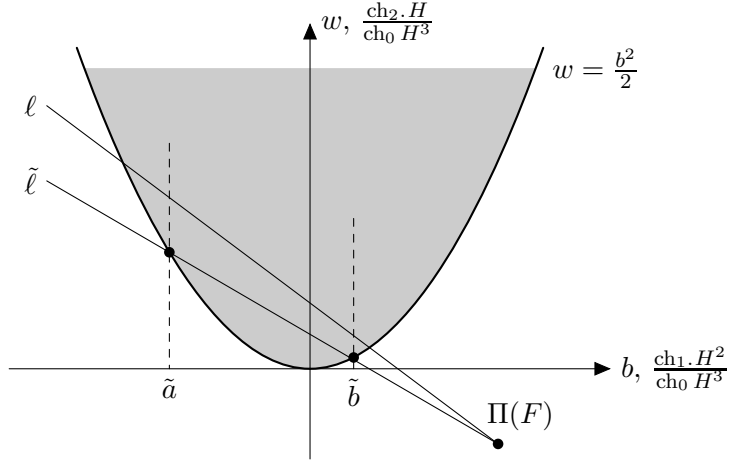
This means that as we move $b\uparrow \tilde{b}$ or $b\downarrow\tilde{a}$ along $\ell$, the point $(b,w)$ lies in $U$, so $F\in\cA_{\;b}$ by Proposition \ref{prop. locally finite set of walls}. By \eqref{Abdef} this implies
$$
\mu\_H\(\cH^{-1}(F)\)\ \le\ \tilde{a} \quad\text{and}\quad \mu\_H\(\cH^0(F)\)\ \ge\ \tilde{b}.
$$
Setting $r':=\rk\;(\cH^{-1}(F))$, so that $\rk\;(\cH^0(F))=\rk\;(F)+r'$, %and combining with \eqref{bounds on bf} 
yields
\beq{esti1}
\ch_1\!\(\cH^{-1}(F)\).\;H^2\,\le\,r'H^3\tilde{a} \quad\text{and}\quad
\ch_1\!\(\cH^0(F)\).\;H^2\,\ge\,(\rk(F)+r')H^3 \tilde{b}.\hspace{-1mm}
\eeq
%\beq{esti1}
%\ch_1\!\(\cH^{-1}(F)\).\;H^2\,\le\,r'H^3(-n+\epsilon_1) \quad\text{and}\quad  
%\ch_1\!\(\cH^0(F)\).\;H^2\,\ge\,-(\rk(F)+r')\epsilon_2H^3.\hspace{-1mm}
%\eeq
Subtracting gives the key inequality
\beq{estimate}
\ch_1(F).\;H^2\ \ge\ r'(\tilde{b} - \tilde{a})H^3+ \rk\;(F)\;H^3\;\tilde{b}.
\eeq
%\beq{estimate}
%\ch_1(F).\;H^2\ \ge\ r'(n-\epsilon_1-\epsilon_2)H^3-\rk\;(F)\;\epsilon_2\;H^3.
%\eeq
In applications this will bound $r'$. When $\tilde\ell=\ell_f$ then in combination with \eqref{bounds on bf} for small $\epsilon=1/4r^2H^3$ it will give $\ch_1(F).\;H^2\ge r'nH^3$.
Note that when $\ell$ lies \emph{strictly above} $\tilde{\ell}$ then $\mu\_H\(\cH^{-1}(F)\) < \tilde{a}$, so \eqref{estimate} becomes a \emph{strict inequality}. 

The rest of this section is devoted to the proof of Theorem \ref{Thm. large n-arbitrary rank}. We fix a $\nu\_{b,w}$-semistable $E \in \cA_{\;b}$ of class $\vi_n$ \emph{not of the form \eqref{fake}}. Assume for a contradiction that it is also not a sheaf. Then by the definition \eqref{Abdef} of $\cA_{\;b}$ we know $\cH^{-1}(E)$ is a nonzero torsion-free sheaf so $r':=\rk\;(\cH^{-1}(E))\ge1$.

\begin{Lem}\label{lem:cohomologies of E}
$\cH^{-1}(E)$ is a rank one torsion-free sheaf with $\ch_1(\cH^{-1}(E)).H^2 = -nH^3$. 
\end{Lem}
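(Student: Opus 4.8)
The plan is to bootstrap from the single inequality \eqref{estimate} applied with $F = E$ and $\tilde\ell = \ell_f$. Since $E$ is $\nu\_{b,w}$-semistable of class $\vi_n$, it lies on or above $\ell_f\cap U$ by Lemma \ref{final wall}, and $\Pi(\vi_n)$ has $b$-coordinate $\mu\_H(\vi_n) = \frac{nH^3 + D.H^2}{(r-1)H^3} = \frac{n}{r-1}$ using $D.H^2 = 0$ from \eqref{general bound}. So I would take $\ell$ to be the actual wall through $\Pi(\vi_n)$ on which $E$ is semistable (or just the line $\ell_f$ itself if $E$ is semistable right on $\ell_f$), note $\ell$ lies above or on $\tilde\ell = \ell_f$, and read off \eqref{esti1}--\eqref{estimate}. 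With $\tilde a < -n + \epsilon_1$ and $-\epsilon_2 < \tilde b$ from \eqref{bounds on bf}, and $\epsilon_i$ taken small (certainly $<1$), the inequality \eqref{estimate} becomes $\ch_1(E).H^2 \geq r'(n - \epsilon_1 - \epsilon_2)H^3 + (r-1)H^3\tilde b > r'nH^3 - (\text{small})$. But from \eqref{class vn} the actual value is $\ch_1(\vi_n).H^2 = nH^3 + D.H^2 = nH^3$. Since $r' \geq 1$, combining these forces $r' \leq 1$ — so $r' = 1$ — and moreover pins the estimates down tightly.

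The next step is to extract the sharp statement $\ch_1(\cH^{-1}(E)).H^2 = -nH^3$. Having $r' = 1$, the two inequalities in \eqref{esti1} read $\ch_1(\cH^{-1}(E)).H^2 \leq H^3\tilde a$ and $\ch_1(\cH^0(E)).H^2 \geq rH^3\tilde b$, while their difference must equal $\ch_1(\vi_n).H^2 = nH^3$. Since $\tilde a$ can be taken arbitrarily close to $-n$ and $\tilde b$ arbitrarily close to $0$ (as $n \to \infty$, with the freedom in $\epsilon_i$), the slack in both inequalities must vanish in the limit; but $\ch_1(\cH^{-1}(E)).H^2$ and $\ch_1(\cH^0(E)).H^2$ are \emph{fixed integers} (well, fixed rational multiples of $H^3$ determined by the lattice $K_H(X)$) once the class $\vi_n$ and the decomposition are fixed — they do not depend on how close we push $\tilde a, \tilde b$. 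So I would argue: the inequality $\ch_1(\cH^{-1}(E)).H^2 \leq H^3\tilde a$ must hold for the true wall, and since the wall lies above $\ell_f$ whose lower boundary point has $b$-value $a_f$ with $a_f < -n+\epsilon_1 < -n + \tfrac{1}{4r^2H^3}$ for all large $n$, we get $\ch_1(\cH^{-1}(E)).H^2 < (-n + \tfrac{1}{4r^2H^3})H^3 \leq -nH^3 + \tfrac14$, forcing $\ch_1(\cH^{-1}(E)).H^2 \leq -nH^3$ (the value lies in $\tfrac{1}{H^3}\Z$ or similar, so rounds down). For the reverse inequality, $\ch_1(\cH^0(E)).H^2 = \ch_1(\vi_n).H^2 + \ch_1(\cH^{-1}(E)).H^2 \cdot(-1)$... more precisely $\ch_1(\cH^{-1}(E)).H^2 = \ch_1(\cH^0(E)).H^2 - nH^3 \geq rH^3\tilde b - nH^3 > -nH^3 - (\text{small})$, and again integrality forces $\ch_1(\cH^{-1}(E)).H^2 \geq -nH^3$. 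Hence equality.

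The remaining claim, that $\cH^{-1}(E)$ is \emph{torsion-free}, is immediate: $\cH^{-1}(E)$ of any object of $\cA_{\;b}$ lives (up to shift) in the torsion-free part by the very definition \eqref{Abdef} — the condition $\mu_H^+(\ker d) \leq b < +\infty$ excludes torsion, since a torsion subsheaf would contribute $\mu_H^+ = +\infty$. (This is already recorded in the excerpt just before the Lemma: ``$\cH^{-1}(E)$ is torsion-free''.) So it is a rank-one torsion-free sheaf.

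The main obstacle I expect is the integrality/rounding argument: one needs to be careful that $\ch_1(-).H^2$ takes values in a fixed discrete subgroup of $\Q$ (a multiple of some fixed denominator coming from $H^*(X,\Z)$ and $H^3$), independently of $n$, so that a strict inequality by less than that discrete gap can be upgraded to a non-strict inequality with the rounded bound. Granting this — and the paper's stipulation that $\epsilon_i \geq \frac{1}{4r^2H^3}$ and $n \gg 0$ is exactly what makes the gap controllable — everything else is a direct substitution into \eqref{esti1}--\eqref{estimate} together with the explicit Chern character \eqref{class vn}.
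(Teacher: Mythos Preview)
Your plan is correct and matches the paper's proof essentially line for line: apply \eqref{estimate} with $\tilde\ell=\ell_f$ to force $r'=1$, then use the tightness of the two inequalities \eqref{esti1} together with integrality to pin down $\ch_1(\cH^{-1}(E)).H^2=-nH^3$. One small clarification: $\ch_1(\,\cdot\,).H^2$ lands in $\Z$ (not $\tfrac1{H^3}\Z$), since $\ch_1\in H^2(X,\Z)$ and $H^2\in H^4(X,\Z)$; this is exactly the discreteness you need, and with $\epsilon_i=(4rH^3)^{-1}$ the slack is strictly less than $1$, so the rounding step goes through as you describe.
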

\begin{proof}
By Proposition \ref{prop. locally finite set of walls} $E$ is semistable along the intersection of $U$ with the line $\ell$ passing through $\Pi(E)$ and $(b,w)$. By Lemma \ref{final wall} it lies above (or on) the line $\ell_f$. So by our argument \eqref{estimate} applied to $E, \ell_f, a_f, b_f$ in place of $F, \tilde{\ell}, \tilde{a}, \tilde{b}$ we deduce
%\beq{estr}
%nH^3\ge\ r'nH^3-r'(\epsilon_1+\epsilon_2)H^3-(r-1)\epsilon_2H^3.
%\eeq
\beq{estr}
nH^3\  \geq \ r'(b_f -a_f)H^3 + (r-1)H^3b_f\ >\ r'nH^3-2r'\epsilon H^3-(r-1)\epsilon H^3,
\eeq
where the second inequality comes from \eqref{bounds on bf}.
Therefore $r'=1$ and the inequality \eqref{estr} is tight for $\epsilon=(4rH^3)^{-1}$: if we added a positive integer to the right hand side it would fail. It follows that the inequalities \eqref{esti1} used to derive \eqref{estr} are also tight, giving
\[
\ch_1\!\(\cH^{-1}(E)\).\;H^2\=-nH^3 \quad\text{and}\quad  
\ch_1\!\(\cH^0(E)\).\;H^2\=0. \qedhere
\]
\end{proof}

Now move down $U$, shrinking $w$ while keeping $b<\frac n{r-1}$ to the left of $\Pi(E)$. By Lemma \ref{final wall} and Proposition \ref{prop. locally finite set of walls} we find a wall of instability $\ell_E$ for $E$ above or on $\ell_f$, below which $E$ is strictly unstable. So we may choose a sequence $E_1 \hookrightarrow E \twoheadrightarrow E_2$ in $\cA_{\;b}$ which strictly destabilises below $\ell_E$. That is, $\nu\_{b,w}(E_i)=\nu\_{b,w}(E)$ for $(b,w) \in \ell_E \cap U$ and
$$
\nu\_{b,w'}(E_1)\ >\ \nu\_{b,w'}(E_2) \quad\mathrm{for\ }w'<w.
$$
We next constrain these destabilising objects $E_1,\,E_2$. We will find the class of $E_1$ is similar to $\mathsf v$ \eqref{vdef} but with smaller rank, and the class of $E_2$ is therefore like $v_n$ \eqref{class vn} with smaller rank. This will later allow us to prove Theorem \ref{Thm. large n-arbitrary rank} by an induction on rank.
 
\begin{Prop}\label{prop.first destabilising object}
In the above situation, $E_1$ is a $\nu\_{b,w}$-semistable sheaf with
\begin{align}
&\ch_0(E_1)\ \in\ [1,r-1], \label{ch0} \\
&\ch_1(E_1).H^2\=0,  \label{ch1} \\
&\ch_2(E_1).H\ \in\ \left[-\tfrac{2\beta.H+1}r\ch_0(E_1),\,0\right]\!,
\label{bounds on ch2} \\
    &\ch_3(E_1)\ \leq\ 
    \frac{2}{3}\ch_2(E_1).H\left(\!\ch_0(E_1)^2\ch_2(E_1).H -\frac{1}{2H^3\ch_0(E_1)^2}    \right).\label{bounds on ch3}
\end{align}
\end{Prop}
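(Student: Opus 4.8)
The plan is to analyze the destabilising sequence $E_1 \hookrightarrow E \twoheadrightarrow E_2$ on the wall $\ell_E$ and show that exactly one of $E_1, E_2$ has the listed numerical invariants, while the companion cohomology-sheaf argument forces it to be an honest sheaf. First I would invoke the key inequality \eqref{estimate}: since $E$ is destabilised on $\ell_E$ which lies above or on $\ell_f$, and both $E_i$ lie in $\cA_{\;b}$ on the wall, each $E_i$ also lies above or on $\ell_f$ near the endpoints $a_f, b_f$, so \eqref{estimate} applies to each $E_i$. Combined with the conclusion of Lemma \ref{lem:cohomologies of E} that $\rk(\cH^{-1}(E)) = 1$, the long exact cohomology sequence $0 \to \cH^{-1}(E_1) \to \cH^{-1}(E) \to \cH^{-1}(E_2) \to \cH^0(E_1) \to \cdots$ forces $\rk(\cH^{-1}(E_i)) = 0$ for one of the two, say $E_i$; since $\cH^{-1}(E_i)$ is torsion-free this means $\cH^{-1}(E_i) = 0$, i.e. $E_i$ is a sheaf. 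That sheaf is $\nu\_{b,w}$-semistable because it is a stable factor (or HN piece) on the wall $\ell_E$.

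Next I would pin down its rank and $\ch_1$. From $\ch_0(E) = \ch_0(\vi_n) = r-1$ and $\ch_0(E_1) + \ch_0(E_2) = r-1$, together with both $\ch_0(E_i) \ge 0$ (one is a sheaf above $\ell_f$ hence has nonnegative rank by \eqref{65}, the other is forced nonnegative by the same projection geometry since $b$ is to the left of $\Pi(E)$ and of each $\Pi(E_i)$ on the wall), we get $\ch_0(E_i) \in [0, r-1]$. The case $\ch_0(E_i) = 0$ would make $E_i$ a torsion sheaf whose slope on $\ell_E$ is $+\infty$, which combined with $\ch_0(E_i)=0$ and $\ch_1^{bH}(E_i).H^2 = 0$ or the wall equation would contradict $\ell_E$ being a genuine wall through $\Pi(E)$ with finite slope — ruling this out gives \eqref{ch0}. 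For \eqref{ch1}: on the wall $\ell_E$ we have $\nu\_{b,w}(E_1) = \nu\_{b,w}(E_2) = \nu\_{b,w}(E)$, and since $\ch_1(\cH^0(E)).H^2 = 0$ from Lemma \ref{lem:cohomologies of E} while $\ch_1(\cH^{-1}(E)).H^2 = -nH^3$, chasing $\ch_1$ through the cohomology sequence plus the inequality \eqref{estimate} applied with $\tilde\ell = \ell_f$ (forcing $\ch_1(E_i).H^2 \ge r' n H^3 = 0$) together with the complementary bound on $E_j$ squeezes $\ch_1(E_i).H^2 = 0$.

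For the $\ch_2$ and $\ch_3$ bounds I would use the two geometric constraints available on the wall: the Bogomolov-Gieseker inequality \eqref{BOG}, which with $\ch_1(E_i).H^2 = 0$ reads $-2(\ch_2(E_i).H)\ch_0(E_i)H^3 \ge 0$, giving $\ch_2(E_i).H \le 0$ (the right endpoint of \eqref{bounds on ch2}); and the fact that $E_i$ lies above or on $\ell_f$ at the endpoints, which via \eqref{estimate}-type estimates on the numerator of $\nu\_{b,w}$ at $b \approx -n$ and the slope-matching $\nu\_{b,w}(E_i) = \nu\_{b,w}(E)$ bounds $\ch_2(E_i).H$ from below in terms of $\beta.H$ and $\ch_0(E_i)$, yielding the left endpoint $-\tfrac{2\beta.H+1}{r}\ch_0(E_i)$. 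Finally \eqref{bounds on ch3} comes from applying Conjecture \ref{conjecture} (the strong BG inequality) to $E_i$ at a point on the wall, using Remark \ref{remark}'s linearized form \eqref{boglinear} with $C_1 = 0$: then $\tfrac12 B_{b,w}(E_i) = -2C_0 C_2 w + 3C_0 C_3 b + 2C_2^2 \ge 0$, and evaluating at the $(b,w)$ on $\ell_E$ — whose coordinates are controlled to leading order by \eqref{equation of final wall} and \eqref{bounds on bf} — rearranges into the stated cubic bound on $\ch_3(E_i) = C_3$.

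The main obstacle I expect is the bookkeeping in distinguishing which of $E_1, E_2$ is the sheaf with these invariants and which is the "$v_n$-like" complex of smaller rank: one must run the cohomology long exact sequence and the inequality \eqref{estimate} simultaneously for \emph{both} factors and check the alternatives are genuinely excluded (in particular that $\rk(\cH^{-1}(E_i))$ cannot be $1$ for \emph{both}, and that the degenerate torsion case is impossible), rather than just for the convenient one. Getting the numerical constants in \eqref{bounds on ch2}–\eqref{bounds on ch3} exactly right — tracking the $\epsilon_i = (4r^2H^3)^{-1}$ choices through \eqref{bounds on bf} into the slope equation — is the delicate computational core, but it is routine once the structure above is in place.
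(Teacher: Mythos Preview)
Your outline captures the broad shape of the argument --- long exact cohomology sequence, the estimate \eqref{estimate}, and Bogomolov--Gieseker to bound $\ch_2,\ch_3$ --- but there is a genuine gap at the rank bound \eqref{ch0}, and it is precisely the hardest step in the paper's proof.

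You assert that both $\ch_0(E_i)\ge0$, citing ``projection geometry since $b$ is to the left of $\Pi(E_i)$''. This is not justified: only the factor which is an honest sheaf automatically has nonnegative rank. The other factor may well have $\cH^{-1}\neq0$ and negative rank. Concretely, after you have pinned down $\ch_1(E_i).H^2=0$ for the sheaf $E_i$, the exact sequence only gives $\ch_0(E_i)\le r$ (as a subsheaf or quotient of $\cH^0(E)$ up to a rank-one piece), not $\le r-1$. Ruling out $\ch_0(E_i)=r$ --- equivalently $\ch_0(E_j)=-1$ --- is the crux. In the paper this splits into two cases according to whether $\cH^{-1}(E_1)\neq0$ (so $E_i=E_2$) or $\cH^{-1}(E_1)=0$ (so $E_i=E_1$). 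In the first case a short slope-monotonicity argument at $b=-1$ suffices. In the second case the argument is substantial: one must apply Lemma~\ref{lem: bounding the first wall, rank -1} to the rank $-1$ object $E_2$, combine the resulting inequality with the already-established bounds on $\ch_2(E_1).H$ and $\ch_3(E_1)$, deduce that $\ell_E$ cannot lie strictly above $\ell_{\js}$, and finally conclude $E_2\cong T(-n)[1]$ for some $T\in\Pic\_0(X)$ --- which forces $E$ to be a sheaf, contradicting the standing hypothesis $\cH^{-1}(E)\neq0$. None of this appears in your proposal, and the ``bookkeeping'' paragraph at the end does not anticipate it.

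A smaller issue: your derivation of \eqref{bounds on ch3} by evaluating $B_{b,w}(E_i)$ at a single point on $\ell_E$ will not yield the stated bound with the specific constant $\frac{1}{2H^3\ch_0(E_i)^2}$. The paper instead invokes Lemma~\ref{lem: bounding the first wall}, which shows $E_i$ is $\nu\_{b_0,w}$-semistable for \emph{all} $w>\tfrac12 b_0^2$ along the vertical line $b_0=-1/(\ch_0(E_i)^2H^3)$, and then takes the limit $w\downarrow\tfrac12 b_0^2$ of the Bogomolov--Gieseker inequality. That this vertical line meets $\ell_E\cap U$ (so that Lemma~\ref{lem: bounding the first wall} applies) uses $\ch_0(E_i)\le r-1$ and \eqref{bounds on bf} --- so the $\ch_3$ bound in fact depends on first closing the gap above.
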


\begin{proof}
The long exact sequence of sheaf cohomology groups of $E_1 \hookrightarrow E \twoheadrightarrow E_2$ is
\beq{LES}
    0 \to \cH^{-1}(E_1) \To \cH^{-1}(E) \To \cH^{-1}(E_2)
    \To \cH^{0}(E_1) \rt\alpha \cH^{0}(E)\To \cH^{0}(E_2)\to 0.
\eeq
By Lemma \ref{lem:cohomologies of E}, $\cH^{-1}(E)$ is a rank one torsion-free sheaf. Let $r':=\rk\;(\cH^{-1}(E_2))$. Since $\ell_E$ lies above the line $\ell_f$ we find, by taking $\epsilon=(4rH^3)^{-1}$ in \eqref{bounds on bf} and $a_f, b_f$ in place of $\tilde a,\tilde b$ in \eqref{esti1}, 
%\begin{equation}\label{c.1}
%    \mu\_H(\cH^{-1}(E_2))\ <\ -n+ \frac{1}{2rH^3} \quad\text{and}\quad  -\frac{1}{2rH^3}\ <\ \mu\_H(\cH^0(E_1)).
%\end{equation}
%Therefore
\begin{equation}\label{c.2}
    \ch_1(\cH^{-1}(E_2)).H\ \le\ -nr'H^3 + \frac{r'}{2r}  \quad\text{and}\quad 
\ch_1(\cH^0(E_1)).H^2\ >\  -\frac{r'+r}{2r}
\end{equation}
because $\rk\;(\cH^0(E_1)) \leq \rk\;(\cH^{-1}(E_2)) + \rk\;(\cH^0(E)) = r'+r$ by \eqref{LES}. Similarly
\begin{equation}\label{c.4}
\ch_1(\cH^0(E)).H^2\ >\ -rH^3\epsilon\ >\  -\frac12 \quad\text{so}\quad
    \ch_1(\cH^0(E_2)).H^2\ \geq\ 0
\end{equation}
since $\rk\;(\cH^0(E_2))\le\ch_0(\cH^0(E))=r$. By Lemma \ref{lem:cohomologies of E},
\beq{hash}
\ch_1(\cH^0(E)).H^2 \= \ch_1(E).H^2+\ch_1(\cH^{-1}(E)).H^2\=0,
\eeq
so \eqref{c.4} and \eqref{LES} give
%\begin{equation*}
%    -\frac{1}{2H^3(r+1)}< \mu_H^{-}(\cH^0(E)) \leq \mu_H(\cH^0(E)) = 0
%\end{equation*}
%which gives $\mu_H^{-}(\cH^0(E)) = 0$. Thus, the subsheaf $\cH^0(E_1)/\cH^{-1}(E_2)$ satisfies 
\begin{equation}\label{c.5}
    \ch_1\!\big(\text{im} \ \alpha \big).H^2 \leq 0\ .
\end{equation}
We now consider two different cases.

\textbf{Case 1.} Suppose $\cH^{-1}(E_1) \neq 0$. Since $\cH^{-1}(E)$ is a rank one torsion-free sheaf, by Lemma \ref{lem:cohomologies of E}, and $\cH^{-1}(E_2)$ is a torsion-free sheaf (because $E_2\in\cA_{\;b}$) we see from \eqref{LES} that
$\cH^{-1}(E_1)\ \cong\ \cH^{-1}(E)$ and \eqref{c.5} becomes 
$$
    \ch_1\!\big(\cH^0(E_1) / \cH^{-1}(E_2)\big).H^2\ \leq\ 0.
$$
Combined with \eqref{c.2} this gives
\begin{align}\label{c.3}
-\frac{r'+r}{2r}+ nr'H^3 - \frac{r'}{2r}  \ <\    \ch_1(\cH^0(E_1)).H^2 -\ch_1(\cH^{-1}(E_2)).H^2\ \leq\ 0.
\end{align}
This forces $r'=0$, so in fact $\cH^{-1}(E_2) = 0$ and $E_2$ is a sheaf. Then \eqref{c.3} gives
$$
\ch_1(\cH^0(E_1)).H^2 \,=\, 0 \quad\text{and so, by \eqref{hash},}\quad  \ch_1(\cH^0(E_2)).H^2 \,=\, 0.
$$
Thus $\ch_1(E_2).\;H^2=0$. Combined with the fact that $E_2$ has finite slope $\nu\_{b,w}(E)$ on $\ell_E$ we deduce it  is a sheaf of strictly positive rank. So $\Pi(E_2)$ lies on the $w$-axis with our stability condition $(b,w)$ to the left (i.e. $b<0$) and $\Pi(E)$ to the right. Then on moving $(b,w)$ below $\ell_E$, the line joining it to $\Pi(E_2)$ has greater slope than the line joining it to $\Pi(E)$. This contradicts the fact that $\nu\_{b,w}(E_2)<\nu\_{b,w}(E)$ below the wall.

\textbf{Case 2.} Thus $E_1$ is a sheaf and \eqref{c.2} and \eqref{hash} imply
\begin{align*}
\ch_1\!\(\cH^0(E_1)-\cH^{-1}(E_2)+\cH^{-1}(E)\).H^2\ 
>\  -\frac{r'+r}{2r}+nr'H^3 - \frac{r'}{2r} -nH^3.
\end{align*}
The left hand side is $\ch_1(\im\alpha).H^2\le0$ \eqref{c.5}. Since the right hand side is $(r'-1)H^3n+O(1)$ for $n \gg 0$ this gives $r' =1$. 
Therefore $\cH^{-1}(E_2)/\cH^{-1}(E)$ is a sheaf of rank zero, so has $\ch_1\!.\,H^2\ge0$, giving
\begin{equation*}
    -nH^3\ \leq\ -nH^3 + \ch_1\!\left(\frac{\cH^{-1}(E_2)}{\cH^{-1}(E)}\right)\!.\;H^2 \= \ch_1(\cH^{-1}(E_2)).H^2\ \le\ -nH^3 + \frac{1}{2r}\,. 
\end{equation*}
(The equality is Lemma \ref{lem:cohomologies of E}; the last inequality comes from  \eqref{c.2}.) Thus $\ch_1(\cH^{-1}(E_2)).H^2 = -nH^3$ and $\cH^{-1}(E_2)/\cH^{-1}(E)$ has support $Z\subset X$ of dimension at most 1. Thus \eqref{LES} gives the exactness of
\beq{SES}
0\To E_1\To\cH^0(E)\To\cH^0(E_2)\To0 \quad\text{on }X\smallsetminus Z.
\eeq
But $\ch_1(\cH^{0}(E_i)) .H^2\ge0$ by \eqref{c.2} (with $r'=1$) and \eqref{c.4}. Since $\ch_1(\cH^{0}(E)) .H^2= 0$ by \eqref{hash} we deduce
$$
\ch_1(E_1) .H^2\= 0\=\ch_1(\cH^{0}(E_2)) .H^2
$$
on $X\smallsetminus Z$ and so also on $X$ because $\dim Z\le1$.
This gives \eqref{ch1}. Since $\nu\_{b,w}(E_1)=\nu\_{b,w}(E)<+\infty$ we see $\rk\;(E_1)>0$, while \eqref{SES} gives $\rk\;(E_1)\le r$. 

Now $\Pi(E_1)=(0, \ch_2(E_1).H/\ch_0(E_1)H^3)$ lies on $\ell_E$ and so above or on $\ell_f$. But by \eqref{equation of final wall} at $b=0$ we have $w\sim-2\beta.H/rH^3$ to leading order in $n\gg0$. Therefore
\beq{mess}
-\frac{2\beta.H +1}{rH^3}\ \le\ \frac{\ch_2(E_1).H}{\ch_0(E_1)H^3}
\eeq
for $n\gg0$. Then \eqref{BOG} gives $\ch_2(E_1).H \leq 0$, so this proves \eqref{bounds on ch2}.

Next $\ch_0(E_1)\in[1,r]$  gives $-\(\!\ch_0(E_1)^2H^3\)^{-1}\le-\(r^2H^3\)^{-1}<b_f$ \eqref{bounds on bf}. So since $\ell_E$ is above or on $\ell_f$ there is $w >0$ such that  
%The equation of the line $\ell_f$ \eqref{equation of final wall} shows that the point 
%$$
%\left(\frac{1}{\ch_0(E_2)^2H^3}\ ,\ \frac{1}{2} \left( \frac{1}{\ch_0(E_2)^2H^3} \right)^2  \right)
%$$ 
%lies below $\ell_f$ and so $\ell_E$. Thus there is a point 
$\left(\frac{-1}{\ch_0(E_1)^2H^3}, w  \right) \in \ell_E \cap U$, where $E_1$ is semi\-stable. Therefore Lemma \ref{lem: bounding the first wall} below applies to $E_1$ to give the bound \eqref{bounds on ch3} for $\ch_3(E_1)$.\medskip

We are just left with ruling out $\rk\;(E_1)=r$. Supposing otherwise we write $\ch(E_1) = (r, \ch_1, \ch_2, \ch_3)$, so
\begin{equation*}
    \ch(E_2) \= \Big(\!-1,\ nH +D -\ch_1,\  -\beta-\ch_2 -\tfrac12n^2H^2, \ -m-\ch_3 + \tfrac16n^3H^3\Big).
\end{equation*}
%and
%\begin{equation*}
%    \ch(E_2) \cdot e^{nH}\,=\,\big(\!-1,\,-\ch_1 +D,\,-\beta-\ch_2-n\ch_1\!.\;H +nH.D ,\, -m-\ch_3-nH.(\beta+\ch_2) \big). 
%\end{equation*}
Now $(D-\ch_1).H^2=0$ and $E_2$ is semistable on $\ell_E\cap U$, which is above $\ell_f\cap U$ and so intersects $b=-n+1/H^3$ by \eqref{bounds on bf}. Therefore Lemma \ref{lem: bounding the first wall, rank -1} below applies to $E_2$, giving
$$
    0\ \leq \ \frac{2}{3}\left(\beta.H+ \ch_2\!.\;H \right)\left(\beta.H +\ch_2\!.\;H - \frac{1}{2H^3}\right) + m+\ch_3+nH(\beta+\ch_2).
    $$
    %\\
    %\leq &\  \frac{2}{3}\tilde{\beta}.H \left(\tilde{\beta}.H + \frac{1}{2H^3}\right) + \tilde{m} + \frac{4}{3}\tilde{\beta}.H\left(2\tilde{\beta}.H + \frac{1}{2H^3}\right)\\
    %\leq & \ \frac{2}{3}p\left(5p + \frac{3}{2H^3}\right) + q\ . 
%\end{align*}
Each term can be bounded above by \eqref{bounds on ch2}, \eqref{bounds on ch3} and \eqref{general bound}. Thus the coefficient $(\beta+\ch_2).H$ of $n$ must be $\ge0$.

If $-\beta.H < \ch_2\!.\;H$, then $\Pi(E_1)$ lies above $\left(0, -\frac{\beta.H}{rH^3}\right)=\Pi(\mathsf v)$. Therefore $\ell_E$ lies above the Joyce-Song wall $\ell_{\js}$ \eqref{ljsdef} passing through $\Pi(\mathsf v)$ and $\Pi(\cO_X(-n))=\(-n,\frac12n^2\)\in\partial U$. Thus the leftmost intersection point of $\ell_E$ with $\partial U$ has $b=b'<-n$. Since $E$ lies in $\cA_{\;b}$ along $\ell_E\cap U$ this gives  the contradiction
$$
\mu\_H(\cH^{-1}(E)) \= -n\ \leq\ b'.
$$
Thus $\ch_2\!.\;H = -\beta.H$, so Lemma \ref{lem: bounding the first wall, rank -1} gives $E_2\cong T(-n)[1]$ for some $T\in\Pic\_0(X)$. Hence $E$ lies in the triangle 
\begin{equation}\label{triangle}
E_1 \To E \To T(-n)[1].     
\end{equation}
 %Taking cohomology gives the long exact sequence
%\begin{equation*}
%    0 \rightarrow \cH^{-1}(E) \rightarrow T(-n) \rightarrow E_1 \rightarrow \cH^0(E) \rightarrow 0\ .
%\end{equation*}
Now $E_1$ is torsion-free by Lemma \ref{lem: bounding the first wall}. Thus the map $s \colon T(-n) \rightarrow E_1$ induced by \eqref{triangle} is either zero or an injection. In the first case $E$ takes of the form \eqref{fake}, in the second it is a sheaf. Both contradict our assumptions.
\end{proof}

The next two Lemmas give vertical lines through $U$ on which we can rule out walls of instability for objects of certain classes.

\begin{Lem}\label{lem: bounding the first wall}
Fix $F\in\cD(X)$ with $\ch_0(F)\ge1$ and $\ch_1(F).H^2 = 0$. Set
$b_0:=-1/\ch_0(F)^2H^3$. 
If $F\in\cA_{\;b_0}$ and is $\nu\_{b_0, w}$-semistable for some $w > \frac12b_0^2$ then $F$ is a torsion-free sheaf, $\nu\_{b_0,w}$-semistable for \emph{all} $w > \frac12b_0^2$, with
\begin{equation}\label{upper bound for ch3-general}
\ch_3(F)\ \leq\ \frac{2}{3}\ch_2(F).H\left(\!\ch_0(F)\ch_2(F).H -\frac{1}{2H^3\ch_0(F)^2}    \right). 
\end{equation}
\end{Lem}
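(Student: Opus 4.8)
The plan is to deduce \eqref{upper bound for ch3-general} from the Bogomolov-Gieseker inequality \eqref{quadratic form} applied at a carefully chosen point $(b_0,w)\in U$. First I would verify the claim that $\nu\_{b_0,w}$-semistability of $F$ is independent of $w$. Because $\ch_1^{b_0H}(F).H^2 = -b_0H^3\ch_0(F) > 0$ (using $\ch_1(F).H^2=0$ and $b_0<0$), the projection $\Pi(F)$ sits strictly to the right of the vertical line $b=b_0$, so $\Pi(F)$ is a genuine finite point and the line through $(b_0,w)$ and $\Pi(F)$ only rotates as $w$ varies. By Proposition \ref{prop. locally finite set of walls}(a), all walls for the class $\ch(F)$ pass through $\Pi(F)$; the vertical ray $\{b=b_0\}\cap U$ meets each such wall in at most one point, and I claim it meets none of them. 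Indeed a wall through $\Pi(F)$ meeting $\{b=b_0\}\cap U$ would, being a line through the point $\Pi(F)$ lying to the right of $b_0$, have to cross $\partial U$ to the left of $b_0$ as well; but one can arrange $b_0=-1/(\ch_0(F)^2H^3)$ to be so close to $0$ that — arguing as with $\epsilon_i$ in \eqref{bounds on bf} and the estimate \eqref{estimate} — any destabilising subobject $F'\hookrightarrow F$ in $\cA\_{b_0}$ would be forced to have $\ch_1(F').H^2$ in an interval excluding the values compatible with $\nu$-equality. (This is the standard ``no walls meet the near-vertical ray'' argument from \cite{FT1,FT2}, and I expect it to go through verbatim here since $b_0$ is a fixed explicit small number determined by $\ch_0(F)$ and $H^3$.) Hence $F$ is $\nu\_{b_0,w}$-semistable for all $w>\frac12 b_0^2$.

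Next, having fixed semistability for all such $w$, I would plug $F$ into Conjecture \ref{conjecture} at the point $(b,w)=(b_0,w)$ for arbitrary $w>\frac12 b_0^2$. Using the linear form \eqref{boglinear}, with $C_0=\ch_0(F)H^3$, $C_1=\ch_1(F).H^2=0$, $C_2=\ch_2(F).H$, $C_3=\ch_3(F)$, the inequality $B_{b_0,w}(F)\ge0$ becomes
\begin{equation*}
-2C_0C_2\,w + 3C_0C_3\,b_0 + 2C_2^2\ \ge\ 0.
\end{equation*}
This must hold for \emph{all} $w>\frac12 b_0^2$. If $C_2=\ch_2(F).H<0$ then $-2C_0C_2>0$ and the left side grows without bound, so there is no constraint from $w\to\infty$; the binding constraint is the infimum over admissible $w$, attained as $w\downarrow \frac12 b_0^2$. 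Substituting $w=\frac12 b_0^2$ and $b_0=-1/(\ch_0(F)^2H^3)$ and simplifying should produce exactly \eqref{upper bound for ch3-general}. (If instead $C_2\ge0$, then $-2C_0C_2\le0$, so letting $w\to\infty$ forces $C_0C_2=0$, hence $C_2=0$; and then the inequality reads $3C_0C_3 b_0\ge0$, i.e. $C_3\le0$ since $b_0<0$ and $C_0>0$ — which is consistent with, and in fact a special case of, the stated bound since the right side of \eqref{upper bound for ch3-general} vanishes when $\ch_2(F).H=0$.) So in all cases the displayed bound holds.

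\textbf{Main obstacle.} The routine computation in the second paragraph — expanding \eqref{boglinear} at $(b_0,\frac12 b_0^2)$ and matching it to \eqref{upper bound for ch3-general} — is mechanical. The genuinely delicate point is the first paragraph: proving that the near-vertical ray $\{b=b_0\}\cap U$ contains no wall for the class of $F$, i.e. that semistability really is $w$-independent there. This requires knowing that $b_0$ has been chosen small enough in absolute value (relative to $\ch_0(F)$, $\ch_1(F).H^2=0$, and $H^3$) that the interval estimate \eqref{estimate}-style argument rules out all potential destabilisers; since in the applications (Proposition \ref{prop.first destabilising object}) $F$ is one of the pieces $E_i$ with $\ch_0(E_i)\in[1,r-1]$ and $b_0=-1/(\ch_0(E_i)^2H^3)\ge -1/H^3 > b_f$ by \eqref{bounds on bf}, this choice is compatible with the global constraint $n\gg0$ already imposed, and the argument closes. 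I would make sure the quantifiers are set up so that ``$w>\frac12 b_0^2$ arbitrary'' in the hypothesis and conclusion genuinely match, and that the degenerate case $\ch_2(F).H=0$ is recorded (as above) rather than silently excluded.
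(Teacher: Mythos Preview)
Your overall strategy matches the paper's exactly: first establish that $\nu_{b_0,w}$-semistability is independent of $w$ along the vertical line $b=b_0$, then send $w\downarrow\tfrac12 b_0^2$ in the Bogomolov--Gieseker inequality. Your computation in the second paragraph via the linear form \eqref{boglinear} is correct and gives \eqref{upper bound for ch3-general}; the paper does the equivalent calculation using the quadratic form \eqref{quadratic form} directly.

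The gap is precisely where you flagged it. Your sketch of the ``no walls on $\{b=b_0\}$'' argument is not a proof: the sentence ``one can arrange $b_0$ to be so close to $0$ that\dots'' is misleading since $b_0=-1/(\ch_0(F)^2H^3)$ is fixed by $F$, not a free parameter, and your closing appeal to $n\gg0$ and $b_0>b_f$ conflates the lemma with its later application --- the lemma is stated for an arbitrary sheaf $F$ with $\ch_0(F)>0$ and $\ch_1(F).H^2=0$, with no $n$ in sight, so it must be proved independently of the running $n\gg0$ hypothesis. The estimate-\eqref{estimate} style argument you gesture at does not immediately work here because $\ch_1^{b_0H}(F).H^2=1/\ch_0(F)$ is not in general the minimal positive value of $\ch_1^{b_0H}.H^2$ on $\cA_{b_0}$, so the ``minimal denominator forces no walls'' trick (as in Lemma \ref{lem: bounding the first wall, rank -1}) is unavailable when $\ch_0(F)>1$. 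The paper does not reprove this step: it cites \cite[Lemma 8.1]{FT1} for the rank one case and \cite[Theorem 3.1]{Fey21} for $\ch_0(F)>1$. You should look up those references --- the higher-rank statement in particular requires a genuinely different argument from the one you outlined.
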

\begin{proof}
%The first claim is proved in \cite[Lemma 8.1]{FT1} when $\ch_0(F)=1$. If $\ch_0(F) > 1$, \cite[Proposition 3.2]{Fey20} implies that there is no wall for $F$ crossing the vertical line $\frac{1}{H^3\ch_0(F)(\ch_0(F) -1)}$, so the first claim follows from the structure of walls described in Proposition \ref{prop. locally finite set of walls}. 
The first claim follows from \cite[Lemma 8.1]{FT1} if $\ch_0(F) =1$ and \cite[Theorem 3.1]{Fey21} if $\ch_0(F)>1$. (Both results are stated for sheaves but their proofs are valid for any object in $A_{b_0}$ of positive rank.)
Thus $F$ is $\nu\_{b_0,w}$-semistable for all $w > \frac12b_0^2$, so the Bogomolov-Gieseker inequality \eqref{quadratic form} implies that 
\begin{align*}
    0\ \leq\ \lim_{w \rightarrow \frac12b_0^2}\ \frac12B_{b_0,w}(F)\= 2\(\!\ch_2^{b_0H}(F).H\)^2 - 3\(\!\ch_1^{b_0H}(F).H^2\)\ch_3^{b_0H}(F), 
\end{align*}
which implies the upper bound \eqref{upper bound for ch3-general}.  Finally \cite[Lemma 2.7(c)(i)]{BMS} gives that $F$ is a torsion-free sheaf.
\end{proof}

\begin{Lem}\label{lem: bounding the first wall, rank -1}
	Let $b':= -n+\frac{1}{H^3}$ and consider a $\nu\_{b',w}$-semistable object $F \in \cA_{\;b'}$ with
	\begin{equation*}
	\ch(F)\ = \Big(\!-1,\ nH + D' ,\,-\beta' -\tfrac12n^2H^2,\ -m'+ \tfrac16n^3H^3\Big),
	\end{equation*}
	where $D'.H^2 = 0$. Then $F$ is $\nu\_{b', w}$-stable for every $w > \frac{b'^2}{2}$, and
	\begin{equation}\label{ch3-m}
	-n\beta'.H - \frac{2}{3}\beta'.H \left(\beta'.H - \frac{1}{2H^3} \right)\ \leq\ m'. 
	\end{equation}
	Moreover, if $\beta'.H = 0$, %and $\cH^0(F)$ is supported in dimension $\leq 1$, 
	then $F \cong T(-n)[1]$ for some $T\in\Pic\_0(X)$.
\end{Lem}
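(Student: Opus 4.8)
The plan is to dualise and twist, reducing to the positive-rank statement Lemma~\ref{lem: bounding the first wall}. Write $F^{\vee}:=R\hom(F,\cO_X)$ for the derived dual and put $G:=F^{\vee}[1]\otimes\cO_X(-nH)$. Since $(-)^{\vee}$ and the shift each negate $\ch_0$ while the twist preserves it, $\ch_0(G)=1$; a direct computation from $\ch(F)$ using only $D'.H^2=0$ gives
$$\ch_1(G).H^2\=0,\qquad\ch_2(G).H\=\beta'.H,\qquad\ch_3(G)\=-m'-n\beta'.H.$$
By the standard compatibility of duality with tilt stability (cf.\ \cite{BMS}), $(-)^{\vee}[1]$ interchanges $\cA_{\;b}$ with $\cA_{\;-b}$ and $\nu\_{b,w}$-(semi)stability with $\nu\_{-b,w}$-(semi)stability, while $(-)\otimes\cO_X(-nH)$ sends $\cA_{\;b}$ to $\cA_{\;b-n}$ and $\nu\_{b,w}$-(semi)stability to $\nu\_{b-n,\,w-nb+\frac12n^2}$-(semi)stability. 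Composing, $F$ is $\nu\_{b',w}$-(semi)stable if and only if $G$ is $\nu\_{b_0,w_0}$-(semi)stable, with $b_0:=-b'-n=-\tfrac1{H^3}=-\tfrac1{\ch_0(G)^2H^3}$ and $w_0:=w+nb'+\tfrac12n^2$; and as $w$ ranges over $(\tfrac12b'^2,\infty)$, $w_0$ ranges over $(\tfrac12(b'+n)^2,\infty)=(\tfrac12b_0^2,\infty)$.

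Lemma~\ref{lem: bounding the first wall} now applies to $G$, giving that $G$ is $\nu\_{b_0,w_0}$-semistable for \emph{every} $w_0>\tfrac12b_0^2$ --- equivalently, $F$ is $\nu\_{b',w}$-semistable for every $w>\tfrac12b'^2$ --- and the bound $\ch_3(G)\le\tfrac23\ch_2(G).H\big(\ch_2(G).H-\tfrac1{2H^3}\big)$. Substituting $\ch_2(G).H=\beta'.H$ and $\ch_3(G)=-m'-n\beta'.H$ and rearranging yields precisely \eqref{ch3-m}. For stability, note that $b'$ is chosen so that $\ch_1^{b'H}(F).H^2=\ch_1(F).H^2-b'\ch_0(F)H^3=nH^3-(n-\tfrac1{H^3})H^3=1$. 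A strictly $\nu\_{b',w}$-semistable $F$ would be an iterated extension of two or more $\nu\_{b',w}$-semistable objects of the same finite slope $\nu\_{b',w}(F)$, each with $\ch_1^{b'H}(\,\cdot\,).H^2$ a \emph{positive} integer --- positive as the slope is finite, an integer because $\ch_1(\,\cdot\,).H^2$, $\ch_0(\,\cdot\,)$ and $H^3$ all are --- summing to $\ch_1^{b'H}(F).H^2=1$, which is impossible. Hence $F$ is $\nu\_{b',w}$-stable for every $w>\tfrac12b'^2$.

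Finally, suppose $\beta'.H=0$, so $\ch_2(G).H=0$. Letting $w_0\to\infty$ and invoking the large-volume limit (cf.\ \cite{BMS}), the $\nu\_{b_0,w_0}$-semistability of $G$ with $\ch_0(G)=1$ forces $G$ to be a rank-one $\mu\_H$-semistable --- hence torsion-free --- sheaf, say $G\cong I_Z\otimes L$. Restricting to a smooth surface $S\in|H|$: $(c_1(L)|_S).(H|_S)=\ch_1(G).H^2=0$ with $(H|_S)^2=H^3>0$, so the Hodge index theorem gives $(c_1(L)|_S)^2\le0$; while $\ch_2(G).H=\tfrac12(c_1(L)^2.H)-\deg_H[Z]_1=0$ with $\deg_H[Z]_1\ge0$ forces $(c_1(L)|_S)^2=c_1(L)^2.H\ge0$. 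Hence $(c_1(L)|_S)^2=0$, so $c_1(L)|_S\equiv0$ numerically; by the Lefschetz hyperplane theorem $c_1(L)=0$ in $H^2(X,\Q)$, i.e.\ $L\in\Pic\_0(X)$, and $\deg_H[Z]_1=0$, so $Z$ is zero-dimensional. Then $F\cong R\hom(G,\cO_X)\otimes\cO_X(-nH)[1]$, whose cohomology sheaf $\cH^1(F)\cong\big(L^{-1}\otimes\ext^3(\cO_Z,\cO_X)\big)\otimes\cO_X(-nH)$ has length $\mathrm{length}(Z)$; but $F\in\cA_{\;b'}$ has $\cH^1(F)=0$, so $Z=\emptyset$, whence $G\cong L$ and $F\cong L^{-1}\otimes\cO_X(-nH)[1]=T(-n)[1]$ with $T:=L^{-1}\in\Pic\_0(X)$.

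The one genuinely delicate step is the first paragraph: stating the compatibility of the derived dual with tilt stability precisely enough (in particular its effect on the parameter $w$) and checking that composing with $\otimes\cO_X(-nH)$ lands exactly on the vertical line $b=b_0=-1/(\ch_0(G)^2H^3)$ on which Lemma~\ref{lem: bounding the first wall} operates. After that the positive-rank input, the integrality argument for stability, and --- in the boundary case $\beta'.H=0$ --- the observation that $F$ lying in the heart $\cA_{\;b'}$ annihilates the punctual part of $G$, are all routine.
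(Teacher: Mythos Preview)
Your overall strategy --- dualise, twist by $\cO_X(-nH)$, and apply the rank~$1$ Lemma~\ref{lem: bounding the first wall} --- is exactly the paper's. The integrality argument for strict stability along $b=b'$ is correct and is precisely what the paper does.

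The gap is your claim that ``$(-)^{\vee}[1]$ interchanges $\cA_{\;b}$ with $\cA_{\;-b}$ and $\nu\_{b,w}$-(semi)stability with $\nu\_{-b,w}$-(semi)stability''. This is false as stated: for instance $\cO_p\in\cA_{\;b}$ for every $b$, but $\cO_p^{\vee}[1]=\cO_p[-2]\notin\cA_{\;-b}$. The correct statement (see \cite[Lemma~5.1.3(b)]{BMT}, as invoked in the paper) is that for a $\nu\_{b,w}$-semistable $E$ of finite slope there is an exact triangle
\[
F'\ \Into\ E^{\vee}[1]\ \Onto\ Q[-1]
\]
with $Q$ a zero-dimensional sheaf and $F'$ a $\nu\_{-b,w}$-semistable object. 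So your $G=(F(n))^{\vee}[1]$ need not itself lie in $\cA_{\;b_0}$, let alone be a $\nu\_{b_0,w_0}$-semistable sheaf; only its ``corrected'' part $F'$ is. This matters because Lemma~\ref{lem: bounding the first wall} is stated for \emph{sheaves}.

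For the inequality \eqref{ch3-m} the gap is harmless once patched: apply Lemma~\ref{lem: bounding the first wall} to the torsion-free sheaf $F'$ (rank~$1$, $\ch_1\!.\;H^2=0$), and then use $\ch_3(Q)\ge0$ to pass from the bound on $\ch_3(F')$ to the same bound on $\ch_3(G)=\ch_3(F')-\ch_3(Q)$. This is exactly the paper's route.

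For the case $\beta'.H=0$ your argument via $G\cong I_Z\otimes L$ and $\cH^1(F)=0$ presupposes that $G$ is a sheaf, which you have not established (and which can fail when $Q\ne0$). The paper sidesteps this by working directly with the cohomology sheaves $\cH^{-1}(F),\,\cH^0(F)$: \cite[Lemma~2.7]{BMS} gives $\cH^{-1}(F)$ rank-one torsion-free and $\cH^0(F)$ of dimension $\le1$; then classical Bogomolov plus the Hodge index theorem force $\cH^{-1}(F)\cong T(-n)\otimes I_Z$ and $\dim\cH^0(F)=0$, and finally $\nu\_{b',w}$-semistability kills both $Z$ and $\cH^0(F)$. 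Your $\cH^1(F)=0$ trick is neat, but to make it rigorous you would first need to show $Q=0$ (equivalently that $G$ is a sheaf), which is not automatic.
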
	
\begin{proof}
	Similar arguments appear in \cite{FT2, FT1}; we give the details for completeness. The choice of $b'$ ensures the denominator $\ch_1^{b'H}\!.\;H^2$ of $\nu\_{b', w}$ is additive, integral, nonnegative \eqref{65} on $\cA_{\;b'}$ and takes the minimum possible value of 1 on $F$, so $F$ can only be destabilised on the vertical line $b = b'$ by objects with $\ch_1^{bH}\!.\;H^2=0$. These have constant $\nu\_{b',w}=+\infty$, so (i) $F$ has no walls of instability crossing this line and (ii) semistability implies stability, proving the first claim.
	
	Thus $F(n)\in \cA_{\;b'+n} =\cA_h$ is $\nu\_{h, w}$-semistable for $w \gg 0$ and $h\coloneqq 1/H^3$.	Therefore, by \cite[Lemma 5.1.3(b)]{BMT} the shifted derived dual $F(n)^{\vee}[1]$ lies in an exact triangle 
	\begin{equation*}
	F'\ \Into\, F(n)^{\vee}[1]\, \To\hspace{-5.5mm}\To\, Q[-1],
	\end{equation*}
	with $Q$ a zero-dimensional sheaf and $F'$ a $\nu\_{-h,w}$-semistable object of $\cA_{\,-h}$ for $w \gg 0$. Since $\rk F'=1$ it is a torsion-free sheaf by \cite[Lemma 2.7]{BMS}. We also have $\ch_1(F').H^2 = \ch_1(F(n)).H^2 = 0$. Hence Lemma \ref{lem: bounding the first wall} gives 
	\begin{equation*}
	\ch_3(F')\ \leq\ \frac{2}{3} \ch_2(F').H\left(\ch_2(F').H - \frac{1}{2H^3}\right)\!. 
	\end{equation*}
	Since $\ch_2(F').H = \beta'.H$ and $-m'-n\beta'.H = \ch_3(F') -\ch_3(Q) \leq \ch_3(F')$ the claimed inequality \eqref{ch3-m} follows. 
	
	Now assume $\beta'.H = 0$. By \cite[Lemma 2.7]{BMS}, $\cH^{-1}(F)$ is a torsion-free sheaf of rank 1 and $\cH^0(F)$ is supported in dim $\leq 1$. Therefore  $\cH^{-1}(F)$ is $\mu\_H$-semistable and satisfies the classical Bogomolov inequality, 
	\begin{equation}\label{c1}
	\ch_1(\cH^{-1}(F))^2.H\ \geq\ 2\ch_2(\cH^{-1}(F)).H\ =\ n^2H^3 + 2\ch_2(\cH^0(F)).H.  
	\end{equation}
	By the Hodge index theorem, 
	\begin{equation}\label{ch11}
	n^2H^3\ =\ \frac{\big(\!\ch_1(\cH^{-1}(F)).H^2\big)^2}{H^3}\ \ge\ \ch_1(\cH^{-1}(F))^2.H,
	\end{equation}
	with equality if and only if $\ch_1(\cH^{-1}(F))$ is a multiple of $H$ in $H^2(X,\Q)$. Combining \eqref{c1} and \eqref{ch11} gives
	\begin{equation}\label{<>0}
	\ch_2(\cH^0(F)).H\ \le\ 0.
	\end{equation}
	But $\dim\mathrm{supp}\,\cH^0(F)\le1$, so this shows  $\dim\mathrm{supp}\,\cH^0(F)=0$ and (\ref{<>0}, \ref{c1}, \ref{ch11}) are equalities. Thus $\ch_1(\cH^{-1}(F))$ is a multiple of $H$ in $H^2(X,\Q)$ and $D' = 0$. Hence there is a zero-dimensional subscheme $Z\subset X$ such that
	\begin{equation}\label{TC}
	\cH^{-1}(F)\ \cong\ T(-n)\otimes I_Z
	\end{equation}
	for some $T\in\Pic\_0(X)$. If $Z$ were nonempty then $\nu\_{b',w}(\cO_Z)=+\infty$, so combining the $\cA_{\;b'}$-short exact sequences
	$$
	\cO_Z \Into \cH^{-1}(F)[1] \Onto T(-n)[1] \qquad\text{and}\qquad \cH^{-1}(F)[1]\Into F \Onto \cH^0(F)
	$$
	gives the destabilising subobject $\cO_Z \hookrightarrow F$, contradicting the $\nu\_{b',w}$-semistability of $F$.
	
	Therefore $Z=\emptyset$ and $\cH^{-1}(F)\cong T(-n)$.
	Since $\cH^0(F)$ is supported in dimension zero, $\Ext^2(\cH^0(F), T(-n)) = 0$, so $F$ splits as $T(-n)[1] \oplus \cH^0(F)$. If $\cH^0(F)\ne0$ it has $\nu\_{b', w} = +\infty$, destabilising $F$. Thus in fact $F=T(-n)[1]$.
\end{proof}	

\begin{proof}[Proof of Theorem \ref{Thm. large n-arbitrary rank}]
Assume for a contradiction the existence of a $\nu\_{b,w}$-semistable $E \in \cA_{\;b}$ of class $\vi_n$ which is neither a sheaf nor of the form \eqref{fake}. Moving down from the point $(b,w)$ we hit a wall $\ell_E$ for $E$ by Lemma \ref{final wall}, with destabilising sequence $E_1\into E\onto E_2$ in $\cA_{\;b}$ described by Proposition \ref{prop.first destabilising object}. In particular $\ch_0(E_1)\in[1,r-1]$ \eqref{ch0} already gives a contradiction if $r=1$. Thus we can use this as the base case of an induction on rank and assume the Theorem has been proved for $\rk(E)\le r-2$.

Set $\ch(E_1)=(\ch_0, \ch_1, \ch_2, \ch_3)$ so $E_2$ is a $\nu\_{b,w}$-semistable object of Chern character
\begin{equation*}
    \vi_n - \ch(E_1)\,=\,\Big(r-1-\ch_0 , \ nH+D -\ch_1 , \ -\beta -\ch_2-\tfrac12n^2H^2,\  -m -\ch_3 +\tfrac16n^3H^3 \Big).
\end{equation*}
We have $r-1-\ch_0\in[0,r-2]$ by \eqref{ch0} and $(D-\ch_1).H^2 = 0$ by \eqref{ch1}. Moreover, by \eqref{bounds on ch2}, $(\beta +\ch_2).H$ has upper and lower bounds $-p_1',\,p_2'$ depending on $p_1$ and $p_2$, and by \eqref{bounds on ch3} $\ch_3 +m$ has an upper bound $q'$ depending on $r,\,p_1,\,p_2$ and $q$:
\beq{bounds'}
 -p'_1\ \leq\ (\beta+\ch_2).H\ \leq\ p'_2 \qquad \text{and} \qquad m+\ch_3\ \leq\ q'.
\eeq
Therefore we may apply the Theorem to $E_2$ of rank $\le r-2$. But it cannot be a sheaf since then $E$ would be an extension of two sheaves $E_1$ and $E_2$ and hence a sheaf itself.

So $E_2=F_2\oplus T(-n)[1]$, giving us a surjection $E\onto T(-n)[1]$ in $\cA_{\;b}$ for $(b,w) \in \ell_E \cap U$. Its kernel in $\cA_{\;b}$ is a $\nu\_{b,w}$-semistable object $F$ of class $\mathsf v$, which by Lemma \ref{lem: bounding the first wall} is a torsion-free sheaf. So the induced map of sheaves $T(-n)\to F$ is either an injection or zero, giving the contradiction that $E$ is either a sheaf or of the form \eqref{fake}.
\end{proof}

\section{Destabilising objects are sheaves}\label{subshvs}
In this section we will analyse walls for sheaves of class $\vi_n$ and show the destabilising objects are sheaves, except on one wall where we get Joyce-Song pairs.

\begin{Def}\label{+ve}
We say a class $v\in K(X)$ with $\ch(v)= (\ch_0, \ch_1, \ch_2, \ch_3) \in H^{2*}(X, \mathbb{Q})$ is positive, written $v>0$, if and only if
\begin{itemize}
\item $\ch_0>0$, or
\item $\ch_0=0$ and $\ch_1\!.\;H^2>0$.
\end{itemize}
\end{Def}

We will define a ``safe area'' $U_v\subset U$ for a positive class $v$ such that whenever $(b,w)\in U_v$, all $\nu\_{b,w}$-semistable objects of class $v$ are sheaves, and any semi-destabilising objects are also sheaves.\medskip

If $\Delta_H(v)<0$ there are no semistable objects by \cite[Theorem 3.5]{BMS}, so we set $U_v=U$.

If $\Delta_H(v)=0$ set $U_v=\{(b,w)\in U\colon b<\mu\_H(v)\}$ to be the points of $U$ to the left of $\Pi(v)$.

If $\Delta_H(v)>0$ there are two cases. Either $\ch_0>0$, so $\Pi(v)$ lies outside $\overline U=U\cup\partial U$, then draw any line through $\Pi(v)$ that misses $U$. Or $\ch_0=0$, then draw any line of slope $\ch_2\!.\;H/\!\;\ch_1\!.\;H^2$ that misses $U$. Now rotate the line clockwise about $\Pi(v)$ in the first case, or fix its slope $\ch_2\!.\;H/\!\;\ch_1\!.\;H^2$ and move it upwards in the second. In both cases it will eventually pass through $U$ (to the left of $\Pi(v)$ in the first case), intersecting $\partial U$ in two points which move further apart until we find a \emph{unique} $\ell_v$ for which the $b$ values $a_v < b_v$ of the two points of $\ell_v\cap\partial U$ satisfy
    \begin{equation}\label{l-v}
H^3(b_v -a_v) \= \ch_1\!.\,H^2 -b_v\ch_0H^3\ >\ 0.
    \end{equation}
    Notice the right hand inequality is immediate from $v>0$ when $\ch_0=0$; when $\ch_0>0$ it is the statement that both intersection points are to the left of $\Pi(v)$. 

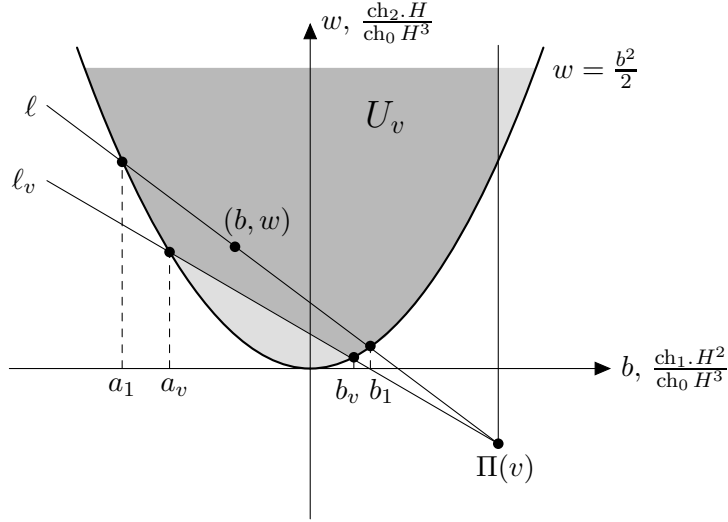
\begin{figure}[h]
	\begin{centering}
		\definecolor{zzttqq}{rgb}{0.27,0.27,0.27}
		\definecolor{qqqqff}{rgb}{0.33,0.33,0.33}
		\definecolor{uququq}{rgb}{0.25,0.25,0.25}
		\definecolor{xdxdff}{rgb}{0.66,0.66,0.66}
		
		\begin{tikzpicture}[line cap=round,line join=round,>=triangle 45,x=1.0cm,y=1.0cm]
		
		\draw[->,color=black] (-4,0) -- (4,0);
		\draw  (4, 0) node [right ] {$b,\,\frac{\ch_1\!.\;H^2}{\ch_0H^3}$};

		%\fill[gray!40!white, draw=black] (-3,4) parabola (3,4);

		\fill [fill=gray!25!white] (0,0) parabola (3,4) parabola [bend at end] (-3,4) parabola [bend at end] (0,0);
		
			    \fill[fill=gray!55!white](-3, 4)--(-2.5,2.75)--(-2, 1.8)--(-1.87,1.55)--(.58, .13)--(1, .42)--(1.4,.85)--(2, 1.75)--(2.5, 2.75)--(2.5, 4); 
		
		\draw[thick]  (0,0) parabola  (3.1,4.27); 
		\draw[thick]  (0,0) parabola (-3.1,4.27); 
		\draw  (3.8 , 3.6) node [above] {$w= \frac{b^2}{2}$};
		
		\draw[->,color=black] (0,-2) -- (0,4.6);
		\draw  (0.9, 4.2) node [above ] {$w,\,\frac{\ch_2\!.\;H}{\ch_0H^3}$};
		
		\draw[color=black, dashed] (.58, 0)-- (.58, .15);
		\draw[color=black, dashed] (.8, 0)-- (.8, .3);
		\draw[color=black] (2.5,-1) -- (2.5,4.3);
		\draw [dashed, color=black] (-1.87,0) -- (-1.87,1.55);
		\draw [dashed, color=black] (-2.5,0) -- (-2.5,2.7);
		\draw [color=black] (2.5, -1) -- (-3.5, 2.5);
		\draw [color=black] (2.5, -1) -- (-3.5, 3.5);

		\draw  (-2.5,0) node [below] {$a_1$};
		\draw  (-1.8,0) node [below] {$a_v$};
		\draw  (1, 3) node [above] {\Large{$U_v$}};
		\draw  (.5, 0) node [below] {$b_v$};
		\draw  (.95, 0) node [below] {$b_1$};
		\draw  (2.6,-1) node [below] {$\Pi(v)$};
	    \draw  (-3.5, 2.5) node [left] {$\ell_v$};
		\draw  (-3.5, 3.5) node [left] {$\ell$};
		\draw  (-.7,1.62) node [above] {$(b,w)$};
		
		\begin{scriptsize}
		\fill (.58, .15) circle (2pt);
		\fill (-1.87,1.55) circle (2pt);
		\fill (2.5,-1) circle (2pt);
		\fill (-2.5,2.75) circle (2pt);
		\fill (.8,.3) circle (2pt);
		\fill (-1,1.62) circle (2pt);
		
		\end{scriptsize}
		
		\end{tikzpicture}
		
		\caption{The safe area $U_v$ when $\ch_0 >0$}
		
		\label{safe area}
		
	\end{centering}
\end{figure}
We define the safe area $U_v\subset U$ of $v$ to be the set of $(b,w) \in U$ strictly above $\ell_v$ and to the left of $\Pi(v)$ as in Figure \ref{safe area}. Explicitly, when $\Delta_H(v)\ge0$,
$$
U_v\ :=\ \big\{(b,w)\in U\colon\,b<\mu_H(v),\ 2w>(a_v+b_v)b-a_vb_v\big\},
$$
where
$$
a_v\=\frac{-\sqrt{1+\ch_0}}{1+\ch_0+\sqrt{1+\ch_0}}\frac{\sqrt{\Delta_H}}{H^3}+\overline\beta,\qquad
b_v\=\frac1{1+\ch_0+\sqrt{1+\ch_0}}\frac{\sqrt{\Delta_H}}{H^3}+\overline\beta
$$
and $\overline\beta$ is defined as in \cite[Equation (16)]{BMS},
$$
\overline\beta\ :=\,\left\{\!\!\begin{array}{ll}
\frac{\ch_2\!.H}{\ch_1\!.H^2} & \text{if }\ch_0=0, \\
\frac{\ch_1\!.H^2-\sqrt{\Delta_H}}{\ch_0H^3} & \text{if }\ch_0>0.
\end{array}\right.
$$
The equality \eqref{l-v} will ensure that walls $\ell\cap U$ in the safe area are sufficiently long that we can apply our standard argument \eqref{estimate} to show that semistable objects $E$ of class $v$ (and their destabilising quotients) must be sheaves.

\begin{Prop}\label{prop. general-sheaf}
If $E\in\cA_{\;b}$ is $\nu\_{b,w}$-semistable object of class $v>0$ and $(b,w) \in U_v$ then
\begin{itemize}
\item $E$ is a sheaf, and
\item if $E_1 \hookrightarrow E \twoheadrightarrow E_2$ is a short exact sequence in $\cA_{\;b}$ with $\nu\_{b,w}(E_i)=\nu\_{b,w}(E)$ then the $E_i$ are sheaves and $(b,w) \in U_{\ch(E_i)}$ for $i=1,2$. 
\end{itemize}
\end{Prop}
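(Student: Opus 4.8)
The plan is to handle the two bullets separately, each one ultimately through the key estimate \eqref{estimate}.

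\emph{$E$ is a sheaf.} Suppose not, so that $\cH^{-1}(E)$ is a nonzero torsion-free sheaf of rank $r'\ge1$. By Proposition \ref{prop. locally finite set of walls} every wall for the class $v$ passes through $\Pi(v)$ (or, when $\ch_0(v)=0$, is parallel of slope $\ch_2(v).H/\ch_1(v).H^2$), so the line $\ell$ through $(b,w)$ and $\Pi(v)$ meets no wall inside $U$; hence $E$ is $\nu\_{b,w}$-semistable, and in particular lies in $\cA_{\;b}$, all along $\ell\cap U$. Because $(b,w)\in U_v$ lies strictly above $\ell_v$, so does $\ell$; and since $\ell_v$ lies above $\partial U$ on $[a_v,b_v]$, so does $\ell$. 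Thus $E\in\cA_{\;b}$ for $b$ near $a_v$ and near $b_v$, the argument producing \eqref{estimate} applies with $\tilde\ell=\ell_v$, $\tilde a=a_v$, $\tilde b=b_v$, and --- as $\ell$ is \emph{strictly} above $\ell_v$ --- it yields the strict inequality $\ch_1(v).H^2 > r'(b_v-a_v)H^3+\ch_0(v)H^3b_v$. Substituting the defining relation \eqref{l-v}, namely $(b_v-a_v)H^3=\ch_1(v).H^2-b_v\ch_0(v)H^3>0$, and cancelling, forces $r'<1$, a contradiction. (If $\Delta_H(v)=0$, so $U_v=\{b<\mu\_H(v)\}$, the same works with $\tilde\ell$ any non-tangent line through $\Pi(v)\in\partial U$ whose second boundary point lies to the left.)

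\emph{The sub-objects.} Given the short exact sequence $0\to E_1\to E\to E_2\to0$ in $\cA_{\;b}$, its long exact sheaf-cohomology sequence \eqref{LES} together with $\cH^{-1}(E)=0$ forces $\cH^{-1}(E_1)=0$, so $E_1$ is a sheaf. A standard see-saw argument shows $E_1$ and $E_2$ are $\nu\_{b,w}$-semistable at $(b,w)$ (a sub-object of $E_1$ is a sub-object of $E$ and so, $E$ being semistable, has slope $\le\nu\_{b,w}(E)=\nu\_{b,w}(E_1)$; dually for quotients of $E_2$). Hence $\ch(E_1)$ satisfies (a)--(c): $\ch_0\ge0$ since $E_1$ is a sheaf; $\ch_1(E_1).H^2>0$ when $\ch_0(E_1)=0$, as otherwise $\nu\_{b,w}(E_1)=+\infty\ne\nu\_{b,w}(E)$; and $\Delta_H(E_1)\ge0$ by \eqref{BOG}. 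For $E_2$ the same argument gives (b) and (c); condition (a), i.e.\ $\ch_0(E_2)\ge0$, will come out of the sheaf-ness of $E_2$ below. Finally, equality of the three slopes at $(b,w)$ places $(b,w),\,\Pi(E),\,\Pi(E_1),\,\Pi(E_2)$ on a single line $L$ (reading any rank-zero summand as a point at infinity); write $\alpha<\beta$ for the $b$-values of the two points $L\cap\partial U$.

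\emph{Safe areas, and $E_2$ is a sheaf.} Using \eqref{l-v} and the elementary fact that the two $\partial U$-intersections of a line through $\Pi(v)$ spread apart as the line rises --- here the hypothesis $\ch_0(v)\ge0$ is used --- membership $(b,w)\in U_v$ is equivalent to $\ch_1^{\beta H}(v).H^2<(\beta-\alpha)H^3$; likewise, for a class $u$ satisfying (a)--(c), $(b,w)\in U_u$ is equivalent to $\ch_1^{bH}(u).H^2>0$ together with (when $\Delta_H(u)>0$) $\ch_1^{\beta H}(u).H^2<(\beta-\alpha)H^3$. Now $\ch_1^{\beta H}(\,\cdot\,).H^2$ is additive. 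Since $E_1$ is a sheaf with $\Pi(E_1)$ lying on $L$ to the right of $(b,w)$ and on or below $\partial U$ (by $\Delta_H(E_1)\ge0$), the line $L$ must leave $U$ before reaching $\Pi(E_1)$, so $\beta\le\mu\_H(E_1)$ and hence $\ch_1^{\beta H}(E_1).H^2\ge0$ (in rank $0$ this reads $\ch_1(E_1).H^2>0$). Therefore $\ch_1^{\beta H}(E_2).H^2=\ch_1^{\beta H}(v).H^2-\ch_1^{\beta H}(E_1).H^2<(\beta-\alpha)H^3$. Now apply \eqref{estimate} to $F=E_2$ with $\ell=\tilde\ell=L$, so $\tilde a=\alpha,\ \tilde b=\beta$ and the hypothesis ``$\ell$ on or above $\tilde\ell$'' holds trivially (and $E_2$ is $\nu\_{b,w}$-semistable, hence in $\cA_{\;b}$, all along $L\cap U$): if $\rk(\cH^{-1}(E_2))=:r''\ge1$ we would get $\ch_1^{\beta H}(E_2).H^2\ge r''(\beta-\alpha)H^3\ge(\beta-\alpha)H^3$, a contradiction. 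So $E_2$ is a sheaf, $\ch_0(E_2)\ge0$, $\ch(E_2)$ satisfies (a)--(c), the same reasoning gives $\ch_1^{\beta H}(E_2).H^2\ge0$, and hence $\ch_1^{\beta H}(E_1).H^2<(\beta-\alpha)H^3$ too. Together with $\ch_1^{bH}(E_i).H^2>0$ (the slopes $\nu\_{b,w}(E_i)$ being finite), this gives $(b,w)\in U_{\ch(E_1)}\cap U_{\ch(E_2)}$, which with the sheaf-ness of the $E_i$ is the second bullet.

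\emph{Main obstacle.} The delicate step is the third paragraph: correctly identifying membership in $U_v$ and in the $U_{\ch(E_i)}$ with a single inequality on the common line $L$ and its boundary points $\alpha<\beta$, pinning down the sign $\ch_1^{\beta H}(E_i).H^2\ge0$, and dealing with the degenerate sub-cases ($\Delta_H=0$, rank $0$, $\ch_0(v)=0$). A secondary point is to check that \eqref{estimate} is genuinely available along all of $L\cap U$ --- i.e.\ that $E$ and $E_2$ stay in the same heart $\cA_{\;b}$, with no shift, as $(b,w)$ moves along $L\cap U$ --- which is exactly what the wall-and-chamber structure of Proposition \ref{prop. locally finite set of walls} provides.
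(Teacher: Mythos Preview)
Your proof is correct and follows essentially the same strategy as the paper's: both bullets reduce to the key estimate \eqref{estimate} together with the additivity of $\ch_1^{bH}(\,\cdot\,).H^2$. The only cosmetic difference is that you work throughout with $\alpha,\beta$ (the $\partial U$-intersections of the line $L$ through $(b,w)$ and $\Pi(v)$) and recast membership in a safe area as the single inequality $\ch_1^{\beta H}(u).H^2<(\beta-\alpha)H^3$, whereas the paper runs the same contradiction using the fixed reference line $\ell_v$ and its endpoints $a_v,b_v$; both arguments amount to the same comparison of chord lengths on $\partial U$.
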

\begin{proof}
Consider the line $\ell$ which passes through $(b,w)$ and $\Pi(v)$ if $\ch_0 > 0$, or passes through $(b,w)$ and has slope $\frac{\ch_2\!.\;H}{\ch_1\!.\;H^2}$ if $\ch_0 = 0$. Since $\ell$ lies \emph{strictly above} $\ell_v$, we can run our standard argument \eqref{estimate} with 
$E,\ell_v,a_v,b_v$ in place of $F,\tilde{\ell}, \tilde{a},\tilde{b}$. If $r':=\ch_0(\cH^{-1}(E))\ge1$ it gives
$$
\ch_1\!.\;H^2\ >\ \ch_0b_vH^3 +r'(b_v - a_v)H^3\ \ge\ \ch_0b_vH^3 +(b_v - a_v)H^3,  
$$
%$E,v,a_v-\epsilon,b_v$ in place of $F,v_n,-n+\epsilon_1,-\epsilon_2$ to give
%$$
%\ch_1\!.\;H^2\ \ge\ -r'(a_v-\epsilon)H^3+(\ch_0+r')b_vH^3
%$$
%for sufficiently small $\epsilon>0$. If $r':=\ch_0(\cH^{-1}(E))\ge1$ this gives
%$$
%\ch_1\!.\;H^2\ >\ \ch_0b_vH^3 +r'(b_v - a_v)H^3\ \ge\ \ch_0b_vH^3 +(b_v - a_v)H^3,
%$$
contradicting \eqref{l-v}. Since $\cH^{-1}(E)$ is torsion-free for $E\in\cA_{\;b}$ \eqref{Abdef} we conclude  $\cH^{-1}(E)=0$ and $E$ is a sheaf.\medskip

Therefore the long exact sequence of cohomology sheaves of $E_1 \hookrightarrow E \twoheadrightarrow E_2$ is
$$
    0 \To \cH^{-1}(E_2)
    \To E_1 \To E\To \cH^{0}(E_2)\To 0.
$$
If $r''\coloneqq \rk\;(\cH^{-1}(E_2))\ge1$ then \eqref{estimate} applied to $E_2,\ell_v,a_v,b_v$ in place of $F,\tilde{\ell},\tilde{a}, \tilde{b}$ gives 
\begin{align*}
\ch_1(E_2).H^2\ & >\ % -r''(a_v-\epsilon)H^3+\ch_0(\cH^0(E_2))\;b_vH^3 \\
 r''(b_v-a_v)H^3+\(\!\ch_0-\ch_0(E_1)\)b_vH^3%+\epsilon r''H^3, 
 \\
&\ge \ (b_v-a_v)H^3+\ch_0b_vH^3-\ch_0(E_1)b_vH^3.
\end{align*}
%if $r'':=\rk\;(\cH^{-1}(E_2))\ge1$. 
But $E_1\in\cA_{\;b_v}$ so $\ch_1(E_1).H^2\ge\ch_0(E_1)b_vH^3$, giving
\begin{align*}
\ch_1\!.\,H^2\ &=\ \ch_1(E_1).H^2 +\ch_1(E_2).H^2\ \ge\ \ch_0(E_1)b_vH^3+\ch_1(E_2).H^2 \\
&>\ (b_v - a_v)H^3+\ch_0b_vH^3.
\end{align*}
This contradicts \eqref{l-v}, so $r''=0$. Since $\cH^{-1}(E_2)$ is torsion-free it is zero and $E_2$ is a sheaf. Thus both $E_i$ are $\nu\_{b,w}$-semistable sheaves with $\nu\_{b,w}(E_i)=\nu\_{b,w}(E)<+\infty$ so have positive classes $[E_i]>0$ in the sense of Definition \ref{+ve}. \medskip

Finally set $v_i:=\ch(E_i)$. Since $E_i$ is a sheaf in $\cA_{\;b}$ it has $\ch_0(E_i)\ge0$ and $\ch_1(E_i).H^2-b\ch_0H^3\ge0$ by \eqref{65}. Thus $(b,w)$ is to the left of $\Pi(E_i)$, and to prove $(b,w)\in U_{v_i}$ it remains to show the line $\ell$ joining $(b,w),\,\Pi(E)$ and $\Pi(E_1)$ is above $\ell_{v_i}$.

Suppose not, so $\ell_{v_i}$ is strictly above $\ell_v$. Thus
\beq{NA}
b_{v_i}\ >\ b_v \quad\text{and}\quad b_{v_i} -a_{v_i}\ >\ b_v- a_v.
\eeq
Since $\ch_0(E_i) \geq 0$, this gives 
\beq{truro}
    H^3(b_{v_i} -a_{v_i}) \= \ch_1(E_i).H^2 -b_{v_i}\ch_0(E_i)H^3\ \leq\ \ch_1(E_i).H^2 -b_{v}\ch_0(E_i)H^3
\eeq
But since $\ell$ lies above $\ell_v$, we see $E_j\in\cA_{\;b_v}$, where $\{i,j\}=\{1,2\}$. Thus $\ch_1^{b_vH}(E_j).H^2\ge0$, and is actually $>0$ since $\nu\_{b\_v,w}(E_j)=\nu\_{b\_v,w}(E)<+\infty$. That is,
\begin{equation*}
\ch_1(E_i).H^2 - b_v H^3\ch_0(E_i)\ <\ \ch_1(E).H^2 - b_v H^3\ch_0(E).
\end{equation*}
Combined with \eqref{truro} we get
$$
H^3(b_{v_i} -a_{v_i})\ <\ \ch_1(E).H^2 -b_{v}\ch_0(E)H^3 \= H^3(b_v-a_v),
$$
contradicting \eqref{NA}.
\end{proof}

For the next Lemma recall the map $\ch_H=(\ch_0H^3,\ch_1\!.\;H^2,\ch_2\!.\;H)\colon K(X)\to\Q^3$ of \eqref{chH}.

\begin{Lem}\cite[Corollary 3.10]{BMS}\label{lem:quadratic}
Fix $(b,w) \in U$. If $E_1 \hookrightarrow E \twoheadrightarrow E_2$ is an $\cA_{\;b}$-exact sequence of $\nu\_{b,w}$-semistable objects, all of the same slope $\nu\_{b,w}<+\infty$, then either
\begin{align*}
\mathrm{(i)}\ &0\,\le\,\Delta_H(E_i)\,<\,\Delta_H(E) \quad\mathrm{for}\ i=1, 2,\ \mathrm{or}\\
\mathrm{(ii)}\ &\Delta_H(E_i)\,=\,0\,=\,\Delta_H(E) \quad\mathrm{and\ the}\,\ch_H(E_i)\ \mathrm{are\ proportional\ to\,}\ch_H(E)\ \mathrm{for}\ i=1, 2.
\end{align*}
\end{Lem}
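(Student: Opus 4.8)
The plan is to reduce the statement to elementary linear algebra for the quadratic form $\Delta_H$, combined only with the Bogomolov--Gieseker inequality \eqref{BOG}. Write $C_i(A):=\ch_i(A).H^{3-i}$, so that $\Delta_H(A)=C_1(A)^2-2C_0(A)C_2(A)$ depends only on $\ch_H(A)\in\Q^3$, and let $\overline\Delta_H$ be the associated symmetric bilinear form, so $\Delta_H(A+B)=\Delta_H(A)+\Delta_H(B)+2\overline\Delta_H(A,B)$. Since $E\mapsto\ch_H(E)$ is additive, the exact sequence gives
\[
\Delta_H(E)\=\Delta_H(E_1)+\Delta_H(E_2)+2\overline\Delta_H(E_1,E_2).
\]
So everything follows once I establish: (1) $\Delta_H(E),\Delta_H(E_1),\Delta_H(E_2)\ge0$; and (2) $\overline\Delta_H(E_1,E_2)\ge0$, with equality forcing $\Delta_H(E_1)=\Delta_H(E_2)=0$ and $\ch_H(E_1),\ch_H(E_2)$ proportional. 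Indeed, if $\overline\Delta_H(E_1,E_2)>0$ then $\Delta_H(E)>\Delta_H(E_i)$ for each $i$, which is case (i); and if $\overline\Delta_H(E_1,E_2)=0$ then $\Delta_H(E_i)=0$ and all three of $\ch_H(E),\ch_H(E_1),\ch_H(E_2)$ are proportional (so in particular $\Delta_H(E)=0$), which is case (ii). Claim (1) is immediate: $E,E_1,E_2$ are $\nu\_{b,w}$-semistable, so \eqref{BOG} applies to each.

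For claim (2) I would use the geometry of $\Delta_H$ on the slice of constant slope. Let $\nu_0:=\nu\_{b,w}(E)=\nu\_{b,w}(E_1)=\nu\_{b,w}(E_2)\in\R$ be the common finite slope. The locus $\{\nu\_{b,w}=\nu_0\}$ is the plane $W\subset\R^3$ cut out by the linear equation $C_2=\nu_0 C_1+(w-\nu_0 b)C_0$; substituting this into $\Delta_H$ expresses $\Delta_H|_W$, in coordinates $(C_0,C_1)$, as the binary quadratic form $C_1^2-2\nu_0 C_0C_1-2(w-\nu_0 b)C_0^2$, whose discriminant is $4\big((2w-b^2)+(b-\nu_0)^2\big)>0$ precisely because $(b,w)\in U$. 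Hence $\Delta_H|_W$ is Lorentzian, of signature $(1,1)$, and $\{\Delta_H|_W\ge0\}$ is a union of two opposite closed convex cones ("wedges") meeting only at the origin. A one-line check (set $C_1=bC_0$, giving $\Delta_H|_W=(b^2-2w)C_0^2\le0$) shows the denominator $D:=C_1-bC_0$ of $\nu\_{b,w}$, which is $\ge0$ on all of $\cA_{\;b}$ by \eqref{65}, vanishes on $\{\Delta_H|_W\ge0\}$ only at the origin; so $D$ has constant sign on each wedge, and $K:=\{\Delta_H|_W\ge0\}\cap\{D\ge0\}$ is exactly one wedge --- a convex cone on which $\Delta_H\ge0$, vanishing only along its two bounding rays. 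Now $\ch_H(E),\ch_H(E_1),\ch_H(E_2)$ all lie in $W$, are nonzero and have $D>0$ (all because their slopes are finite), hence lie in $K\setminus\{0\}$. Claim (2) is then the reverse Cauchy--Schwarz inequality for a Lorentzian form on one of its wedges: $\overline\Delta_H\ge0$ on $K\times K$, and $\overline\Delta_H(x,y)=0$ with $x,y\in K\setminus\{0\}$ forces $x,y$ to be null (so $\Delta_H(E_i)=0$) and proportional.

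The only real content is claim (2), and within it the substantive point is the discriminant computation showing $\Delta_H|_W$ is Lorentzian; this is exactly where the hypothesis $(b,w)\in U$, i.e. $w>\tfrac12 b^2$, enters, and without it the whole argument collapses. Everything else --- the reverse Cauchy--Schwarz on a Lorentzian wedge together with its equality case, the identification of $K$ via $D\ge0$, and the bookkeeping of the two cases --- is routine. Since this is precisely the argument behind \cite[Corollary 3.10]{BMS}, one may simply cite that reference; the above is the self-contained version.
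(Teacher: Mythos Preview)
Your proof is correct and follows essentially the same route as the paper's: restrict $\Delta_H$ to the $2$-plane of fixed slope, show it is Lorentzian there, pick out the wedge on which the denominator $C_1-bC_0$ is nonnegative, and invoke reverse Cauchy--Schwarz together with its equality case. The only cosmetic difference is that the paper establishes signature $(1,1)$ by observing that $\Delta_H$ has signature $(2,1)$ on $\R^3$ and that the vector $(1,b,w)$ lies in the plane with $\Delta_H(1,b,w)=b^2-2w<0$, whereas you compute the discriminant of the binary form directly; your line $D=0$ is exactly the line $\R\cdot(1,b,w)$ in the paper's argument.
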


\begin{proof} Since $\Delta_H$ and $\nu\_{b,w}$ factor through $\ch_H$ we consider them as functions on $\Q^3$. Thus $\Delta_H$ defines a quadratic form of signature $(2,1)$ on $\R^3$.

Consider the 2-dimensional subspace $V\subset\R^3$ on which either $\nu\_{b,w}$ equals $\nu\_{b,w}(E)$ or both its numerator and denominator vanish. The vector $v:=(1,b,w)\in V$ satisfies
\begin{itemize}
\item $\Delta_H(v)<0$, so $\Delta_H$ has signature (1,1) on $V,$ and
\item the function $\ch_1\!.\;H^2-b\ch_0\!H^3$ (the denominator of $\nu\_{b,w}$) vanishes on $v$.
\end{itemize}
Thus the line $\R v$ splits $V$ --- and the positive cone $\{\Delta_H|_V\ge0\}$ --- into two components on which $\ch_1\!.\,H^2-b\ch_0H^3$ takes different signs. Let $\mathcal C^+$ denote the component of the cone $\{\Delta_H|_V\ge0\}$ on which $\ch_1\!.\,H^2-b\ch_0H^3\ge0$.

By  \cite[Theorem 3.5]{BMS} both $\ch_H(E_i)\in\mathcal{C}^{+}$. Thus the Lemma follows from an elementary geometric fact about the bilinear form $\langle \,\cdot\,, \,\cdot\, \rangle$ associated to $\Delta_H|_V$: if $v_1, v_2 \in \mathcal{C}^{+} -\{0\}$ then $\langle v_1, v_2 \rangle \geq 0$, with equality if and only if $v_1$ and $v_2$ are proportional and $\Delta_H(v_i) =0$.  
\end{proof}

We now describe all walls and semistable factors for $\nu\_{b,w}$-semistable objects of class $v_n$.

If the object is not a sheaf then by Theorem \ref{Thm. large n-arbitrary rank} it takes the form \eqref{fake}; it is therefore semistable on $\ell_{\js}\cap U$ and unstable on $U\smallsetminus\ell_{\js}$.

So now consider a $\nu\_{b,w}$-semistable \emph{sheaf} $E$ of class $v_n$ and let $\ell_E$ be a wall of instability for $E$. We will find three cases. In the first case, $(b,w)$ will be on the Joyce-Song wall $\ell_{\js}\subset\R^2$ \eqref{ljsdef}.
Here we will find $E$ is destabilised by a unique sequence $F\to E\to T(-n)[1]$ coming from a Joyce-Song pair \eqref{JSs}, up to tensoring by some $T\in\Pic\_0(X)$. 

In the other two cases the destabilising objects are \emph{sheaves}. In case 2(a) below their safe areas contain $(b,w)$, which will allow us to continue crossing walls. In case 2(b) one of the sheaves has no further walls and the other is of type $v_n$ for a strictly lower rank $\le r-2$, which will allow us to do an induction.
\medskip

So fix a $\nu\_{b,w}$-semistable sheaf $E$ of class $v_n$ and $(b,w) \in\ell_E\cap U$ to the left of $\Pi(E)$, i.e. $b<\frac n{r-1}$. Suppose $E_1 \hookrightarrow E \twoheadrightarrow E_2$ is a destabilising short exact sequence in $\cA_{\;b}$ and assume that $E_2$ is  semistable \emph{next to the wall}.\footnote{\label{nextto}I.e. $E_2$ is $\nu\_{b,w+\epsilon}$-semistable or $\nu\_{b,w-\epsilon}$-semistable for all sufficiently small $0\le\epsilon\ll1$. The point is we can avoid $E_2$ being an object like \eqref{fake} by replacing it by the final quotient of its Jordan-H\"older filtration.}
%which strictly destabilises either above or below $(b,w)$. That is, $\nu\_{b,w'}(E_1) > \nu\_{b,w'}(E_2)$ for either $w'>w$ or $w'< w$. 

\begin{Thm}\label{thm:all walls}
Under the above conditions, one of the following holds.
\begin{enumerate}
\item[\emph{(1)}] $E_1$ is a sheaf in class $\mathsf v,\ E_2 \cong T(-n)[1]$ for some $T\in\Pic\_0(X)$ and $(b,w)\in$ $\ell_{\js}$. On or above $\ell_{\js}$ 
there are no walls for sheaves of class $\alpha[\mathsf v]\in K_H(X)$ for $\alpha\in\big[\frac1r,1\big]$.
%In particular $E_1$ is $\nu\_{b,w_+}$-semistable for all $w_+\ge w$.
\item[\emph{(2)}] Both $E_i$ are sheaves with positive classes $[E_i]>0$ and either
    \begin{enumerate}
        \item[\emph{(a)}] $(b,w)$ lies in the safe areas $U_{[E_i]}$ for $i=1,2$, or
        \item[\emph{(b)}] one of the $E_i$ has $\ch_1(E_i).H^2 = 0$ and sheaves of class $\alpha[E_i]\in K_H(X)$ have
no walls on or above $\ell_E$ for $\alpha\in(0,1]$. The other sheaf $E_j$ has class of type $v_n$ for $\rk\le r-2$ --- that is, $\ch(E_j)+\ch(\cO_X(-n))$ satisfies bounds \eqref{general bound}.\footnote{That is, there are constants $p'_1,p'_2,q'$ depending on $r$ and the original bounds $p_1,p_2,q$. Recall we chose $n\gg0$ at the beginning \eqref{nzero} sufficiently large that our arguments work for all rank $r$ classes $\mathsf v$ satisfying the bounds \eqref{general bound} \emph{and} for all smaller rank classes $\mathsf v'$ satisfying the bounds $p'_1,p'_2,q'$ in \eqref{general bound}.}
    \end{enumerate}
\end{enumerate}
%    \begin{equation*}
%     \hspace{13mm}  0\,\leq\,\ch_1(E_i).H^2\,\leq\,nH^3, \qquad 0\,\leq\,\Delta_H(E_i)\,< \Delta_H(E) \quad\text{and}\quad B_{b_0,w_0}(E_i)\,\geq\,0
%    \end{equation*}
%for $i=1, 2$. In particular, $\ch_3(E_i)$ varies in a bounded interval. Furthermore
%    \begin{enumerate}
%        \item[\emph{(a)}] if $\ch_1(E_i).H^2 = 0$ for $i=1$ or $2$, then $E_i$ is $\nu\_{b,w_+}$-semistable for all $w_+\ge w$,\marginpar{Isn't $-(2\beta.H+1)$ sufficient?}
%                \begin{equation*}
%   \hspace{1cm}\ch_0(E_i)\ \ge\ 1 \quad\text{and}\quad -\frac{2\beta.H +1}r\ch_0(E_i)\ <\ \ch_2(E_i).H\ \leq\ 0,
%   \end{equation*}
%        \item[\emph{(b)}] otherwise $(b,w) \in U_{\ch(E_i)}$ for $i=1,2$.  
%    \end{enumerate}
%\end{enumerate}
\end{Thm}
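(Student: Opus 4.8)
The plan is a case analysis driven by the cohomology sheaves of $E_1,E_2$ and the position of $\ell:=\ell_E$, which by Lemma \ref{final wall} lies above or on $\ell_f$; recall that the intersections $a_f<b_f$ of $\ell_f$ with $\partial U$ may be taken within $\epsilon_i$ of $-n$ and $0$ by \eqref{bounds on bf}. We may assume $E_1,E_2$ are $\nu_{b,w}$-semistable on $\ell$ (replacing them by suitable subquotients). First, since $E$ is a sheaf, $\cH^{-1}(E)=0$, so the long exact sequence of cohomology of $E_1\hookrightarrow E\twoheadrightarrow E_2$ forces $\cH^{-1}(E_1)=0$ --- hence $E_1$ is a sheaf --- and gives $0\to\cH^{-1}(E_2)\to E_1\to E\to\cH^0(E_2)\to0$. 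Running the standard estimate \eqref{esti1}--\eqref{estimate} along $\ell$ with $\tilde\ell=\ell_f$, as in the proof of Proposition \ref{prop.first destabilising object}: both $E_i$ lie in $\cA_b$ for $b\in(a_f,b_f)$, which forces $\ch_1(E_i).H^2\ge0$ (integers, with $\epsilon_2$ tiny) summing to $nH^3$, and pins $r'':=\rk\cH^{-1}(E_2)$ to $\{0,1\}$, with $r''=1$ additionally giving $\ch_1(\cH^{-1}(E_2)).H^2=-nH^3$, $\ch_1(E_1).H^2=0$ and $\ch_0(E_1)=r$.

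\emph{Case $r''=1$.} Now $E_2$ has the rank $-1$ Chern character of Lemma \ref{lem: bounding the first wall, rank -1} with $D':=\ch_1(E_1)$, $D'.H^2=0$, and as $\ell$ lies above $\ell_f$ it meets $b=-n+1/H^3$ inside $U$, so that Lemma applies to $E_2$. To force $\beta'.H=0$ (in its notation $\beta'=\beta+\ch_2(E_1)$): since $\cH^{-1}(E_2)$ has rank $1$ and $\mu\_H=-n$, the condition $E_2\in\cA_b$ forces the left endpoint $a_E$ of $\ell\cap\partial U$ to satisfy $a_E\ge -n$, hence $a_f\ge a_E\ge -n$; as the gap $\ell_f(-n)-\ell_{\js}(-n)$ equals $-\beta.H/H^3$ to leading order, $a_f\ge -n$ is only possible when $\beta.H\ge0$. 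On the other hand $E_1$ being $\nu\_{b,w}$-semistable gives $\ch_2(E_1).H\le0$ by \eqref{BOG}, while $\ell$ lying above $\ell_f$ at $b=0$ --- where $\ell(0)=\ch_2(E_1).H/(rH^3)$ and $\ell_f(0)\to 2\beta.H/(rH^3)$ --- gives $\ch_2(E_1).H\ge 2\beta.H$. Together these squeeze $\beta.H=\ch_2(E_1).H=0$, so $\beta'.H=0$ and $E_2\cong T(-n)[1]$ for some $T\in\Pic\_0(X)$. Then $\ch(E_1)=\ch(\mathsf v)$ in $K(X)$, $E_1$ is a sheaf of class $\mathsf v$, and equality of slopes along $\ell$ puts the collinear triple $\Pi(\mathsf v),\Pi(v_n),\Pi(\cO(-n)[1])$ on $\ell$, i.e.\ $\ell=\ell_{\js}$ --- case (1). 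For its final sentence: a sheaf $G$ with $[G]=\alpha[\mathsf v]\in K_H(X)$ has $\ch_1(G).H^2=0$, so $\Pi(G)=\Pi(\mathsf v)$ and every wall for $G$ passes through $\Pi(\mathsf v)$; the Bogomolov--Gieseker inequality \eqref{quadratic form} applied to $\alpha\mathsf v$ and its candidate destabilisers confines such walls on or below $\ell_{\js}$ for $\alpha\in[\tfrac1r,1]$ by the same $-\tfrac n2b+\dots$ computation that produced $\ell_f$, so by local finiteness of walls (Proposition \ref{prop. locally finite set of walls}) none lies strictly above $\ell_{\js}$.

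\emph{Case $r''=0$.} Both $E_i$ are sheaves. If $(b,w)\in U_{v_n}$ --- equivalently $\ell$ is strictly above $\ell_{v_n}$, noting $v_n$ satisfies (a),(b),(c) since $\Delta_H(v_n)=\Delta_H(E)\ge0$ by \eqref{BOG} --- then Proposition \ref{prop. general-sheaf} applied to $v_n$ immediately gives $(b,w)\in U_{[E_i]}$ for $i=1,2$: case (2a). Otherwise $\ell$ sits between $\ell_f$ and $\ell_{v_n}$; here the threshold \eqref{l-v}, now applied to the subobjects, shows one of them, say $E_i$, has $\ch_1(E_i).H^2=0$. Then $\ch_0(E_i)\ge1$ (else $E_i$ is supported in dimension $\le1$ with $\nu\_{b,w}(E_i)=+\infty\ne\nu\_{b,w}(E)$), so the class $[E_j+\cO_X(-n)]=[\mathsf v]-[E_i]$ has rank $r-\ch_0(E_i)\le r-1$ and $\ch_1.H^2=0$, while bounding $\ch_2(E_i).H$ and $\ch_3(E_i)$ via \eqref{BOG} and Lemma \ref{lem: bounding the first wall} shows its remaining Chern numbers satisfy \eqref{general bound} with $p_1,p_2,q$ replaced by constants depending only on $r,p_1,p_2,q$. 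Thus $E_j$ is of type $v_n$ with rank $\le r-2$; and, repeating the argument of the previous paragraph with $E_i,\ell$ in place of $\mathsf v,\ell_{\js}$, sheaves of class $\alpha[E_i]$ have no walls on or above $\ell$ --- case (2b). The hard part will be this last case: deciding which $E_i$ is terminal, verifying $(b,w)\in U_{[E_i]}$ in (2a), and establishing the ``no walls on or above'' claims all hinge on controlling the mutual positions of $\ell_f,\ell_{\js},\ell_{v_n}$, the $\ell_{[E_i]}$ and the Bogomolov--Gieseker obstruction lines of all the rescaled classes $\alpha[\mathsf v],\alpha[E_i]$, which depend on the signs of $\beta.H$ and of the $\ch_2(E_i).H$ and must be tracked uniformly in $\alpha$ and $n\gg0$.
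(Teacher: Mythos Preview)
Your Case $r''=1$ argument has two genuine gaps. First, you assert that the estimates immediately give $\ch_0(E_1)=r$, but tightness of \eqref{esti1}--\eqref{estimate} only pins down the $\ch_1\!.\;H^2$ values; the long exact sequence gives merely $\ch_0(E_1)\in[1,r]$. The paper excludes $\ch_0(E_1)\le r-1$ by observing that then $\ch(E_2)$ is of type $v_n$ with rank in $[0,r-2]$ and satisfies bounds \eqref{general bound}, so Theorem~\ref{Thm. large n-arbitrary rank} forces $E_2$ to be a sheaf --- contradicting $r''=1$. Without this step you cannot apply Lemma~\ref{lem: bounding the first wall, rank -1} to $E_2$, since you do not yet know $\ch_0(E_2)=-1$. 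Second, your squeeze to $\beta.H=\ch_2(E_1).H=0$ rests on a sign error: the leading term of \eqref{equation of final wall} at $b=0$ is $-2\beta.H/(rH^3)$, not $+2\beta.H/(rH^3)$, so $\ell_E$ above $\ell_f$ there gives only $\ch_2(E_1).H\ge -2\beta.H$, and nothing collapses. Indeed the conclusion $\beta.H=0$ would be a constraint on the \emph{fixed} class $\mathsf v$, which is absurd since Case~(1) must occur for every $\mathsf v$ satisfying \eqref{general bound}. The paper instead uses the inequality of Lemma~\ref{lem: bounding the first wall, rank -1} on $E_2$ --- available only once $\ch_0(E_2)=-1$ is known --- to deduce $(\beta+\ch_2(E_1)).H\ge0$; this says $\Pi(E_1)$ lies on or above $\ell_{\js}$, and strict inequality would force $a_E<-n$, contradicting your (correct) observation $a_E\ge\mu\_H(\cH^{-1}(E_2))=-n$.

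Your Case $r''=0$ split is also off: whether $(b,w)\in U_{v_n}$ does not decide between (2a) and (2b). A wall below $\ell_{v_n}$ can perfectly well have both $\ch_1(E_i).H^2>0$, and you then still need to prove $(b,w)\in U_{[E_i]}$ --- which your dichotomy does not address. The paper does not use $U_{v_n}$ here at all; it works at $b_0=-1/(rH^3)$ to get $0\le\ch_1(E_i).H^2\le nH^3$ directly, then proves the general claim \eqref{claim} that any $F$ of the same slope with $\ch_1(F).H^2<nH^3$ and $\ch_0(F)\in[0,r]$ automatically has $(b,w)\in U_{[F]}$, simply because $\ell_E$ lies above $\ell_f$.
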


\begin{proof}
Note $E_1$ is a sheaf because $E$ is, but $E_2$ need not be.
%Let $\ell_E$ be the line passing through $\Pi(\vi_n)$ and $(b_0, w_0)$. It intersects the boundary $\partial U$ at two points with $b$-values $a_1 < b_1$. By Lemma \ref{final wall}, $\ell_E$ lies above $\ell_f$, so 
%\begin{equation}\label{b-values}
%    a_1 < a_f \qquad \text{and} \qquad b_f < b_1.
%\end{equation}
%$E_1 \hookrightarrow  E \twoheadrightarrow E_2$ be a destabilising sequence along the wall $\ell_E$. 
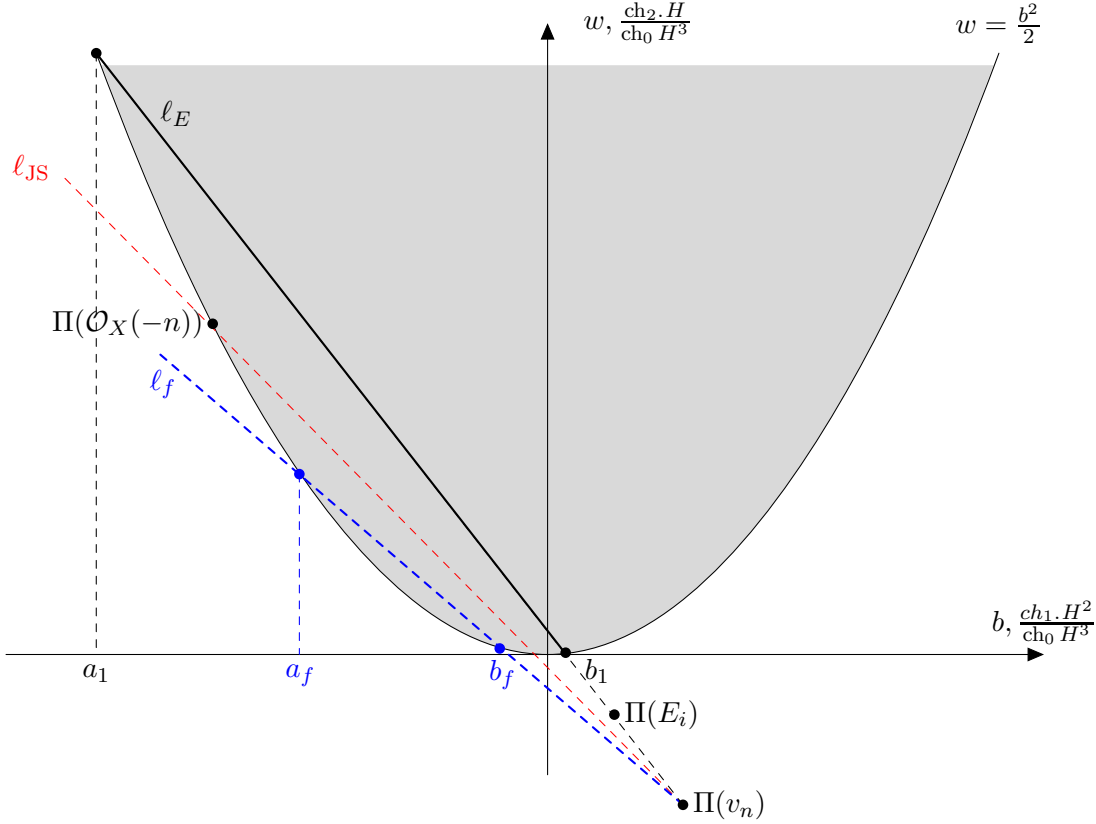
\begin{figure}[h]
	\begin{centering}
		\definecolor{zzttqq}{rgb}{0.27,0.27,0.27}
		\definecolor{qqqqff}{rgb}{0.33,0.33,0.33}
		\definecolor{uququq}{rgb}{0.25,0.25,0.25}
		\definecolor{xdxdff}{rgb}{0.66,0.66,0.66}
		
		\begin{tikzpicture}[line cap=round,line join=round,>=triangle 45,x=1.2cm,y=0.8cm]
		
		\draw[->,color=black] (-6,0) -- (5.5,0);
		\draw  (5.5, 0) node [above] {$b, \frac{ch_1.\;H^2}{\ch_0H^3}$};
		\fill [fill=gray!30!white] (0,0) parabola (4.94, 9.8) parabola [bend at end] (-4.94, 9.8) parabola [bend at end] (0,0);

		\draw  (0,0) parabola (5,10); 
		\draw  (0,0) parabola (-5,10); 
		\draw  (5 , 10) node [above] {$w=\frac{b^2}{2}$};

		\draw[->,color=black] (0,-2) -- (0,10.5);
		\draw  (1, 10) node [above ] {$w, \frac{\ch_2\!.\;H}{\ch_0H^3}$};
			\draw  (1.5, 6) node [above] {\Large{$U$}};
		\draw[dashed,color=red] (1.5, -2.5) -- (-5.4, 8);
		
		\draw[dashed,color=blue , thick] (1.5, -2.5) -- (-4.3, 5);
		\draw[dashed,color=black] (1.5, -2.5) -- (-5, 10);		
		\draw[thick, color=black] (-5, 10) -- (.2, .01);
		
		%\draw[color=black] (-.75,.55) -- (1,2.5);
		%\draw [color=black] (-.5,.24) -- (0.4,0.16);
		%\draw [color=red, thick] (0,-.5) -- (-3.8,6.4);
		%\draw [color=black, dashed] (0,-1.1) -- (-3.8,5.8);
		
		\draw [color=black,dashed] (-5, 10) -- (-5, 0);
		
		%\draw [color=black, dashed] (-3.45, -.05) -- (-3.45,  5.15);
		
		\draw[dashed, color=blue] (-2.75,3) -- (-2.75,0);
		%\draw [dashed, color=black] (-.73, .23) -- (-.73, -.2);
		%\draw [dashed, color=black] (-1,.3) -- (-.68,.45);
		\draw[color=blue] (-2.74,0) node [below]{$a_f$};
		\draw[color=blue] (-.5,.08) node [below]{$b_f$};
		
		\draw  (-3.7,5.5) node [left] {$\Pi(\mathcal{O}_X(-n))$};
		\draw  (1.5, -2.5) node [right ] {$\Pi(\vi_n)$};
		%\draw  (0, 3.9) node [right ] {$\frac{n^2}{4} + \frac{(\beta.H)^2}{(nH^3)^2}$ };
		\draw  (.55, .1) node [below ] {$b_1$};
		\draw  (-5, 0) node [below] {$a_1$};
		\draw  (.74,-1) node [right ] {$\Pi(E_i)$};
		\draw[color=blue] (-4, 4.5) node [left ] {$\ell_f$};
		\draw  (-4.4, 9) node [right] {$\ell_E$};
		\draw[color=red]  (-5.4, 8.1) node [left] {$\ell_{\js}$};
		
		\begin{scriptsize}
				\fill (1.5, -2.5) circle (2pt);
		
		\fill (.2, .03) circle (2pt);
		\fill (-5,10) circle (2pt);
		\fill (-3.71,5.5) circle (2pt);
		
		\fill[color=blue] (-2.75,3) circle (2pt);
		\fill[color=blue] (-.53,.1) circle (2pt);
		
		\fill (.74,-1) circle (2pt);
		
		%\fill (0,-1.1) circle (2pt);
    	%\fill (-.73, .23) circle (2pt);
		%\fill (-3.45,  5.2) circle (2pt);

		\end{scriptsize}
		
		\end{tikzpicture}
		
		\caption{Walls for objects of class $\vi_n$}
		
		\label{figure.walls for class v}
		
	\end{centering}
\end{figure}

\textbf{Case (1).} Suppose first that $E_2$ is not a sheaf. Then $E_2\in\cA_{\;b}$ implies $\cH^{-1}(E_2)$ is torsion-free, so it has rank $r'>0$. We have the exact sequence of cohomology sheaves
\beq{SES2}
    0 \To \cH^{-1}(E_2) \To E_1 \To E \To \cH^0(E_2) \To 0.
\eeq
Since $E_2$ lies on $\ell_E\cap U$ above or on $\ell_f$, our usual argument (\ref{bounds on bf}, \ref{esti1}) with $\epsilon=(4rH^3)^{-1}$ gives
\begin{equation*}
    \ch_1(\cH^{-1}(E_2)).H^2\ \le\ -nr'H^3 + \frac{r'}{4r} \quad \text{and} \quad \ch_1(\cH^0(E_2)).H^2\ \ge\ -\frac{\rk\;(\cH^0(E_2))}{4r}\,.
\end{equation*}
But $\rk\;(\cH^0(E_2))\le r$ so $\ch_1(\cH^0(E_2)).H^2 \geq 0$. Similarly $E_1\in\cA_{\;b}$ as $b\uparrow b_f$ gives
\begin{equation*}
  \ch_1(E_1).H^2\ \geq\ -\frac{\ch_0(E_1)}{4r}\ \ge\ -\frac{r'+r}{4r}\,.  
\end{equation*}
Combining these inequalities,
\begin{align*}
    nH^3\=\ch_1(E).H^2 \ =&\ \ch_1(E_1).H^2+ \ch_1(\cH^0(E_2)).H^2 -\ch_1(\cH^{-1}(E_2)).H^2\\
    \ge&\,\  nr'H^3-\frac{2r'+r}{4r}\,.
\end{align*}
Since $n \gg 0$ this shows $r'=1$. Therefore the last term $\frac{2r'+r}{4r}\in\(0,\frac34\big]$
%$\frac{2r'+r-1}{4r}\in\(0,\frac12\big]$ 
and all the above inequalities between integers must be sharp, yielding
\begin{equation} \label{chern} 
    \ch_1(\cH^{-1}(E_2)).H^2 \= -nH^3 \quad \text{and} \quad \ch_1(\cH^{0}(E_2)).H^2 \= \ch_1(E_1).H^2 \=0.  
\end{equation}
Thus $1\le\ch_0(E_1) \leq r$ by \eqref{SES2}. Now Lemma \ref{final wall} implies $\Pi(E_1)\in\ell_E$ lies above or on $\ell_f$; substituting this into \eqref{equation of final wall} gives
$$
 - \frac{3nm +2n^2\beta.H +2(\beta.H)^2/H^3}{n^2rH^3 +2(r-1)\beta.H}\ \le\ \frac{\ch_2(E_1).H}{\ch_0(E_1)H^3}\ \stackrel{\eqref{BOG}}{\leq}\ 0.
$$
Since $n\gg0$ this gives the bounds
\begin{equation}\label{3nm}
-\tfrac1r\ch_0(E_1)(2\beta.H +1)\ <\ \ch_2(E_1).H\ \le\ 0.
\end{equation}
%so Lemma \ref{final wall} implies 
%that 
%\begin{equation*}
%    - \frac{2\beta.H +1}{H^3(r+1)}< \frac{\ch_1(E_1).H^2}{\ch_0(E_1)H^3} \leq 0\ .
%\end{equation*}

For $\alpha\in\big[\frac1r,1\big]$ we have $-\frac1{\alpha^2\ch_0(E_1)^2H^3}\in(a_f,b_f)$ \eqref{bounds on bf}, so $b$ takes this value along $\ell_f\cap U$ and therefore along $\ell_E\cap U$ too. Hence Lemma \ref{lem: bounding the first wall} shows that for any sheaf of class $\alpha[\mathsf v]\in K_H(X)$ \eqref{K_Hdef} there is no wall above, on or just below $\ell_{\js}$ and that
\begin{equation}\label{upper bound for ch3}
\ch_3(E_1)\ \leq\ \frac23\ch_2(E_1).H\left(\ch_0(E_1)^2\ch_2(E_1).H -\frac{1}{2H^3\ch_0(E_1)^2}\right).
\end{equation}
Therefore the class $\ch(E_2)$,
\begin{equation*}
\Big(r-1-\ch_0(E_1),\,nH+D -\ch_1(E_1),\,-\beta -\ch_2(E_1) -\tfrac12n^2H^2,\, -m -\ch_3(E_1) + \tfrac16n^3H^3\Big),
\end{equation*}
satisfies the bounds \eqref{general bound}. If it had rank $\ge0$ then by Theorem \ref{Thm. large n-arbitrary rank}
\beq{fake2}
E_2\ \cong\ F\oplus T(-n)[1],
\eeq
since it is not a sheaf. But \eqref{fake2} contradicts the semistability of $E_2$ next to the wall. So $\ch_0(E_2)=-1$ and $\ch_0(E_1)=r$.

By \eqref{bounds on bf} we see $b'=-n+1/H^3$ lies on $\ell_E$, so Lemma \ref{lem: bounding the first wall, rank -1} now applies to $E_2$, giving
\begin{align*}
   -nH(\beta+\ch_2(E_1))\ \leq\ \frac{2}{3}\left(\beta.H+\ch_2(E_1).H\right) \!\left(\!\beta.H + \ch_2(E_1).H - \frac{1}{2H^3}\!\right)  +m+\ch_3(E_1). 
\end{align*}
Since $n\gg0$ and $\ch_3(E_1)$ is bounded above by \eqref{upper bound for ch3} this means $\beta.H+\ch_2(E_1).H\ge0$. 
%By \eqref{SES2} and $r'=1$ this gives $\ch_0(\cH^0(E_2))=0$; then by \eqref{chern} we find $\cH^0(E_2)$ is supported in dimension $\leq 1$.
From
$$
\Pi(v_n)\=\tfrac1{r-1}\left(n,\,-\tfrac{\beta.H}{H^3}-\tfrac{n^2}{2}\right), \quad
\Pi(E_1)\=\left(0,\,\tfrac{\ch_2(E_1).H}{rH^3}\right), \quad
\Pi(\cO_X(-n))\=\left(-n,\,\tfrac{n^2}2\right)
$$
we see this is the condition that $\Pi(E_1)$ lies on or above the Joyce-Song wall $\ell_{\js}$ joining $\Pi(\cO_X(-n))$ and $\Pi(\vi_n)$. If it is strictly above, i.e. $\beta.H+\ch_2(E_1).H>0$, then $E_2\in\cA_{-n}$ since $\ell_{\js}$ intersects $\partial U$ when $b=-n$. Thus $\mu\_H(\cH^{-1}(E_2)) < -n$, contradicting \eqref{chern}. So in fact $\beta.H+\ch_2(E_1).H=0$ and by Lemma \ref{lem: bounding the first wall, rank -1} again $E_2 \cong T(-n)[1]$ for some $T\in\Pic\_0(X)$. \medskip

\textbf{Case (2).} Now assume that $E_2$ is sheaf, as are $E$ and $E_1$. Since $E$ is $\nu\_{b,w}$-semistable, $\ell_E$ lies above or on $\ell_f$. Hence by \eqref{bounds on bf} we know the vertical line $b= b_0 \coloneqq \frac{-1}{rH^3}<b_f$ intersects $\ell_E$ at a point $(b_0, w_0) \in U$. Therefore $E_1,E_2$ lie in $\cA_{\;b_0}$ so $\ch_1^{b_0H}\!.\,H^2\ge0$. Moreover their $\nu\_{b_0,w_0}$-slopes equal $\nu\_{b_0,w_0}(E)<+\infty$ since $b_0$ is to the left of $\Pi(E)$. Thus $\ch_1^{b_0H}(E_i).H^2>0$, giving
\begin{equation*}
   0\ <\  \ch_1(E_i).H^2 + \frac1r\ch_0(E_i)\ <\  nH^3 + \frac{r-1}r\,.
\end{equation*}
Since $0 \leq \ch_0(E_i) \leq r-1$ this shows $0 \leq \ch_1(E_i).H^2 \leq nH^3$. By %\cite[Theorem 3.5]{BMS}
Lemma \ref{lem:quadratic} and the Bogomolov-Gieseker inequality \eqref{quadratic form} we have $0 \leq \Delta_H(E_i) < \Delta_H(E)$ and  $B_{b_0,w_0}(E_i) \geq 0$. Since $\ch_1^{b_0H}(E_i).H^2>0$ this gives upper bounds on the $\ch_3(E_i)$, then $\ch_3(E_1)+\ch_3(E_2)=\ch_3(E)$ turns these into lower bounds as well. Now consider the stated cases (a),\,(b).

\begin{enumerate}
\item [(a)] If $\ch_1(E_i).H^2>0$ for $i=1,2$ then $\ch_1(E_i).H^2 =\(\!\ch_1(E)-\ch_1(E_j)\).H^2<nH^3$. We show $(b,w)\in U_{[E_i]}$ in this case by proving the following general result.

Suppose that
$F \in \cA_{\;b}$ has the same $\nu\_{b,w}$-slope as the $\nu\_{b,w}$-semistable sheaf $E$, where $(b,w)$ is strictly to the left of $\Pi(E),\,\Pi(F)$. Then
\beq{claim}
\qquad (b,w)\,\in\,U_{[F]} \quad\text{if}\quad \ch_1(F).H^2\,<\,nH^3 \quad\text{and}\quad 0\,\le\,\ch_0(F)\,\le\,r.
\eeq
Indeed, suppose for a contradiction that the line $\ell$ passing through $\Pi(E),\,\Pi(F)$ and $(b,w)$ lies below or on the line $\ell_{[F]}$ defined by \eqref{l-v}. Then  $\ell_{[F]}$ must lie above $\ell_f$ \eqref{equation of final wall}, so by \eqref{bounds on bf},
    \begin{equation*}
    a\_{[F]}\ <\ -n + \frac{1}{4rH^3} \qquad \text{and}\qquad    -\frac{1}{4rH^3}\ <\ b\_{[F]}\;.
    \end{equation*}
This implies
$$
\hspace{13mm}nH^3- \frac{1}{2r}\,<\,H^3\(b\_{[F]} -a\_{[F]}\)\,=\,\ch_1(F).H^2 -b\_{[F]}\ch_0(F)H^3\,\le\,\ch_1(F).H^2 + \frac14\,,
$$
giving the contradiction $\ch_1(F).H^2 > nH^3-1$.
    \item[(b)] Suppose $\ch_1(E_i).H^2 = 0$ so $\ch_1(E_j).H^2 = nH^3$. Since $\nu\_{b,w}(E_i)=\nu\_{b,w}(E)<+\infty$ we see $\ch_0(E_i)>0$. Since $E_j$ is also a sheaf, $\ch_0(E_i) \leq r-1$; in particular $r>1$ when there are type 2(b) walls. Thus $-\frac1{\alpha^2\ch_0(E_i)^2H^3}<b_f$ \eqref{bounds on bf} for $\alpha\in[\frac1{r-1},1]$ and $b$ takes this value along $\ell_E\cap U$. Hence Lemma \ref{lem: bounding the first wall} applies to $E_i$ and any semistable sheaf in class $\alpha[E_i]$ to show it has no walls above or on $\ell_E$. In particular $E_i$ is $\nu\_{b,w_+}$-semistable for all $w_+\ge w$. Moreover, just as in \eqref{3nm}, $\Pi(E_i)\in\ell_E$ lying above or on $\ell_f$ implies, for $n\gg0$,
\beq{marker}
 -\frac{2\beta.H +1}{rH^3}\ <\ \frac{\ch_2(E_i).H}{\ch_0(E_i)H^3}\ \stackrel{\eqref{BOG}}{\leq}\ 0.\phantom\qedhere
\eeq
This bounds $\ch_2(E_j).H$. Combined with the bound on $\ch_3(E_j)$ derived above, this  gives the bounds \eqref{general bound} with $p_1,p_2,q$ replaced by appropriate functions $p_1',p_2',q'$ of the constants $p_1,p_2,q$ we used in rank $r$.  $\hfill\square$
\end{enumerate}
\end{proof}

%Consider the vertical line $b= -1$, and let $(-1, w_{JS})$ and $(-1, w_f)$ be the intersection points with the lines $\ell_{JS}$ and $\ell_f$. As a result of Theorem \ref{thm:all walls} we get the following.  
%\begin{Cor}\label{cor:finitely many walls}
%There is a finite sequence of real numbers $w_1 > w_2 > ... > w_k > w_f$ which satisfies the following: 
%\begin{enumerate}
%    \item For any $i=1,.., k$, there is a strictly $\nu\_{b,w}$-semistable object of class $v_n$ which is unstable for $w<w_k$ or $w> w_k$.    
%    \item If an object $F \in \cA(b=-1)$ is strictly $\nu\_{b=-1, \tilde{w}}$-semistable for some $\tilde{w}> \frac{1}{2}$ and $\nu\_{b=-1,w}$-unstable for $w> \tilde{w}$ or $w<\tilde{w}$, then $\tilde{w} = w_i$ for some $0 \leq i \leq k$.    
%\end{enumerate}
%\end{Cor}

\begin{Rem}
In case 1 we say \emph{the wall is of type 1}, and similarly for 2(a) and 2(b). Note that a wall can be of more than one type simultaneously; in fact even a single $E$ may sit in exact sequences of different types at the same time.
\end{Rem}

%A \emph{Jordan-H\"older filtration} of a $\nu\_{b,w}$-semistable object $E$ is a filtration $0=E_0\into E_1\into\dots\into E_n=E$ in $\cA_{\;b}$ such that the \emph{Jordan-H\"older factors} $E_i/E_{i-1}$ are \emph{strictly $\nu\_{b,w}$-stable}. (When $\nu\_{b,w}=+\infty$ such a filtration may not exist because it does not terminate, but when $\nu\_{b,w}<+\infty$ they always exist and are finite by Lemma \ref{lem:quadratic}.) 
%
%Jordan-H\"older filtrations are not unique in general, and for weak stability conditions like $\nu\_{b,w}$ even their associated gradeds --- the direct sum of their Jordan-H\"older factors --- need not be unique. Nonetheless we can consider all factors from all possible filtrations.

\begin{Def}\label{gJH}
Fix a $\nu\_{b,w}$-semistable object $E$ and a filtration $0\subseteq E_0\subsetneq E_1\subseteq E$ in $\cA_{\;b}$, all of whose graded pieces are $\nu\_{b,w}$-semistable of the same slope $\nu\_{b,w}(E)<+\infty$. If $0\subseteq E_0$ and $E_1\subseteq E$ are not both equalities then we call $E_1/E_0$ a \emph{semistable factor} of $E$.

If furthermore $E_1/E_0$ is semistable next to the wall joining $(b,w)$ and $\Pi(E)$ (in the sense of Footnote \ref{nextto}) we call it a \emph{nondegenerate semistable factor} of $E$.
\end{Def}

In particular objects like \eqref{fake} could never be nondegenerate semistable factors of some $E$, but the summands of \eqref{fake} could be.

\begin{Cor}\label{ssf}
Given a $\nu\_{b,w}$-semistable sheaf $E$ of class $v_n$, each of its nondegenerate semistable factors is one of the following. The last two occur only on $\ell_{\js}$.
\begin{itemize}
\item A sheaf of rank $\le r-1$ with $(b,w)$ in its safe area.
\item A sheaf of type $v_n$ for some rank $\le r-2$.
\item A sheaf of class $\mathsf v$ which is $\nu\_{b,w'}$-semistable for all $w'\geq w$, with $(b,w)\in U_{\mathsf v}$.
\item $T(-n)[1]$ for some $T\in\Pic\_0(X)$.
\end{itemize}
\end{Cor}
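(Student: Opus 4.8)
The plan is to deduce this from Theorem \ref{thm:all walls} and Proposition \ref{prop. general-sheaf} by induction on the rank $r$, with $n\gg0$ chosen uniformly for all the lower-rank classes that arise (the footnote to Theorem \ref{thm:all walls}). If $E$ is $\nu\_{b,w}$-stable it has no proper subobject of slope $\ge\nu\_{b,w}(E)<+\infty$, hence no semistable factor, and there is nothing to prove; so I may assume $E$ is strictly $\nu\_{b,w}$-semistable. Then $(b,w)$ lies on a wall $\ell_E$ for $E$ (Proposition \ref{prop. locally finite set of walls}), which is the line through $(b,w)$ and $\Pi(v_n)$ since $\ch_0(v_n)\neq0$; in particular $\mathrm{slope}(\ell_E)=\nu\_{b,w}(E)$, and Theorem \ref{thm:all walls} applies to any destabilising short exact sequence of $E$ because $b<\tfrac n{r-1}$. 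Given a semistable factor $F=E_1/E_0$, realised by a filtration $0\subseteq E_0\subsetneq E_1\subseteq E$ in $\cA_{\;b}$ with $(E_0,E_1)\neq(0,E)$ and all graded pieces $\nu\_{b,w}$-semistable of slope $\nu\_{b,w}(E)$, I take the destabilising short exact sequence $0\to A\to E\to B\to0$ with $A=E_0$ if $E_0\neq0$, and $A=E_1$ (so $F=A$) if $E_0=0$; then $F$ equals $A$, equals $B$, or is a semistable factor of $B$ (realised by a two-step filtration of $B$, so that Proposition \ref{prop. general-sheaf} or the induction hypothesis applies to it directly). Theorem \ref{thm:all walls} now gives three cases for $\{A,B\}$.

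In \emph{case (1)} the subobject $A$ is a sheaf of class $\mathsf v$ and $B\cong T(-n)[1]$; the proof of Theorem \ref{thm:all walls} (via Lemma \ref{lem: bounding the first wall}) shows $A$ has no wall on or above $\ell_{\js}$, hence $A$ is $\nu\_{b,w'}$-semistable for all $w'\ge w$ and $\nu\_{b,w}$-stable, and $T(-n)[1]$ is $\nu\_{b,w}$-stable, so neither has a proper semistable factor and $F\in\{A,\,T(-n)[1]\}$ --- the third and fourth bullets. These occur only on $\ell_{\js}$, since if a class-$\mathsf v$ sheaf or $T(-n)[1]$ is a semistable factor of $E$ then $\Pi(\mathsf v)$, respectively $\Pi(\cO(-n)[1])$, lies on the line through $(b,w)$ of slope $\nu\_{b,w}(E)=\mathrm{slope}(\ell_E)$, i.e.\ on $\ell_E$, which with $\Pi(v_n)\in\ell_E$ and \eqref{ljsdef} forces $\ell_E=\ell_{\js}$; the containment $(b,w)\in U_{\mathsf v}$ follows from the estimate \eqref{estimate} applied to $A$ exactly as in the proof of \eqref{claim}, using that $\ell_{\js}$ lies above $\ell_f$. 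In \emph{case 2(a)} both $A$ and $B$ are sheaves with $(b,w)$ in their safe areas; if $F\in\{A,B\}$ this is the first bullet, and if $F$ is a semistable factor of $B$ then Proposition \ref{prop. general-sheaf}, applied once to split $B$ and once more to realise $F$ as a subobject of a quotient of $B$, shows $F$ is a sheaf with $(b,w)$ in its safe area --- again the first bullet. In \emph{case 2(b)} one of $A,B$ is a sheaf with $\ch_1.H^2=0$ and no wall on or above $\ell_E$, hence $\nu\_{b,w}$-stable with no proper factor, and the other is a sheaf of type $v_n$ of rank $\le r-2$; so $F$ is either the first sheaf --- the first bullet, after checking $(b,w)$ lies in its safe area by the same comparison of $\ell_E$ with the defining line of the safe area as in \eqref{claim} --- or the type-$v_n$ sheaf (the second bullet), or a semistable factor of it, to which the induction hypothesis applies and returns one of the four types (the ``class-$\mathsf v$'' option of the lower-rank statement being absorbed by the first bullet's safe-area clause). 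This exhausts the cases and completes the induction.

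I expect the main obstacle to be the two safe-area containments --- $(b,w)\in U_{\mathsf v}$ in case (1) and the analogous containment for the $\ch_1.H^2=0$ sheaf in case 2(b) --- each of which, like the proof of \eqref{claim}, requires moving along $\ell_E$ (or $\ell_{\js}$) to its two intersections with $\partial U$ and running the estimate \eqref{estimate} against the line defining the safe area. The remainder is the bookkeeping that the four-type list is closed under passing to semistable factors, which is taken care of by Proposition \ref{prop. general-sheaf} for the safe-area sheaves, by stability (hence the absence of proper factors) for the class-$\mathsf v$ sheaves and the $T(-n)[1]$'s, and by the rank induction for the type-$v_n$ sheaves.
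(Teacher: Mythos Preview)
Your approach is essentially the paper's, and the induction on rank is a clean way to package what the paper phrases as ``run the first half of the proof again'' when $E_1$ is of type $v_n$. Two points need tightening, however.

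First, ``$E$ strictly semistable $\Rightarrow$ $(b,w)$ lies on a wall'' is not justified by Proposition \ref{prop. locally finite set of walls}: if the destabilising subobject $A$ has $\ch_H(A)$ proportional to $\ch_H(E)$ then $A\hookrightarrow E$ does \emph{not} create a wall, yet $E$ may still be strictly semistable (e.g.\ a direct sum). The paper treats this case separately: proportionality forces $[A]=\alpha[v_n]$ with $\alpha\in(0,1)$, hence $\rk(A)\in[1,r-2]$, $\ch_1(A).H^2<nH^3$, and $(b,w)\in U_{[A]}$ by \eqref{claim}, landing us in the first bullet. You should insert this case before invoking Theorem \ref{thm:all walls}, whose statement assumes $(b,w)\in\ell_E$ for an actual wall.

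Second, ``no walls on or above $\ell_E$'' does \emph{not} imply $\nu\_{b,w}$-stability, so your claim that the class-$\mathsf v$ sheaf in case (1) and the $\ch_1.H^2=0$ sheaf in case 2(b) have no proper semistable factors is unjustified. This does not actually derail the argument: once you have checked $(b,w)\in U_{\mathsf v}$ (respectively $U_{[E_i]}$) via \eqref{claim}, any proper factor is handled by Proposition \ref{prop. general-sheaf} and lands in the first bullet. The paper uses exactly this route in its cases (iii), (iv) rather than appealing to stability. (For $T(-n)[1]$ you are fine: it \emph{is} $\nu\_{b,w}$-stable by \cite[Corollary 3.11(a)]{BMS}.) With these two fixes your proof goes through and matches the paper's.
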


\begin{proof}
Given a filtration $0\subseteq E_0\subsetneq E_1\subseteq E$ as in Definition \ref{gJH},
we claim that (i) $E_1=E$, or (ii) $E_1$ is a sheaf of type $v_n$ for some rank $<r-1$, or (iii) $[E_1]=\mathsf v\in K(X)$ or (iv) $E_1$ is a sheaf of rank $\le r-1$ and $(b,w)\in U_{[E_1]}$.

If $E_1\into E\onto E/E_1$ does not create a wall for $E$ then $[E_1],\,[E/E_1],\,[E]\in K_H(X)$ must be proportional: $[E_1]=\alpha[E]$. By $\nu\_{b,w}$-semistability all have $\ch_1^{bH}\!.\;H^2\ge0$; therefore $\ch_1^{bH}\!.\;H^2(E_1)>0$ since this is true of $E$. Thus $\alpha\in(0,1]$. If $\alpha=1$ then $E_1=E$. Otherwise $\alpha\in\big[\frac1{r-1},\frac{r-2}{r-1}\big]$ so $\rk(E_1)\in[1,r-2],\ \ch_1\!.H^2<nH^3$ and $(b,w)\in U_{[E_1]}$ by \eqref{claim}.

So we suppose now that $E_1\into E\onto E/E_1$ does create a wall for $E$. If $E/E_1$ is not a sheaf then it is either 
\begin{itemize}
\item $T(-n)[1]$ for some $T\in\Pic\_0(X)$ by Theorem \ref{thm:all walls}\;(1), or
\item an object of the form \eqref{fake} of type $v_n$ with rank less than $r-1$, by \eqref{fake2} in the proof of Theorem \ref{thm:all walls}\;(1).
\end{itemize}
\noindent In both cases we are on $\ell_{\js}$ and $\ch_1(E_1).H^2 = 0$. Therefore $E_1$ is $\nu\_{b,w'}$-semistable for all $w' \geq w$ by Lemma \ref{lem: bounding the first wall} and $(b,w)\in U_{\mathsf [E_1]}$ by \eqref{claim}. 

Otherwise $E_1$ and $E/E_1$ are sheaves of rank $\le r-1$ by Theorem \ref{thm:all walls}\;(2). In case 2(a) we know $(b,w)\in U_{[E_1]}$.
In case 2(b) there are two possibilities for $E_1$ and $E/E_1$. One is that $E_1$ has $\ch_1\!.\;H^2=0,\,\ch_0>0$, but then $(b,w)\in U_{[E_1]}$ by \eqref{claim}. The other is that $E_1$ is of type $v_n$ for a lower rank $<r-1$. This proves our claim.\medskip

If $E_0=0$ then one of (ii),\,(iii),\,(iv) holds, proving the Corollary. Now assume $E_0 \neq 0$. In cases (iii),\,(iv) Proposition \ref{prop. general-sheaf} shows $E_1/E_0$ is a sheaf with $(b,w)\in U_{[E_1/E_0]}$. To check it has rank $\le r-1$ we note $E_0\ne0$ cannot be of rank zero in case (iii) because $E_1$ is torsion-free by \cite[Lemma 2.7(c)(i)]{BMS}.

%In case (iii) $E_1$ is a sheaf of class $\mathsf v$ which is semistable on and everywhere above the wall by Theorem \ref{thm:all walls}\;(1). Hence $E_0\subset E_1$ does not create a wall for $E_1$, so they have proportional classes $[E_1]=\alpha[E_0]\in K_H(X)$. Again $\alpha\in[0,1]$ and cannot be 1 since $E_0\ne E_1$. If $\alpha=0$ then $E_1/E_0$ is a sheaf of class $\mathsf v$ and $(b,w)\in U_{\mathsf v}$. Otherwise $\alpha\in(0,1)$ so $E_1/E_0$ has rank $\le r-1,\ \ch_1(E_1/E_0).H^2=0$ and $(b,w)$ lies in its safe area by \eqref{claim}.]

In cases (i),\,(ii) the class of $E_1$ in $K(X)$ is of type $v_n$ so we can run the first half of the proof again for the destabilising quotient $E_1/E_0$ of $E_1$ in place of the destabilising subobject $E_1$ of $E$.

We conclude that (i) $E_1/E_0$ is $0$, or (ii) $E_1/E_0$ is a sheaf of type $v_n$ for some rank $<r-1$, or (ii$'$) an object of type $v_n$ of the form \eqref{fake}, or (iii) $E_1/E_0\cong T(-n)[1]$ or (iv) $E_1/E_0$ is a sheaf of rank $\le r-1$ and $(b,w)\in U_{[E_1/E_0]}$. And by the definition of nondegenerate semistable factor we can rule out case (i) and (ii$'$).
\end{proof}

So we have particularly good control on the type 2 walls of Theorem \ref{thm:all walls}. The type 1 wall crossing can be shown to describe a component of the moduli space of $\nu\_{b,w}$-semistable objects of class $v_n$. It parameterises \emph{strictly $\nu\_{b,w}$-stable sheaves} which are all cokernels of Joyce-Song stable pairs up to tensoring by some $T\in\Pic\_0(X)$. Since we will not strictly need the full details of this result for our wall crossing, we relegate it to Theorem \ref{jsthm} in Appendix \ref{JSapp}.

\subsection{Bogomolov-Gieseker conjecture}\label{BGsec}
In this section we note that in place of the full Bogomolov-Gieseker conjecture we can make do with the much weaker \ref{wBG} below, at least when $X$ is Calabi-Yau. As before, fix a class $v \in K(X)$ of rank $r > 0$ and set $v_n = v- [\cO_X(-n)]$ for any $n \gg 0$. In this paper we have applied Conjecture \ref{conjecture} only
\begin{enumerate}
	\item to objects $E$ of type $v_n$ to find $\ell_f$ in Lemma \ref{final wall},
	\item to the rank $r \geq 1$ tilt semistable sheaves $F$ of Lemma \ref{lem: bounding the first wall}, at $b_0 = \mu\_H(E)-\frac{1}{r^2H^3}$ and $w \downarrow \frac12b_0^2$, and
	\item to the destabilising objects $E_i$ of Theorem \ref{thm:all walls}, at points $(b,w)\in U_{[E_i]}$ of the safe area of $E_i$ on the wall $\ell \cap U$ along which $E_i$ and $v_n$ have the same slope. 
\end{enumerate}

Referring to Figure \ref{fig-ell-v} we define $\ell_{v, n}$ to be the lowest possible line --- that is, the line of the maximal possible (negative) gradient --- through $\Pi(v_n)$ which intersects $\partial U$ at two points with $b$-values $b_1<b_2 < \mu\_H(v)$ satisfying
\begin{equation*}
b_1\ \leq\ -n+\tfrac{1}{4r^2H^3}\qquad  \text{and} \qquad b_2\ \geq\ \mu\_H(v)-\tfrac{1}{4r^2H^3}\,. 
\end{equation*}

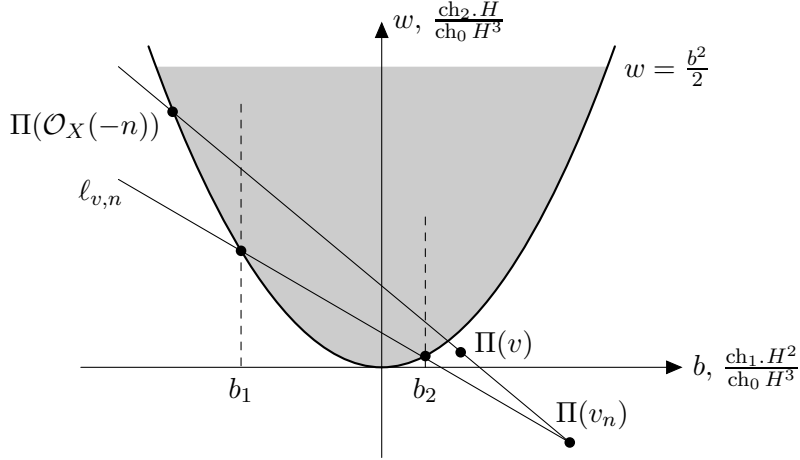
\begin{figure}[h]
	\begin{centering}
		\definecolor{zzttqq}{rgb}{0.27,0.27,0.27}
		\definecolor{qqqqff}{rgb}{0.33,0.33,0.33}
		\definecolor{uququq}{rgb}{0.25,0.25,0.25}
		\definecolor{xdxdff}{rgb}{0.66,0.66,0.66}
		
		\begin{tikzpicture}[line cap=round,line join=round,>=triangle 45,x=1.0cm,y=1.0cm]
		
		\draw[->,color=black] (-4,0) -- (4,0);
		\draw  (4, 0) node [right ] {$b,\,\frac{\ch_1\!.\;H^2}{\ch_0H^3}$};
		
		\fill [fill=gray!40!white] (0,0) parabola (3,4) parabola [bend at end] (-3,4) parabola [bend at end] (0,0);
		
		\draw[thick]  (0,0) parabola  (3.1,4.27); 
		\draw[thick]  (0,0) parabola (-3.1,4.27); 
		\draw  (3.8 , 3.6) node [above] {$w= \frac{b^2}{2}$};
		
		\draw[->,color=black] (0,-1.2) -- (0,4.6);
		\draw  (0.9, 4.2) node [above ] {$w,\,\frac{\ch_2\!.\;H}{\ch_0H^3}$};
		
		\draw[color=black, dashed] (.58, 2)-- (.58, 0);
		\draw [dashed, color=black] (-1.87,3.5) -- (-1.87, 0);
		\draw [color=black] (2.5, -1) -- (-3.5, 2.5);
		\draw [color=black] (2.5, -1) -- (-3.5, 4);
		
		\draw  (-1.87,0) node [below] {$b_1$};
		\draw  (.58, 0) node [below] {$b_2$};
		\draw  (2.8,-.3) node [below] {$\Pi(v_n)$};
		\draw  (-3.3, 2.3) node [left] {$\ell_{v, n}$};
		\draw   (-2.8,3.2) node [left] {$\Pi(\cO_X(-n))$};
		\draw   (1.1,.3) node [right] {$\Pi(v)$};
		
		\begin{scriptsize}
		\fill (.58, .15) circle (2pt);
		\fill (-1.87,1.55) circle (2pt);
		\fill (2.5,-1) circle (2pt);
		\fill (-2.78,3.4) circle (2pt);
		\fill (1.05,.2) circle (2pt);

		\end{scriptsize}
		
		\end{tikzpicture}
		\caption{The line $\ell_{v, n}$}\label{fig-ell-v}		
		
	\end{centering}
\end{figure}

To handle part (a) above we only need the following weakening of Conjecture \ref{conjecture}.\bigskip

\begin{enumerate}[label=\textbf{(wBG)}]
\item\label{wBG} \emph{There is no $\nu\_{b,w}$-semistable object of class $v_n$ for $(b,w) \in \ell_{v, n} \cap U$.}
\end{enumerate}

\bigskip
%Fix a class $v \in K(X)$ of rank $r >0$. Then for any $n \gg 0$, there is a line $\ell_{v, n}$ passing through $\Pi(v_n)$ such that 
%\begin{enumerate*}
%	\item[(i)] it intersects $\partial U$ at two points with $b$-values $b_1<b_2$ satisfying $b_1 +n \leq \frac{1}{4r^2H^3}$ and $\mu_H(v)-b_2 \leq \frac{1}{4r^2H^3}$,
%	\item[(ii)] there is no $\nu\_{b,w}$-semistable object of class $v_n$ for $(b,w) \in \ell_{v, n} \cap U$. 
%\end{enumerate*}
%}

For part (b) we need \emph{any} upper bound for $\ch_3(F)$, rather than the explicit upper bound of Lemma \ref{lem: bounding the first wall}. The following result --- proved at the end of this section --- provides this.

\begin{Thm}\label{BG2} Suppose $c_1(X)=0$. 
Fix $(r,c,s) \in \N \oplus \Z \oplus \Q$ with $c>0$ if $r=0$. There exists $m \in \mathbb{Z}$ depending on $(r,c,s)$ such that $\ch_3(F) \leq m$ for all $\mu\_H$-semistable sheaves $F$ with $\ch_H(F) = (r,c,s)$.
\end{Thm}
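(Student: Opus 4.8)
The plan is to deduce the bound from the boundedness of $\mu\_H$-semistable sheaves with bounded discriminant, which is classical. \emph{Reduction to one good twist.} Assume first $r\ge1$, so that $F$ is automatically torsion-free. Since $c_1(X)=0$, Hirzebruch--Riemann--Roch on $X$ gives $\chi(F)=\ch_3(F)+\tfrac1{12}\ch_1(F).c_2(X)$ and, for every $k$, $\chi(F(kH))=\chi(F)+P(k)$, where $P$ is a polynomial whose coefficients depend only on $(r,c,s)$, $H^3$ and $H.c_2(X)$ and with $P(0)=0$ (fixing $\ch_H(F)$ fixes every coefficient of the Hilbert polynomial of $F$ except the constant term $\chi(F)$). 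Hence it is enough to bound $\chi(F)$ from above and $\ch_1(F).c_2(X)$ from below; and since $\chi(F)=\chi(F(kH))-P(k)$, it suffices to produce a single twist $k=k(r,c,s,X,H)$, uniform over all such $F$, with $H^{>0}(F(kH))=0$ and $h^0(F(kH))$ bounded above.

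\emph{Pinning down $\ch_1(F)$.} The classical Bogomolov inequality for $\mu\_H$-semistable torsion-free sheaves gives $\ch_1(F)^2.H\ge 2\ch_0(F)\ch_2(F).H=2rs$, while writing $\ch_1(F)=\tfrac{c}{H^3}H+P$ with $P.H^2=0$ and applying the Hodge index theorem gives $P^2.H\le0$, so $\ch_1(F)^2.H=\tfrac{c^2}{H^3}+P^2.H\le\tfrac{c^2}{H^3}$. Thus $\ch_1(F)^2.H$ lies in the fixed interval $[\,2rs,\ c^2/H^3\,]$; since $D\mapsto -D^2.H$ is positive definite on the finite-dimensional space $\{D\in\mathrm{NS}(X)_{\mathbb R}:D.H^2=0\}$ and $\ch_1(F)$ is the image of an integral divisor class, $\ch_1(F)$ takes only finitely many values. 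In particular $\ch_1(F).c_2(X)$ is bounded below, and the discriminant $(\ch_1(F)^2-2\ch_0(F)\ch_2(F)).H=\ch_1(F)^2.H-2rs$ takes only finitely many values.

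\emph{Uniform regularity and conclusion.} Fix one of the finitely many possibilities $D=\ch_1(F)$. Then $F$ ranges over $\mu\_H$-semistable torsion-free sheaves of fixed rank $r$, fixed $\ch_1=D$ (hence fixed slope and fixed discriminant degree) and fixed $\ch_2.H=s$. By the effective Castelnuovo--Mumford regularity bounds for $\mu\_H$-semistable sheaves --- which depend only on the rank, the slope, the discriminant degree and $(X,H)$ (Langer's work on semistable sheaves; in characteristic zero this is classical, going back to Maruyama and contained in the book of Huybrechts and Lehn) --- there is a $k=k(r,c,s,X,H)$, independent of $F$, for which $F$ is $k$-regular, so $H^{>0}(F(kH))=0$ and $\chi(F(kH))=h^0(F(kH))$. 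The standard bound on the number of global sections of a $\mu\_H$-semistable sheaf in terms of its rank and slope (Simpson, Le Potier, Langer) gives $h^0(F(kH))\le C(r,c,s,X,H)$. Hence $\chi(F)=h^0(F(kH))-P(k)\le C-P(k)$, and $\ch_3(F)=\chi(F)-\tfrac1{12}D.c_2(X)$ is bounded above by a constant depending only on $(r,c,s)$ and $(X,H)$; the maximum over the finitely many $D$ is the required $m(r,c,s)$. For $r=0$ (so $c>0$ and $F$ is pure of dimension $2$) one argues in the same way: $\ch_1(F)$ is then the effective class of the support of $F$, which has bounded $H$-degree and hence lies in a finite set, and for each such support class the boundedness and uniform regularity of semistable pure $2$-dimensional sheaves of fixed degree again bound $\ch_3(F)$ from above.

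\emph{Main obstacle.} The cohomological bookkeeping in the first paragraph is routine and the Bogomolov/Hodge estimates in the second are elementary; the real content is the third step --- extracting from the literature the precise uniform regularity and global-sections bounds stated (those depending only on rank, slope and discriminant degree, and on $(X,H)$), both for torsion-free $\mu\_H$-semistable sheaves on a threefold and for pure $2$-dimensional semistable sheaves, and checking that fixing $\ch_H(F)$ together with the Bogomolov and Hodge-index constraints really does fix or bound all the numerical data those results require.
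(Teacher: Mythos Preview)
Your second paragraph (pinning down $\ch_1(F)$ via Bogomolov plus Hodge index) is fine and close in spirit to what the paper does. The genuine gap is in the third step: the uniform Castelnuovo--Mumford regularity bound you invoke --- one depending only on rank, slope and discriminant degree --- does not exist. The decisive counterexample is the family of ideal sheaves $I_Z$ of zero-dimensional subschemes $Z\subset X$ of length $n$. These are $\mu_H$-stable (rank one, torsion-free) with $\ch_H(I_Z)=(H^3,0,0)$ and $\Delta_H=0$ for every $n$, yet their regularity tends to infinity with $n$: one has $H^1(I_Z(k))\ne0$ whenever $h^0(\cO_X(k))<n$. So no single twist $k$ kills higher cohomology uniformly over the family, and the identity $\chi(F(k))=h^0(F(k))$ on which your bound rests is unavailable. (This example also shows directly that the family of $\mu_H$-semistable torsion-free sheaves with fixed $\ch_H$ is \emph{not} bounded --- only $\ch_3$ bounded \emph{above}, which is precisely what is to be proved.) The same obstruction applies verbatim to your $r=0$ sketch: for a fixed smooth surface $\iota\colon S\hookrightarrow X$, the sheaves $\iota_*I_{Z/S}$ with $Z\subset S$ of length $n$ are pure of dimension two with fixed $\ch_1,\ \ch_2.H$ and unbounded regularity.

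The paper's proof supplies the missing idea. For $r>0$ it first passes to the reflexive hull via $0\to F\to F^{**}\to Q\to 0$ with $\dim Q\le1$. The point is that \emph{reflexive} $\mu_H$-semistable sheaves with bounded leading three Hilbert-polynomial coefficients \emph{do} form a bounded family (Langer), so $\ch_3(F^{**})$ is uniformly bounded. Then $Q$ is a quotient of a fixed $\cO_X(-N)^{\oplus M}$ (coming from that boundedness) with $\ch_2(Q).H$ bounded, so Grothendieck's lemma bounds $\ch_3(Q)$ from below; subtracting gives the upper bound on $\ch_3(F)=\ch_3(F^{**})-\ch_3(Q)$. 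For $r=0$ the paper instead assumes without loss $\ch_3(F)\ge0$, uses effectivity of $\ch_1(F)$ (essentially your argument) to bound $\ch_1(F).\mathrm{Td}_2(X)$, hence bounds the constant term of the Hilbert polynomial \emph{below}, and only then applies boundedness. In both cases the key is to first reduce to a situation where the full Hilbert polynomial is controlled, rather than appeal to a regularity bound that ignores $\ch_3$.
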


To handle part (c) we extend Theorem \ref{BG2} from $\mu\_H$-semistable sheaves to $\nu\_{b,w}$-semistable objects. We again take $c_1(X)=0$ and fix $(r,c,s)\in \N \oplus \Z \oplus \Q$  with $c>0$ if $r=0$. 

\begin{Cor}
There exists $\widetilde{m}\in \mathbb{Z}$ depending on $(r,c,s)$ such that $\ch_3(F) \leq \widetilde{m}$ for all $\nu\_{b,w}$-semistable $F \in\cA_{\;b}$ with $\ch_{H}(F) = (r,c,s)$ and $(b,w) \in U_{[F]}$. 
\end{Cor}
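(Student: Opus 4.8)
The plan is to reduce to Theorem~\ref{BG2} by crossing walls upward inside the safe area, running an induction on the pair $\bigl(\Delta_H(F),\ch_0(F)\bigr)$ in lexicographic order (recall $c_1(X)=0$ is in force). Both entries are nonnegative integers: $F$ is a sheaf by Proposition~\ref{prop. general-sheaf}, whose hypotheses (a),(b),(c) on the class $\ch_H(F)=(r,c,s)$ all hold --- (c) by \eqref{BOG} --- so $\ch_0(F)\ge0$, and $\Delta_H(F)\ge0$ again by \eqref{BOG}. If $r=0$ then every rank-zero sheaf is $\mu\_H$-semistable and the statement is immediate from Theorem~\ref{BG2}, so I will assume $r\ge1$.

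First I would move straight up the vertical line through $(b,w)$. Since $U_{[F]}$ consists of the points of $U$ above $\ell_{[F]}$ and to the left of $\Pi(F)$, and the latter condition depends only on $b$, the whole ray stays in $U_{[F]}$. If $F$ remains $\nu\_{b,w'}$-semistable for every $w'\ge w$, I claim $F$ is $\mu\_H$-semistable. It is torsion-free to begin with: a subsheaf of dimension $\le1$ has $\nu\_{b,w'}=+\infty$ and would destabilise $F$ already at $w'=w$, and a $2$-dimensional torsion subsheaf $T$ has $\nu\_{b,w'}(T)$ constant in $w'$ while $\nu\_{b,w'}(F/T)\to-\infty$, so it would destabilise $F$ for $w'\gg0$. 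If moreover $F$ were $\mu\_H$-unstable, its first Harder--Narasimhan piece $G\subsetneq F$ would have $\ch_0(G)\ge1$, with $G$ and $F/G$ in $\cA_{\;b}$ and $\mu\_H(G)>\mu\_H(F/G)>b$ (the last inequality because $F\in\cA_{\;b}$ forces $\mu_H^-(F)>b$); comparing the $w'$-linear parts $-w'/\bigl(\mu\_H(G)-b\bigr)$ and $-w'/\bigl(\mu\_H(F/G)-b\bigr)$ of $\nu\_{b,w'}(G)$ and $\nu\_{b,w'}(F/G)$ would then force $\nu\_{b,w'}(G)>\nu\_{b,w'}(F/G)$ for $w'\gg0$, contradicting semistability. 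So $F$ is $\mu\_H$-semistable and Theorem~\ref{BG2} gives $\ch_3(F)\le m(r,c,s)$; this is the base of the induction.

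Otherwise, by local finiteness of walls (Proposition~\ref{prop. locally finite set of walls}) there is a least $w^*\ge w$ above which $F$ is destabilised; $F$ is still $\nu\_{b,w^*}$-semistable (semistability being a closed condition) and fits in an $\cA_{\;b}$-exact sequence $0\to F_1\to F\to F_2\to0$ with all three objects $\nu\_{b,w^*}$-semistable of the common finite slope $\nu\_{b,w^*}(F)$ --- indeed each $F_i$ is automatically semistable here, since a Harder--Narasimhan factor of larger slope inside $F_1$, or of smaller slope in $F_2$, would destabilise $F$. Because $(b,w^*)\in U_{[F]}$, Proposition~\ref{prop. general-sheaf} gives that $F_1,F_2$ are sheaves with $(b,w^*)\in U_{[F_i]}$, and Lemma~\ref{lem:quadratic} leaves exactly two possibilities: either $0\le\Delta_H(F_i)<\Delta_H(F)$ for $i=1,2$, or $\Delta_H(F_i)=0=\Delta_H(F)$ with $\ch_H(F_i)$ proportional to $(r,c,s)$; in the latter case $\ch_0(F_i)\in[1,r-1]$, since a rank-zero $F_i$ would have $\ch_H(F_i)=0$ and $\nu\_{b,w^*}(F_i)=+\infty$. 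In either case $\bigl(\Delta_H(F_i),\ch_0(F_i)\bigr)$ is strictly below $\bigl(\Delta_H(F),\ch_0(F)\bigr)$ lexicographically, so the inductive hypothesis bounds $\ch_3(F_i)$ and hence $\ch_3(F)=\ch_3(F_1)+\ch_3(F_2)$.

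The delicate point --- which I expect to be the crux of the argument --- is to check that the resulting bound depends only on $(r,c,s)$, equivalently that only finitely many classes $\ch_H(F_i)$ can occur throughout the recursion. Here $\ch_0(F_i)\in[0,r]$ and $\Delta_H(F_i)\in\{0,1,\dots,\Delta_H(F)\}$ are visibly bounded, so it is enough to bound $\ch_1(F_i).H^2$; then $\ch_2(F_i).H$ is determined through $\Delta_H$ when $\ch_0(F_i)>0$, and recovered from the complementary factor (which has rank $r\ge1$) when $\ch_0(F_i)=0$. To bound $\ch_1(F_i).H^2$ I would note that the wall line $\ell_E$ through $\Pi(F)$ meets $\partial U$ in two points whose right-hand $b$-value $b_E$ satisfies $b_E\le\mu\_H(F)$: since $\Delta_H(F)\ge0$ the point $\Pi(F)$ lies on or below $\partial U$, so $\mu\_H(F)$ is not strictly between those two intersection points, whereas $(b,w^*)\in U\cap\ell_E$ with $b<\mu\_H(F)$ is. As $\Pi(F_i)$ (for $\ch_0(F_i)>0$) lies on $\ell_E$, on or below $\partial U$, and to the right of $b$, it follows that $\mu\_H(F_i)\ge b_E$ is bounded below, and $\ch_1(F_1).H^2+\ch_1(F_2).H^2=c$ then bounds $\ch_1(F_i).H^2$ from above as well. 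Granting this finiteness, one defines $\widetilde m(r,c,s)$ recursively as the larger of $m(r,c,s)$ and of $\widetilde m\bigl(\ch_H(F_1)\bigr)+\widetilde m\bigl(\ch_H(F_2)\bigr)$ over the now-finite set of admissible splittings, and the induction closes.
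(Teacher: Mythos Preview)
Your approach is essentially the paper's: induction on $\Delta_H$, using Proposition~\ref{prop. general-sheaf} and Lemma~\ref{lem:quadratic} to control the destabilising factors on each wall and reducing to Theorem~\ref{BG2} once you reach the large-volume region. Your secondary lexicographic ordering by $\ch_0$ is harmless but superfluous: when $\Delta_H(F)=0$, Lemma~\ref{lem:quadratic}(ii) forces any semi-destabilising factor to have $\ch_H$ proportional to $\ch_H(F)$, hence the same $\nu\_{b,w}$-slope for \emph{every} $(b,w)$, so this never produces an actual wall and your base case applies directly.

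There is one small gap in the finiteness paragraph. You correctly establish $\mu\_H(F_i)\ge b_E$, but to get a bound depending only on $(r,c,s)$ you need $b_E$ bounded below \emph{uniformly in the wall}; the inequality $b_E\le\mu\_H(F)$ that you prove points the wrong way. The fix is immediate: since $(b,w^*)\in U_{[F]}$ lies strictly above $\ell_{[F]}$ and both $\ell_E$ and $\ell_{[F]}$ pass through $\Pi(F)$, the wall $\ell_E$ lies above $\ell_{[F]}$ for all $b'<\mu\_H(F)$, whence $b_E>b_{[F]}$ --- and $b_{[F]}$ depends only on $\ch_H(F)=(r,c,s)$ via \eqref{l-v}. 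With this in hand your bounds on $\ch_1(F_i).H^2$ (and hence on $\ch_2(F_i).H$) go through, and the recursion terminates as you describe.
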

\begin{proof}
	%We first show the claim for tilt semitable shaeves
	We prove the claim by induction on $\Delta = c^2-2rs$. If $\Delta = 0$ then by Lemma \ref{lem:quadratic} there are no walls for objects $F$ of class $\ch_H(F) =(r,c,s)$. Thus we are already in the large volume chamber and $F$ is a tilt semistable sheaf. Consider its Harder-Narasimhan filtration with respect to Gieseker stability. Each Gieseker semistable factor has $\ch_H$ proportional to $(r,c,s)$ so can take only finitely many possible values. Applying Theorem \ref{BG2} to each and summing over all possible filtrations provides an upper bound for $\ch_3$.
		
	If $\Delta > 0$, then we know there are finitely many walls in the safe area $U_{[F]}$ for classes $F$ of fixed $\ch_H(F)$. This determines finitely many possible destabilising classes $\ch_H(F_i)$ along the walls, each with strictly smaller discriminant $\Delta(F_i)<\Delta(F)$. By induction on $\Delta$ we can assume the $\ch_3(F_i)$ are already bounded; summing gives the bound for $\ch_3(F)$. 
\end{proof}

Therefore the results of Sections \ref{shvs} and \ref{subshvs} (and the proof of Theorem \ref{1} in the next section) all hold for smooth projective Calabi-Yau threefolds satisfying \ref{wBG}.

We note that this applies to any smooth quintic 3-fold, and to (2,4) complete intersections in $\PP^{\;5}$.
In these cases Conjecture \ref{conjecture} has been proved for a restricted subset of weak stability conditions: those $(b,w)$ satisfying 
\begin{equation}\label{in for b, w}
w\ >\ \frac{1}{2} b^2 + \frac{1}{2}\big(b - \lfloor b \rfloor\big)\big (\lfloor b \rfloor+1 - b \big). 
\end{equation}
See \cite[Theorem 2.8]{Li} and \cite[Theorem 2.8]{Liu} respectively.  
In particular if $b\in\Z$ then Conjecture \ref{conjecture} holds for any $w > \frac12b^2$. But the bounds \eqref{bounds on bf} show that $b$ takes an integer value $b_0$ along $\ell_f\cap U$ \eqref{equation of final wall}, so Conjecture \ref{conjecture} holds at $(b_0,w)$ for the class $v_n$. Thus there is no $\nu\_{b_0,w}$-semistable object of class $v_n$ when $(b_0, w)$ lies below $\ell_f$. Since $\ell_{v,n}$ lies below $\ell_f$, this proves \ref{wBG}. %And $\condd$ holds because Gieseker semistable sheaves are tilt semistable, so by \eqref{in for b, w} we may apply Conjecture \ref{conjecture} in the large volume chamber.

A weaker form of Conjecture \ref{conjecture} is also proved in \cite[Theorem 1.2]{Ko20} for certain Calabi-Yau threefold double and triple covers of $\PP^{\;3}$. This similarly suffices to prove \ref{wBG} --- and thus Theorem \ref{1} --- for these manifolds. (While Koseki's version of the Bogomolov-Gieseker inequality includes additional terms, the line $\ell_f(E)$ of \eqref{equation of final wall} --- where the inequality becomes an equality for objects $E$ of class $v_n$ --- still has equation $w = xb + y$ with coefficients $x\sim-\frac{n}{2},\ y\sim$ constant as $n\to\infty$. Thus the argument \eqref{bounds on bf} is unaffected.)

\medskip

\noindent\textbf{Proof of Theorem \ref{BG2} for $r>0$.}
The reflexive hull $F^{**}$ of $F$ sits in the exact sequence
\begin{equation}\label{exact}
    0 \To F \To F^{**} \To Q \To 0
\end{equation}
with $\dim Q\le1$. Thus $\ch_2(Q).H\ge0$ and $\ch_1(F^{**})=\ch_1(F)$, so by the $\mu\_H$-semistability of $F^{**}$ the classical Bogomolov inequality gives
\beq{interval}
    s\ \leq\ \ch_2(F).H+\ch_2(Q).H\=\ch_2(F^{**}).H\,\stackrel{\eqref{BOG}}{\leq}\,\frac{c^2}{2r}\,.
\eeq
Thus $\ch_H(F^{**})$ lies in a finite set depending only on $\ch_H(F)$. Since $\mathrm{Td}_1(X)=0$ we now have uniform bounds --- depending only on $\ch_H(F)$ --- on the leading 3 coefficients of the Hilbert polynomial of $F^{**}$. But by \cite[Theorem 4.4]{langer}, $\mu\_H$-semistable reflexive sheaves with such bounds form a bounded family.
Thus we deduce a uniform bound for $\ch_3(F^{**})$.
% and the regularity reg$\;(F^{**})$ \cite[Lemma 1.7.6]{HL}.....\cite[Lemma 1.7.9]{HL}

By \eqref{interval} the leading coefficient $\ch_2(Q).H$ of the Hilbert polynomial of $Q$ lies in the interval $\big[0,\frac{c^2}{2r}\big]$. But $Q$ is a quotient of the fixed sheaf $\cO_X(-N)^{\oplus M}$, so a result of Grothendieck \cite[Lemma 2.6]{Gr} gives a lower bound on the subleading term of its Hilbert polynomial. Since $\mathrm{Td}_1(X)=0$ this is
$$
\ch_3(Q)\ \geq\ C.    
$$
By our uniform bound on $\ch_3(F^{**})$ this gives the required upper bound on $\ch_3(F)=\ch_3(F^{**})-\ch_3(Q)$.
\medskip

\noindent\textbf{Proof of Theorem \ref{BG2} for $r=0$.}
Take a $\mu\_H$-semistable pure sheaf $F$ of dimension 2 with $\ch_H(F)=(0,c,s)$ and $c>0$. To prove an upper bound on $\ch_3(F)$ we may as well assume that $\ch_3(F)\ge0$.
Since $\mathrm{Td}_1(X)=0$ the Hilbert polynomial of $F$ is
$$
\chi(F(t))\=\tfrac12ct^2+st+\big[\!\ch_1(F).\mathrm{Td}_2(X)+\ch_3(F)\big].
$$
Its leading and subleading coefficients are fixed by $\ch_H(F)$. If we can bound $\ch_1(F).\mathrm{Td}_2(X)$ then combined with $\ch_3(F)\ge0$ this bounds below the constant term in square brackets. Thus by \cite[Theorem 4.4]{langer} $F$ lies in a bounded family, giving an upper bound for $\ch_3(F)$ depending only on $\ch_H(F)$, as required. 

The key point is that $\ch_1(F)=D$ is an \emph{effective} divisor, so its degree is comparable to its size measured in any norm $|\,\cdot\,|$ on $H^2(X,\R)$: there are constants $c_i>0$ such that
$$
c_1|D|\ \le\ D.H^2\ \le\ c_2|D|.
$$
(See the proof of \cite[Corollary 7.3.3]{BMT}, for instance.) Thus $|D|\le c/c_1$ and $D$ lies in a finite subset of $H^2(X,\Z)$, implying the claimed uniform bound on $D.\mathrm{Td}_2(X)$.

\section{Enumerative invariants and wall crossing formulae}\label{DT3}

We have now proved the technical results we need to prove Theorem \ref{1} by wall crossing. Next we review Joyce theory: the enumerative invariants we will use, and the wall crossing formula relating them as we change stability condition.

From now on we assume $(X,\cO_X(1))$ is a smooth complex projective Calabi-Yau 3-fold: $K_X\cong\cO_X$. Without loss of generality we assume also that $H^1(\cO_X)=0$; when this does not hold the locally free action of Jac$\;(X)$ on moduli spaces of sheaves of rank $\ge1$ forces all invariants to vanish. In particular then $\Pic\_0(X)=H^2(X,\Z)_{\mathrm{tors}}$.

\subsection*{Abelian categories}
Let $\cA$ denote one of the abelian categories Coh$(X)$ or $\cA_{\;b}$ \eqref{Abdef}. 
Both have the same Grothendieck group $K(\mathrm{Coh}(X))$, numerical Grothendieck group $K(X)$ \eqref{Kdef}, and quotient $K_H(X)$ \eqref{K_Hdef}.
%We say a class $\beta\in C(\cA)$ is \emph{subordinate} to $\alpha\in C(\cA)$ if $\alpha-\beta\in C(\cA)$. This defines a partial order $\beta\le\alpha$ on $C(\cA)$.
In contrast the positive cone $C(\cA)\subset K(X)$ defined by
\beq{CA}
C(\cA)\ :=\ \big\{[F]\in K(X)\colon F\in\cA\big\}
\eeq
depends on $\cA$. To deal with strictly semistable objects in $\cA$ we would like to apply Joyce's Ringel-Hall algebra technology \cite{Jo2,Jo3,BrHall,BR}. To do this we must verify that $\cA$ satisfies a number of conditions.

Firstly, $[a]=0\in C(\cA)$ implies $a=0\in\cA$. For $\cA=\mathrm{Coh}\;(X)$ this is implied by the injectivity of the Chern character on numerical K-theory $K(X)$, and for $\cA=\cA_{\;b}$ it is \cite[Lemma 3.2.1]{BMT}. Secondly Coh$\;(X)$ is Noetherian because $X$ is, while $\cA_{\;b}$ is Noetherian by \cite[Proposition 5.22]{BMT}. Both have finite dimensional Homs.

In \cite[Section 9]{Jo1} Joyce shows that Coh$(X)$ satisfies \cite[Assumption 7.1]{Jo1}. For $\cA_{\;b}$ it follows from \cite[Proposition B.4]{BCR}
if \cite[Assumption B.1]{BCR} holds: namely that the stack of objects of $\cA_{\;b}$ is open in the stack of objects $E\in\cD(X)$ with $\Ext^{<0}(E,E)=0$. But this is proved in \cite{TodaK3}; see Proposition \ref{prop:pi-to-heart} for more details. Finally \cite[Assumption 8.1]{Jo1} has two parts; the first is used to handle weak stability conditions like purity whose moduli stacks are not of finite type, so we do not need it --- the permissibility of \eqref{filtrate} below and its weakening Proposition \ref{finsum} will be sufficient for our purposes. The second is that $\pi_1\times\pi_2$ in \eqref{Cart} below has finite type. This is proved for $\cA_{\;b}$ or Coh$(X)$ by using a flattening stratification over which $\pi_1\times\pi_2$ has a locally constant fibre of the form $\Ext^1(E_2,E_1)/\Hom(E_2,E_1)$ as in \cite[Proposition 6.2]{BrHall} for instance.

\subsection*{Hall algebra}
We denote Joyce's Hall algebra $\mathrm{SF}_{\!\mathrm{al}}\;(\cA)$ by $H(\cA)$. It is constructed in \cite{Jo2} starting with the $\Q$-vector space on the following generators: isomorphism classes of representable morphisms from (algebraic stacks of finite type over $\C$ with affine stabilisers) to (the stack of objects of $\cA$). Joyce then quotients by the scissor relations for closed substacks, and makes the result into a ring with his Hall algebra product $*$ on stack functions.

At the level of individual objects, the product $1_E*1_F$ of (the indicator functions of) $E$ and $F$ is the stack of all extensions between them,
\beq{extstack}
\frac{\Ext^1(F,E)}{\Aut(E)\times\Aut(F)\times\Hom(F,E)}\,.
\eeq
Here $e\in\Ext^1(F,E)$ maps to the corresponding extension in $\cA$ of $F$ by $E$, and elements of $\Hom(F,E)$ act as automorphisms of this extension in the obvious way.\footnote{Write the extension as $0\to E\rt\iota G\rt\pi F\to 0$. Then the action of any $\phi\in\Hom(F,E)$ on $G$ via id$_{\;G}+\iota\circ\phi\circ\pi$ preserves both $E\into G$ and $G\to\hspace{-2.5mm}\to F$.} More generally $*$ is defined via the stack $\mathfrak{Ext}$ of all short exact sequences
\beq{exten}
0\To E_1\To E\To E_2\To0
\eeq
in $\cA$, with its morphisms $\pi_1,\pi,\pi_2\colon\mathfrak{Ext}\to\cA$ taking the extension to $E_1,\,E,\,E_2$ respectively. This defines the universal case, which is the Hall algebra product of $\cA$ with itself:
$$
1_{\cA}*1_{\cA}\ =\ \Big(\mathfrak{Ext}\rt\pi\cA\Big).
$$
Other products are defined by fibre product with the universal case: given two stack functions $U,V\to\cA$ we define $U*V\to\cA$ by the Cartesian square
\beq{Cart}
\xymatrix@C=35pt{
U*V \ar[r]\ar[d]& \mathfrak{Ext} \ar[r]^-\pi\ar[d]_{\pi_1\!}^{\!\times\pi_2}& \cA \\
U\times V \ar[r]& \cA\times\cA\,.\!\!\!}
\eeq
Joyce uses weak stability conditions $\tau$ on $\cA$ that are \emph{permissible}
in the sense of \cite[Definition 4.7]{Jo3}. That is
\begin{enumerate}
\item[(i)] the stacks $\cM^{ss}_\tau(\alpha)$ of $\tau$-semistable objects of fixed class $\alpha\in K(X)$ are algebraic of finite type, and
\item[(ii)] $\cA$ is $\tau$-Artinian: any filtration
\beq{filtrate}
\hspace{5mm} \dots\,\subsetneq\,E_2\,\subsetneq\,E_1\,\subsetneq\,E_0\,:=\,E\,\text{ in }\,\cA,\,\text{ with }\,\tau(E_{n+1})\,\ge\,\tau(E_n/E_{n+1})\,\text{ for all }\,n,
\eeq
is automatically finite.
\end{enumerate}
Both conditions fail\;\footnote{For instance,
%notice when $\nu\_{b,w}=+\infty$ there are unbounded families of $\nu\_{b,w}$-semistable sheaves $F$ given by iterating $F\mapsto\ker(F\twoheadrightarrow\cO_x)\oplus\cO_x$ for $x\in\operatorname{supp}(F)$.
\emph{all} sheaves supported in dimension $\le1$ are in $\cA_{\;b}$ and are $\nu\_{b,w}$-semistable of slope $+\infty$.} for the weak stability conditions $\nu\_{b,w}$ on $\cA_{\;b}$, but in mild ways that we can manage. Firstly, $\cA_{\;b}$ admits finite $\nu\_{b,w}$-Harder-Narasimhan filtrations by \cite[Lemma 3.2.4]{BMT}, so we do not need to use (ii) to produce them (as Joyce does). For his other applications of permissibility the following will be sufficient. Recall the notion of semistable factors from  Definition \ref{gJH}.

\begin{Prop}\label{finsum} The following weakening of permissibility holds for $\(\cA_{\;b},\nu\_{b,w}\)$,
\begin{enumerate}
\item[\emph{(i)}] the stacks $\cM^{ss}_\tau(\alpha)$ of $\nu\_{b,w}$-semistable objects of fixed class $\alpha\in K(X)$ are algebraic of finite type when $\nu\_{b,w}(\alpha)<+\infty$, and
\item[\emph{(ii)}] if $b\in\Q$ and $\nu\_{b,w}(E_n)<+\infty$ for all $n$ in \eqref{filtrate} then the filtration is finite.
\end{enumerate}
Moreover, for any $(b,w)\in U$ and $\alpha\in K(X)$ with $\nu\_{b,w}(\alpha)<+\infty$, there are only finitely many $K(X)$ classes of semistable factors of $\nu\_{b,w}$-semistable objects in class $\alpha$.
\end{Prop}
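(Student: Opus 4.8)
I would establish the two numbered claims and the final clause in turn. For (i): the stack of objects of $\cA_{\;b}$ is an open substack of the (algebraic, with affine stabilisers) stack of objects $E\in\cD(X)$ with $\Ext^{<0}(E,E)=0$, so $\cM^{ss}_{\nu_{b,w}}(\alpha)$ is algebraic, and the only real point is boundedness (finite type). Since $\nu_{b,w}(\alpha)<+\infty$ forces $\ch_1^{bH}(\alpha).H^2>0$ we are away from the problematic slope-$+\infty$ locus (sheaves supported in dimension $\le1$), and boundedness follows from the work of Piyaratne--Toda; I would simply invoke Appendix \ref{TomB}. For (ii): write $d(-):=\ch_1^{bH}(-).H^2$, which is additive on $\cA_{\;b}$, is $\ge0$ there by \eqref{65}, and — for $b=p/q\in\Q$ — takes values in $\tfrac1q\Z$ because $\ch_0$, $\ch_1.H^2$ and $H^3$ are integers. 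Every quotient $E_n/E_{n+1}$ in \eqref{filtrate} lies in $\cA_{\;b}$, so $d(E_{n+1})=d(E_n)-d(E_n/E_{n+1})\le d(E_n)$; hence $\big(d(E_n)\big)_n$ is non-increasing in $\tfrac1q\Z_{\ge0}$ and eventually constant, say for $n\ge N$. Then $d(E_n/E_{n+1})=0$, so $\nu_{b,w}(E_n/E_{n+1})=+\infty$ by \eqref{noo}, and the defining inequality $\nu_{b,w}(E_{n+1})\ge\nu_{b,w}(E_n/E_{n+1})$ of \eqref{filtrate} forces $\nu_{b,w}(E_{n+1})=+\infty$, contradicting the hypothesis. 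So no strict step can occur for $n\ge N$ and the filtration has length $\le N$.

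For the final clause, let $G$ be a semistable factor of a $\nu_{b,w}$-semistable $E$ of class $\alpha$; it is $\nu_{b,w}$-semistable with $\nu_{b,w}(G)=\nu_{b,w}(\alpha)=:\mu<+\infty$, and by additivity and positivity of $d$ it has $d(G)\in[0,d(\alpha)]$. I would first bound $\ch_H(G)$. The slope condition $\nu_{b,w}(G)=\mu$ reads $\ch_1(G).H^2=d(G)+bH^3\ch_0(G)$ and $\ch_2(G).H=\mu\,d(G)+wH^3\ch_0(G)$; substituting into $\Delta_H(G)=(\ch_1(G).H^2)^2-2\ch_0(G)H^3(\ch_2(G).H)$ gives a quadratic in $\ch_0(G)$ with leading coefficient $-H^6(2w-b^2)<0$ (as $(b,w)\in U$). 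Since $\Delta_H(G)\ge0$ by \eqref{BOG} and $d(G)$ ranges over the compact interval $[0,d(\alpha)]$, this bounds $\ch_0(G)$, and hence also $\ch_1(G).H^2$ and $\ch_2(G).H$; so $\ch_H(G)$ takes only finitely many values. (One can also note $\Delta_H(G)\le\Delta_H(\alpha)$ by iterating Lemma \ref{lem:quadratic}.)

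The remaining — and hardest — step is to upgrade this to finiteness of $\ch(G)\in K(X)$, which I would do by fixing one of the finitely many values of $\ch_H(G)$ and showing the corresponding $G$ lie in a bounded family. Decompose $G\in\cA_{\;b}$ into the torsion-free sheaf $\cH^{-1}(G)$ (with $\mu_H^+\le b$) and $\cH^0(G)$ (with $\mu_H^->b$). Their ranks are bounded, since $\cH^{-1}(G)$ is assembled from subquotients of the cohomology sheaves $\cH^\bullet(E)$, which are bounded by part (i), while $\ch_0(G)$ is fixed. Bounded rank together with the one-sided slope bound and the fixed value of $\ch_1(G).H^2$ confine all $\mu_H$-Harder--Narasimhan slopes of $\cH^\pm(G)$ (and the divisorial degree of the torsion of $\cH^0(G)$) to bounded ranges, so the semistable HN factors have bounded $(\ch_0,\ch_1.H^2)$; since $D\mapsto D^2.H$ is negative definite on each affine slice $\{D.H^2=\mathrm{const}\}$ of $H^2(X,\R)$, the Hodge index theorem and the Bogomolov inequality then bound $\ch_1$ of each factor as an integral class, hence $\ch_1(\cH^\pm(G))$ and so $\ch_1(G)$. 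Next, $\ch_2(\cH^{-1}(G)).H$ is bounded above by Bogomolov on the HN factors and below by the $\nu_{b,w}$-semistability of $G$ applied to the subobject $\cH^{-1}(G)[1]\hookrightarrow G$, which has strictly positive $\ch_1^{bH}$-denominator (otherwise its slope would be $+\infty$, destabilising $G$) and hence finite slope $\le\mu$; so $\ch_2(\cH^0(G)).H=\ch_2(G).H+\ch_2(\cH^{-1}(G)).H$ is bounded too. With $(\ch_0,\ch_1,\ch_2.H)$ bounded for $\cH^\pm(G)$, Langer's boundedness theorem \cite[Theorem 4.4]{langer} (using $c_1(X)=0$) puts the torsion-free parts in a bounded family; the torsion parts are pinned down by their bounded $\ch_1$ and $\ch_2.H$ — an effective divisor, resp. $1$-cycle, of bounded degree — together with Grothendieck's bound on the sub-leading Hilbert coefficient, and the Bogomolov--Gieseker inequality \ref{conjecture} bounds the remaining $\ch_3$. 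Only finitely many classes $\ch(G)\in K(X)$ survive. The main obstacle throughout is precisely this last passage, from control of the $H$-degrees $\ch_H$ to control of the full Chern characters of the semistable factors, where Hodge index, Langer's boundedness and Bogomolov--Gieseker all enter.
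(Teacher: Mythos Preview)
Your arguments for (i) and (ii) are essentially the paper's: (i) is deferred to the Piyaratne--Toda result in Appendix~\ref{TomB}, and (ii) uses that $\ch_1^{bH}(\,\cdot\,).H^2$ is additive, nonnegative on $\cA_{\;b}$, and (for $b\in\Q$) takes values in a discrete subgroup of $\R$, so the strictly positive contributions from each step bound the length of the filtration.

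For the final clause you diverge sharply from the paper. The paper's proof is a two-line contradiction: if infinitely many decompositions $\alpha=\sum\alpha_i$ with $\nu\_{b,w}(\alpha_i)=\nu\_{b,w}(\alpha)$ and $\cM^{ss}_{b,w}(\alpha_i)\ne\emptyset$ existed, then taking direct sums of representatives would produce an unbounded family of $\nu\_{b,w}$-semistable objects of class $\alpha$, contradicting (i). In other words, the hard work has already been done in (i) (via Piyaratne--Toda), and the finiteness of semistable-factor classes is squeezed out of it for free. You instead attempt a direct boundedness argument for the factors $G$ themselves: bound $\ch_H(G)$ via $\Delta_H(G)\ge0$, then upgrade to the full $\ch(G)\in K(X)$ using Hodge index on $\ch_1$, Langer's theorem on the torsion-free parts of $\cH^{\pm}(G)$, Grothendieck's bound on subleading Hilbert coefficients, and the Bogomolov--Gieseker inequality for $\ch_3$.

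This is a reasonable strategy in spirit, but several of your steps are only sketched and would need real work to close. Bounding $\rk\cH^{-1}(G)$ from the boundedness of $E$ requires tracing through the long exact cohomology sequences for \emph{both} inclusions $E_0\subset E_1\subset E$, and uses that $E_0$ is itself semistable of the same slope (so its $\ch_0$ is already bounded by your first step). You bound $\ch_1$ of the $\mu\_H$-HN factors as integral classes but do not explain how $\ch_2(G)\in H^4(X,\Q)$ (not just $\ch_2(G).H$) gets bounded before invoking Langer; and for the zero-dimensional torsion inside $\cH^0(G)$ the invariants $\ch_i.H^{3-i}$ all vanish for $i\le2$, so ``pinned down by their bounded $\ch_1$ and $\ch_2.H$'' does not by itself control its length --- you need the BG inequality on $G$ to cap $\ch_3(G)$ from above and then a separate argument (e.g.\ via the complementary factor, as in a wall-crossing constraint) to bound it below. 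None of this is fatal, but it is a lot of machinery to reprove what (i) already gives you. The paper's route is both shorter and more robust: it needs no Hodge index, no Langer, and no case analysis on torsion.
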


\begin{proof}
We relegate the proof of (i) to Theorem \ref{ftstack} in Appendix \ref{TomB}. For (ii), the $\nu\_{b,w}<+\infty$ condition implies $\ch_1^{bH}>0$ for $E$ and all $E_n,\,E_n/E_{n+1}\in\cA_{\;b}$. Since $b\in\Q$ there is an $\epsilon>0$ such that $\ch_1^{bH}(F)>\epsilon$ for any $F\in\cA_{\;b}$ with $\ch_1^{bH}>0$. Therefore if $E_n \neq 0$,
$$
\ch_1^{bH}(E)\=\ch_1^{bH}(E_n)+\ch_1^{bH}(E_{n-1}/E_n)+\dots+\ch_1^{bH}(E_0/E_1)\ >\ (n+1)\;\epsilon
$$
bounds $n$, as required.
%In particular when $E$ is $\nu\_{b,w}$-semistable with $\nu\_{b,w}<+\infty$ any filtration \eqref{filtrate} of it is finite (even without the $b\in\Q$ condition, since we can perturb $(b,w)$ along the line of slope $\nu\_{b,w}(E)$ through it).

For the final claim, assume for a contradiction there are infinitely many ways to write $\alpha\in K(X)$ as a sum of classes $\alpha_1 , \dots, \alpha_n\in K(X)$ with $\nu\_{b,w}(\alpha_i) = \nu\_{b,w}(\alpha)<+\infty$ and $\cM^{ss}_{b,w}(\alpha_i) \neq \emptyset$. Then we can generate an unbounded family of $\nu\_{b,w}$-semistable objects of class $\alpha$ by taking direct sums of objects in $\cM^{ss}_{b,w}(\alpha_i)$, contradicting (i).  
\end{proof}

By (i) the inclusion
\beq{delt}
1_{\cM^{ss}_\tau(\alpha)}\ \colon\,\cM^{ss}_\tau(\alpha)\ \Into\ \cA
\eeq
defines an element of $H(\cA)$. In the case of $\(\cA_{\;b},\nu\_{b,w}\)$ we assume $\nu\_{b,w}(\alpha)<+\infty$ to get the same result. To ameliorate the stabilisers of strictly semistable objects, Joyce replaces this indicator stack function by the logarithm of the indicator function of the stack of all semistable sheaves of the same slope,
\beq{epsi}
\epsilon_\tau(\alpha)\ :=\ \mathop{\sum_{m\ge 1,\ \alpha_1,\dots,\;\alpha_m\;\in\,C(\cA)\,:}}_{\sum_{i=1}^m \alpha_i\,=\,\alpha,\ \tau(\alpha_i)\,=\,\tau(\alpha)\ \forall i}\frac{(-1)^m}m\ 1_{\cM^{ss}_\tau(\alpha_1)}*\cdots*1_{\cM^{ss}_\tau(\alpha_m)}\,.
\eeq
For permissible weak stability conditions $\tau$ Joyce shows this is a finite sum using \cite[Proposition 4.9]{Jo3}. The last statement of Proposition \ref{finsum} gives the same finiteness for $\(\cA_{\;b},\nu\_{b,w}\)$ when we assume $\nu\_{b,w}(\alpha)<+\infty$.

\subsection*{Invariants}
Joyce defines a set of \emph{virtually indecomposable} stack functions with algebra stabilisers $\mathrm{SF}_{\mathrm{al}}^{\mathrm{ind}}(\cA)\subset \mathrm{SF}_{\mathrm{al}}(\cA)$, which we denote $H^{\mathrm{ind}}(\cA)\subset H(\cA)$, and prove it is a sub Lie algebra under the Hall algebra commutator. He then proves the really deep result \cite[Theorem 8.7]{Jo3} that $\epsilon_\tau(\alpha)$ \eqref{epsi} lies in it,
$$
\epsilon_\tau(\alpha)\ \in\ H^{\mathrm{ind}}(\cA)\ \subset\ H(\cA).
$$
Morally this means it has only $\C^*$ stabilisers. Precisely, by \cite[Proposition 3.4]{JS}, \eqref{epsi} is equal in the Hall algebra to a $\Q$-linear combination
\beq{BC*}
\sum_i\alpha_i\(Z_i/\C^*,\rho_i\),\quad\alpha_i\in\Q,
\eeq
where $Z_i$ is a quasi-projective \emph{variety} with trivial $\C^*$-action and $\rho_i$ is a representable morphism $Z_i/\C^*\to\cA$. Thus we can define the generalised DT invariant
\beq{Jdef}
\mathsf J\;_\tau(\alpha)\ :=\ -\sum_i\;\alpha_i\;e\(Z_i,\rho_i^*\chi^B\)\=-\sum_{i,n}\alpha_in\;e\((\rho_i^*\chi^B)^{-1}\{n\}\)\ \in\ \Q \vspace{-1mm}
\eeq
counting $\tau$-semistable objects of class $\alpha$ independently of choices as in  \cite[Equation 5.5]{JS}. Here $\chi^B\colon\cA\to\Z$ is the Kai Behrend function \cite{Be} on the stack of objects of $\cA$ and $e$ denotes topological Euler characteristic weighted by the constructible function $\rho_i^*\chi^B$. (The extra sign in \eqref{Jdef} is to cancel the Behrend function $-1$ of $\C^*$.) If there are no strictly semistable objects of class $\alpha$ and the coarse moduli space $\cM^{ss}_\tau(\alpha)$ can be proved to be projective then it carries a symmetric obstruction theory whose dimension 0 virtual cycle has length $\J_\tau(\alpha)\in\Z$. \medskip

In the case of $\(\cA_{\;b},\nu\_{b,w}\)$ this recipe defines $\mathsf J\;_{b,w}(\alpha)$ when $\nu\_{b,w}(\alpha)<+\infty$. We can also consider stability conditions on $\cA=$\,\;Coh$(X)$. Given a sheaf of class $\alpha\in K(X)$ write its Hilbert polynomial as
$$
P_\alpha(t)\ :=\ \chi\(\alpha(t)\)\=a_dt^d+a_{d-1}t^{d-1}+\dots+a_0,
$$
where $d\le3$ and $a_d\ne0$. Its \emph{reduced} Hilbert polynomial is
$$
p_\alpha(t)\ :=\ \frac{P_\alpha(t)}{a_d}\,.
%\left\{\!\!\!\begin{array}{cc} \frac{P_\alpha(t)}{a_3} & a_3>0, \\
%\infty & a_3=0. \end{array}\right.
$$
Following Joyce \cite[Section 4.4]{Jo3} we introduce a total order $\prec$ on monic polynomials like $p_\alpha(t)$ by saying $p\prec q$ if and only if
\begin{itemize}
\item[(i)] deg$\,p>\ $deg$\,q$, or
\item[(ii)] deg$\,p=\,\;$deg$\,q$ and $p(t)<q(t)$ for $t\gg0$.
\end{itemize}
(Notice the strange direction of the inequality in (i), ensuring that sheaves of lower dimension have \emph{greater} slope.)
Then $E\in\mathrm{Coh}\;(X)$ is called Gieseker (semi)stable if for all non-trivial exact sequences $0\to A\to E\to B\to0$ in Coh$\;(X)$ we have
$$
p\_{\;[A]}\ \,(\preceq)\,\ p\_{\;[B]}.
$$
Here $(\preceq)$ means $\prec$ for stability and $\preceq$ (which is $\prec$ or $=$) for semistability. In particular (i) ensures that Gieseker semistable sheaves are pure. 

Discarding the constant term of the Hilbert polynomial before dividing by its top coefficient as before gives
\beq{tidef}
\widetilde p_\alpha(t)\ :=\ p_\alpha(t)-\frac{a_0}{a_d}\=
t^d+\dots+\frac{a_1}{a_d}t.
%\left\{\!\!\begin{array}{cc} t^3+\frac{a_2}{a_3}t^2+\frac{a_1}{a_3}t & a_3>0, \\
%\infty & a_3=0. \end{array}\right.
\eeq
It depends only on the class of $\alpha$ in the group $K_H(X)$ of \eqref{K_Hdef}.
Then \emph{tilt (semi)stability} on Coh$(X)$ is defined by the inequalities
$$
\widetilde p\_{\;[A]}\ \,(\preceq)\,\ \widetilde p\_{\;[B]}
$$
for all non-trivial exact sequences of sheaves $0\to A\to E\to B\to0$. By an argument from \cite[Proposition 14.2]{Br} it turns out there is a large volume chamber $w\gg0$ for $\alpha$ in which $\nu\_{b,w}$-(semi)stability is the same as tilt (semi)stability for $\rk\;(\alpha)\ge1$ or $\rk\;(\alpha)=0$ and $\ch_1(\alpha).H^2>0$.

By \cite[Theorem 4.20]{Jo3} tilt and Gieseker stability are \emph{permissible weak stability conditions} on $\cA=\mathrm{Coh}(X)$ in the sense of \cite[Definition 4.7]{Jo3}. (For Gieseker stability we can drop the adjective \emph{weak}.)
Therefore \eqref{delt} and \eqref{epsi}, applied to tilt and Gieseker stability respectively, define elements
$$
1_{\cM^{ss}_{\mathrm{ti}}(\alpha)},\,1_{\cM^{ss}(\alpha)}\in H(\mathrm{Coh}\;(X))\ \text{ and }\ \epsilon\_{\mathrm{ti}}(\alpha),\,\epsilon(\alpha)\,\in\,H^{\mathrm{ind}}(\mathrm{Coh}\;(X))
$$
%assuming $\widetilde p_\alpha(t)\not\equiv0$ in the first case.
and the corresponding invariants
\beq{Jtilt}
\J_{\mathrm{ti}}(\alpha)\ \in\ \Q
%\ \text{ (if }\widetilde p_\alpha(t)\not\equiv0)
\quad\text{and}\quad \J(\alpha)\ \in\ \Q 
\eeq
as in \eqref{Jdef}. If all Gieseker semistable sheaves of class $\alpha\in K^0(X)$ are Gieseker stable then $\J(\alpha)\in\Z$ is the original DT invariant defined in \cite{Th}.

\subsection*{Wall crossing formulae} Joyce shows how the invariants $\J_\tau(\alpha)$ defined above vary as we change the stability condition $\tau$.
So fix $\cA$ and three stability conditions $\tau_+,\tau\_0,\tau_-$ thereon. We may think of these as lying above, on and below a wall respectively, but it is important to note that we can swap $\tau_\pm$ or take either to equal $\tau\_0$ --- the wall crossing formula will still apply.

Under the conditions described in Section \ref{conditions} below, the wall crossing formula relates the invariants $\J_\pm$ counting $\tau_\pm$-semistable objects. It takes the general form
%\begin{align}\nonumber
%\J_\tau(\alpha)\,=\,\J_\sigma(\alpha)\ -& \hspace{-3mm}\mathop{\mathop{\sum_{\alpha_1,\,\alpha_2\,\in\,C(\cA),\ \alpha_1+\alpha_2\,=\,\alpha}}_{\mathrm{and}\ [\sigma(\alpha_1)\,>\,\sigma(\alpha_2),\ \tau(\alpha_1)\,\le\,\tau(\alpha_2)]}}_{\mathrm{or}\ [\sigma(\alpha_1)\,\ge\,\sigma(\alpha_2),\ \tau(\alpha_1)\,<\,\tau(\alpha_2)]}\hspace{-6mm}(-1)^{\chi(\alpha_2,\alpha_1)}\chi(\alpha_2,\alpha_1)\;\J_\sigma(\alpha_1)\;\J_\sigma(\alpha_2)
%\\ +&\mathop{\sum_{m\ge3,\ \alpha_1,\dots,\;\alpha_m\,\in\,C(\cA),}}_{\sum_{i=1}^m\alpha_i\,=\,\alpha}C(\alpha_1,\dots,\alpha_m)\prod_{i=1}^m\J_\sigma(\alpha_i). \label{WCF}
%\end{align}
\beq{WCF}
\J_+(\alpha)\,=\,\J_-(\alpha)\ +\ \mathop{\sum_{m\ge2,\ \alpha_1,\dots,\;\alpha_m\,\in\,C(\cA),}}_{\sum_{i=1}^m\alpha_i\,=\,\alpha,\ \tau\_0(\alpha_i)\,=\,\tau\_0(\alpha)\,\forall i}C_{+,-}(\alpha_1,\dots,\alpha_m)\prod_{i=1}^m\J_-(\alpha_i).
\eeq
Here the $C_{+,-}(\alpha_1,\dots,\alpha_n)\in\Q$ are universal coefficients  depending only on the Mukai pairings $\chi(\alpha_i,\alpha_j)$ and the relative sizes of the set of slopes $\{\tau_\pm(\alpha_i)\}$. The term
$$
C_{+,-}(\alpha_1,\dots,\alpha_m)\prod_{i=1}^m\J_-(\alpha_i)
$$
is zero unless there is a $\tau\_0$-semistable object $E$ of class $\alpha$ with $m$ $\tau\_0$-semistable factors of classes $\alpha_1,\dots,\alpha_m$. The formula reflects the different Harder-Narasimhan filtrations of $E$ on the two sides of the wall (and then further filtrations of the semistable Harder-Narasimhan factors by semi-destabilising subobjects).

The formula comes from the identity \cite[Theorem 3.14]{JS},
%\beq{[]}
%\epsilon_{\tau_+}(\alpha)\=\epsilon_{\tau_-}(\alpha)+\sum c_{+,-}(\alpha_1,\dots,\alpha_m)\big[\big[\cdots\big[\big[\epsilon_{\tau_-}(\alpha_1),\epsilon_{\tau_-}(\alpha_2)\big],\epsilon_{\tau_-}(\alpha_3)\big],\cdots\big],\epsilon_{\tau_-}(\alpha_m)\big],
%\eeq
%where the sum is over the same splittings $\alpha=\alpha_1+\ldots+\alpha_m$ as in \eqref{WCF}.
expressing $\epsilon_{\tau_+}(\alpha)-\epsilon_{\tau_-}(\alpha)$ as a weighted sum of iterated commutators of terms $\epsilon_{\tau_-}(\alpha_i)\in H(\cA)$, where $\alpha_i$ are the classes of $\tau\_0$-semistable factors of $\alpha$. We then take $\chi^B$-weighted Euler characteristics of both sides as in \eqref{Jdef}. Denote this integration map $\Psi\colon H^{\mathrm{ind}}(\cA)\to\Q$, so that $\Psi(\epsilon_\tau(\beta))=-\J_\tau(\beta)$. Then by \cite[Theorem 5.14]{JS} it satisfies\footnote{\cite[Theorem 5.14]{JS} proves \eqref{psi} for $\cA=$ Coh$\;(X)$, but it also holds for $\cA=\cA_{\;b}$ since Toda \cite[Theorem 1.1]{TodaHall} showed that \cite[Theorem 5.11]{JS} is valid for the moduli stack of objects $E\in\cD(X)$ with $\Ext^{<0}(E,E)=0$.}
\beq{psi}
\Psi\big[\epsilon(\alpha),\epsilon(\beta)\big]\=
(-1)^{\chi(\alpha,\beta)}\chi(\alpha,\beta)\,\Psi\(\epsilon(\alpha)\)\Psi\(\epsilon(\beta)\)
\eeq
whenever $\epsilon(\alpha),\,\epsilon(\beta)\in H^{\mathrm{ind}}(\cA)$ are virtual indecomposable stack functions supported on the stack of objects of classes $\alpha,\,\beta\in K(X)$ respectively. Thus $\Psi$ turns the iterated commutators into products of invariants $\J_-(\alpha_i)$, hence giving the formula \eqref{WCF}.

%on passing from $\sigma$-stability to $\tau$-stability, all of the differences in slope $\sigma(\alpha_i)-\sigma(\alpha_j)$ ``change sign" in the weak sense that either $\sigma(\alpha_i)-\sigma(\alpha_j)$ and $\tau(\alpha_i)-\tau(\alpha_j)$ have opposite signs, or at least one is zero.
%In fact they are nonzero unless the $\alpha_i$ are all sums of classes of the associated graded pieces of a Jordan-H\"older filtration in one stability condition of an object which is semistable in the other stability condition.
%This ensures only finitely many terms of the sum \eqref{WCF} are nonzero under the right conditions.
%Equivalently,
%\begin{itemize}
%\item $\sigma(\alpha_i)\,<\,\sigma(\alpha_{i+1})$ and $\tau(\alpha_i)\,\ge\,\tau(\alpha_{i+1})$, or
%\item $\sigma(\alpha_i)\,\le\,\sigma(\alpha_{i+1})$ and $\tau(\alpha_i)>\tau(\alpha_{i+1})$, or
%\item $\sigma(\alpha_i)\,>\,\sigma(\alpha_{i+1})$ and $\tau(\alpha_i)\,\le\,\tau(\alpha_{i+1})$, or
%\item $\sigma(\alpha_i)\,\ge\,\sigma(\alpha_{i+1})$ and $\tau(\alpha_i)\,<\,\tau(\alpha_{i+1})$.
%\end{itemize}

The coefficients $C_{+,-}$ are given by complicated formulae. We will only need to know that when $m =2$, $\tau\_0(\alpha_1)=\tau\_0(\alpha_2),\ \tau_-(\alpha_1)>\tau_-(\alpha_2)$ and $\tau_+(\alpha_1)<\tau_+(\alpha_2)$ then
\beq{><}
C_{+,-}(\alpha_1,\alpha_2)+C_{+,-}(\alpha_2,\alpha_1)\=(-1)^{\chi(\alpha_1,\alpha_2)-1}\chi(\alpha_1,\alpha_2).
\eeq
In fact, in the notation of  \cite[Equation 3.8]{JS}, $-\;U(\alpha_1,\alpha_2;\tau_+,\tau_-)=1=U(\alpha_2,\alpha_1;\tau_+,\tau_-)$. Therefore, applying the integration map $\Psi$ to \cite[Equation 3.10]{JS} and using \eqref{psi} gives \eqref{><}.
%\begin{eqnarray}
%\tau_-(\alpha_1)\,>\,\tau_-(\alpha_2),\ \tau_+(\alpha_1)\,<\,\tau_+(\alpha_2) \quad&\so& \quad C\=(-1)^{\chi(\alpha_1,\alpha_2)}\chi(\alpha_1,\alpha_2).
%\tau_-(\alpha_1)\,>\,\tau_-(\alpha_2),\ \tau_+(\alpha_1)\,=\,\tau_+(\alpha_2) &\so& C\=\tfrac12(-1)^{\chi(\alpha_1,\alpha_2)}\chi(\alpha_1,\alpha_2), \label{=>} \\
%\tau_-(\alpha_1)\,=\,\tau_-(\alpha_2),\ \tau_+(\alpha_1)\,<\,\tau_+(\alpha_2) &\so& C\=\tfrac12(-1)^{\chi(\alpha_1,\alpha_2)}\chi(\alpha_1,\alpha_2). \label{<=}
%\tau_-(\alpha_1)\,<\,\tau_-(\alpha_2),\ \tau_+(\alpha_1)\,>\,\tau_+(\alpha_2) &\so& C\=(-1)^{\chi(\alpha_1,\alpha_2)}\chi(\alpha_1,\alpha_2), \label{<>}
%\end{eqnarray}

\subsection{Conditions}\label{conditions} To prove \eqref{WCF} --- and in particular to show it contains only finitely many nonzero terms --- Joyce \cite{Jo3} assumes 
that (a) $\tau_\pm,\tau\_0$ are permissible and (b) $\tau\_0$ \emph{dominates} $\tau_\pm$ in the sense of \cite[Definition 4.10]{Jo3}. That is, $\tau_+,\tau\_0$ should satisfy
\beq{cond2}
\tau_+(\beta)\ \le\ \tau_+(\gamma)\ \ \so\ \ 
\tau\_0(\beta)\ \le\ \tau\_0(\gamma)
\eeq
for all $\beta,\,\gamma\in C(\cA)$, and similarly for $\tau_-$\;.

When $\tau_+=\tau_0$ is tilt stability and $\tau_-$ is Gieseker stability on $\cA=$ Coh$\;(X)$, both conditions hold; (a) by \cite[Theorem 4.20]{Jo3} and (b) is immediate from the definitions.

Now take $\cA=\cA_{\;b}$ and $\tau\_0=\nu\_{b,w_0}$ on a wall of instability $\ell\cap U$ for $\alpha\in K(X)$, with $\tau_\pm=\nu\_{b,w_\pm}$ either side of $\ell$. Then although (a) fails in general, we can replace it by Proposition \ref{finsum} when $\nu\_{b,w_0}(\alpha)<+\infty$ and $b$ is rational. This will be sufficient for our purposes.

Similarly (b) fails because there is a dense set of walls between any two distinct weak stability conditions $\nu\_{b,w_\pm}$ when we allow the classes $\beta,\,\gamma$ in \eqref{cond2} to vary, but the following weakening holds. Suppose $\nu\_{b,w_0}(\alpha)<+\infty$. By Proposition \ref{finsum} it has finitely many classes of $\nu\_{b,w_0}$-semistable factors; denote them $\alpha_1,\dots,\alpha_k$.

\begin{Prop}\label{close}
We may choose $(b,w_\pm)$ sufficiently close to $(b,w_0)$ that the only wall for $\alpha$ or $\alpha_i$ between (or containing) $(b,w_\pm)$ is $\ell\;\cap\;U$. In particular, \eqref{cond2} holds for $\beta,\gamma\in\{\alpha,\alpha_i\}$.
\end{Prop}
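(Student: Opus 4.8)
The plan is as follows. Write $\alpha_1,\dots,\alpha_k$ for the finitely many $K(X)$-classes of $\nu\_{b,w_0}$-semistable factors of $\nu\_{b,w_0}$-semistable objects of class $\alpha$; their finiteness is the last assertion of Proposition \ref{finsum}, valid since $\nu\_{b,w_0}(\alpha)<+\infty$. By Definition \ref{gJH} every semistable factor of such an object has $\nu\_{b,w_0}$-slope equal to $\nu\_{b,w_0}(\alpha)=:s_0<+\infty$, so $\nu\_{b,w_0}(\alpha_i)=s_0$ for all $i$. The final assertion \eqref{cond2} of the Proposition is then immediate: for $\beta,\gamma\in\{\alpha,\alpha_1,\dots,\alpha_k\}$ we have $\tau\_0(\beta)=s_0=\tau\_0(\gamma)$, so the conclusion $\tau\_0(\beta)\le\tau\_0(\gamma)$ of \eqref{cond2} holds unconditionally, whichever of $\tau_+,\tau_-$ plays the role of the hypothesis. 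Thus the real content is the statement about walls.

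Each $v\in\{\alpha,\alpha_1,\dots,\alpha_k\}$ satisfies $\ch_H(v)\ne0$ (otherwise $\nu\_{b,w_0}(v)=+\infty$ by \eqref{noo}) and $\Delta_H(v)\ge0$ by \eqref{BOG} (each such $v$ has a $\nu\_{b,w_0}$-semistable representative), so Proposition \ref{prop. locally finite set of walls} gives a locally finite set of walls for $v$. The geometric point I would establish is that \emph{any wall for any such $v$ passing through $(b,w_0)$ coincides with $\ell$}. Indeed, since $\nu\_{b,w_0}(v)=s_0$, the gradient of the line from $(b,w_0)$ to $\Pi(v)$ equals $s_0$, which is also the slope of $\ell$; so when $\ch_0(v)\ne0$ the point $\Pi(v)$ lies on $\ell$, and as $\Delta_H(v)\ge0$ forces $\Pi(v)\notin U$ it is distinct from $(b,w_0)$. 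Since by Proposition \ref{prop. locally finite set of walls}(a) every wall for $v$ passes through $\Pi(v)$, a wall through both $\Pi(v)$ and $(b,w_0)$ must be $\ell$. When $\ch_0(v)=0$ the walls for $v$ are parallel of slope $\ch_2(v).H/\ch_1(v).H^2=\nu\_{b,w_0}(v)=s_0$ by Proposition \ref{prop. locally finite set of walls}(b), and $\ell$ is the unique line of slope $s_0$ through $(b,w_0)$, so again such a wall through $(b,w_0)$ equals $\ell$. Note also that $\ell$, having finite slope $s_0$, is \emph{not} the vertical line through $(b,w_0)$, so $\ell$ meets $\{b\}\times\R$ in the single point $(b,w_0)$.

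The rest is a compactness argument. Choose a compact disc $K\subset U$ centred at $(b,w_0)$; by local finiteness it meets only finitely many walls in total for the classes $\alpha,\alpha_1,\dots,\alpha_k$, and by the previous paragraph all of those that contain $(b,w_0)$ equal $\ell$, so the remaining finitely many avoid $(b,w_0)$ and hence lie at positive distance from it. Pick $\delta>0$ with $\overline B_\delta(b,w_0)\subset K$ and with $\overline B_\delta(b,w_0)$ disjoint from each of these finitely many walls, and choose $w_-<w_0<w_+$ with $(b,w_\pm)\in B_\delta(b,w_0)$; set $\tau_\pm:=\nu\_{b,w_\pm}$. Then the segment $\{b\}\times[w_-,w_+]\subset\overline B_\delta(b,w_0)$ meets no wall for $\alpha$ or any $\alpha_i$ except possibly $\ell$, and it meets $\ell$ only at the interior point $(b,w_0)$. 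This is exactly the asserted conclusion, and \eqref{cond2} then holds as in the first paragraph.

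Almost all of this is bookkeeping; the one step carrying the geometric content is the identification of every wall through $(b,w_0)$ — for $\alpha$ or for a semistable factor of $\alpha$ — with $\ell$ itself, which rests on the equal-slope property of semistable factors (Definition \ref{gJH}) together with the description of wall directions in Proposition \ref{prop. locally finite set of walls}. The only further point requiring care is checking that $\ell$ is not vertical, so that the vertical perturbation of $(b,w_0)$ genuinely leaves $\ell$ rather than sliding along it.
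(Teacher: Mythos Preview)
Your argument is correct and follows the same route as the paper --- local finiteness of walls for each of the finitely many classes $\alpha,\alpha_1,\dots,\alpha_k$ --- but you make explicit a point the paper leaves implicit: that any wall for $\alpha$ or $\alpha_i$ passing through $(b,w_0)$ must coincide with $\ell$, since such a wall has slope $\nu\_{b,w_0}(\alpha_i)=s_0=\mathrm{slope}(\ell)$ and (when $\ch_0\ne0$) contains $\Pi(\alpha_i)\in\ell\smallsetminus U$. Without this observation local finiteness alone does not rule out a second wall through $(b,w_0)$, so your version is the more complete of the two; otherwise the arguments are the same.
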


\begin{proof}
Since there are only finitely many classes $\alpha_1,\dots,\alpha_k$ of $\nu\_{b,w_0}$-semistable factors this follows immediately from the fact that the walls of instability for any fixed class are locally finite; see Proposition \ref{prop. locally finite set of walls}.
\end{proof}

Let $A:=\{\alpha_1,\dots,\alpha_k\}\subset C(\cA_{\;b})$. Taking $\nu\_{b,w_\pm}$-Harder-Narasimhan filtrations of $\nu\_{b,w_0}$-semi\-stable objects, Proposition \ref{close} gives the following two identities,
\beq{1*1}
1_{\cM^{ss}_{b,w_0}}(\alpha)\=\mathop{\sum_{m\ge1,\ a_1,\dots,\;a_m\,\in\,A,\ \sum_{i=1}^ma_i\,=\,\alpha,}}_{\nu\_{b,w_\pm}(a_1)\,>\,\dots\,>\,\nu\_{b,w_\pm}(a_m)}1_{\cM^{ss}_{b,w_\pm}}(a_1)*\cdots*1_{\cM^{ss}_{b,w_\pm}}(a_m).
\eeq
Thus, when $\nu\_{b,w_0}(\alpha)<+\infty$, we have the formula \cite[Equation 3.11]{JS} with only \emph{finitely many terms}. From this the wall crossing formula follows. That is, \eqref{1*1} can be inverted as in \cite[Equation 3.12]{JS} and then expressed in terms of $\epsilon_{b,w_+}(\alpha)$ and $\epsilon_{b,w_-}(a_i)$ to give \cite[Equation 3.10]{JS}. This can then be rewritten in terms of a finite number of iterated commutators as in \cite[Theorem 3.14]{JS}. Applying the integration map $\Psi$ then gives the wall crossing formula \eqref{WCF} just as before.

%One is to invoke the forthcoming preprint \cite{Jnew}. The stability condition $\nu\_{b,w}$ is continuous in $w$ in the sense of \cite[Definition 3.11]{Jnew} by Proposition \ref{prop. locally finite set of walls}. Thus by \cite[Proposition 3.12 and Theorem 6.2]{Jnew}*** we get a finite wall crossing formula \eqref{WCF} for $\tau_\pm=\nu\_{w_\pm}$ sufficiently close to $\tau\_0=\nu\_{b,w_0}$ on a wall of instability for the class $\alpha\in K(X)$ with $\nu\_{b,w_0}(\alpha)<+\infty$. 

%Furthermore as $E$ varies over semistable objects of a fixed class in $K(X)$ we claim the set of possible $K(X)$ classes of semistable factors $E'$ is still finite:
%\begin{itemize}
%\item By construction $\nu\_{b,w}(E')=\nu\_{b,w}(E)<+\infty$.
%\item As above $\epsilon<\ch_1^{bH}(E')<\ch_1^{bH}(E)$.
%\item Lemma \ref{lem:quadratic} gives the bounds $0\le\Delta_H(E')\le\Delta_H(E)$.
%\item By the Bogomolov-Gieseker inequality \eqref{quadratic form} applied to $E'$ and $E/E'$ we bound $\ch_3(E')$ above and below.
%\end{itemize}
%Together these show $\ch_H(E')$ and $\ch_3(E')$ can take only finitely many values. From Theorem \ref{ftstack} we then deduce finiteness of the classes $[E']\in K(X)$.

In particular, so long as we keep $b$ rational and to the left of $\frac n{r-1}$ at all times to ensure $\nu\_{b,w}<+\infty$ in class $v_n$ (and therefore in the classes of all its semistable factors on any wall) the wall crossing formula \eqref{WCF} holds in the four different settings we will apply it to in the next section:
\begin{enumerate}
\item[(1)] crossing the Joyce-Song wall $\ell_{\js}$,
\item[(2)] crossing other type 2 walls in $U$,
\item[(3)] passing from the large volume chamber $w\gg0$ to tilt stability,
\item[(4)] passing from tilt stability to Gieseker stability.
\end{enumerate} 
Only (1) involves non-trivial complexes of sheaves; (2,3,4) use sheaves alone. In (1,2) we work with $\(\cA_{\;b},\nu\_{b,w}\)$, then (3) takes us into Coh$\;(X)$ where (4) takes place.

\section{Crossing the walls}\label{r}

In this section we will use wall crossing formulae to prove Theorem \ref{1} for a fixed class $\mathsf v\in K(X)$ with
$$
\ch(\mathsf v)\=(r,D,-\beta,-m)\,\text{ such that }\,r\,>\,0\,\text{ and }\,D.H^2\,=\,0.
$$
(In (\ref{norm1}, \ref{norm2}) at the end of this section we explain how to generalise to arbitrary $D.H^2$.) Picking $p_1,p_2,m$ so that the bounds \eqref{general bound} holds, we then choose $n\gg0$ accordingly.

By \cite[Proposition 5.3]{Br.stbaility} each class $\alpha\in K(X)$ which is positive in the sense of Definition \ref{+ve} has a large volume chamber $w\gg0$ in which $\alpha$ has no walls. For $(b,w)$ in this chamber the invariants 
$\J_{b,w}(\alpha)$ are independent of $(b,w)$; we denote them by $\J_{b,\infty}(\alpha)$.

\subsection*{Safe wall crossing to large volume}
Our first wall crossing formula takes place in the abelian category $\cA=\cA_{\;b}$. Fix a positive $\alpha\in C(\cA)\subset K(X)$ in the sense of Definition \ref{+ve}. We show that when $(b,w)$ lies in its safe area $U_\alpha$ the invariant $\J_{b,w}(\alpha)$ counting $\nu\_{b,w}$-semistable sheaves of class $\alpha$ can be expressed in terms of invariants $\J_{b,\infty}$ counting sheaves of rank$\,\le\rk(\alpha)$ in their large volume chambers.

\begin{Prop}\label{safety}
For $(b,w)\in U_\alpha$ there exists a universal formula
\beq{ufor}
\J_{b,w}(\alpha)\=\J_{b,\infty}(\alpha)+F_{b,w}\(\J_{b,\infty}(\beta_i)\)
\eeq
with $\beta_i$ a positive class of $\,\rk\le\rk(\alpha)$ for all $i$.
\end{Prop}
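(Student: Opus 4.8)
The plan is to join $(b,w)\in U_\alpha$ to the large volume chamber of $\alpha$ by moving vertically upwards, to apply the wall crossing formula \eqref{WCF} at each wall met, and to organise the argument as an induction on the discriminant $\Delta_H(\alpha)$.

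First I would observe that the entire vertical ray $\{b\}\times[w,+\infty)$ lies inside $U_\alpha$: the condition that $b$ be to the left of $\Pi(\alpha)$ does not involve the second coordinate, while lying strictly above the line $\ell_\alpha$ is preserved as $w$ increases. By \cite[Proposition 5.3]{Br.stbaility} there is a large volume chamber $w\gg0$ for $\alpha$ containing no walls, and by Proposition \ref{prop. locally finite set of walls} (local finiteness plus this large-volume threshold) the ray meets only finitely many walls for $\alpha$; it therefore passes through finitely many chambers, and I pick a point in each, getting $(b,w_0),\dots,(b,w_k)$ with $w_0:=w$ and $w_k\gg0$ in the large volume chamber, consecutive points separated by a single wall. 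Replacing $(b,w)$ by a rational point of its chamber changes neither side of \eqref{ufor}, so we may assume $b\in\Q$; combined with $\nu\_{b,w}(\alpha)<+\infty$ (automatic to the left of $\Pi(\alpha)$), Propositions \ref{finsum} and \ref{close} place us in the setting of Section \ref{conditions}, so \eqref{WCF} is valid at every one of these walls and has only finitely many nonzero terms.

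Now I run the induction on $\Delta_H(\alpha)$, a well-founded quantity since it is nonnegative and discrete. If $\alpha$ has no wall on the ray --- in particular whenever $\Delta_H(\alpha)=0$, because Lemma \ref{lem:quadratic} then forces any semi-destabilising subobject to have $\ch_H$ proportional to that of $\alpha$, hence the same $\nu\_{b,w}$-slope on both sides and no genuine wall --- then $(b,w)$ and the large volume chamber lie in one chamber, $\J_{b,w}(\alpha)=\J_{b,\infty}(\alpha)$, and \eqref{ufor} is trivial. Otherwise, crossing the wall between $(b,w_{j-1})$ and $(b,w_j)$ (at the point where it meets the ray, which lies in $U_\alpha$) and using \eqref{WCF} with the two sides interchanged expresses $\J_{b,w_{j-1}}(\alpha)$ as $\J_{b,w_j}(\alpha)$ plus a universal polynomial in invariants $\J_{b,w_j}(\alpha_i)$, where the $\alpha_i$ range over classes occurring in decompositions $\alpha=\alpha_1+\dots+\alpha_m$, $m\ge2$, of the semistable factors of some semistable object of class $\alpha$ on that wall. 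By Proposition \ref{prop. general-sheaf}, applied iteratively along a filtration by semi-destabilising subobjects, each $\alpha_i$ is the class of a sheaf, so $0\le\rk(\alpha_i)\le\rk(\alpha)$ and $\alpha_i$ satisfies (a),\,(b),\,(c), and the wall point lies in $U_{\alpha_i}$; since $U_{\alpha_i}$ is open of the same ``above $\ell_{\alpha_i}$, to the left of $\Pi(\alpha_i)$'' shape, the vertical ray above that point --- in particular $(b,w_j)$ --- also lies in $U_{\alpha_i}$. Finally, on a genuine wall the destabilising and destabilised objects all share a single $\nu\_{b,w}$-slope, so Lemma \ref{lem:quadratic} excludes the proportional case and yields $\Delta_H(\alpha_i)<\Delta_H(\alpha)$. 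The inductive hypothesis therefore applies to each $\J_{b,w_j}(\alpha_i)$, rewriting it as $\J_{b,\infty}(\alpha_i)$ plus a universal polynomial in invariants $\J_{b,\infty}(\beta_k)$ with $0\le\rk(\beta_k)\le\rk(\alpha_i)\le\rk(\alpha)$. Composing the wall crossing identities for $j=1,\dots,k$, substituting these expressions and collecting terms, produces the asserted universal polynomial \eqref{ufor}: every argument has rank in $[0,\rk(\alpha)]$, only finitely many terms are nonzero (finitely many walls, each \eqref{WCF} finite, the induction well-founded), and universality of the coefficients follows from that of the coefficients $C_{\pm,\mp}$ in \eqref{WCF}, which depend only on the Mukai pairings $\chi(\alpha_i,\alpha_j)$ and the relative order of the slopes.

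I expect the main obstacle to be bookkeeping rather than a single sharp estimate: one must keep the whole upward path inside $U_\alpha$ and, crucially, inside the safe areas of every destabilising class encountered, so that Proposition \ref{prop. general-sheaf} keeps applying at every stage; and one must check that the induction on $\Delta_H$ genuinely terminates, which is exactly where Lemma \ref{lem:quadratic} is needed to rule out the proportional alternative on each actual wall. Once these points are secured, the statement is an assembly of results already in place.
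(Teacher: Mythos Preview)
Your proof is correct and follows essentially the same approach as the paper: induction on $\Delta_H$, with Proposition~\ref{prop. general-sheaf} ensuring the semistable factors on each wall are sheaves lying in their own safe areas and Lemma~\ref{lem:quadratic} giving the strict drop in $\Delta_H$ needed for the induction. The only cosmetic differences are that the paper moves \emph{down} from the large volume chamber (with a secondary induction on walls crossed) whereas you move \emph{up} and compose, and that your exclusion of case~(ii) of Lemma~\ref{lem:quadratic} via ``genuine walls'' is a slightly roundabout way of saying that in the inductive step $\Delta_H(\alpha)>0$, which rules out~(ii) directly.
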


Here and throughout we stick to the following conventions.
\begin{itemize}
\item The universal formulae need not be invertible in any sense; for instance the function $F_{b,w}=0$ is a perfectly good universal formula.
\item The functions $F_{b,w}$ can change from line to line, as can the classes $\beta_i>0$; we only care that each such function is given by a polynomial with finitely many nonzero terms and coefficients depending only on $b,\,w$ and topological invariants of $X$.
%\item[(3)] Whenever we say $0\le\rk\;(\beta)\le d$ we will also mean that if $\rk\;(\beta)=0$, then $\ch_1(\beta).H^2>0$ so that $\J_{b,\infty}(\beta)$ counts tilt semistable pure dimension two sheaves.
\end{itemize}

\begin{proof}[Proof of Proposition \ref{safety}]
In the large volume chamber $w\gg0$ for $\alpha$ we have the required universal formula by setting $F\equiv0$.

If $\Delta_H(\alpha)<0$ there is nothing to prove because $\J_{b,w}(\alpha)=0$ by \eqref{BOG}.

If $\Delta_H(\alpha)=0$ there are no walls of instability for $\alpha$ by Lemma \ref{lem:quadratic} and the Bogomolov inequality \eqref{BOG}, so its large volume chamber is all of $U$ and the result follows.

So we may assume $\Delta_H(\alpha)>0$ and induct on it by assuming we have proved the Proposition for all classes with $\Delta_H<\Delta_H(\alpha)$.

Now we move our stability condition downwards through $U$ from the large volume chamber to the point $(b,w)$ of the Proposition. By the local finiteness of walls for $\alpha$ of Proposition \ref{prop. locally finite set of walls}, we cross only finitely many walls $\ell_i$. Fixing one and choosing $(b,w_0)$ on it, by Proposition \ref{close} we may pick $(b,w_\pm)\in U_\alpha$ just above and below it so that there are no further walls for $\alpha$ and its $\nu\_{b,w_0}$-semistable factors $\alpha_i$ between (or containing) $(b,w_\pm)$. By a further induction on the $\ell_i$ above $(b,w_+)$ we may assume we have a universal formula \eqref{ufor} for $(b,w_+)$ and it now suffices to show this gives one for $w_-$. (Notice that if $(b,w)$ in the Proposition lies on a wall for $\alpha$ then we take $w_-=w_0$ in the last step of the induction, but the rest of the proof is unchanged.)

Setting $\tau_\pm:=\nu\_{b,w_\mp}$ and $\tau\_0:=\nu\_{b,w_0}$ in \eqref{WCF} gives a formula
\beq{safeWCF}
\J_{b,w_-}(\alpha)\=\J_{b,w_+}(\alpha)+F\(\J_{b,w_+}(\alpha_i)\),
\eeq
with $F$ a universal function in classes $\alpha_i$ of $\tau\_0$-semistable factors of $\alpha$. By Proposition \ref{prop. general-sheaf} and Lemma \ref{lem:quadratic} these are classes of sheaves of rank$\,\le\rk(\alpha)$ with $\Delta_H(\alpha_i)<\Delta_H(\alpha)$ and $(b,w_+)\in U_{\alpha_i}$. By our first induction assumption this means Proposition \ref{safety} applies to the $\alpha_i$ so the $\J_{b,w_+}(\alpha_i)$ can all be written as functions of a finite number of terms $\J_{b,\infty}(\beta_j)$ in positive classes $\beta_j$ of $\rk\le\rk(\alpha)$. And by our second induction assumption, so can $\J_{b,w_+}(\alpha)$. Thus \eqref{safeWCF} is the required universal formula for $\J_{b,w_-}(\alpha)$.
\end{proof}

\subsection*{Induction on rank} 
We work with the class $v_n:=\mathsf v-[\cO(-n)]$.
Before dealing with its type 1 wall crossing over $\ell_{\js}$ we first set up another induction --- this time on $r=\rk(\mathsf v)$ --- to deal with the type 2 wall crossings of Theorem \ref{thm:all walls}.\footnote{By Appendix \ref{2} we could simplify things in rank $r=2$ since the type 2 walls for $v_n$ all lie strictly above $\ell_{\js}$.} We will prove we have a universal formula
\beq{forminduc}
\J_{b,w}(v_n)\=F_{b,w}\(\J_{b,\infty}(\alpha_i)\) \quad\text{in positive classes }\alpha_i\text{ of }\rk\,\le\,\rk\;(v_n)
\eeq
for each $(b,w)\in U$ with $b<\frac n{r-1}$ outside of a finite number of straight lines in $U$. (These will be the finitely many walls for $v_n$ and its semistable factors. There is also a (different) universal formula on these walls, but we do not need it.)

First recall from \eqref{bounds on bf} that there are points of $U$ below $\ell_f$ \eqref{equation of final wall}, so by Lemma \ref{final wall} we get the universal formula $\J_{b,w}(v_n)=0$ below the lowest wall for $v_n$. Similarly in the large volume chamber $w\gg0$ for $v_n$ we have the universal formula $\J_{b,w}(v_n)\=\J_{b,\infty}(v_n)$.

When $\rk\;(\mathsf v)=1$, so $\rk(v_n)=0$, this gives the universal formulae \eqref{forminduc} \emph{in all of} $U\smallsetminus\ell_{\js}$ by the results of \cite{FT2}. There we showed objects of class $v_n$ have only one wall; they are all strictly unstable below $\ell_{\js}$ and are both tilt and Gieseker stable rank 0 pure dimension two sheaves above it. This proves the base case of our induction.

So now we assume \eqref{forminduc} holds for $\rk(\mathsf v)\le r-1$ and prove it when $\rk(\mathsf v)=r$. We have already seen it is true in the large volume chamber, and below $\ell_f$. From either of these chambers we move through $U$ towards $\ell_{\js}$, never crossing it and always keeping $b<\frac n{r-1}$. By the local finiteness of walls for $v_n$ (Proposition \ref{prop. locally finite set of walls}), we cross 
only a finite number of walls of types 2(a) and 2(b) in Theorem \ref{thm:all walls}.
So we need only show that crossing a single wall of type 2(a) or 2(b) preserves the existence of a universal formula \eqref{forminduc}.

\subsection*{Type 2 walls}
Suppose $(b,w_1)$ and $(b,w_2)$ are either side of a wall for $v_n$ other than $\ell_{\js}$. By Theorem \ref{thm:all walls} it has type 2. The wall crossing formula \eqref{WCF} takes the form
\beq{2a}
\J_{b,w_2}(v_n)\=\J_{b,w_1}(v_n)+F\(\J_{b,w_1}(\alpha_i)\),
\eeq
where the $\alpha_i$ are all possible semistable factors of semistable sheaves of class $v_n$ on the wall. By Corollary \ref{ssf} these takes two forms.

The first is classes of type $v_n$ for a smaller rank $\le r-2$. By our induction assumption we have the formula \eqref{forminduc} for their invariants $\J_{b,w_1}$, expressing them in terms of invariants $\J_{b,\infty}(\beta_j)$ of positive classes of rank $\le r-2$.

Secondly we get positive classes $\alpha_i$ of rank $\le r-1$ with $(b,w)\in U_{\alpha_i}$. Proposition \ref{safety} expresses them in terms of more invariants $\J_{b,\infty}(\beta_j)$ of positive classes of rank $\le r-1$.

Substituting these into \eqref{2a} gives the wall crossing formula
\beq{2b}
\J_{b,w_2}(v_n)\=\J_{b,w_1}(v_n)+F\(\J_{b,\infty}(\beta_j)\)\ \ \text{where }\beta_j>0\text{ has }\rk\,\le\,r-1\ \,\forall\;j.
\eeq
This completes our induction and proves \eqref{forminduc} for $\rk(v_n)=r-1$.

\subsection*{Joyce-Song wall}
We can now cross the Joyce-Song wall. Let $(b,w_\pm)$ be points just above and below $(b,w_0)\in\ell_{\js}$. We write the wall crossing formula \eqref{WCF} as
\beq{1JS}
\J_{b,w_+}(v_n)\=\J_{b,w_-}(v_n)+(-1)^{\chi(\mathsf v(n))-1}\chi(\mathsf v(n))\cdot\#H^2(X,\Z)_{\mathrm{tors}}\cdot\J_{b,\infty}(\mathsf v)+\dots.
\eeq
Here we have included one of the $m=2$ terms of \eqref{WCF} with $\alpha_1,\,\alpha_2$ equal to $\big[\cO(-n)[1]\big],\,\mathsf v$. These are the classes of the semistable factors $T(-n)[1]$ and a $\nu\_{b,w_-}$-semistable sheaf $F$ that appear on a type 1 wall in Theorem \ref{thm:all walls}. Since $F$ has no walls either on or above $\ell_{\js}$ we have $\J_{b,w_-}(\mathsf v)=\J_{b,\infty}(\mathsf v)$; since $T(-n)[1]$ is rigid the counting invariant of its class $\big[\cO(-n)[1]\big]$ is
\beq{triv}
\J_{b,w_-}\big[\cO(-n)[1]\big]\=\#\Pic\_0(X)\=\#H^2(X,\Z)_{\mathrm{tors}}\;.
\eeq
Finally the coefficient $(-1)^{\chi(\mathsf v(n))-1}\chi(\mathsf v(n))$ is \eqref{><}.

Corollary \ref{ssf} gives a list of $K(X)$ classes $\alpha_i$ of possible nondegenerate semistable factors on $\ell_{\js}$. Since the classes of arbitrary semistable factors are sums of those of nondegenerate semistable factors, the list shows that a decomposition $\sum_{i=1}^m\alpha_i=v_n$ of $v_n$ is either $\mathsf v+\big[\cO(-n)[1]\big]$ or \emph{it does not involve $\mathsf v$.} Thus the extra term we included in \eqref{1JS} is the \emph{only} term in the wall crossing formula \eqref{WCF} for $v_n$ involving $\mathsf v$. All other terms involve only \eqref{triv} and terms $\J_{b,w_-}(\alpha_i)$ where $\alpha_i$ is either (i) a sheaf of rank $\le r-1$ with $(b,w_-)\in U_{\alpha_i}$, or (ii) a sheaf of type $v_n$ for some rank $\le r-2$.

Using Proposition \ref{safety} in case (i) and \eqref{forminduc} in case (ii) shows the remaining terms $\dots$ in \eqref{1JS} can be written as a universal function in invariants $\J_{b,\infty}(\beta_i)$ of positive classes $\beta_i>0$ of rank $\le r-1$.

Therefore rearranging \eqref{1JS} and replacing $\J_{b,w_\pm}(v_n)$ by their expressions \eqref{forminduc} yields a universal formula expressing $\J_{b,\infty}(\mathsf v)$ in terms of counting invariants of rank $<r$,
\beq{lastt}
\J_{b,\infty}(\mathsf v)\=F\(\J_{b,\infty}(\beta_j)\)\,\text{ with }\beta_j>0 \text{ of rank}\,\le\,r-1\ \,\forall\;j.
\eeq

\subsection*{Large volume to tilt}
Here there is no actual wall crossing: for a fixed positive class $\alpha\in K(X)$, tilt stability of sheaves in class $\alpha>0$ is equivalent to $\nu\_{b,w}$-stability for $w$ in its large volume chamber by \cite[Proposition 14.2]{Br}. That is for $w \gg 0$ (and $b < \mu\_H(\alpha)$ if $\rk\;(\alpha)>0$) we find $\J_{b,\infty}(\alpha)=\J_{\mathrm{ti}}(\alpha)$. Since \eqref{lastt} involves only finitely many classes we can take $w\gg0$ uniformly large for each of them, whereupon it becomes
\beq{poly3}
\J_{\mathrm{ti}}(\mathsf v)\=F\(\J_{\mathrm{ti}}(\beta_j)\)\,\text{ with }\beta_j>0 \text{ of rank}\,\le\,r-1\ \,\forall\;j.
\eeq

\subsection*{Tilt to Gieseker} 
Since the invariants in \eqref{poly3} count only \emph{sheaves}, we can now work in $\cA=$\,Coh$(X)$ for our final wall crossing from tilt stability to Gieseker stability.
Here tilt stability \emph{dominates} Gieseker stability in the sense of \cite[Definition 4.10]{Jo3}:
\beq{cond}
p_{\alpha}(t)\ \preceq\ p_{\beta}(t)\ \ \so\ \ \widetilde p_{\alpha}(t)\ \preceq\ \widetilde p_{\beta}(t) \qquad\forall\,\alpha,\,\beta\,\in\,K(X).
\eeq
They are both permissible (weak) stability conditions, so letting $\tau_+=\tau_0$ be the former and $\tau_-$ the latter, \eqref{WCF} gives a wall crossing formula
\beq{Gti}
\J_{\mathrm{ti}}(\alpha)\=\J(\alpha)+\mathop{\sum_{m\ge2,\ \alpha_1,\dots,\;\alpha_m\,\in\,C(\cA),}}_{\sum_{i=1}^m\alpha_i\,=\,\alpha,\ \widetilde p_{\alpha_i}(t)\,=\,\widetilde p_{\alpha}(t)\ \forall i}C(\alpha_1,\dots,\alpha_m)\prod_{i=1}^m\J(\alpha_i).
\eeq
The definition \eqref{tidef} of $\widetilde p(t)$ ensures that if $\rk(\alpha)>0$ then  $\rk\;(\alpha_i)>0$ in the above formula, so in fact $\rk\;(\alpha_i)\in[1,\,\rk(\alpha)-1]$. In particular, applying \eqref{Gti} to $\alpha=\mathsf v$ gives
$$
\J_{\mathrm{ti}}(\mathsf v)\=\J(\mathsf v)+F\(\J(\alpha_i)\)\,\text{ with }\,1\,\le\,\rk(\alpha_i)\,\le\,r-1\ \,\forall\;j.
$$
Applying it to the classes $\beta_j$ in \eqref{poly3} gives a similar formula but with $\alpha_i$ positive of rank $\le r-1$ in that case. Substituting both into \eqref{poly3} gives a new polynomial expression
\beq{atlast}
\J(\mathsf v)\=F\(\J(\alpha_i)\)\,\text{ with }
\alpha_i>0 \text{ of rank}\,\le\,r-1\ \,\forall\;i.
\eeq
We have finally expressed the rank $r$ invariant $\J(\mathsf v)$ in terms of invariants counting Gieseker semistable sheaves of $\rk\;(\alpha_i)\le r-1$. To finish up, we would like to prove the same for classes $\mathsf v$ with arbitrary $\ch_1(\mathsf v).H^2$.

\subsection*{Normalisation}
So far we have worked with classes $\mathsf v$ with $\ch_1(\mathsf v).\;H^2=0$. For an arbitrary $\mathsf v\in K(X)$ we reduce to this case by replacing any class $\ch\in H^{2*}(X,\Q)$ by $\ch^{tH}:=e^{-tH}\!\cdot\ch$. We calculate
\beq{norm1}
\nu\_{b,w}(\ch)\=\nu\_{b^t,w^t}\(\!\ch^{tH}\)+t,
\eeq
where $b^t:=b-t$ and $w^t:=w-bt+\frac12t^2$. Then fixing $t:=\ch_1(\mathsf v).\;H^2/rH^3$ gives
\beq{norm2}
\ch^{tH}_1(\mathsf v).\;H^2\=0.
\eeq
So replacing $\ch$ by $\ch^{tH}$ and $(b,w)$ by $(b^t,w^t)$ throughout the paper, (\ref{norm1}, \ref{norm2}) ensure we get all the same results for $\mathsf v$.\medskip

In particular we get a formula \eqref{atlast} for 
arbitrary classes $\mathsf v$ of rank $r$. By induction on rank we may assume each $\J(\alpha_i)$ on the right hand side has been written as a function of rank 0 dimension 2 classes, thus doing the same for the left hand side $\J(\mathsf v)$. This proves Theorem \ref{1}.

\appendix
\section{Joyce-Song pairs}\label{JSapp}
Mochizuki \cite[Chapter 3]{Mo} and Joyce-Song \cite[Section 5.4]{JS} introduced certain stable pairs. Here we modify Joyce-Song's definition, replacing their Gieseker (semi)stability by $\nu\_{b,w}$-(semi)stability. We could also use tilt (semi)stability --- for $(b,w)$ just above $\ell_{\js}$ the two definitions give the same stable pairs by Theorem \ref{thm:all walls}\;(1).

\begin{Def}
A $\nu\_{b,w}$-Joyce-Song stable pair $(F,s)$ of class $(\mathsf v,\,n\gg0)$ consists of a coherent sheaf $F$ of class $\mathsf v\in K(X)$ and a section $s\in H^0(F(n))$ such that
\begin{itemize}
\item $F$ is $\nu\_{b,w}$-semistable and
\item $s\colon\cO(-n)\to F$ does not factor through any $\nu\_{b,w}$-destabilising subobject of $F$.
\end{itemize}
\end{Def}

In fact any $\nu\_{b,w}$-destabilising subobject $F_1\into F$ in $\cA_{\;b}$ is actually a \emph{subsheaf} of $F$ when $(b,w)$ is above $\ell_f$ \eqref{equation of final wall} --- in particular for $(b,w)$ on or above $\ell_{\js}$. Let $F_2:=F/F_1\in\cA_{\;b}$ and set
$r':=\rk\;(\cH^{-1}(F_2)),\ r_i:=\rk\;(F_i)$. By \eqref{estimate} applied to $F_2$ and \eqref{esti1} for $F_1$ we get
\beqa
0 &=& \ch_1(F).H^2\=\ch_1(F_2).H^2+\ch_1(F_1).H^2 \\ &\ge& r'(n-2\epsilon)H^3-r_2\epsilon H^3-r_1\epsilon H^3\=r'(n-2\epsilon)H^3-r\epsilon H^3,
\eeqa
with $\epsilon=1/4r^2H^3$. Thus $r'=0$, but $\cH^{-1}(F_2)$ is torsion-free by the definition of $\cA_{\;b}$. So $F_2$ is a sheaf and by \eqref{LES} this shows $F_1$ is a subsheaf of $F$.

Morphisms of pairs $(F_i,s_i)$ are given by maps $F_1\to F_2$ intertwining the $s_i$. The stability condition ensures that the only automorphism of a stable Joyce-Song pair is the identity. Thus the stack $\js_{\;b,w}(\mathsf v)$ of Joyce-Song stable pairs (representing the functor mapping base schemes $B$ to families of stable Joyce-Song pairs flat over $B$) is an algebraic space.

Let $\cM^{ss}_{b,w}(v_n)$ denote the moduli stack of $\nu\_{b,w}$-semistable objects in class $v_n$. Since $\nu\_{b,w}(v_n)<+\infty$ it is an algebraic stack of finite type by Theorem \ref{ftstack}.

Fix $(b,w_0)$ on $\ell_{\js}$ \eqref{ljsdef} and $(b,w)$ just above $\ell_{\js}$ such that none of the finite set of walls for $v_n$ or $\mathsf v$ passes between them.

\begin{Thm}\label{jsthm} Let $\C^*\curvearrowright\js_{\;b,w}(\mathsf v)$ be the trivial action. There is a morphism
\begin{eqnarray} \label{map}
\js_{\;b,w}(\mathsf v)/\C^*\times\Pic\_0(X) &\To& \cM^{ss}_{b,w}(v_n), \\
\((F,s),T\) &\Mapsto& E\,:=\,\cok(s)\otimes T, \nonumber
\end{eqnarray}
which is an isomorphism onto an open and closed substack of $\cM^{ss}_{b,w}(v_n)$.

The image is those $E\in\cM^{ss}_{b,w}(v_n)$ which admit a nonzero map to $T(-n)[1]$ for some $T\in\Pic\_0(X)$; we then recover $(F\otimes T,s)$ from its cone. Any such $E$ is strictly $\nu\_{b,w}$-stable.
\end{Thm}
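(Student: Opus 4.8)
The plan is to realize \eqref{map} as the morphism classifying the universal cokernel of a Joyce-Song stable pair twisted by the (finitely many, since $H^1(\cO_X)=0$) line bundles $T\in\Pic\_0(X)$, to check this cokernel is strictly $\nu\_{b,w}$-stable, to invert the construction, and to compare deformation theories. First I would prove that if $(F,s)$ is a $\nu\_{b,w}$-Joyce-Song stable pair of class $(\mathsf v,n)$ and $T\in\Pic\_0(X)$ then $E:=\cok(s)\otimes T$ is strictly $\nu\_{b,w}$-stable of class $v_n$; since $s$ is an injective map of sheaves (recalled in the introduction) $\cok(s)$ is a sheaf of class $v_n$, and since tensoring by $T$ preserves $\cA_{\;b}$ and $\ch_H$, hence $\nu\_{b,w}$, it suffices to take $T=\cO_X$. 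The sequence \eqref{JSs} gives a triangle $F\xrightarrow{q}E\to\cO(-n)[1]$ in which all three objects lie in $\cA_{\;b}$ (using $b>-n$ along and just above $\ell_{\js}$), so $q$ is a \emph{monomorphism} in $\cA_{\;b}$ with cokernel $\cO(-n)[1]$. The core is the Mochizuki-style pullback argument run \emph{on} $\ell_{\js}$: any subobject $E_1\subsetneq E$ in $\cA_{\;b}$ that semi-destabilises $E$ with finite slope at a point $(b,w_0)\in\ell_{\js}$ has preimage $F_1:=q^{-1}(E_1)\supseteq\cO(-n)$, through which $s$ factors, and the exact sequence $0\to F_1\to E_1\to\cO(-n)[1]\to0$ in $\cA_{\;b}$ combined with $\nu\_{b,w_0}(\cO(-n)[1])=\mathrm{slope}(\ell_{\js})$ forces $\nu\_{b,w_0}(F_1)\ge\nu\_{b,w_0}(F)$ by the seesaw property; since class $\mathsf v$ has no walls on or above $\ell_{\js}$ (Theorem \ref{thm:all walls}(1)) this either contradicts $\nu\_{b,w}$-semistability of $F$ or exhibits $F_1$ as a destabilising subobject of $F$ through which $s$ factors, contradicting Joyce-Song stability. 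Tracking that $\Pi(E_1),\Pi(F_1),\Pi(F/F_1)$ all lie on $\ell_{\js}$ and that $E_1\subseteq E$ forces $\Pi(E_1)$ to the left of $\Pi(v_n)$, I would rule out the configurations that could otherwise produce a \emph{strict} destabiliser of $E$ just above $\ell_{\js}$, the borderline case being a proper $E_1$ with $\ch_H(E_1)$ proportional to $\ch_H(v_n)$ — which pulls back (using $n\gg0$) to a proper $F_1$ with $\ch_H(F_1)$ proportional to $\ch_H(\mathsf v)$, again contradicting Joyce-Song stability. This yields strict $\nu\_{b,w}$-stability of $E$ just above $\ell_{\js}$, which is the last sentence of the theorem.

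The cokernel construction is functorial for flat families, so together with the finite set $\Pic\_0(X)$ it defines a morphism $\js_{\;b,w}(\mathsf v)\times\Pic\_0(X)\to\cM^{ss}_{b,w}(v_n)$; strict stability of $E$ makes $\Aut(E)=\C^*$, whereas a Joyce-Song stable pair has only the identity, so the morphism factors through the trivial $\C^*$-gerbe, giving \eqref{map}. To identify its image I argue both ways. If $E=\cok(s)\otimes T_0$ then twisting the connecting map of \eqref{JSs} by $T_0$ produces a nonzero map $E\to T_0(-n)[1]$. Conversely, let $E\in\cM^{ss}_{b,w}(v_n)$ be strictly $\nu\_{b,w}$-stable with a nonzero map to $T(-n)[1]$; replacing $E$ by $E\otimes T^{-1}$ we may take $T=\cO_X$. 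Since semistability is closed as one approaches the wall $\ell_{\js}$ from the semistable side, $E$ is $\nu\_{b,w_0}$-semistable on $\ell_{\js}$, and as $\cO(-n)[1]$ is $\nu\_{b,w_0}$-stable of the same slope any nonzero $\phi\colon E\to\cO(-n)[1]$ is an epimorphism in $\cA_{\;b}$; cohomology of the rotated triangle shows $F:=\ker_{\cA_{\;b}}\phi$ is a sheaf of class $\mathsf v$ fitting in $0\to\cO(-n)\xrightarrow{s}F\to E\to0$ with $s$ injective. That $(F,s)$ is Joyce-Song stable is then proved by the same pullback bookkeeping run backwards: a destabiliser of $F$ on $\ell_{\js}$ would, via $q$, destabilise $E$ there (and "no walls for $\mathsf v$" carries this to $(b,w)$), so $F$ is $\nu\_{b,w}$-semistable and hence torsion-free; and a factorisation of $s$ through a $\nu\_{b,w}$-destabilising subobject of $F$ would produce a proper subobject of $E$ violating strict stability just above $\ell_{\js}$, after discarding the $\ch_H$-proportional case as above. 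Finally $\Hom(E,T(-n)[1])\cong\Ext^1(E,T(-n))$ is one-dimensional (a Serre-duality and vanishing computation using $n\gg0$), so $(F\otimes T,s)$ and $T$ are recovered uniquely from $E$; hence \eqref{map} is a monomorphism of stacks and its image is exactly the set of strictly stable $E$ admitting a nonzero map to some $T(-n)[1]$.

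It remains to see this image is open and closed and that \eqref{map} is an isomorphism onto it. Closedness follows from upper semicontinuity of $\dim\Hom\big(E,T(-n)[1]\big)$ over the finite set $\Pic\_0(X)$. For openness I would show \eqref{map} is \'etale by a deformation-obstruction comparison: the triangle $\cO(-n)\to F\to E$ with $\cO(-n)$ rigid identifies the tangent and obstruction spaces of the pair $(F,s)$ with those of $E$, so the morphism is smooth and unramified, hence open; an \'etale monomorphism with open and closed image is an isomorphism onto that substack. The main obstacle throughout is the pullback bookkeeping underlying both directions of the stability correspondence — establishing the exact equivalence between Joyce-Song stability of $(F,s)$ and strict $\nu\_{b,w}$-stability of $E=\cok(s)$ just above $\ell_{\js}$ — which requires combining the positions of the relevant $\Pi$-points on $\ell_{\js}$ with the classification of semistable factors in Corollary \ref{ssf} to eliminate the borderline subobjects that would destabilise $E$ on only one side of the wall.
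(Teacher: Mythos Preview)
Your overall architecture matches the paper's: prove $E=\cok(s)\otimes T$ is strictly $\nu\_{b,w}$-stable, invert via a nonzero map $E\to T(-n)[1]$, compute $\Hom(E,T'(-n)[1])$ for uniqueness, and compare deformation theories for the local isomorphism. But the heart of your forward direction --- the ``pullback argument'' --- has a genuine gap.

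You take an arbitrary semi-destabilising $E_1\subsetneq E$ at $(b,w_0)\in\ell_{\js}$, form $F_1:=q^{-1}(E_1)$, and assert an exact sequence $0\to F_1\to E_1\to\cO(-n)[1]\to 0$ in $\cA_{\;b}$. But $q\colon F\hookrightarrow E$ is a \emph{monomorphism} in $\cA_{\;b}$, so $F_1=F\cap E_1$ and the quotient $E_1/F_1$ is only a subobject of $E/F=\cO(-n)[1]$, not in general all of it; your sequence is unjustified. The further claim ``$F_1\supseteq\cO(-n)$, through which $s$ factors'' does not parse: $\cO(-n)\notin\cA_{\;b}$, and in $\Coh(X)$ it is the \emph{kernel} of $F\twoheadrightarrow E$, whereas $F_1$ was defined via the $\cA_{\;b}$-inclusion $F\hookrightarrow E$. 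You are conflating the $\cA_{\;b}$-mono $F\hookrightarrow E$ with the $\Coh(X)$-epi $F\twoheadrightarrow E$, and no version of this pullback gives both the exact sequence you write and a factorisation of $s$.

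The paper avoids this by taking $E_1$ to be $\nu\_{b,w}$-\emph{stable} (any destabiliser contains a stable subobject of at least the same slope) and working directly at $(b,w)$ just above $\ell_{\js}$. Then the composite $E_1\to E\to\cO(-n)[1]$ is a map between two stable objects. If it is zero, $E_1$ lifts to a subobject of $F$ with $\nu\_{b,w}(E_1)\ge\nu\_{b,w}(E)>\nu\_{b,w}(F)$, contradicting the $\nu\_{b,w}$-semistability of $F$ built into the Joyce--Song pair. If it is nonzero, stability of $E_1$ forces injectivity in $\cA_{\;b}$; then strict stability of $\cO(-n)[1]$ together with the see-saw on the wall forces the cokernel $Q$ to have $\ch_H(Q)=0$, whence $Q$ is a zero-dimensional sheaf --- impossible as an $\cA_{\;b}$-quotient of $\cO(-n)[1]$. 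This clean dichotomy replaces your pullback entirely.

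A smaller point: in your converse you assume $E$ is already strictly $\nu\_{b,w}$-stable, but the theorem asserts this as a \emph{conclusion} for any semistable $E$ admitting a nonzero map to some $T(-n)[1]$. The paper needs only semistability: once $E\to T(-n)[1]$ is shown surjective (by stability of $T(-n)[1]$), the kernel $F$ is a semistable sheaf of class $\mathsf v$ by Theorem~\ref{thm:all walls}(1), and a factorisation of $s$ through a destabilising $F_1\hookrightarrow F\twoheadrightarrow F_2$ would produce a quotient $E\twoheadrightarrow F_2$ with $\nu\_{b,w}(F_2)=\nu\_{b,w}(F)<\nu\_{b,w}(E)$, violating mere semistability of $E$. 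Your sketch of the $\Hom$ computation and the deformation-theory comparison is in line with the paper's.
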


\begin{proof}
Fix a Joyce-Song stable pair $(F,s)$. By case (1) of Theorem \ref{thm:all walls}, $F$ remains $\nu\_{b,w}$-semistable if we send $w\to+\infty$. Since its slope $\to-\infty$ while that of any rank 0 subsheaf remains constant, we see that $F$ must be torsion-free. In particular, $s$ is injective and $E:=\cok(s)$ is a sheaf.

Suppose $E_1\into E$ is a $\nu\_{b,w}$-stable subobject in $\cA_{\;b}$ with $\nu\_{b,w}(E_1)\ge\nu\_{b,w}(E)$. Composing with $E\to\cO(-n)[1]$ gives $E_1\to\cO(-n)[1]$ with $\nu\_{b,w_0}(E_1)\ge\nu\_{b,w_0}(\cO(n)[1])$. Suppose first this map is nonzero.  By the stability of $E_1$ it is therefore injective in $\cA_{\;b}$; denote its $\cA_{\;b}$-cokernel by $Q$. Then by the strict $\nu\_{b,w_0}$-stability of $\cO(-n)[1]$ \cite[Corollary 3.11(a)]{BMS} and the see-saw inequality, we find that both the denominator $\ch_1^{bH}\!.\;H^2$ and numerator $\ch_2\!.\;H-w\ch_0$ of $\nu\_{b,w}(Q)$ must vanish for $(b,w)$ along the wall. It follows that $\ch_H(Q)$ \eqref{chH} is zero, so $Q$ is a sheaf supported in dimension 0 \cite[proof of Lemma 3.2.1]{BMT}. This is impossible because any $\cA_{\;b}$-quotient of $\cO_X(-n)[1]$ has $\cH^{-1}(Q)\ne0$.

Thus $E_1\to\cO(-n)[1]$ is zero and $E_1\into E$ lifts to $E_1\into F$. But as $(b,w)$ is above $\ell_{\js}$,
$$
\nu\_{b,w}(\mathsf v)\,<\,\nu\_{b,w}(v_n)\,<\,\nu\_{b,w}(\cO(-n)[1]) \quad\so\quad \nu\_{b,w}(F)\,<\,\nu\_{b,w}(E)\,\le\,\nu\_{b,w}(E_1).
$$
This means $E_1$ strictly destabilises $F$, which is a contradiction.
Therefore $E$ is strictly $\nu\_{b,w}$-stable; in particular Aut$\;(E)=\C^*$. The same argument applies to $E\otimes T$ for $T\in\Pic\_0(X)$.\medskip

Conversely, start with a $\nu\_{b,w}$-semistable object $E$ admitting a nonzero map to $T(-n)[1]$. By the $\nu\_{b,w}$-stability of $T(-n)[1]$ and the same argument as above it is a surjection in $\cA_{\;b}$. This gives a sequence $F\into E\twoheadrightarrow T(-n)[1]$ in $\cA_{\;b}$ which $\nu\_{b,w_0}$-destabilises $E$, so by case (1) of Theorem \ref{thm:all walls} $F$ is a $\nu\_{b,w}$-semistable sheaf. The same is therefore true of $F\otimes T^{-1}$.

Suppose the induced $s\colon T(-n)\to F$ factors through a subsheaf $F_1\into F\twoheadrightarrow F_2$ with $\nu\_{b,w}(F_1)=\nu\_{b,w}(F)=\nu\_{b,w}(F_2)$. Then $F\twoheadrightarrow F_2$ factors through a map $E\twoheadrightarrow F_2$. But this contradicts $\nu\_{b,w}(E)>\nu\_{b,w}(F)=\nu\_{b,w}(F_2)$. Thus $(F\otimes T^{-1},s)$ is a Joyce-Song stable pair whose cokernel, tensored with $T$, is $E$. \medskip

Finally, given $T'\in\Pic\_0(X)$, apply $\Hom(\ \cdot\ ,T'(-n))$ to the sequence $T(-n)\to F\to E$. We find that $\Hom(E,T'(-n)[1])=H^0(T^*\otimes T')$ is $\C$ if $T'=T$ and zero otherwise. Thus $s$ and $T$ are uniquely determined by $E$, and \eqref{map} is a bijection to its image
$$
\big\{E\colon\Hom(E,T(-n)[1])\ne0\text{ for some }T\in\Pic\_0(X)\big\}.
$$
This is clearly closed. Working with the universal Joyce-Song pair and taking its cokernel upgrades the arrow \eqref{map} to a morphism of stacks. It remains to show it is a local isomorphism when we compose with the projection to the coarse moduli space $M^{ss}_{b,w}(v_n)$. When $r>1$ this follows from the observation that the deformation-obstruction theory of $\js_{b,w}(\mathsf v)$ in \cite[Proposition 12.14]{JS} is the same as the deformation-obstruction theory of $M^{ss}_{b,w}(v_n)$ in
\cite[Theorem 4.1]{HT} because the complex $\mathbb I=\{\cO(-n)\to F\}$ used in the former is quasi-isomorphic to $E$ used in the latter. When $r=1$ so $\rk(v_n)=0$ the truncations of \cite[Theorem 12.20]{JS} and \cite[Equation 4.10]{HT} are also isomorphic.
\end{proof}

\section{Rank 2 simplifications}\label{2}
In this section we consider the special case $r=2$, where we can strengthen Theorem \ref{thm:all walls} by showing there are no walls below $\ell_{\js}$ and that $\ell_{\js}$ is not a type 2 wall. First we recall the result in the $r=1$ case, which was proved in \cite{FT2} when the term $D$ in \eqref{class vn} is zero.

\begin{Thm}\label{ft2}
When $r=1$ the only wall for the class $v_n$ is $\ell_{\js}$. If $F$ is $\nu\_{b,w}$-semistable above $\ell_{\js}$ then there exists a unique $T\in\Pic\_0(X)$ and a unique nonzero map $F\to T(-n)[1]$ which $\nu\_{b,w}$-destabilises $F$ below $\ell_{\js}$.
\end{Thm}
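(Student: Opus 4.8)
The plan is to prove the theorem by specialising the wall analysis of Sections~\ref{shvs}--\ref{subshvs} to $r=1$. When $r=1$ the class $v_n$ has rank $0$ and $\ch_1(v_n).H^2=nH^3>0$, so by Theorem~\ref{Thm. large n-arbitrary rank} every $\nu\_{b,w}$-semistable object $E$ of class $v_n$ is a sheaf; it is pure of dimension $2$, since a torsion subsheaf of dimension $\le1$ has infinite $\nu\_{b,w}$-slope and, $\nu\_{b,w}(E)$ being finite, would strictly destabilise $E$. Now let $\ell_E$ be any wall for $v_n$. By Proposition~\ref{prop. locally finite set of walls} and Theorem~\ref{Thm. large n-arbitrary rank} it carries a $\nu\_{b,w}$-semistable sheaf $E$ of class $v_n$ together with a short exact sequence $E_1\into E\onto E_2$ whose three $\nu\_{b,w}$-slopes agree along $\ell_E\cap U$ and with $\nu\_{b,w}(E_1)>\nu\_{b,w}(E_2)$ in one of the two adjacent chambers. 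Feeding this into Theorem~\ref{thm:all walls}: case~2(b) occurs only for $r>1$ and so is excluded, leaving case~1 or case~2(a).

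The heart of the matter is that case~2(a) is impossible when $r=1$. There $E_1$ and $E_2$ are sheaves of rank in $[0,r-1]=\{0\}$, hence of rank $0$, and moreover $\ch_1(E_i).H^2>0$ --- otherwise $E_i$ would be supported in dimension $\le1$ with infinite slope, contradicting equality of slopes on $\ell_E$. But for a rank-$0$ class the denominator $\ch_1^{bH}(E_i).H^2=\ch_1(E_i).H^2$ of $\nu\_{b,w}(E_i)$ does not depend on $(b,w)$, so $\nu\_{b,w}(E_i)=\ch_2(E_i).H/\ch_1(E_i).H^2$ is constant on $U$; the same holds for $\nu\_{b,w}(E)$. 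Thus the three slopes $\nu\_{b,w}(E_1),\nu\_{b,w}(E),\nu\_{b,w}(E_2)$ are fixed numbers, which cannot be all equal on $\ell_E$ and satisfy $\nu\_{b,w}(E_1)>\nu\_{b,w}(E_2)$ off it --- a contradiction. Hence every wall for $v_n$ is of case~1, which by Theorem~\ref{thm:all walls}(1) forces $\ell_E=\ell_{\js}$, with $E_1$ a sheaf of class $\mathsf v$ and $E_2\cong T(-n)[1]$ for some $T\in\Pic\_0(X)$; this proves the first claim. (The sharper assertions of \cite{FT2} that $E$ is strictly unstable below $\ell_{\js}$ and strictly stable above it, proved there for $D=0$, follow in the same way, since everything used here is insensitive to replacing $D$ by another class orthogonal to $H^2$.)

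For the second claim, fix $F$ of class $v_n$ that is $\nu\_{b,w}$-semistable above $\ell_{\js}$. By Theorem~\ref{jsthm} together with its ``whole moduli space'' refinement for $r=1\le2$ (which is \cite{FT2}), $F=\cok(s)\otimes T$ for a Joyce-Song stable pair $(F_0,s)$ of class $\mathsf v$ and some $T\in\Pic\_0(X)$, giving a triangle $F_0\otimes T\to F\to T(-n)[1]$ with $F_0$ a sheaf of class $\mathsf v$. Below $\ell_{\js}$ one has $\nu\_{b,w}(\mathsf v)>\nu\_{b,w}(v_n)>\nu\_{b,w}(\cO(-n)[1])$, so $F_0\otimes T\into F$ strictly destabilises $F$ there, witnessed by the quotient map $F\to T(-n)[1]$. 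For uniqueness, apply $\Hom(\,\cdot\,,T'(-n))$ to $0\to T(-n)\to F_0\otimes T\to F\to0$: since $F_0$ is a rank-$1$ torsion-free sheaf ranging over the bounded moduli of semistable sheaves of class $\mathsf v$ and $n\gg0$, both $\Hom(F_0\otimes T,T'(-n))$ and, by Serre duality on the Calabi-Yau $X$, $\Ext^1(F_0\otimes T,T'(-n))\cong H^2\big(X,F_0(n)\otimes T\otimes(T')^{\vee}\big)^{*}$ vanish, so $\Hom_{\cD(X)}\!\big(F,T'(-n)[1]\big)\cong\Hom(T(-n),T'(-n))=H^0(X,T^{\vee}\otimes T')$, which is $\C$ if $T'=T$ and $0$ otherwise, as $T^{\vee}\otimes T'$ is a nontrivial torsion line bundle when $T'\ne T$. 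Hence $T$ and the destabilising map $F\to T(-n)[1]$ are unique.

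Thus most of the proof reduces to results already available --- Theorems~\ref{Thm. large n-arbitrary rank}, \ref{thm:all walls}, \ref{jsthm} and \cite{FT2} --- and the one genuinely fresh step is the exclusion of case~2(a). I expect that step, namely the rank-$0$ slope-constancy observation and the check that it really contradicts the definition of a wall, to be where care is needed; the Serre-duality bookkeeping for the uniqueness statement is routine.
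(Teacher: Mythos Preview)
Your argument that $\ell_{\js}$ is the only wall is essentially the paper's: both reduce to the observation that in rank $r=1$ all potential type~2 factors have rank~0, hence constant $\nu\_{b,w}$-slope, so no genuine wall can arise. (The paper treats 2(a) and 2(b) in one breath; your separate exclusion of 2(b) via ``$r>1$ when there are type 2(b) walls'' is fine.)

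The second half of your proof, however, is circular. To conclude that an $F$ semistable above $\ell_{\js}$ satisfies $F=\cok(s)\otimes T$ you invoke ``Theorem~\ref{jsthm} together with its whole moduli space refinement for $r\le2$''. But the ``whole moduli space'' statement for $r=1$ (with general $D$ satisfying $D.H^2=0$) is precisely the content of Theorem~\ref{ft2} --- the thing you are proving. Citing \cite{FT2} does not rescue this: that paper only treats $D=0$, and your parenthetical ``follows in the same way'' is not a proof. What you need, and what you have not supplied, is an \emph{independent} reason why every $F$ semistable above $\ell_{\js}$ admits a nonzero map to some $T(-n)[1]$.

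The paper closes this gap directly: by Lemma~\ref{final wall} there are no $\nu\_{b,w}$-semistable objects of class $v_n$ below $\ell_f$, and since you have already shown $\ell_{\js}$ is the only wall, there are none anywhere below $\ell_{\js}$. Hence any $F$ semistable above $\ell_{\js}$ is strictly destabilised on crossing it, so on $\ell_{\js}$ it sits in a destabilising sequence. You have shown this must be of type~1, giving $F_1\into F\onto T(-n)[1]$ with $F_1$ of class $\mathsf v$; now Theorem~\ref{jsthm} applies honestly and gives the Joyce-Song description together with uniqueness of $T$ and the map. Once you insert this step your Serre-duality calculation for uniqueness becomes unnecessary (it is already contained in the proof of Theorem~\ref{jsthm}), though it is correct.
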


\begin{proof}
Theorem \ref{thm:all walls} describes the walls of instability for $v_n$. We claim there are no walls of type 2. On such a wall the semistable factors would have rank 0 since $\rk(v_n)=r-1=0$. Thus their $\nu\_{b,w}$-slopes and that of $v_n$ are all constant in $(b,w)$, so there is no such wall.

We conclude that $\ell_{\js}$ is the only wall. By Lemma \ref{final wall} there are no $\nu\_{b,w}$-semistable objects of type $v_n$ in the nonempty region of $U$ below $\ell_f$. Moving $(b,w)$ upwards, the absence of walls means there are none below $\ell_{\js}$. On $\ell_{\js}$ Theorem \ref{thm:all walls}\;(1) shows any semistable objects $E$ admits a nonzero map to $T(-n)[1]$ for some $T\in\Pic\_0(X)$. Therefore the isomorphism of Theorem \ref{jsthm} to a connected component of moduli space is in fact an isomorphism to the whole moduli space: all semistable objects $E$ are cokernels of Joyce-Song pairs $(F,s)$ twisted by some $T\in\Pic\_0(X)$, with $F,s,T$ uniquely determined by $E$.
\end{proof}

The main result of this section is an analogous result when $r=2$. 

\begin{Thm}\label{thm: rank-2}
Fix $r=2$ and work in class $v_n$.
\begin{itemize}
\item  For $(b,w)$ below $\ell_{\js}$ there are no semistable objects: $\cM^{ss}_{b,w}(v_n)=\emptyset$.
\item For $(b,w)$ just above $\ell_{\js}$ the semistable objects are Joyce-Song pairs tensored with elements of $\Pic\_0(X)$. Thus they are all $\nu\_{b,w}$-stable and 
%all semistable objects $E$ are stable and are $\cok(s)\otimes T$ for a unique Joyce-Song stable pair $(F,s)$ and $T\in\Pic\_0(X)$, so
\eqref{map} is an isomorphism,
$$
\cM^{ss}_{b,w}(v_n)\ \cong\ \js_{\;b,w}(\mathsf v)/\C^*\times\Pic\_0(X).
$$
\end{itemize}
\end{Thm}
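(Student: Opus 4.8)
The plan is to reduce both assertions to the single claim, call it $(\star)$: \emph{for $r=2$, no wall of instability of $v_n$ lies on or below $\ell_{\js}$ other than $\ell_{\js}$ itself, and every destabilisation occurring on $\ell_{\js}$ is of type $1$ in the sense of Theorem \ref{thm:all walls}} --- equivalently, all type $2$ walls of $v_n$ lie strictly above $\ell_{\js}$. Granting $(\star)$, the first bullet follows: if some $E\in\cM^{ss}_{b,w}(v_n)$ had $(b,w)$ strictly below $\ell_{\js}$ with $b$ to the left of $\Pi(v_n)$, then by Lemma \ref{final wall} the point $(b,w)$ would lie above $\ell_f$; pushing $(b,w)$ down towards $\ell_f$, where $E$ is no longer semistable, $E$ must cross one of its walls, which by Proposition \ref{prop. locally finite set of walls}\,(a) is a line through $\Pi(v_n)$ and which, lying below $(b,w)$, is strictly below $\ell_{\js}$ --- contradicting $(\star)$.

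For the second bullet fix $(b,w)$ just above $\ell_{\js}$ and below every other wall of $v_n$ or of $\mathsf v$, exactly as in Theorem \ref{jsthm}. Given $E\in\cM^{ss}_{b,w}(v_n)$, the first bullet shows $E$ is destabilised upon crossing $\ell_{\js}$, so $\ell_{\js}$ is a wall for $E$; by $(\star)$ the corresponding destabilising sequence is of type $1$, so Theorem \ref{thm:all walls}\,(1) provides a nonzero map $E\to T(-n)[1]$ for some $T\in\Pic_0(X)$. Theorem \ref{jsthm} then identifies $E$ with $\cok(s)\otimes T$ for a $\nu_{b,w}$-Joyce--Song stable pair $(F,s)$ --- uniquely determined by $E$ --- and shows $E$ is strictly $\nu_{b,w}$-stable. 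Thus the morphism \eqref{map} is surjective; being moreover injective and an isomorphism onto an open and closed substack by Theorem \ref{jsthm}, it is an isomorphism $\cM^{ss}_{b,w}(v_n)\cong\js_{b,w}(\mathsf v)/\C^*\times\Pic_0(X)$, and in particular all semistable objects are stable.

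It remains to prove $(\star)$. Since $r=2$ we have $\rk(v_n)=1$, so in a type $2$ destabilising sequence $E_1\into E\onto E_2$ of a semistable sheaf of class $v_n$ the ranks split as $0+1$; write $G$ for the rank $0$ factor and $F$ for the rank $1$ factor. Along $\ell_E$ the slope $\nu_{b,w}(G)=\ch_2(G).H/\ch_1(G).H^2$ is constant, so it equals the gradient of $\ell_E$ (which runs through $\Pi(v_n)$), while $\ch_0(F)=1\in[0,r]$ and $\ch_1(F).H^2=nH^3-\ch_1(G).H^2<nH^3$ (in case $2$(b) instead $\ch_1(F).H^2=0$). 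Lemma \ref{final wall} forces $\ell_E$, hence the point $\Pi(F)$, to lie on or above $\ell_f$; this bounds $\ch_2(F).H$ below as in \eqref{3nm} via \eqref{BOG}, and the estimate \eqref{claim} places $(b,w)$ in the safe area $U_{[F]}$. One then substitutes $\Pi(F)$ (equivalently the constant slope $\nu_{b,w}(G)$) into the parametrisation of $\ell_E$ and compares its gradient with that of $\ell_{\js}$, the goal being to show $\ell_E$ strictly steeper and hence strictly above $\ell_{\js}$; the two cases $F\into E\onto G$ and $G\into E\onto F$, and the subcases $2$(a) and $2$(b), are treated in the same way.

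The main obstacle is precisely that last gradient comparison: the naive lower bound $\ch_2(F).H>-\tfrac1r\ch_0(F)(2\beta.H+1)$ coming from ``$\Pi(F)$ above $\ell_f$'' just misses separating $\ell_E$ from $\ell_{\js}$, so one is forced to re-run the sharper argument used in the proof of Theorem \ref{thm:all walls}\,(1): if $\Pi(F)$ lay on or below $\ell_{\js}$ then the leftmost point of $\ell_E\cap\partial U$ would have $b$-value $\le -n$, and tracking the cohomology sheaves of the destabilising sequence (the inequality $\mu_H(\cH^{-1}(\,\cdot\,))<-n$ against $\ch_1(\cH^{-1}(\,\cdot\,)).H^2=-nH^3$, cf. \eqref{chern}) --- or, equivalently, analysing the cokernel of the rank $0$ factor $G$ as in the Joyce--Song discussion --- yields a contradiction. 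The same computation shows $\ell_{\js}$ cannot itself be a type $2$ wall, since then $\Pi(F)$ would have to lie both on $\ell_E=\ell_{\js}$ and strictly above it. (It may be worth first recording the $r=1$ case of Theorem \ref{ft2}, where $(\star)$ is trivial because all semistable factors then have rank $0$ and hence constant slope, so no type $2$ wall can exist.)
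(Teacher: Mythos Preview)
Your reduction of both bullet points to the claim $(\star)$ is correct and matches the paper's logic. The gap is in your proposed proof of $(\star)$ itself.

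The argument you borrow from Theorem \ref{thm:all walls}\,(1) does not transfer. There the contradiction came from the fact that $E_2$ was \emph{not} a sheaf: one had $\cH^{-1}(E_2)\ne0$ with $\ch_1(\cH^{-1}(E_2)).H^2=-nH^3$, and pushing $b$ past $-n$ along $\ell_E$ forced $\mu_H(\cH^{-1}(E_2))<-n$. In a type~2 destabilisation both $E_i$ are sheaves, so $\cH^{-1}=0$ and there is nothing to contradict. Your geometric claim is also inverted: $\ell_{\js}$ meets $\partial U$ at $b=-n$, so a line through $\Pi(v_n)$ lying on or \emph{below} $\ell_{\js}$ has its leftmost intersection with $\partial U$ at $b\ge -n$, not $\le -n$. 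Thus neither the geometry nor the cohomology step you invoke is available.

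The paper's proof of $(\star)$ is genuinely different and more quantitative. Writing $(\ch_0,\ch_1.H^2,\ch_2.H,\ch_3)(E_1)=(1,cH^3,sH^3,dH^3)$ for the rank~1 factor, one first disposes of $c=0$ separately: then the rank~0 factor $E_0$ has class of type $v_n$ for $r=1$, and the $r=1$ result (Theorem \ref{ft2}) forces $\ell_E$ through $\Pi(\cO(-n))$, hence $\ell_E=\ell_{\js}$, after which one checks the destabilisation is of type~1 by producing a nonzero map $E\to T(-n)[1]$. For $c\in(0,n)$ the argument is analytic: apply the Bogomolov--Gieseker inequality \eqref{quadratic form} to $E_1$ at the point $(-1,w(-1))\in\ell_E\cap U$ to get an upper bound for $d$, and apply a separate $\ch_3$ bound for rank~0 tilt-semistable sheaves (Lemma \ref{lem:rank sero sheaevs}) to $E_0$ to get a lower bound for $d$. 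Combining these yields a quadratic inequality $s^2-\alpha_1 s-\alpha_2\le0$ in $s$, and one checks by a quartic-in-$c$ computation that the value $s_0$ corresponding to $\ell_E=\ell_{\js}$ violates it, forcing $s>s_0$ and hence $\ell_E$ strictly above $\ell_{\js}$. None of this is supplied by, or reducible to, the cohomology-sheaf argument you sketch.
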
 

\begin{proof}
Take a point $(b,w)\in U$ just above $\ell_{\js}$ and $E\in\cM^{ss}_{b,w}(v_n)$. By Lemma \ref{final wall} we can decrease $w$ until $(b,w)$ lies on a wall $\ell\cap U$ below which $E$ is strictly destabilised. Thus either $\ell=\ell_{\js}$ or $\ell$ is below $\ell_{\js}$; our first aim is to show the former.

We apply Theorem \ref{thm:all walls} to $(b,w)\in\ell\cap U$ and $E$. If we are in case (1) then by Theorem \ref{jsthm} we are done. So we suppose we are in case (2).

Since $r=2$, the rank of $E$ is 1 and the destabilising sheaves $E_1,\,E_2$ have ranks zero and one; we relabel and order them $E_0,\,E_1$ so that $\rk\;(E_i)=i$. We write
$$
\(\!\ch_0(E_1),\ \ch_1(E_1).H^2,\ \ch_2(E_1).H,\ \ch_3(E_1)\)\=\(1,\,cH^3,\,sH^3,\,dH^3\)
$$
for $c\in\frac1{H^3}\Z$ and $s,d\in\Q$.
By Theorem \ref{thm:all walls}, $c\in[0,n]$. By 2(b) of the same Theorem, $\ch_1(E_0).H^2>0$, so $c<n$. Moreover 
if $c=0$ then $E_1$ is tilt semistable and 
%$\ch(E_0) = v_n -\ch_1(E_1)$ is
%\begin{equation*}
%\Big(0,\ nH+D -\ch_1(E_1),\ -\beta -\ch_2(E_1) - \tfrac12n^2H^2,\ -m  -\ch_3(E_1) + \tfrac16n^3H^3\;\Big)  
%\end{equation*}
%where $-\beta.H \leq \ch_2(E_1).H \leq 0$ {\color{red} (because $(b,w)$ is above $\ell_f$ \eqref{equation of final wall})} and $\ch_3(E_1) =dH^3$ is bounded from above by \eqref{upper bound for d}.
the class of the $\nu\_{b,w}$-semistable sheaf $E_0$ has the form of $v_n$ for rank 1 less.

Therefore we may apply Theorem \ref{ft2} to $E_0$ to conclude $(b,w)$ is above or on $E_0$'s Joyce-Song wall. Thus $\ell$ passes through or above $\Pi(\cO(-n)[1])$, as well as through $\Pi(E)$. Since by construction it cannot be above $\ell_{\js}$, we conclude that $\ell=\ell_{\js}$. Hence it passes through $\Pi(\cO(-n)[1])$ so it is also $E_0$'s Joyce-Song wall.

%which is the line passing through $\Pi(\cO_X(-n))$ of slope $\frac{\ch_2(E_0).H}{\ch_1(E_0).H^2}$. Therefore, the lowest wall for objects of class $\vi_n$ of type (a)(ii) is the JS wall. 
The destabilising sequence is either $E_0\into E \twoheadrightarrow E_1$ or $E_1\into E \twoheadrightarrow E_0$. Since $\nu\_{b,w}(E_0)$ is constant in $w$ while $\nu\_{b,w}(E)$ increases as $w$ decreases, the first sequence does not destabilise $E$ below the wall, so the destabilising sequence is $E_1\into E \twoheadrightarrow E_0$. By Theorem \ref{ft2} there is a nonzero map $E_0\rightarrow T(-n)[1]$ for some $T\in\Pic\_0(X)$. Composing with $E\twoheadrightarrow E_0$ gives a nonzero map $E\rightarrow T(-n)[1]$. Again by Theorem \ref{jsthm} we are now done. \medskip

So finally we assume $c\in(0,n)$. The line $\ell$ passing through
$$
\Pi(\vi_n)\=\left(n,\,-\tfrac{\beta.H}{H^3}-\tfrac{n^2}2\right)
\qquad\text{and}\qquad \Pi(\ch(E_1))\=(c,s)
$$
is 
\beq{ell}
 w(b)\=-\left(\frac{\beta.H}{H^3} +\frac{n^2}2 +s\right)\left(\frac{b-c}{n-c}\right)+s\,. 
\eeq
Since $\ell$ lies above $\ell_f$ by Lemma \ref{final wall}, and $-1\in(a_f,b_f)$ by \eqref{bounds on bf}, we know that $\ell$ is inside $U$ at $b=-1$. At this point $B_{-1,\,w(-1)}(E_1)\ge0$ by the Bogomolov-Gieseker conjecture \eqref{conjecture}. Using the form \eqref{boglinear} this becomes
$$
(c^2-2s)\left[\left(\tfrac{\beta.H}{H^3}+\tfrac{n^2}2+s\right)\tfrac{1+c}{n-c}+s\right]-(3d-cs)+2s^2-3cd\ \ge\ 0,
$$
%\begin{multline*}
%\(-2b_0(c+1)+2s-1\)(c^2-2s) +4 \left(s+c+ \tfrac12\right)^2 -6(c+1)\left(d+s+ \tfrac{c}{2} + \tfrac{1}{6}\right)\\
%   \=-2b_0(c+1)(c^2-2s) +2sc(c+1) -6d(c+1).
%\end{multline*}
giving an upper bound for $d$,
\begin{equation}\label{upper bound for d}
d\ \leq\ \frac{sc}{3} + \frac{c^2-2s}{n-c}\left(\frac{\beta.H}{H^3}+\frac{n^2}2+s\right)\!.
\end{equation}
Since the destabilising sheaf $E_0$ is of rank zero, Lemma \ref{lem:rank sero sheaevs} below gives
\begin{equation*}
    \frac{\ch_3(E_0)}{H^3}\ \leq\ \frac{(\ch_2(E_0).H)^2}{2H^3\ch_1(E_0).H^2} + \frac1{24} \left(\frac{\ch_1(E_0).H^2}{H^3}\right)^{\!3}.
\end{equation*}
Substituting $\ch(E_0)=\ch(E)-\ch(E_1)$ gives a lower bound for $d$,
\begin{equation}\label{lower bound for d}
-\frac{m}{H^3} + \frac{n^3}{6}-d\ \leq\ \frac1{2(n-c)}\left(\frac{n^2}{2}+s + \frac{\beta.H}{H^3}\right)^{\!2} + \frac{(n-c)^3}{24}\,.
\end{equation}
Combining \eqref{upper bound for d} and \eqref{lower bound for d} gives the inequality $s^2-\alpha_1s-\alpha_2\le0$, where 
\begin{align*}
    \alpha_1 &\ :=\ 2cn +n^2 +2\tfrac{\beta.H}{H^3}\,,\\
    \alpha_2 &\ :=\ \tfrac14c^4 -c^3n +c^2 \left( \tfrac{5}{2}n^2 +2 \tfrac{\beta.H}{H^3} \right) -c \tfrac{6m}{H^3} + 3n^2 \tfrac{\beta.H}{H^3} + n \tfrac{6m}{H^3} + 3 \left(\tfrac{\beta.H}{H^3}\right)^{\!2}.
\end{align*}
Thus $s$ lies between the roots of $s^2-\alpha_1s-\alpha_2=0$. The larger root is $>\frac{\alpha_1}2$ which is $>0$ since $n\gg0$. Since $c>0$, we see that for $n\gg0$,
$$
s_0\ :=\ -\left(\frac{n}{2} +\frac{\beta.H}{2nH^3}  \right) c -\frac{\beta.H}{2H^3}\ <\ 0.
$$
We will show that $s_0^2-\alpha_1s_0-\alpha_2>0$, so $s_0$ is smaller than the smaller root. Thus $s>s_0$, or equivalently
$$
-\frac1{n-c}\left(\frac{\beta.H}{H^3} +\frac{n^2}2 +s\right)\ <\ -\left(\frac n2+\frac{\beta.H}{2nH^3}\right)\!.
$$
The left hand side is the gradient of $\ell$ \eqref{ell}; the right hand side is the gradient of $\ell_{\js}$. Thus, to the left of their common point $\Pi(v_n)$,  $\ell$ lies strictly above $\ell_{\js}$, a contradiction.\medskip

So what remains is to compute $s_0^2-\alpha_1s_0-\alpha_2$. We get a quartic polynomial $f(c)$ in $c$,
\begin{multline*}
  -\tfrac14c^4 +nc^3 +\left[-\tfrac{5}{4}n^2 -\tfrac{\beta.H}{2H^3} + \tfrac{(\beta.H)^2}{4n^2(H^3)^2}\right]\!c^2+ \left[\tfrac{n^3}{2} +3n \tfrac{\beta.H}{H^3}  +\tfrac{3(\beta.H)^2}{2n(H^3)^2}  + \tfrac{6m}{H^3} \right]\!c\\
-\left[\tfrac{5\beta.H}{2H^3}n^2 + \tfrac{6m}{H^3}n + \tfrac{7(\beta.H)^2}{4(H^3)^2}\right]\!,
\end{multline*}
and we must show $f(c)>0$ for $c\in\big[\frac1{H^3},n-\frac1{H^3}\big]$ when $n\gg0$. To leading order in $n$ we can read off from the above formula,
\begin{itemize}
\item[(i)] $f\(\frac1{H^3}\)\sim\frac{n^3}{2H^3}>0\ $ and $\,f'\(\frac1{H^3}\)\sim\frac{n^3}2>0,$\vspace{1mm}
\item[(ii)] $f\(n-\frac{1}{H^3}\)\sim\frac{n^2}{4(H^3)^2}>0\ $ and $\,f'\(n-\frac{1}{H^3}\)\sim-\frac{n^2}{2H^3}<0$.
\end{itemize}
Moreover as $n\to\infty$ the coefficients of the powers of $c$ in $f'(c)$ tend to
$$
f'(c)\ \sim\ -c^3+3nc^2-\tfrac52n^2c+\tfrac12n^3\=(n-c)\(\tfrac12n^2-2nc+c^2\)
$$
with roots at $n$ and $n\(1\pm\surd\tfrac12\)$. Since the coefficients are symmetric polynomials in the roots this shows the roots of $f'$ tend to $n$ and $n\(1\pm\surd\tfrac12)$. In particular, for $n\gg0$, this and (i),\,(ii) show that precisely one of the roots of $f'$ is in $\left[\frac{1}{H^3}, n-\frac{1}{H^3}\right]$ and $f(c) >0$ for any $c \in \left[\frac{1}{H^3} , n-\frac{1}{H^3}\right]$, as required.
\end{proof}

\begin{Lem}\label{lem:rank sero sheaevs}
If $F$ is a $\nu\_{b,w}$-semistable rank zero sheaf for some $(b,w)\in U$ then 
%Take a slope-semistable sheaf $F$ of rank zero and let $\ell_F$ be a line of slope $\frac{\ch_2(F).H}{\ch_1(F).H^2}$ which intersects the boundary $\partial U$ at points with $b$-values $a_1 <b_1$ such that 
%$$
%b_1-a_1 = \frac{\ch_1(F).H^2}{H^3}. 
%$$ 
%Then $F$ is $\nu\_{b,w}$-semistable for any $(b,w) \in U \cap \ell_F$ and 
\begin{equation*}
    \ch_3(F)\ \leq\ \frac{(\ch_2(F).H)^2}{2\ch_1(F).H^2} + \frac{H^3}{24} \left(\frac{\ch_1(F).H^2}{H^3}\right)^{\!3}\!.
\end{equation*}
\end{Lem}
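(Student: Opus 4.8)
We may assume $c := \ch_1(F).H^2 > 0$, as otherwise the right-hand side is undefined; write $s := \ch_2(F).H$. First, $F$ is pure of dimension $2$: a subsheaf supported in dimension $\le 1$ has $\ch_1.H^2 = 0$, hence $\nu\_{b,w}$-slope $+\infty$, and would destabilise $F$, whose slope is $s/c < +\infty$. Moreover, for rank-$0$ sheaves $\nu\_{b,w}$ equals $\ch_2(\,\cdot\,).H/\ch_1(\,\cdot\,).H^2$ independently of $(b,w)$, so the $\nu\_{b,w}$-semistability of $F$ forces it to be slope-semistable with respect to this slope function (equivalently, tilt-semistable); this is the only input we use.

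The plan is to reduce the claim to the classical Bogomolov inequality on the two-dimensional support, in the spirit of the proof of Theorem \ref{BG2} for $r=0$. Replace $F$ by its two-dimensional reflexive hull $\widehat F := \ext^1_X\big(\ext^1_X(F,\cO_X),\cO_X\big)$, which fits in $0\to F\to\widehat F\to Q\to 0$ with $\dim Q\le 1$, has $\ch_1(\widehat F)=\ch_1(F)$ and $\ch_2(\widehat F).H\ge s$, is still slope-semistable, and is now maximal Cohen--Macaulay --- an $\cO_S$-module, generically of rank one, on the Cohen--Macaulay divisor $S$ with $[S]=\ch_1(F)$. Since $Q$ is a quotient of a fixed $\cO_X(-N)^{\oplus M}$ with $\ch_2(Q).H$ in a bounded interval, Grothendieck's Lemma \cite[Lemma 2.6]{Gr} bounds $\ch_3(Q)$ from below, so it suffices to prove a bound of the required shape for $\widehat F$. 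Writing $\widehat F=i_*L$ with $i\colon S\hookrightarrow X$ and $L$ generically a line bundle on $S$, one applies Grothendieck--Riemann--Roch together with $c_1(X)=0$ --- so $K_S=[S]|_S$ by adjunction and $\chi(\cO_S)=[S].\mathrm{Td}_2(X)+\tfrac1{12}([S]|_S)^2$ --- to express $\ch_3(\widehat F)$ through $\ch_1(L),\ \ch_2(L)$ on $S$ and $[S]|_S$, then uses the classical Bogomolov inequality $\ch_2(L)\le\tfrac12\ch_1(L)^2$ on $S$ and the Hodge index theorem on $S$ for $\delta:=\ch_1(L)-\tfrac12[S]|_S$, which satisfies $\delta.(H|_S)=\ch_2(\widehat F).H$ and $(H|_S)^2=[S].H^2=c$, giving $\delta^2\le(\ch_2(\widehat F).H)^2/c$. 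Assembling these and translating back to $F$ via the Grothendieck bound for $Q$, the remaining error terms are built from the effective class $[S]=\ch_1(F)$ with $[S].H^2=c$, and are bounded by $\tfrac{H^3}{24}(c/H^3)^3$; this yields the statement.

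The principal obstacle is that $S$ may be non-reduced or singular, so ``restrict to $S$ and apply classical Bogomolov'' must be made precise --- either through a Bogomolov inequality for torsion-free sheaves on a (possibly singular) surface or its resolution, or, as in the $r=0$ case of Theorem \ref{BG2}, by invoking Langer's boundedness \cite[Theorem 4.4]{langer} to reduce to finitely many $F$ and then checking the numerical inequality directly. The secondary difficulty is the Riemann--Roch bookkeeping that produces the precise constant $\tfrac{H^3}{24}(c/H^3)^3$ and absorbs the corrections coming from $Q$ and from a possibly non-nef $\ch_1(F)$.
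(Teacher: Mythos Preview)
Your approach has a genuine gap, and differs fundamentally from the paper's proof. The paper's argument is short and direct: it shows that a $\nu_{b,w}$-semistable rank $0$ sheaf $F$ remains $\nu_{b',w'}$-semistable for all $(b',w')$ in its safe area $U_{[F]}$ (since for rank $0$ objects $\nu_{b,w}$ reduces to the constant slope $\ch_2.H/\ch_1.H^2$, and by Proposition~\ref{prop. general-sheaf} destabilisers in the safe area are again rank $0$ sheaves). It then computes the lower boundary line $\ell_v$ of $U_{[F]}$ explicitly, chooses the specific point $(b,w)=\big(\nu_H(F),\,\tfrac12\nu_H(F)^2+\tfrac{(\ch_1(F).H^2)^2}{8(H^3)^2}\big)$ on $\ell_v\cap U$, and applies the Bogomolov--Gieseker inequality~\eqref{quadratic form} there. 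The constants $\tfrac12$ and $\tfrac1{24}$ fall out of that inequality; the BG conjecture is a standing assumption of the paper and is essential to the argument.

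Your route tries to avoid the BG conjecture and recover the bound from classical Bogomolov on the support surface. This cannot yield the stated inequality with the exact constants. First, the Grothendieck lower bound for $\ch_3(Q)$ is not explicit --- it depends on the twist $N$ and the number of generators $M$, which vary with $\widehat F$ --- so you cannot absorb it into a universal $\tfrac{H^3}{24}(c/H^3)^3$. Second, your Hodge index step gives $\delta^2\le(\ch_2(\widehat F).H)^2/c$, but $\ch_2(\widehat F).H=s+\ch_2(Q).H\ge s$, so the resulting bound is in terms of $\ch_2(\widehat F).H$ rather than $s$, and goes the wrong way. Third, the cubic self-intersection $[S]^3$ (which appears in the Riemann--Roch expansion) is \emph{not} controlled by $c=[S].H^2$ alone; the Hodge index theorem bounds $[S]^2.H$ but says nothing about $[S]^3$. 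Finally, you yourself flag the issue that $S$ may be singular or non-reduced, so the classical Bogomolov inequality for $L$ on $S$ is not available as stated; invoking Langer's boundedness as a fallback would at best give \emph{some} upper bound (as in Theorem~\ref{BG2}), not the precise inequality of the Lemma. In short, the explicit inequality here is essentially the rank $0$ case of the BG conjecture at a particular point of $U$, and the paper proves it by assuming the conjecture, not by deriving it from classical surface theory.
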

\begin{proof}
For objects of rank 0 the slope $\nu\_{b,w}$ reduces to the ordinary 2-dimensional slope $\nu\_H$,
$$
\nu\_H(F)\ :=\ \left\{\!\!\!\begin{array}{cc} \frac{\ch_2(F).H}{\ch_1(F).H^2} & \text{if }\ch_1(F).H^2\ne0, \\
+\infty & \text{if }\ch_1(F).H^2=0. \end{array}\right.
$$
It follows that a $\nu\_{b,w}$-semistable rank 0 sheaf is automatically $\nu\_H$-semistable. Conversely, by Proposition \ref{prop. general-sheaf}, a rank 0 sheaf $F$ can only be $\nu\_{b,w}$-destabilised by other rank 0 sheaves so long as we take $(b,w)$ inside its safe area $U_v$, where $v=\ch(F)$. It follows that $F$ is $\nu\_{b,w}$-semistable for $(b,w)$ above the line $\ell_v$ \eqref{l-v}, which we claim has equation
\beq{lyne}
    w \= \frac{\ch_2(F).H}{\ch_1(F).H^2}\  b + \frac{1}{8}\left(\frac{\ch_1(F).H^2}{H^3}\right)^{\!2} - \frac{1}{2}\left(\frac{\ch_2(F).H}{\ch_1(F).H^2}\right)^{\!2}\!.
\eeq
Both lines have slope $\frac{\ch_2(F).H}{\ch_1(F).H^2}$. Substituting $w=\frac12b^2$ into the left hand side of \eqref{lyne} gives a quadratic equation $b^2+\alpha_1b+\alpha_2=0$ whose roots $a_v<b_v$ satisfy
$$
(b_v-a_v)^2\=(a_v+b_v)^2-4a_vb_v\=\alpha_1^2-4\alpha_2.
$$
Thus we can calculate $b_v-a_v=\ch_1(F).H^2/H^3$, which by \eqref{l-v} means \eqref{lyne} is indeed $\ell_v$.

Therefore $F$ is $\nu\_{b,w}$-semistable on $\ell_v\cap U$, and in particular at the point
$$
b\=\nu\_H(F)\,,\qquad w=\frac12\nu\_H(F)^2+\frac{(\ch_1(F).H^2)^2}{8(H^3)^2}\,.
$$
Applying the Bogomolov-Gieseker inequality \eqref{quadratic form} at this point gives 
\begin{equation*}
    \ch_3(F)\ \leq\ \frac{(\ch_2(F).H)^2}{2\ch_1(F).H^2} + \frac{H^3}{24} \left(\frac{\ch_1(F)}{H^3}\right)^{\!3}\!.\qedhere
\end{equation*}
\end{proof}

\section{Moduli stacks}\label{TomB}
Piyaratne and Toda proved that Bridgeland semistable objects of a fixed class $v\in K_H(X)$ and fixed $\ch_3$ form an algebraic stack of finite type \cite[Theorem 4.2]{PiTo}.\footnote{They also prove the stack is \emph{quasi-proper}: it satisfies the valuative criterion for properness without the separatedness condition.}
Here we use their result to deduce the same for stacks of $\nu\_{b,w}$-semistable objects of slope $\nu\_{b,w}<+\infty$ by finding a Bridgeland stability condition which is equivalent to $\nu\_{b,w}$-stability on objects of class $v\in K_H(X)$. We thank Yukinobu Toda for his help with this section.

\begin{Prop}\label{prop.reducing to a good one}
	Fix $(b,w)\in U$ and $v\in K_H(X)$ with $\Pi(v)\not\in U$ and $\ch_1^{bH}(v).\;H^2>0$. Then without affecting the (semi)stability of objects of class $v$ we may assume $(b,w)$ satisfies
	\beq{parabw}
	w\ch_0H^3 \= b^2\ch_0H^3 - b\ch_1\!.\;H^2 + \ch_2\!.\;H, \quad\text{where }\ch_i\,:=\,\ch_i(v).
	\eeq
%	\begin{enumerate*}
%		\item[\rm{(a)}]  $b  = \frac{\ch_2\!.\;H}{\ch_1\!.\;H^2}$ if $\ch_0 = 0$, or
%		\item[\rm{(b)}] $w = b^2 - b \,\frac{\ch_1\!.\;H^2}{\ch_0H^3} + \frac{\ch_2\!.\;H}{\ch_0H^3}$ if $\ch_0 \neq 0$. 
%	\end{enumerate*} 
\end{Prop}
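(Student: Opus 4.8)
The idea is purely planar: exploit that, since $\Pi(v)$ is defined, $\ch_0:=\ch_0(v)\ne0$, and then Proposition \ref{prop. locally finite set of walls}(a) forces every wall for $v$ to pass through $\Pi(v)$, so one is free to slide $(b,w)$ along any ray out of $\Pi(v)$ without changing the (semi)stability of class-$v$ objects. Write $\ch_i:=\ch_i(v)$, $\mu:=\mu\_H(v)=\ch_1.H^2/\ch_0H^3$, $c_2:=\ch_2.H/\ch_0H^3$, so that $\Pi(v)=(\mu,c_2)$; the hypothesis $\Pi(v)\notin U$ \eqref{Udef} reads $c_2\le\tfrac12\mu^2$ (equivalently $\Delta_H(v)\ge0$), and $\ch_1^{bH}(v).H^2>0$ reads $\ch_0H^3(\mu-b)>0$, so in particular $b\ne\mu$. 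Dividing \eqref{parabw} by $\ch_0H^3$ (the sign is immaterial), the locus \eqref{parabw} is the parabola $P=\{w=b^2-b\mu+c_2\}$, which passes through $\Pi(v)$ since at $b=\mu$ it gives $w=c_2$.

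First I would record the elementary conic fact: the non-vertical line of slope $k$ through $\Pi(v)$, namely $w=c_2+k(b-\mu)$, meets $P$ exactly in the two points $\Pi(v)$ and $(b',w'):=\big(k,\;k^2-k\mu+c_2\big)$, because substituting gives $(b-\mu)(b-k)=0$. Then, assuming that line contains a point of $U$, I would check three things. \emph{(i)} $(b',w')\in U$: one computes $w'-\tfrac12b'^2=\tfrac12\big[(k-\mu)^2-(\mu^2-2c_2)\big]$, and the quantity $(k-\mu)^2-(\mu^2-2c_2)$ is precisely the discriminant that must be positive for the line of slope $k$ through $\Pi(v)$ to meet the \emph{open} region $U$ at all --- which it does by assumption. \emph{(ii)} $(b,w)$ and $(b',w')$ lie on the same ray out of $\Pi(v)$, i.e.\ $(b-\mu)(b'-\mu)>0$: indeed $(b-\mu)(k-\mu)=w-\big(\mu(b-\mu)+c_2\big)$, where $\mu(b-\mu)+c_2$ is the value at $b$ of the line of slope $\mu$ through $\Pi(v)$; since $\Pi(v)\notin U$ this line lies on or below the tangent $w=\mu b-\tfrac12\mu^2$ to $\partial U$, hence below $\partial U$, so $w>\tfrac12b^2\ge\mu(b-\mu)+c_2$. \emph{(iii)} as $U$ is convex, the whole segment from $(b,w)$ to $(b',w')$ lies in $U$, and by \emph{(ii)} it avoids $\Pi(v)$. (A by-product of \emph{(ii)} is that $(b',w')$ again satisfies $\ch_1^{b'H}(v).H^2>0$.)

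Next I would apply this with $k$ the slope of the line joining $\Pi(v)$ to $(b,w)$ --- well defined because $b\ne\mu$ --- and replace $(b,w)$ by the resulting $(b',w')\in P$, which satisfies \eqref{parabw}. By \emph{(iii)} the segment $[(b,w),(b',w')]$ lies in $U$ and avoids $\Pi(v)$; since $\ch_0\ne0$, every wall $\ell_i$ for $v$ runs through $\Pi(v)$, so a line through $\Pi(v)$ meets $\bigcup_i\ell_i$ only in $\Pi(v)$ unless it is itself one of the $\ell_i$. Hence that segment lies either inside a single chamber of $U\smallsetminus\bigcup_i\ell_i$, or entirely inside a single wall $\ell_i\cap U$. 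In the first case Proposition \ref{prop. locally finite set of walls}(c) gives that the $\nu\_{b,w}$-(semi)stability of objects of class $v$ is the same at the two endpoints; in the second, the same conclusion holds because the destabilising and Jordan--H\"older data of a class-$v$ object on the wall are pinned down by $\ell_i$ alone --- any such factor of nonzero rank has its $\Pi$-point on $\ell_i$, and any of rank $0$ has two-dimensional slope equal to $\mathrm{slope}(\ell_i)$ --- so the (semi)stable locus is constant along $\ell_i\cap U$. This is exactly the assertion.

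The only step I expect to require genuine care is this last ``wall case''. If one prefers to avoid it, an alternative is to first perturb $(b,w)$ slightly into an adjacent chamber and run the argument there: the target point then moves to a nearby point of $P$, and since the statement is about the isomorphism type of $\cM^{ss}_{b,w}(v)$ it is insensitive to such a perturbation, all the stability conditions involved lying on a single wall or in a single chamber. Everything else --- the three items of the second paragraph --- is just routine bookkeeping with the parabola $P$, the boundary parabola $\partial U$, and its tangent lines, using nothing beyond the inequality $\Delta_H(v)\ge0$ and the Bogomolov--Gieseker inequality \eqref{BOG} implicit in ``$\Pi(v)\notin U$''.
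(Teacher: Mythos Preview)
Your argument for $\ch_0\ne0$ is correct and is essentially the paper's own proof: both slide $(b,w)$ along the line through $\Pi(v)$ to the second intersection with the parabola \eqref{parabw}, invoking Proposition~\ref{prop. locally finite set of walls} to see that (semi)stability is unchanged. Your items (i)--(iii) make explicit what the paper encodes geometrically via the tangent lines $\ell_\pm$ and the points $p_\pm$ of Figure~\ref{parab}; your treatment of the case where the segment lies on a wall is in fact more careful than the paper's, which simply cites Proposition~\ref{prop. locally finite set of walls} without comment.

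There is, however, a gap. You infer from the hypothesis ``$\Pi(v)\notin U$'' that $\Pi(v)$ is defined, hence $\ch_0\ne0$; but the paper intends this hypothesis to include the rank-zero case (where $\Pi(v)$ is undefined and so vacuously not in $U$), and its proof treats $\ch_0=0$ separately. This case matters for the application in Theorem~\ref{ftstack}, which imposes no rank restriction. When $\ch_0=0$, equation \eqref{parabw} degenerates to the vertical line $b=m:=\ch_2\!.\;H/\ch_1\!.\;H^2$, and your parabola argument collapses. The fix is easy: by Proposition~\ref{prop. locally finite set of walls}(b) all walls for $v$ are parallel of slope $m$, so one moves $(b,w)$ along the line of slope $m$ through it until $b=m$; this point lies in $U$ because at $b=m$ the line and $\partial U$ have equal slope $m$, so the line is still strictly above $\partial U$ there. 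You should add this case.
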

\begin{proof}
	If $\ch_0 = 0$ we are required to show we may take $b$ to equal $m:=\frac{\ch_2\!.\;H}{\ch_1\!.\;H^2}$. Let $\ell$ be the line of slope $m$ through $(b,w)$. By Proposition \ref{prop. locally finite set of walls} the (semi)stability of objects $E$ of class $v$ is unchanged if we move $(b,w)$ along $\ell\cap U$. So we move it to the intersection of $\ell$ with the vertical line $b= m$, which lies inside $U$: both $\partial U$ and $\ell$ have equal gradients $m$ at $b=m$, so $\ell$ still lies above $\partial U$ at this point.
		
	Suppose now that $\ch_0 \neq 0$. Recall the $H$-discriminant $\Delta_H = \left(\ch_1\!.\;H^2\right)^2-2(\ch_2\!.\;H)\ch_0H^3$ of \eqref{BOG} and consider the points 
	\begin{equation*}
	p_\pm\ :=\ \left(\frac{\ch_1\!.\;H^2}{\ch_0H^3} \pm  \frac{\sqrt{\Delta_H}}{\ch_0H^3}\, , \  \frac{1}{2}\! \left(\frac{\ch_1\!.\;H^2}{\ch_0H^3} \pm \frac{\sqrt{\Delta_H}}{\ch_0H^3} \right)^{\!2\,}  \right)
	\end{equation*}
whose lines $\ell_\pm$ through $\Pi(v)$ are tangent to $\partial U$ as in Figure \ref{parab}.
 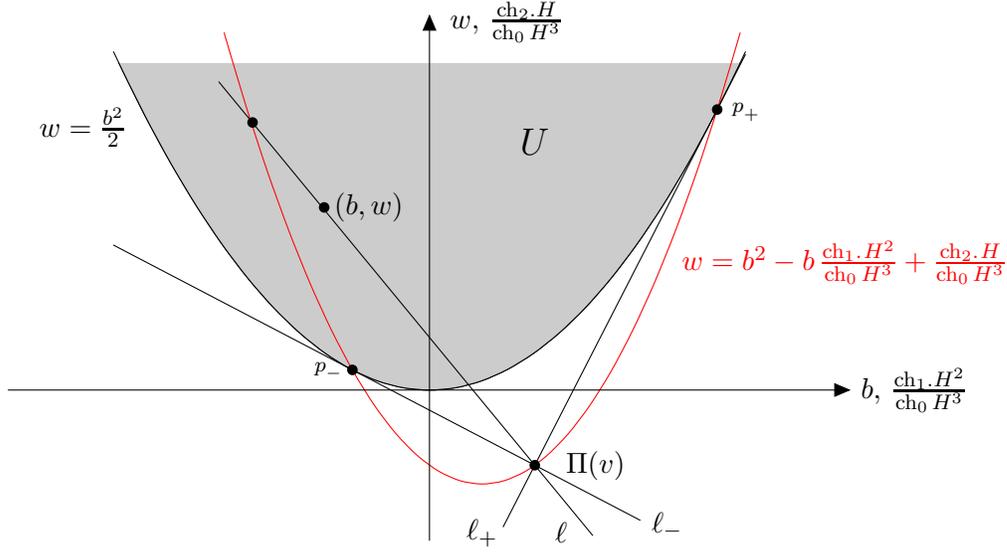
\begin{figure}[h]
	\begin{centering}
		\definecolor{zzttqq}{rgb}{0.27,0.27,0.27}
		\definecolor{qqqqff}{rgb}{0.33,0.33,0.33}
		\definecolor{uququq}{rgb}{0.25,0.25,0.25}
		\definecolor{xdxdff}{rgb}{0.66,0.66,0.66}
		
		\begin{tikzpicture}[line cap=round,line join=round,>=triangle 45,x=1.4cm,y=1.0cm]
		
		\draw  plot[smooth,domain=-3:3] (\x, {\x*\x/2});
		\draw  (-3.8, 3.5) node [right] {$w= \frac{b^2}{2}$};
		\fill[gray!40!white, draw=black] plot[smooth,domain=-2.95:2.95] (\x, {\x*\x/2});	
		\draw  (1, 3) node [above] {\Large $U$};
				
		\draw[red]  plot[smooth,domain=-1.95:2.95] (\x, {\x*\x-\x-1});
		\draw  (2.3,1.7) node [right] [color=red]{$w= b^2 - b\,\frac{\ch_1\!.\;H^2}{\ch_0H^3} + \frac{\ch_2\!.\;H}{\ch_0H^3}$};

		\draw[->,color=black] (-4,0) -- (4,0);
		\draw  (4, 0) node [right ] {$b,\,\frac{\ch_1\!.H^2}{\ch_0H^3}$};
		\draw[->,color=black] (0,-2) -- (0,5);
		\draw  (0.1, 4.9) node [right] {$w,\,\frac{\ch_2\!.H}{\ch_0H^3}$};
			
		\draw  plot[smooth,domain=-3:2] (\x, {-1-0.732*(\x-1)});
		\draw  (2.5,-1.8) node[left] {$\ell_-$};
		\draw  plot[smooth,domain=0.7:3] (\x, {-1+2.732*(\x-1)});
		\draw  (.5, -1.9) node {$\ell_+$};
		\draw  plot[smooth,domain=-2:1.55] (\x, {.7-1.7*\x});
		\draw  (1.25, -1.9) node {$\ell$};
		
		\fill (1, -1) circle (2pt);
		\draw  (1.2, -1) node [right] {$\Pi(v)$};
		\fill (-1.68, 3.56) circle (2pt);
        \fill (-1, 2.43) circle (2pt);
		\draw  (-1, 2.43) node [right] {$(b,w)$};

		\begin{scriptsize}
		\fill (2.732, 3.732) circle (2pt);
		\draw  (2.8, 3.7) node [right] {$p\_+$};
		\fill (-.732, .268) circle (2pt);
		\draw  (-.75, .25) node [left] {$p\_-$};				
		\end{scriptsize}
		
		\end{tikzpicture}
		
		\caption{Finding the new $(b,w)$ when $\ch_0>0$}
		\label{parab}
		
	\end{centering}
\end{figure}	

The parabola through $\Pi(v)$ and $p_\pm$ is precisely \eqref{parabw},
	$$
	w \= b^2 - b\,\frac{\ch_1\!.\;H^2}{\ch_0H^3} + \frac{\ch_2\!.\;H}{\ch_0H^3}\,.
	$$
The line $\ell$ passing through $\Pi(v)$ and $(b,w)\in U$ lies between $\ell_\pm$. It therefore intersects the parabola \eqref{parabw} at a point which is
\begin{itemize}
\item northwest of $p_-$ if $\ch_0>0$ so that $E\in\cA_{\;b}$ implies $(b,w)$ is to the left of $\Pi(v)$, or
\item  northeast of $p_+$ if $\ch_0<0$ so $(b,w)$ is to the right of $\Pi(v)$.
\end{itemize}
(Here $E$ is an object of class $v$.) Thus this point is inside $U$, and we may move $(b,w)$ to it without affecting the (semi)stability of $E$ by Proposition \ref{prop. locally finite set of walls}.
\end{proof}

Given $(b, \alpha) \in \mathbb{R} \times \mathbb{R}^{>0}$, let $Z_{b, \alpha} \colon K(X) \rightarrow \mathbb{C}$ be the group homomorphism
\begin{equation*}
Z_{b, \alpha}(E)\ :=\ \left(-\ch_3^{bH}(E) + \tfrac3{10}\alpha^2\ch_1^{bH}(E).H^2 \right) + i \left( \ch_2^{bH}(E).H - \tfrac12\alpha^2\ch_0(E)H^3   \right). 
\end{equation*}
Although $Z_{b, \alpha}$ does not factor through $K_H(X)$, the following slope function $\widetilde\nu\_{b,\alpha}$ on objects $E$ of $\cA_{\;b}$ does,
$$
\widetilde\nu\_{b,\alpha}(E)\ =\ \left\{\!\!\begin{array}{cc} \frac{\text{Im}\,Z_{b, \alpha}(E)}{\ch_1^{bH}(E).H^2}
& \text{if }\ch_1^{bH}(E).H^2\ne0, \\
+\infty & \text{if }\ch_1^{bH}(E).H^2=0. \end{array}\right.
$$
Define the heart $\cA_{\;b,\alpha}:=\big\langle\mathcal{F}_{b, \alpha}[1],\,\mathcal{T}_{b, \alpha}\big\rangle\subset \cD(X)$, where 
\begin{equation*}
 \mathcal{T}_{b, \alpha}\ :=\ \big\{E \in \cA_{\;b}\ \colon\ \widetilde\nu_{b, \alpha}^{\,-} > 0  \big\}, \quad\qquad  \mathcal{F}_{b, \alpha}\ :=\ \big\{E \in \cA_{\;b}\ \colon\ \widetilde\nu_{b, \alpha}^{\;+} \leq 0  \big\}. 
\end{equation*} 

\begin{Thm}\cite[Theorem 8.6 and Proposition 8.10]{BMS}
	For any $(b, \alpha) \in \mathbb{R} \times \mathbb{R}^{>0}$, the pair $\sigma\_{b, \alpha} \coloneqq \(\cA_{\;b,\alpha},\,Z_{b, \alpha}\)$ defines a Bridgeland stability condition on $\cD(X)$.
\end{Thm}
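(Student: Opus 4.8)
The plan is to deduce everything from \cite[Theorem 8.6 and Proposition 8.10]{BMS}, recording first the dictionary that identifies our rescaled data with theirs: $\sigma_{b,\alpha}$ is simply the image, under the degree map $(\ch_0H^3,\,\ch_1.H^2,\,\ch_2.H,\,\ch_3)$, of their construction, and passing to $H$-degrees affects none of the relevant linear algebra. Concretely, $\sigma_{b,\alpha}$ is obtained by a \emph{double tilt}. One first tilts $\Coh(X)$ at the torsion pair cut out by $\mu_H$-slope $b$ to get the category $\cA_{\;b}$ of \eqref{Abdef}, a heart by \cite[Lemma 6.1]{Br}. On $\cA_{\;b}$ the auxiliary slope $\widetilde\nu_{b,\alpha}$ is a weak stability function with the Harder--Narasimhan property: it is the rescaling of $\nu_{b,w}$ from \eqref{noo} (the extra real term $\tfrac3{10}\alpha^2\ch_1^{bH}.H^2$ of $Z_{b,\alpha}$ is invisible to the tilt), and its HN property on $\cA_{\;b}$ is \cite[Lemma 3.2.4]{BMT}. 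One then tilts $\cA_{\;b}$ at the torsion pair $(\mathcal T_{b,\alpha},\mathcal F_{b,\alpha})$ determined by the sign of $\widetilde\nu_{b,\alpha}$; standard tilting theory then makes $\cA_{\;b,\alpha}=\langle\mathcal F_{b,\alpha}[1],\mathcal T_{b,\alpha}\rangle$ the heart of a bounded $t$-structure on $\cD(X)$.

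Next I would verify that $Z_{b,\alpha}$ is a stability function on $\cA_{\;b,\alpha}$, i.e. that $Z_{b,\alpha}(E)$ lies in $\{z:\operatorname{Im}z>0\}\cup\R_{<0}$ for every nonzero $E\in\cA_{\;b,\alpha}$. That $\operatorname{Im}Z_{b,\alpha}\ge0$ on $\cA_{\;b,\alpha}$ is built into the choice of torsion pair, using the control on objects of $\cA_{\;b}$ with $\ch_1^{bH}.H^2=0$ recorded in Remark \ref{heart}; and for the boundary objects, those with $\operatorname{Im}Z_{b,\alpha}=0$, the same description of $\cA_{\;b}$ forces the real part $-\ch_3^{bH}+\tfrac3{10}\alpha^2\ch_1^{bH}.H^2$ to be strictly negative. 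This is a short computation, exactly as in \cite[\S8]{BMS}. The Harder--Narasimhan property for $\sigma_{b,\alpha}$ then follows from the Noetherianity of $\cA_{\;b,\alpha}$ (inherited through the two tilts) together with the discreteness of $\operatorname{Im}Z_{b,\alpha}$, by the usual criterion.

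The hard part, and the step I expect to be the real obstacle, is the \emph{support property}: one must exhibit a quadratic form $Q$ on the rank-four lattice $\operatorname{im}(\ch_0H^3,\,\ch_1.H^2,\,\ch_2.H,\,\ch_3)$ through which $Z_{b,\alpha}$ factors, negative definite on $\ker Z_{b,\alpha}$ and nonnegative on the class of every $\sigma_{b,\alpha}$-semistable object. This is precisely where the Bogomolov--Gieseker inputs enter: the classical inequality $\Delta_H\ge0$ of \eqref{BOG} controls the ``imaginary'' directions, while the refined inequality $B_{b,w}\ge0$ of Conjecture \ref{conjecture} --- linear in $(b,w)$ by Remark \ref{remark}, after the change of variables implicit in the definition of $Z_{b,\alpha}$ --- controls $\ch_3$, and the two assemble into a $Q$ of the required signature with semistable classes nonnegative. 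Since this is carried out in \cite[Proposition 8.10]{BMS} and is insensitive to passing to $H$-degrees, I would simply invoke \cite[Theorem 8.6 and Proposition 8.10]{BMS}, the dictionary above being recorded for the reader's benefit.
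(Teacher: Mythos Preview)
The paper gives no proof of this statement at all: it is simply quoted from \cite[Theorem 8.6 and Proposition 8.10]{BMS} with the citation in the theorem header, and the text moves on immediately. Your proposal correctly identifies the result as a direct citation and supplies the dictionary and a sketch of the argument behind it, which is more than the paper does; so in that sense you are aligned with the paper's approach (namely, invoke \cite{BMS}) but have added expository material the paper omits.
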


Taking $(b,w)$ given to us by Proposition \ref{prop.reducing to a good one} and 
setting $\alpha^2=2\(w-\frac12b^2\)>0$ we note
\begin{itemize}
\item $\widetilde\nu\_{b,\alpha} = \nu\_{b,w} +b$, so $\widetilde{\nu}\_{b,\alpha}$-(semi)stability is the same as $\nu\_{b,w}$-(semi)stability, and
\item $\text{Im}\,Z_{b, \alpha}(E)=0=\widetilde\nu\_{b,\alpha}(E)$ for $E$ of class $v\in K_H(X)$.
\end{itemize}

\begin{Lem}\label{lem. reduce to stability condition}
	Fix a class $v\in K_H(X)$ with $\ch_1^{bH}(v).\;H^2>0$ and $\mathrm{Im}\,Z_{b, \alpha}(v) = 0$. An object $E \in \cA_{\;b}$ of class $v$ is $\widetilde{\nu}\_{b,\alpha}$-semistable if and only of it is $\sigma\_{b, \alpha}$-semistable.    
\end{Lem}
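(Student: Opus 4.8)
The plan is to observe that, for an object $E\in\cA_{\;b}$ of class $v$, \emph{both} of the conditions in the statement are equivalent to the single condition $E\in\mathcal{F}_{b,\alpha}$; the Lemma then follows by transitivity. The key preliminary point is that the hypotheses $\ch_1^{bH}(v).H^2>0$ and $\mathrm{Im}\,Z_{b,\alpha}(v)=0$ give $\widetilde\nu_{b,\alpha}(E)=0$, and that $Z_{b,\alpha}(E)=Z_{b,\alpha}(v)$ is a real number --- necessarily nonzero, since $\sigma_{b,\alpha}$ is a genuine stability condition (if it vanished, no $E$ of class $v$ could lie in $\mathcal{F}_{b,\alpha}$, hence none could satisfy either condition, and the Lemma would hold vacuously; so we may assume $Z_{b,\alpha}(v)\neq0$).

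First I would show $E$ is $\widetilde\nu_{b,\alpha}$-semistable if and only if $E\in\mathcal{F}_{b,\alpha}$. The forward direction is immediate: a $\widetilde\nu_{b,\alpha}$-semistable object of slope $0$ has $\widetilde\nu^{+}_{b,\alpha}(E)=0\le0$. For the converse, let $G_1,\dots,G_m$ be the $\widetilde\nu_{b,\alpha}$-Harder-Narasimhan factors of $E$ in $\cA_{\;b}$. Since $\widetilde\nu^{+}_{b,\alpha}(E)\le0<+\infty$, no $G_i$ has $\ch_1^{bH}(G_i).H^2=0$ (such a factor would force $\widetilde\nu^{+}_{b,\alpha}(E)=+\infty$), so $\mathrm{Im}\,Z_{b,\alpha}(G_i)=\widetilde\nu_{b,\alpha}(G_i)\cdot\ch_1^{bH}(G_i).H^2\le0$ for each $i$; as these sum to $\mathrm{Im}\,Z_{b,\alpha}(E)=0$, every term vanishes, forcing $\widetilde\nu_{b,\alpha}(G_i)=0$ and hence, by strict decrease of Harder-Narasimhan slopes, $m=1$.

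Next I would show $E$ is $\sigma_{b,\alpha}$-semistable if and only if $E\in\mathcal{F}_{b,\alpha}$. If $E\in\mathcal{F}_{b,\alpha}$ then $E[1]\in\cA_{\;b,\alpha}$ by definition of the tilt, and $Z_{b,\alpha}(E[1])=-Z_{b,\alpha}(v)\in\R_{<0}$, so $E[1]$ has the maximal phase $1$ in $\cA_{\;b,\alpha}$; every $\cA_{\;b,\alpha}$-subobject then has phase $\le1$, so $E[1]$---and hence $E$---is $\sigma_{b,\alpha}$-semistable. Conversely, if $E$ is $\sigma_{b,\alpha}$-semistable then $E[k]\in\cA_{\;b,\alpha}$ for some $k\in\Z$; since $E\in\cA_{\;b}$ and $\cA_{\;b,\alpha}=\langle\mathcal{F}_{b,\alpha}[1],\mathcal{T}_{b,\alpha}\rangle$ is the tilt of $\cA_{\;b}$, necessarily $k\in\{0,1\}$, with $E\in\mathcal{T}_{b,\alpha}$ if $k=0$ and $E\in\mathcal{F}_{b,\alpha}$ if $k=1$. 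The case $E\in\mathcal{T}_{b,\alpha}$ is impossible: then $\widetilde\nu^{-}_{b,\alpha}(E)>0$, and since at least one Harder-Narasimhan factor of $E$ has finite (hence positive) slope---not all can have $\ch_1^{bH}.H^2=0$, as $\ch_1^{bH}(v).H^2>0$---while all factors have $\mathrm{Im}\,Z_{b,\alpha}\ge0$ (this nonnegativity on $\cA_{\;b}$ is built into the construction of $\sigma_{b,\alpha}$, cf.\ Remark~\ref{heart}), we would get $\mathrm{Im}\,Z_{b,\alpha}(E)>0$, contradicting $\mathrm{Im}\,Z_{b,\alpha}(E)=0$. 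Hence $E\in\mathcal{F}_{b,\alpha}$, completing the equivalence.

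Chaining the two equivalences gives the Lemma. I expect the one point genuinely needing care --- the \emph{main obstacle}, such as it is --- to be the bookkeeping forced by the infinite-slope direction of the weak stability condition: one must verify that the Harder-Narasimhan factors entering the argument have $\ch_1^{bH}.H^2>0$, and must invoke $\mathrm{Im}\,Z_{b,\alpha}\ge0$ on $\cA_{\;b}$ to dispose of possible slope-$+\infty$ factors in the case $E\in\mathcal{T}_{b,\alpha}$. These are exactly what make the two a priori different coarsenings of $\sigma_{b,\alpha}$-stability on class $v$ --- namely $\widetilde\nu_{b,\alpha}$-stability and membership in $\mathcal{F}_{b,\alpha}$ --- coincide.
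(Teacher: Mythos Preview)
Your proof is correct and takes essentially the same approach as the paper: both arguments pivot on showing that under the hypotheses $E$ is $\widetilde\nu\_{b,\alpha}$-semistable iff $E\in\mathcal F_{b,\alpha}$ iff $E[1]\in\cA_{\;b,\alpha}$ has maximal phase $1$, with the paper organising this as a round trip and you as two equivalences through $\mathcal F_{b,\alpha}$. One minor slip worth fixing: $Z_{b,\alpha}$ depends on $\ch_3$ and so is not determined by the class $v\in K_H(X)$ alone---but since only $\mathrm{Im}\,Z_{b,\alpha}$ enters your argument, this does no harm.
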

\begin{proof}
	First assume $E \in \cA_{\;b}$ is $\widetilde{\nu}\_{b,\alpha}$-semistable. Then $\widetilde\nu\_{b,w}(E) = 0$ implies $E[1] \in \cF_{b,\alpha}[1]\subset\cA_{\;b,\alpha}$ with central charge $Z_{b,\alpha}\in(-\infty,0)$ on the boundary of the closure of the upper half plane $\overline{\mathbb H}\supset Z_{b,\alpha}\(\cA_{\;b,\alpha}\)$. Thus all sub- and quotient objects of $E$ in $\cA_{\;b,\alpha}$ must also have $Z_{b,\alpha}\in(-\infty,0)\subset\overline{\mathbb H}$, so $E$ is $\sigma\_{b,\alpha}$-semistable.
	
	Conversely, if $E$ is $\sigma\_{b,\alpha}$-semistable then $E \in \cA(b) \ \cap\ \cA_{b, \alpha}[k]$ for some $k \in\Z$. This inter\-section is $\cT_{b, \alpha}$ when $k=0$ and $\cF_{b, \alpha}$ when $k=-1$; otherwise it is empty. But Im$\,Z_{b, \alpha}(v) = 0$ and $\ch_1^{bH}(v).\;H^2>0$, so we deduce $E \in \cF_{b, \alpha}$. By the definition of $\cF_{b, \alpha}$,
	\begin{equation*}
	\widetilde{\nu}_{b, \alpha}^{\;+}\(E\)\ \leq\ 0 \= \widetilde{\nu}\_{b, \alpha}\(E\),
	\end{equation*} 
so $E$ is $\widetilde{\nu}\_{b,\alpha}$-semistable.  
\end{proof}

\begin{Prop}\cite{TodaK3}\label{prop:pi-to-heart}
	Given a scheme $S$ of finite type and $\mathcal{E} \in \cD(X \times S)$, the subset of points $s \in S$ for which $\mathcal{E}_s \in \cA_{\;b}$ is open.   
\end{Prop}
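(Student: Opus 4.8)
The plan is to deduce the statement from two standard openness results---openness of cohomological amplitude for perfect complexes, and openness of $\mu_H$-slope conditions in flat families of sheaves---after decomposing $\cA_{\;b}$ into the two halves of its tilted $t$-structure. Recall from \eqref{Abdef} that $\cA_{\;b}$ is the tilt of $\Coh(X)$ at the torsion pair $(\mathcal T_b,\mathcal F_b)$, where $\mathcal T_b=\{G\in\Coh(X)\colon\mu_H^-(G)>b\}$ and $\mathcal F_b=\{G\in\Coh(X)\colon\mu_H^+(G)\le b\}$; hence $E\in\cD(X)$ lies in $\cA_{\;b}$ exactly when $E\in\cD^{\le0}_{\cA_{\;b}}\cap\cD^{\ge0}_{\cA_{\;b}}$, where $\cD^{\le0}_{\cA_{\;b}}=\{E\colon\cH^{>0}(E)=0,\ \cH^0(E)\in\mathcal T_b\}$ and $\cD^{\ge0}_{\cA_{\;b}}=\{E\colon\cH^{<-1}(E)=0,\ \cH^{-1}(E)\in\mathcal F_b\}$. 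Writing $\mathcal E_s:=\mathbf Li_s^*\mathcal E$, it therefore suffices to show that $\{s\colon\mathcal E_s\in\cD^{\le0}_{\cA_{\;b}}\}$ and $\{s\colon\mathcal E_s\in\cD^{\ge0}_{\cA_{\;b}}\}$ are open. The second reduces to a claim of exactly the shape of the first: as $X$ is smooth projective, $\mathcal E^{\vee}:=\mathbf R\hom(\mathcal E,\cO_{X\times S})$ is again perfect with $(\mathcal E^{\vee})_s=\mathbf R\hom(\mathcal E_s,\cO_X)$ by relative duality, and---up to a fixed shift and a harmless zero-dimensional correction, as in \cite[Lemma 5.1.3(b)]{BMT}---derived duality exchanges $\cD^{\le0}_{\cA_{\;b}}$ and $\cD^{\ge0}_{\cA_{\;b}}$ with the corresponding halves for the tilt $\cA_{\;-b}$, turning the $\mathcal F_b$-condition on $\cH^{-1}$ into a $\mathcal T$-condition on $\cH^0$ of the dual complex. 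So I concentrate on $\cD^{\le0}_{\cA_{\;b}}$.

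First, $S_0:=\{s\colon\cH^{>0}(\mathcal E_s)=0\}$ is open: this is the standard openness of the relative $\cD^{\le0}$-amplitude of a perfect complex over the proper morphism $X\times S\to S$ (right $t$-exactness of $\mathbf Li_s^*$ bounds the amplitude of $\mathcal E_s$ above uniformly, the topmost cohomology sheaf commutes with base change, its $S$-support is closed by properness, and one strips it off and iterates). After replacing $S$ by $S_0$ we have $\cH^{>0}(\mathcal E)=0$ on $X\times S$ by Nakayama, so $\mathcal C:=\cH^0(\mathcal E)$ is a coherent sheaf on $X\times S$ with $i_s^*\mathcal C=\cH^0(\mathcal E_s)$ for every $s$ (right-exactness of $i_s^*$). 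Thus it remains to prove that, for an arbitrary coherent sheaf $\mathcal C$ on $X\times S$, the set $\{s\in S\colon i_s^*\mathcal C\in\mathcal T_b\}$ is open.

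This is the crux, and the obstacle is precisely that $\mathcal C$ need not be $S$-flat, so the classical openness of ``$\mu_H^->b$'' in flat families does not apply directly. I would recover it as follows. First, over a flattening stratification $S=\bigsqcup_j S_j$ of $\mathcal C$ each $\mathcal C|_{X\times S_j}$ is $S_j$-flat, so by classical boundedness of destabilising quotients $\{s\in S_j\colon i_s^*\mathcal C\in\mathcal T_b\}$ is open in $S_j$; hence $V:=\{s\in S\colon i_s^*\mathcal C\in\mathcal T_b\}$ is constructible. Second, $V$ is stable under generisation: pulling back along a map $\mathrm{Spec}\,R\to S$ from a DVR with closed point $s$ and generic point $\eta$, the free quotient $\overline{\mathcal C}:=\mathcal C_R/(\mathcal C_R)_{\mathrm{tors}}$ is $R$-flat and $\overline{\mathcal C}\otimes k(s)$ is a quotient of $i_s^*\mathcal C$, so since $\mathcal T_b$ is closed under quotients, $i_s^*\mathcal C\in\mathcal T_b$ forces $\overline{\mathcal C}\otimes k(s)\in\mathcal T_b$, and openness of ``$\in\mathcal T_b$'' in the flat family $\overline{\mathcal C}/R$ then gives $i_\eta^*\mathcal C=\overline{\mathcal C}\otimes k(\eta)\in\mathcal T_b$. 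A constructible subset of a Noetherian scheme that is stable under generisation is open, which closes the $\cD^{\le0}$ case and, with the duality reduction, the Proposition.

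The hardest point to get fully right is the interaction of these last two moves---in particular the generisation step genuinely uses that $\mathcal T_b$ is a torsion \emph{class}, closed under quotients, which is why the $\mathcal F_b$-condition is instead handled through the derived dual. Structurally this whole argument is a special case of the Abramovich--Polishchuk formalism of sheaves of $t$-structures, available because $\cA_{\;b}$ is Noetherian by \cite[Proposition 5.22]{BMT}; that is the route taken in \cite{TodaK3}.
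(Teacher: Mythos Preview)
Your $\cD^{\le0}$ half is fine: the argument via top cohomology, Nakayama, a flattening stratification, and generisation using that $\mathcal T_b$ is closed under quotients is correct and pleasantly elementary. The paper does not argue this way; it quotes Abramovich--Polishchuk's sheaves-of-$t$-structures theorem directly and then Toda's Lemma~4.7, so your route here is genuinely different.

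The gap is in the $\cD^{\ge0}$ half. The reduction by derived duality does not work: for no shift $k$ does $(\,\cdot\,)^\vee[k]$ give an \emph{equivalence} between $\cD^{\ge0}_{\cA_{\;b}}$ and $\cD^{\le0}_{\cA_{\;-b}}$. With $k=3$ one has the inclusion $(\cD^{\ge0}_{\cA_{\;b}})^\vee[3]\subset\cD^{\le0}_{\cA_{\;-b}}$, but it is strict --- for instance $G=\cO_X(m)$ with $m>-b$ lies in $\mathcal T_{-b}\subset\cD^{\le0}_{\cA_{\;-b}}$, yet $G^\vee[3]=\cO_X(-m)[3]$ has $\cH^{-3}\ne0$ and so is not in $\cD^{\ge0}_{\cA_{\;b}}$. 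The ``harmless zero-dimensional correction'' of \cite[Lemma~5.1.3]{BMT} concerns individual tilt-semistable objects, not the aisles, and it is precisely the asymmetry in how $(\,\cdot\,)^\vee$ treats sheaves of different dimensions that prevents the aisles from matching. A one-sided inclusion is not enough to transport openness. (There is also a secondary issue: to form $\mathcal E^\vee$ you need $\mathcal E$ perfect on $X\times S$, which requires $S$ smooth --- the paper's first step is exactly this reduction, which you omit.)

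So the $\cD^{\ge0}$ condition has to be handled directly, and this is where the real content lies: openness of ``$\cH^{<-1}(\mathcal E_s)=0$'' is not automatic since $\mathbf L i_s^*$ can create \emph{lower} cohomology, and ``$\cH^{-1}(\mathcal E_s)\in\mathcal F_b$'' involves a torsion-\emph{free} class, not closed under quotients, so your generisation trick does not apply symmetrically. This is exactly what the Abramovich--Polishchuk machinery is built to handle, using that $\cA_{\;b}$ is Noetherian; the paper's proof invokes it via \cite[Theorem~3.3.2]{AP} and \cite[Lemma~4.7, Remark~3.11]{TodaK3}. You correctly identify this in your final sentence, but as it stands your argument does not reach the conclusion without it.
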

\begin{proof}
	By an argument similar to \cite[Lemma 3.6]{TodaK3} we may assume $S$ is a smooth quasi-projective variety. Suppose there is a point $s \in S$ such that $\mathcal{E}_s \in \cA_{\;b}$. By \cite[Theorem 3.3.2]{AP} there is an open neighbourhood $s \in U \subset S$ such that $\mathcal{E}|_U$ lies in
	\begin{equation*}
	\cA_{\;U}\ :=\ \big\{F \in \cD(X \times U) \ \colon\ Rp_*(F\otimes L^k) \in \cA_{\;b} \,\ \text{for all $k \gg 0$}  \big\}.
	\end{equation*}
Here $L$ is the pull back of an ample line bundle from $U$ and $p$ is the projection $X\times U\to X$. Then \cite[Lemma 4.7 and Remark 3.11]{TodaK3} implies that the set of points $s' \in U$ for which $\mathcal{E}_{s'} \in \cA_{\;b}$ is indeed open in $S$.
\end{proof}

\begin{Thm}\label{ftstack}
Fix $v \in K_H(X),\ m\in\Z$ and $(b, w) \in U$ such that $\ch_1^{bH}(v).\;H^2>0$. Then the union over all $\alpha\in K(X)$ with $[\alpha]=v\in K_H(X)$ and $\ch_3(\alpha)=m$
\beq{unyun}
\bigcup\nolimits_\alpha\cM^{ss}_{b,w}(\alpha)
\eeq
is an algebraic stack of finite type.
\end{Thm}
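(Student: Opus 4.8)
The plan is to translate $\nu\_{b,w}$-semistability into Bridgeland stability for the family $\sigma\_{b,\alpha}$ of \cite{BMS} and then invoke the Piyaratne--Toda finiteness theorem \cite[Theorem 4.2]{PiTo}.

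First I would dispose of a degenerate case: since $\Delta_H$ and the hypotheses depend only on $v\in K_H(X)$, if $\Delta_H(v)<0$ then by \eqref{BOG} every $\cM^{ss}_{b,w}(\alpha)$ in \eqref{unyun} is empty and there is nothing to prove, so I may assume $\Delta_H(v)\ge0$, i.e. $\Pi(v)\notin U$. Then Proposition \ref{prop.reducing to a good one} applies --- its hypotheses $\Pi(v)\notin U$ and $\ch_1^{bH}(v).H^2>0$ hold --- and lets me move $(b,w)$ within $U$, without altering the $\nu\_{b,w}$-(semi)stability of any object of class $\alpha$ with $[\alpha]=v$, to a point satisfying \eqref{parabw}. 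Setting $\alpha^2:=2\big(w-\tfrac12b^2\big)>0$, equation \eqref{parabw} says precisely that $\mathrm{Im}\,Z_{b,\alpha}$ vanishes on $v\in K_H(X)$; and by the two bullet points after the BMS theorem one then has $\widetilde\nu\_{b,\alpha}=\nu\_{b,w}+b$, so $\widetilde\nu\_{b,\alpha}$- and $\nu\_{b,w}$-(semi)stability coincide and $\widetilde\nu\_{b,\alpha}(v)=0$.

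Next I would apply Lemma \ref{lem. reduce to stability condition}: for $E\in\cA_{\;b}$ of class $v$, $\widetilde\nu\_{b,\alpha}$-semistability (equivalently, $\nu\_{b,w}$-semistability) is the same as $\sigma\_{b,\alpha}$-semistability of a shift of $E$. Hence, up to the shift $[1]$, the objects parametrised by \eqref{unyun} are exactly the $\sigma\_{b,\alpha}$-semistable objects of $\cD(X)$ with $\ch_H=v$ and $\ch_3=m$, which by \cite[Theorem 4.2]{PiTo} form an algebraic stack of finite type; therefore so does \eqref{unyun}.

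The main obstacle will be promoting this equivalence from $\C$-points to an isomorphism of moduli \emph{stacks}. To handle it I would check that the defining conditions are open in flat families: fibrewise membership of the heart $\cA_{\;b}$ (and of $\cA_{\;b,\alpha}$) is open by Proposition \ref{prop:pi-to-heart}, and $\nu\_{b,w}$- and $\sigma\_{b,\alpha}$-semistability are each open in families; so a family over a base $T$ is a family of $\nu\_{b,w}$-semistable objects of class $\alpha$ with $[\alpha]=v$ and $\ch_3(\alpha)=m$ precisely when, after the shift $[1]$, it is a family of $\sigma\_{b,\alpha}$-semistable objects with $\ch_H=v$ and $\ch_3=m$. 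This identifies the two stacks and finishes the argument.
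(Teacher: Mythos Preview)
Your proposal is correct and follows essentially the same route as the paper: normalise $(b,w)$ via Proposition~\ref{prop.reducing to a good one}, use Lemma~\ref{lem. reduce to stability condition} to identify $\nu\_{b,w}$-semistability with $\sigma\_{b,\alpha}$-semistability, invoke \cite[Theorem~4.2]{PiTo}, and use Proposition~\ref{prop:pi-to-heart} for the stack-level statement. One small point: you claim that, up to shift, \eqref{unyun} is \emph{exactly} the Piyaratne--Toda stack, whereas the paper only asserts (and only needs) that \eqref{unyun} is the \emph{open substack} of $\sigma\_{b,\alpha}$-semistable objects cut out by the condition of lying in $\cA_{\;b}$; Lemma~\ref{lem. reduce to stability condition} gives the equivalence only for $E\in\cA_{\;b}$, so a priori there could be $\sigma\_{b,\alpha}$-semistable objects in $\cA_{\;b,\alpha}$ of the right numerical class not of the form $E[1]$ with $E\in\cA_{\;b}$. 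This does not affect your conclusion, since an open substack of a finite type algebraic stack is again one.
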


\begin{proof}
By \cite[Theorem 4.2]{PiTo} the $\sigma_{b, \alpha}$-semistable objects $E \in \cA_{\;b, \alpha}$ with fixed
$$
\(\!\ch_0(E),\ \ch_1(E).H^2,\ \ch_2(E).H,\ \ch_3(E)\)\=\(\!\ch_H(E),\ \ch_3(E)\)\ \in\ \Q^4
$$
form an algebraic stack of finite type. Intersecting with the objects in $\cA_{\;b}$ gives the union \eqref{unyun} by Lemma \ref{lem. reduce to stability condition}. By Proposition \ref{prop:pi-to-heart} this is an open substack.
\end{proof}

\bibliographystyle{halphanum}
\bibliography{references}

\begin{thebibliography}{MNOP}

\bibitem[AP]{AP} D. Abramovich and A. Polishchuk, \emph{Sheaves of t-structures and valuative criteria for stable complexes}, Crelle's Jour. \textbf{590} (2006), 89--130. \mathAG{0309435}.

%\bibitem[BM]{BM} A. Bayer and E. Macr\`i, \emph{The unreasonable effectiveness of wall-crossing in algebraic geometry}, to appear in Proceedings of the International Congress of Mathematicians (2022). \arXiv{2201.03654}.

\bibitem[BMT]{BMT} A. Bayer, E. Macr\`i and Y. Toda, \emph{Bridgeland stability conditions on threefolds I: Bogomolov-Gieseker type inequalities}, Jour. Alg. Geom. \textbf{23} (2014), 117--163.
\arXiv{1103.5010}.

\bibitem[BMS]{BMS} A. Bayer, E. Macr\`i and P. Stellari, \emph{The space of stability conditions on abelian threefolds, and on some Calabi-Yau threefolds}, Invent. Math. \textbf{206} (2016), 869--933. \arXiv{1410.1585}.

\bibitem[BCR]{BCR}
S. Beentjes, J. Calabrese and J. Rennemo,
\emph{A proof of the Donaldson-Thomas crepant resolution conjecture}, Inventiones \textbf{229} (2022), 451--562.
\arXiv{1810.06581}.

\bibitem[Be]{Be} K.~Behrend, \textit{Donaldson-Thomas invariants via microlocal geometry}, Ann. of
Math. \textbf{170} (2009), 1307--1338. \href{http://arxiv.org/abs/math/0507523}{math.AG/0507523}.

\bibitem[BR]{BR} K. Behrend and P. Ronagh, \emph{The inertia operator on the motivic Hall algebra}, Compositio Math. \textbf{155} (2019), 528--598. \arXiv{1612.00372}.

\bibitem[Br1]{Br.stbaility} T. Bridgeland, \emph{Stability conditions on triangulated categories}, Ann. of Math. \textbf{166} (2007), 317--345. \mathAG{0212237}.

\bibitem[Br2]{Br} T. Bridgeland, \emph{Stability conditions on $K3$ surfaces}, Duke Math. Jour. \textbf{141} (2008), 241--291. \mathAG{0307164}.

\bibitem[Br3]{BrHall} T. Bridgeland,
\emph{An introduction to motivic Hall algebras}, Adv. Math. \textbf{229} (2012), 102--138. \arXiv{1002.4372}.

\bibitem[DM]{BenSven} B. Davison and S. Meinhardt,
\emph{The integrality conjecture for coherent sheaves on Calabi-Yau threefolds},  in preparation.

\bibitem[Fe1]{Fey21} S. Feyzbakhsh, \emph{An effective restriction theorem via wall-crossing and Mercat's conjecture}, Math. Zeit. \textbf{301} (2022), 4175--4199.  \arXiv{1608.07825}.

\bibitem[Fe2]{F22} S. Feyzbakhsh, \emph{Explicit formulae for rank zero DT invariants and the OSV conjecture}, \arXiv{2203.10617}.

\bibitem[FT1]{FT1} S. Feyzbakhsh and R. P. Thomas, \emph{An application of wall-crossing to Noether--Lefschetz loci}, Quart. Jour. Math. \textbf{72} (2021), 51--70. Issue in memory of Sir Michael Atiyah. \arXiv{1912.01644}.

\bibitem[FT2]{FT2} S. Feyzbakhsh and R. P. Thomas, \emph{Curve counting and S-duality}, Épijournal de Géométrie Algébrique \textbf{7} (2023). \arXiv{2007.03037}.

\bibitem[FT3]{FT3} S. Feyzbakhsh and R. P. Thomas, \emph{Rank $r$ DT theory from rank $1$}, Jour. AMS \textbf{36} (2023), 795--826. \arXiv{2108.02828}.

%\bibitem[GJT]{GJT} J. Gross, D. Joyce and Y. Tanaka, \emph{Universal structures in $\C$-linear enumerative invariant theories. I}, \arXiv{2005.05637}.

\bibitem[Gr]{Gr} A. Grothendieck, \emph{Techniques de construction et th\'eor\`emes d’existence en g\'eom\'etrie alg\'ebrique IV: Les sch\'emas de Hilbert}, S\'eminaire Bourbaki, \textbf{221}, 1960/61.

%\bibitem[HL]{HL} D.~Huybrechts and M.~Lehn. \textit{The geometry of moduli spaces of sheaves}, second edition, Cambridge University Press, 2010.

\bibitem[HT]{HT} D.~Huybrechts and R.~P.~Thomas, \textit{Deformation-obstruction theory for complexes via Atiyah and Kodaira-Spencer classes}, Math.~Ann.~{\bf{346}} (2010), 545--569. \href{http://arxiv.org/abs/0805.3527}{arXiv:0805.3527}.

\bibitem[Jo1]{Jo1} 
D. Joyce, \emph{Configurations in abelian categories. I. Basic properties and moduli stacks}, Adv. Math. \textbf{203} (2006), 194--255. \mathAG{0312190}.

\bibitem[Jo2]{Jo2} 
D. Joyce, \emph{Configurations in abelian categories. II. Ringel-Hall algebras},  Adv. Math. \textbf{210} (2007), 635--706. 
\href{http://arxiv.org/abs/math/0503029}{math.AG/0503029}.

\bibitem[Jo3]{Jo3} 
D. Joyce, \emph{Configurations in abelian categories. III. Stability conditions and
identities}, Adv. Math. \textbf{215} (2007), 153--219. \href{http://arxiv.org/abs/math/0410267}{math.AG/0410267}.

\bibitem[Jo4]{Jo4} 
D. Joyce, \emph{Enumerative invariants and wall-crossing formulae in abelian categories}, \arXiv{2111.04694}.

\bibitem[JS]{JS} D.~Joyce and Y.~Song, \textit{A theory of generalized {D}onaldson-{T}homas invariants}, Memoirs of the AMS \textbf{217}, no. 1020 (2012). \href{http://arxiv.org/abs/0810.5645}{arXiv:0810.5645}.

\bibitem[JU]{JU} D.~Joyce and M. Upmeier, \emph{Orientation data for moduli spaces of coherent sheaves over Calabi-Yau 3-folds}, Adv. Math. \textbf{381} (2021), 107627. \arXiv{2001.00113}.

%\bibitem[Kl]{Kl} S. Kleiman, \emph{Les th\'eor\`emes de finitude pour le foncteur de Picard}, SGA 6, XIII, 1966/67.

\bibitem[KS1]{KS1} M. Kontsevich and Y. Soibelman, \emph{Stability structures, motivic Donaldson-Thomas invariants and cluster transformations}, \arXiv{0811.2435}.

\bibitem[KS2]{KS2} M. Kontsevich and Y. Soibelman, \emph{Cohomological Hall algebra, exponential Hodge structures and motivic Donaldson-Thomas invariants}, Comm. Number Theory and Physics \textbf{5} (2011), 231--252. \arXiv{1006.2706}.

\bibitem[Ko]{Ko20} N. Koseki, \emph{Stability conditions on Calabi-Yau double/triple solids}, 
Forum of Math. Sigma \textbf{10} (2022),  e63. \arXiv{2007.00044}.

\bibitem[La]{langer} A.~Langer. \emph{Semistable sheaves in positive characteristic}, Ann. of Math. \textbf{159} (2004), 251--276.

%\bibitem[La2]{Lanew} A.~Langer. \emph{Moduli spaces and Castelnuovo-Mumford regularity of sheaves on surfaces}, American Jour. Math. \textbf{128} (2006), 373--417.
%
%\bibitem[La3]{langer-2} A.~Langer. \emph{Moduli spaces of sheaves and principal G-bundles}, in ``\emph{Algebraic geometry, Seattle
%2005}" Part 1, Proc. Sympos. Pure Math. \textbf{80} (2009), 273--308.

\bibitem[Li]{Li} C. Li, \emph{On stability conditions for the quintic threefold}, Invent. Math. \textbf{218} (2019), 301--340. \arXiv{1810.03434}.

\bibitem[Liu]{Liu} S. Liu, \emph{Stability condition on Calabi-Yau threefold of complete intersection of quadratic and quartic hypersurfaces}, Forum of Math. Sigma \textbf{10} (2022),  e106. \arXiv{2108.08934}.

\bibitem[MP]{MP} A. Maciocia and D. Piyaratne, \emph{Fourier-Mukai Transforms and Bridgeland Stability Conditions on Abelian Threefolds II}, Int. Jour. of Math. \textbf{27} (2016), 1650007. \arXiv{1310.0299}.

\bibitem[MNOP]{MNOP}
D.~Maulik, N.~Nekrasov, A.~Okounkov, and R.~Pandharipande.
\newblock {\em Gromov-{W}itten theory and {D}onaldson-{T}homas theory, {I}},
  Compos. Math., {\bf 142} (2006), 1263--1285.
\newblock \mathAG{0312059}.

\bibitem[Mo]{Mo} T.~Mochizuki, \emph{Donaldson Type Invariants for Algebraic Surfaces}, Lecture Notes in Math \textbf{1972}, Springer 2009.

\bibitem[OPT]{OPT} G. Oberdieck, D. Piyaratne and Y. Toda, \emph{Donaldson-Thomas invariants of abelian threefolds and Bridgeland stability conditions}, Jour. Alg. Geom. \textbf{31} (2022), 13--73. \arXiv{1808.02735}.

\bibitem[PP]{PP} R. Pandharipande and A. Pixton, {\em Gromov-Witten/Pairs
correspondence for the quintic 3-fold}, Jour. AMS \textbf{30} (2017), 389--449.

\bibitem[PiTo]{PiTo} D. Piyaratne and Y. Toda, \emph{Moduli of Bridgeland semistable objects on 3-folds and Donaldson–Thomas invariants}, Crelle's Jour. \textbf{747} (2019), 175--219. \arXiv{1504.01177}.

\bibitem[Su]{Sun} H. Sun, \emph{Tilt-stability, vanishing theorems and Bogomolov-Gieseker type inequalities}, Adv. Math. \textbf{347} (2019), 677--707. \arXiv{1609.03245}.

\bibitem[TT2]{TT2} Y.~Tanaka and R. P. Thomas, \emph{Vafa-Witten invariants for projective surfaces II: semistable case}, Pure Appl. Math. Quart. \textbf{13} (2017), 517--562. Volume in honour of the 60th birthday of Simon Donaldson. \arXiv{1702.08487}.

\bibitem[Th]{Th} R. P. Thomas, \textit{A holomorphic Casson invariant for Calabi-Yau 3-folds, and bundles on K3 fibrations}, Jour. Diff. Geom. \textbf{54} (2000), 367--438.
\href{http://arxiv.org/abs/math/9806111}{math.AG/9806111}.

\bibitem[To1]{TodaK3} Y. Toda. \emph{Moduli stacks and invariants of semistable objects on K3 surfaces}, Adv. in Math, \textbf{217} (2008), 2736--2781. \mathAG{0703590}.

\bibitem[To2]{TodaBG} Y. Toda. \emph{Bogomolov-Gieseker-type inequality and counting invariants}, Jour. of Top. \textbf{6} (2012), 217--250. \arXiv{1112.3411}.

\bibitem[To3]{TodaHall} Y. Toda. \emph{Hall algebras in the derived category and higher-rank DT invariants}, Alg. Geom. \textbf{7} (2020), 240--262. \arXiv{1601.07519}.
\end{thebibliography}

\bigskip \noindent
{\tt{s.feyzbakhsh@imperial.ac.uk\\ richard.thomas@imperial.ac.uk}}

%\noindent Department of Mathematics\\
%\noindent Imperial College\\
%\noindent London SW7 2AZ \\
%\noindent United Kingdom

\end{document}